\let\overfence\overbrace 
\let\downfencefill\downbracefill 
\patchcmd{\overfence}{\downbracefill}{\downfencefill}{}{}
\patchcmd{\downfencefill}{\braceru \bracelu}{}{}{}
\apptocmd{\sloppy}{\hbadness 10000\relax}{}{}
\newcommand{\comment}[1]{}
\newcommand\fA{\mathfrak{A}}
\def\N{\mathbb{N}} 
\def\Z{\mathbb{Z}} 
\def\Q{\mathbb{Q}} 
\def\R{\mathbb{R}} 
\def\C{\mathbb{C}} 
\def\F{\mathbb{F}}
\def\topdf{\texorpdfstring}
\def\inf{\mathrm{inf}}
\def\sotimes{\overset{\sim}{\otimes}}
\theoremstyle{plain}
\newtheorem{thm}[equation]{Theorem}
\newtheorem{lem}[equation]{Lemma}
\newtheorem{coro}[equation]{Corollary} 
\newtheorem{prop}[equation]{Proposition}
\def\s{\mathfrak{s}}
\theoremstyle{definition}
\newtheorem{defi}[equation]{Definition} 
\newtheorem{ex}[equation]{Example}
\newtheorem{stand}[equation]{Standing assumption}
\newtheorem{conj}[equation]{Conjecture}
\newtheorem{quest}[equation]{Question}
\theoremstyle{remark} 
 \newtheorem{rem}[equation]{Remark}
\newtheorem*{ack}{Acknowledgements}
\newcommand{\cA}{\mathcal A}
\newcommand{\cB}{\mathcal B}
\newcommand{\cC}{\mathcal C}
\newcommand{\cG}{\mathcal G}
\newcommand{\cH}{\mathcal H}
\newcommand{\cK}{\mathcal K}
\newcommand{\cO}{\mathcal O}
\newcommand{\cR}{\mathcal R}
\newcommand{\cU}{\mathcal U}
\newcommand{\cV}{\mathcal V}
\def\fA{\mathfrak{A}}
\def\fB{\mathfrak{B}}
\def\fC{\mathfrak{C}}
\def\fF{\mathfrak{F}}
\newcommand{\BF}{\fB\fF}
\def\ab{\mathfrak{Ab}}
\def\gr{\operatorname{gr}}
\def\alg{\mathrm{Alg}}
\def\sotimes{\overset{\sim}{\otimes}}
\newcommand{\aha}{{\alg_{\K}}}
\newcommand{\lra}{\longrightarrow}
\newcommand{\iso}{\overset{\sim}{\lra}}
\newcommand{\ol}{\overline}
\newcommand{\K}{\mathsf{k}}
\newcommand{\T}{{\mathbb{T}}}
\newcommand{\Perf}{\operatorname{Perf}}
\newcommand{\DD}{\operatorname{\bf D}}
\newcommand{\sg}{\operatorname{sgn}}
\newcommand{\singg}{\operatorname{sng}}
\newcommand\Gr[1][]{{\operatorname{{Gr}-}}}
\newcommand{\Modd}{\operatorname{Mod-}}
\newcommand{\M}{\operatorname{\mathbb M}}
\newcommand{\Pgrp}{\operatorname{P^{gr}-\!}}
\def\a{\alpha}
\def\TT{\mathcal{T}}
\def\LL{\mathscr{L}}
\newcommand{\FK}{\operatorname{FK}}
\newcommand{\FKbar}{\operatorname{\mathrm{F}\ol{\mathrm{K}}}}
\def\reg{\operatorname{reg}}
\def\sing{\operatorname{sing}}
\def\sink{\operatorname{sink}}
\def\inf{\operatorname{inf}}
\def\sour{\operatorname{sour}}
\def\triqui{\vartriangleleft}
\def\quitri{\vartriangleright}
\def\mspan{\operatorname{span}}
\def\supp{\operatorname{supp}}
\def\rk{\operatorname{rk}}
\def\Gl{\operatorname{GL}}
\def\Idem{\operatorname{Idem}}
\def\ev{\operatorname{ev}}
\def\id{\operatorname{id}}
\newcommand{\hltimes}{{ \ \widehat{\ltimes}\ }}
\def\Path{\operatorname{Path}}
\newcommand{\coker}{{\rm Coker}}
\renewcommand{\ker}{{\rm Ker}}
\newcommand{\op}{\mathrm{op}}
\def\Ext{\operatorname{Ext}}
\def\tor{\operatorname{Tor}}
\def\Hom{\operatorname{Hom}}
\def\End{\operatorname{End}}
\def\ab{\operatorname{ab}}
\DeclareMathOperator*{\Tor}{Tor}
\def\comp{\operatorname{comp}}
\renewcommand{\top}{\operatorname{top}}
\numberwithin{equation}{section}
\title{Classification Conjectures for Leavitt path algebras}
\author{Guillermo Corti\~nas}
\address{Guillermo Corti\~nas: Dep. Matem\'atica-IMAS\\ Facultad de Ciencias Exactas y Naturales\\
Universidad de Buenos Aires\\ Argentina}
\email{gcorti@dm.uba.ar}
\author{Roozbeh Hazrat}
\address{Roozbeh Hazrat: 
Centre for Research in Mathematics and Data Science\\
Western Sydney University\\
Australia} \email{r.hazrat@westernsydney.edu.au}
\thanks{The first named author is a CONICET researcher.~He was supported by grants 
PIP 11220200100423CO from CONICET, UBACyT 20020220300206BA from UBA and PICT 2021-I-A-00710 from ANPCyT. The second author acknowledges Australian Research Council
grant DP230103184.
}
\begin{document}

\begin{abstract} 
The theory of Leavitt path algebras is intrinsically related, via graphs, to the theory of symbolic dynamics and $C^*$-algebras where the major classification programs have been a domain of intense research in the last 50 years. In this article, we gather together current lines of research in the classification of Leavitt path algebras, questions, conjectures, and some of the results about them that have been obtained so far. 

\end{abstract}

\maketitle

\section{Introduction}\label{sec:intro}
\numberwithin{equation}{section}
Leavitt path algebras are the discrete, purely algebraic version of graph $C^*$-algebras. The classification of these algebras has received a lot of attention, as they are closely related to the classification of graph $C^*$-algebras and those of symbolic dynamics. However, while graph $C^*$-algebras have been classified by means of $K$-theoretic invariants ~\cite{errs3}, an analogous classification for Leavitt path algebras remains conjectural.

A $C^*$-algebra is \emph{simple} if it has no non-trivial closed ideals. Simple graph $C^*$-algebras are completely classified by the  Grothendieck group $K_0$ ~\cites{ror,rordam111,rordam222}. 
For graphs with unital non-simple $C^*$-algebras, it became clear that one way to preserve enough information is to further consider the $K$-groups of the ideals and their subquotients and how they are related to each other via the long exact sequence of $K$-theory groups.
 This is now called filtered $K$-theory; after the pioneering work of  R\o rdam~\cite{ror} and Restorff~\cite{restorff}, it was further investigated by Eilers, Restorff, Ruiz and S\o rensen~\cites{errs4,errs2}. In the major article~\cite{errs3} they show that filtered $K$-theory is a complete invariant for unital graph $C^*$-algebras. 

 However, at the level of Leavitt path algebras, it is not known whether $K_0$ or other $K$-theoretical invariants classify simple Leavitt path algebras. It has now been over 15 years since the question on whether the Grothendieck group $K_0$ classifies simple purely infinite Leavitt path algebras of finite graphs was posed~\cite{question}. Similarly it is already over a decade since it was conjectured that the graded Grothendieck group $K_0^{\gr}$ is a complete invariant of the graded isomorphism class of \emph{all} unital Leavitt path algebras~\cite{haz2013}. Despite a flurry of activity, the questions and conjectures above remain unsolved; in particular no counterexamples to them have been found.

 In this survey paper, we present in one place several results on classification which are scattered in the literature  and further discuss various $K$-theoretical invariants which conjecturally could classify Leavitt path algebras.  We look at classical $K$-theory, graded $K$-theory, filtered $K$-theory and bivariant $K$-theory.  

The rest of this article is organized as follows. 

Section \ref{sec:preli} recalls some basic facts about monoids, graphs and $C^*$-algebras. 

Section \ref{sec:leavitt} introduces our main object of study, the Leavitt path algebra,  which is a $\K$-algebra $L(E)$ associated to a directed graph $E$, with coefficients in a field $\K$. This section also recalls  some of its main properties. 

Section \ref{sec:elliott} concerns Elliott's $K$-theoretic classification of unital ultramatricial algebras \cite{elliott}, which appears as Theorem \ref{thm:elliott}, after a brief introduction to the Grothendieck group $K_0$ and ultramatricial algebras, highlighting some examples related to Leavitt path algebras.  

Section \ref{sec:KP} is about the classification question for simple purely infinite (SPI) Leavitt path algebras, which aims to classify them up to isomorphism/Morita equivalence
using the Grothendieck group $K_0$. The question is inspired by R\o rdam's classification of SPI graph $C^*$-algebras of finite graphs \cite{ror} which is a particular case of the more general 
Kirchberg-Phillips' result \cite{P} classifying all Kirchberg algebras. After some prelimnaries on SPI rings (Subsection \ref{subsec:simple}), higher algebraic and topological 
$K$-theory (Subsection \ref{subsec:hik}) and Kasparov's bivariant $K$-theory (Subsection \ref{subsec:KK}), the Kirchberg-Phillips theorem \ref{thm:KP} is stated in Subsection 
\ref{subsec:KP}, and the question, posed in \cite{question}, of whether a similar theorem holds for SPI Leavitt path algebras of finite graphs, is raised. We then explain the different strategies 
of proof of these $C^*$-algebraic results, and the extent to which they can or cannot be translated to the purely algebraic setting. For example we recall Kirchberg's absorption 
theorems for $C^*$-algebras \cite{KP} (stated here as Theorem \ref{thm:KP}) and how it was proved in \cite{aratenso} that they fail to hold for Leavitt path algebras (Theorem 
\ref{thm:aratenso}). For another example, we explain how Franks' theorem \ref{thm:franks} was used in \cite{flow} to show that flow-equivalent graphs have Morita equivalent Leavitt 
path algebras (Theorem \ref{thm:flow}). Then we recall R\o rdam's theorem \cite{ror} that a finite SPI graph and its Cuntz splice have isomorphic $C^*$-algebras (Theorem 
\ref{thm:splice}), and review some of the research that has been published (\cite{johsor}, \cite{tokel2}, \cite{ac1}) as yet inconclusive, trying to determine whether the same is 
true for Leavitt path algebras (see Theorem \ref{thm:sgn=diag},  Corollary \ref{coro:tokel2} and Theorem \ref{thm:noexistis}). Kirchberg and Phillips use Kasparov's bivariant 
$K$-theory to obtain a classification result for Kirchberg algebras up to homotopy which they then transform to a classification up to isomorphism using analytic methods. After an 
introduction to bivariant algebraic $K$-theory \cite{ct}, and a description in Theorems \ref{thm:fundtri} and \ref{thm:kksing} of the $kk$-theoretic properties of Leavitt path algebras from \cite{cm1}, we explain how the latter were used in \cite{cm2} to establish that $K_0$ classifies SPI Leavitt path algebras of finite graphs up to 
homotopy (Theorem \ref{thm:cm2}), and how this classification ``passes to the completion" \cite{bullift} to a homotopy classification of SPI graph $C^*$-algebras (Theorem 
\ref{thm:bullift}).

Section \ref{sec:symb} reviews basic concepts and results from symbolic dynamics, such as shift spaces, conjugacy, Williams' theorem \cite{williams} that strong shift equivalence 
implies shift equivalence (Theorem \ref{willmnhfhf}), the fact that the converse does not hold \cites{kr1,kr2}, and Krieger's theorem \cite{krieger} that shift equivalence is classified by his dimension 
invariant (Theorem \ref{kriegerthm}). 

Section \ref{sec:gradK} concerns graded modules and graded $K$-theory $K^{\gr}$. For a ring graded over an abelian group $\Gamma$, the groups $K_n^{\gr}(A)$ carry a 
natural $\Z[\Gamma]$-module structure. As an application of graded $K$-theory we sketch a proof of a result from \cite{ac1},  stated in  Section \ref{sec:KP} as Theorem \ref{thm:noexistis}, asserting the non-existence of 
$\Z/m\Z$-graded unital homomorphisms between the Leavitt path algebras of the $n$-petal rose graph $\cR_n$ and of its Cuntz' splice for $n\ge 2$ and $m\in \{0\}\cup\N_{\ge 2}$.

Section \ref{sec:gradsymb} is about the graded classification conjectures for Leavitt path algebras and their connections to symbolic dynamics. These conjectures take into account 
the $\Z$-grading that Leavitt path algebras have, and aim to classify them up to graded isomorphism/graded Morita equivalence by means of the graded Grothendieck 
$\Z[x,x^{-1}]$-module $K_0^{\gr}$. In Subsection \ref{subsec:poly} we recall the definition of polycephaly graphs, and the result \cite{haz2013} that if $E$ and $F$ are polycephaly 
and $K_0^{\gr}(L(E))\cong K_0^{\gr}(L(F))$ as pointed partially ordered groups, then $L(E)\cong_{\gr}L(F)$ (Theorem \ref{mani543}). This motivated the graded classification 
conjecture  (Conjecture \ref{conjalgiso}), first formulated in \cite{haz2013}, which asserts that the latter classification result holds for general graphs. In Subsection 
\ref{subsec:ample} we recall the definition of amplified graphs and Eilers-Ruiz-Sims' result \cite{eilers2} that the conjecture is true for them (Theorem \ref{thm-main}). 
Subsection \ref{subsec:twist} recalls from \cite{skew} the description of the Leavitt path algebra of a finite graph as a Laurent polynomial ring over its zero-degree component, 
skewed by a corner isomorphism $\alpha$, and Ara-Pardo's result \cite{apgrad} (stated here as Theorem \ref{theor:Kiffgr-iso unital}) that the graded classification conjecture holds 
up to twisting $\alpha$ by a locally inner automorphism. Subsection \ref{subsec:full} recalls another conjecture from \cite{haz2013} which has been established independently by 
Arnone \cite{guido2023} and Va\v{s} \cite{vas} (Theorem \ref{thm:full}). The conjecture/theorem says that any homomorphism of pointed partially ordered $\Z[x,x^{-1}]$-modules 
$K_0^{\gr}(L(E))\to K_0^{\gr}(L(F))$ lifts to a unital graded homomorphism $L(E)\to L(F)$. In Subsection \ref{subsec:gradmor}  we review graded Morita equivalence of graded rings 
\cites{hazbook, abramsmori}. Subsection \ref{subsec:art} recalls results from \cite{hazd} relating graded Morita equivalence and the isomorphism class of $K_0^{\gr}$ to shift 
equivalence. Theorem \ref{h99} says that for finite $E$ and $F$, $K_0^{\gr}(L(E))\cong K_0^{\gr}(L(F))$ is equivalent to the adjacency matrices $A_E$ and $A_F$ being shift-equivalent. Proposition \ref{hgysweet} says that if $A_E$ and $A_F$ are strongly shift equivalent then $L(E)$ and $L(F)$ are graded Morita equivalent, and that if the latter holds, 
then $A_E$ and $A_F$ are shift equivalent. Thus graded Morita equivalence of Leavitt path algebras sits right in between strong shift equivalence and shift equivalence. Subsection 
\ref{subsec:c*ver} concerns a $C^*$-version of the graded classification conjecture (Conjecture \ref{conjanal}); it predicts that equivariant $K_0$ classifies graph $C^*$-algebras 
up to $\ast$-isomorphism preserving the gauge circle action. Subsection \ref{subsec:moritasing} reviews a result of Chen and Yang (Theorem \ref{thm:yache}) relating 
graded Morita equivalence of $L(E)$ and $L(F)$ to derived equivalence and equivalence between the singularity categories of certain truncated path algebras \cite{chen}, and states 
the graded Morita classification conjecture (Conjecture \ref{conjmogr}), which predicts that $K_0^{\gr}(L(E))\cong K_0^{\gr}(L(F))$ as partially ordered $\Z[x,x^{-1}]$-modules implies, among other things, that $L(E)$ and $L(F)$ 
are graded Morita equivalent and that $C^*(E)$ and $C^*(F)$ are gauge equivariantly Morita equivalent. Subsection \ref{talentedmoni} recalls the definition of the talented monoid of a graph introduced in \cite{hazli}, explains that it detects graph 
theoretic properties (Theorem \ref{conLm}) and how it was used in \cite{oberwol} to establish the graded Morita classification conjecture for meteor graphs 
(Theorem \ref{alergy1}). Subsection \ref{subsec:homotopy2} states Theorem \ref{thm:guidotopy} from \cite{guidotopy} which says that $K_0^{\gr}$ classifies Leavitt path algebras of 
finite primitive graphs up to graded homotopy. Subsection \ref{subsec:compa} concerns certain compatibility conditions that one may add to shift equivalence between the adjacency matrices of graphs $E$ and $F$. By \cite{recast} these conditions are implied by strong shift equivalence (Proposition \ref{prop:ssegse}) and imply that $L(E)$ and $L(F)$ are graded Morita equivalent (Theorem \ref{thm:recast}).

Section \ref{sec:filk} is the final section. It reviews filtered topological \cites{errs,errs2,errs3,errs4} and algebraic $K$-theory \cite{arahazli} and their relations to each other and to graded $K$-theory. It is explained how these theories were used in \cite{arahazli} to establish Proposition \ref{bfg1998d}, which says that shift equivalent graphs have Morita equivalent $C^*$-algebras. The section ends with a diagram exhibiting known and conjectural connections between various $K$-theoretic invariants and equivalences of algebras, graphs and matrices.

\begin{ack} We would like to thank Gene Abrams, Pere Ara, Efren Ruiz, and Lia Va\v{s} for carefully reading the paper and giving us feedback. Thanks also to the anonymous referee for a very careful reading of the article and for comments that improved the presentation of the paper. Part of the research for this article was carried out during a visit of the second named author to the Santal\'o Mathematics Research Institute (IMAS) of the Mathematics Department of the School of Exact and Natural Sciences of the University of Buenos Aires. He wholeheartedly thanks his host Willie Corti\~nas and the people at IMAS-DM for their hospitality. 
\end{ack}
\section{Preliminaries }\label{sec:preli}

Throughout $\N_0$ denotes the monoid of non-negative integers and $\N$ denotes the positive integers.

\subsection{Monoids}\label{subsec:mono}
\numberwithin{equation}{subsection}
Let $M$ be a commutative monoid, written additively,  and $\Gamma$ a group acting on $M$ by monoid automorphisms. For $\alpha \in \Gamma$ and $a\in M$, we denote the action of $\alpha$ on $a$ by ${}^\alpha a$. 
A monoid homomorphism $\phi:M_1 \rightarrow M_2$ is a $\Gamma$-\emph{monoid homomorphism} if it respects the action of $\Gamma$, i.e. if $
\phi({}^\alpha a)={}^\alpha \phi(a)$ $\forall a\in M$. Recall a \emph{pre-order} on a set $X$ is a reflexive and transitive relation. We define the \emph{algebraic}  pre-order on the monoid $M$ by $a\leq b$ if $b=a+c$, for some $c\in M$. 
Throughout we write $a \parallel b$ if the elements $a$ and $b$ are not comparable.  A monoid is \emph{conical} if $a+b=0$ implies $a=b=0$; it is \emph{cancellative} if $a+b=a+c$ implies $b=c$. A nonzero element $a\in M$ is an \emph{atom} if $a=b+c$ implies that $b=0$ or $c=0$. An element 
$ a\in M$  is called \emph{minimal} if $b\leq a\Rightarrow a\leq b$. When $M$ is conical and cancellative, an element $a\in M$ is an atom if and only if it is minimal if and only if $0\not = b\leq a\Rightarrow a=b$. A monoid of interest in the paper, the talented monoid of a graph $E$, denoted by $T_E$ (see Section~\ref{talentedmoni}) is conical and cancellative and thus the equivalence just mentioned holds.  

Throughout we assume that the group $\Gamma$ is abelian. Indeed in our setting of graph algebras, most of the time this group is the group of integers $\mathbb Z$. We call an element $0 \not = a\in M$ \emph{periodic} if there exists $0 \not = \alpha \in \Gamma$ such that ${}^\alpha a =a$. If $a\in M$ is not periodic, we call it \emph{aperiodic}. We denote the orbit of the action of $\Gamma$ on an element $a$ by $O(a)$, so $O(a)=\{{}^\alpha a \mid \alpha \in \Gamma \}$. 

We say that $\Gamma$ acts \emph{freely} on a monoid $M$ if ${}^\gamma m=m$, where $0\not = m \in M$,  implies that $\gamma$ is the identity of the group, i.e., all the isotropy groups of the action are trivial.

 A $\Gamma$-\emph{order-ideal} of a monoid $M$ is a  subset $I$ of $M$ such that for any $\alpha,\beta \in \Gamma$, ${}^\alpha a+{}^\beta b \in I$ if and only if 
$a,b \in I$. Equivalently, a $\Gamma$-order-ideal is a submonoid $I$ of $M$ which is closed under the action of $\Gamma$ and is
\emph{hereditary} in the sense that $a \le b$ and $b \in I$ imply $a \in I$. The set $\mathcal L(M)$ of $\Gamma$-order-ideals of $M$ forms a (complete) lattice. We say $M$ is a \emph{simple} 
$\Gamma$-\emph{monoid} if $M\ne 0$ and the only $\Gamma$-order-ideals of $M$ are $0$ and $M$.

\subsection{Graphs}\label{graph6464}

A (directed) graph $E$ is a tuple $(E^{0}, E^{1}, r, s)$, consisting of sets $E^{0}$ and $E^{1}$ and maps $s,r:E^1\to E^0$. The elements of $E^0$ and $E^1$ are respectively the \emph{vertices} and the \emph{edges} of $E$. A graph $E$ is \emph{finite} or \emph{countable} if both $E^0$ and $E^1$ are.

\begin{stand}\label{stand:count}
All graphs are assumed countable. 
\end{stand}

If $e\in E^1$ then $s(e)$ and $r(e)$ are the \emph{source} and the \emph{range} of $e$. We think of an edge as an arrow going from its source to its range.
We use the convention that a \emph{path} $\alpha$ in $E$ of \emph{length} $|\alpha|=n\geq 1$ is
a sequence 
\begin{equation}\label{eq:path}
\a=e_{1}e_{2}\cdots e_{n}    
\end{equation} 
with $e_{i}\in E^1$ such that
$r(e_{i})=s(e_{i+1})$ for $1\leq i\leq n-1$.  Set $s(\alpha) = s(e_1)$, and $r(\alpha) =r(e_{n})$. A vertex is viewed as a path in $E$ of length $0$. 
For $n\geq 2$, we define $E^n$ to be the set of paths in $E$ of length $n$ and $\Path(E)=\bigcup_{n\geq 1}E^n$, the set of all paths in $E$.

We shall also have occasion to consider \emph{infinite paths} $e_1e_2\cdots$ and \emph{bi-infinite paths} $\cdots e_{-1}e_0e_1\cdots$, defined in the obvious way. The \emph{support} of a path $\alpha$ is the set $\supp(\alpha)$ of the sources and ranges of its edges. 
We say that a vertex $u$ \emph{connects to a vertex} $v$, and  write $u\ge v$, if there is a path $\alpha$ with $s(\alpha)=u$ and $r(\alpha)=v$. We say that $u$ \emph{connects to a path} $\alpha$ if there exists $v\in\supp(\alpha)$ such that $u$ connects to $v$.

A \emph{closed path based} at a vertex $v\in E^0$ is a path $\alpha$ with $|\alpha|\ge 1$ such that $v=s(\alpha)=r(\a)$. A \emph{cycle} is a closed path \eqref{eq:path} such that $s(e_i) \neq s(e_j)$ for every $i \neq j$. A \emph{loop} is a cycle of length one. We say a graph is \emph{acyclic} if it has no cycles.

Let $v\in E^0$ and $c$ the cardinal of the set $s^{-1}(v)$ of edges it emits. We say that $v$ is a \emph{sink} if $c=0$, an \emph{infinite emitter} if $c$ is infinite, and  \emph{regular} if $1\le c<\infty$. We write $\sink(E)$, $\inf(E)$ and $\reg(E)$ for the sets of sinks, infinite emitters and regular vertices. Nonregular vertices are called \emph{singular}; put $\sing(E)=E^0\setminus\reg(E)$. A graph is \emph{regular} if $E^0=\reg(E)$ and \emph{row-finite} $\inf(E)=\emptyset$. A vertex $v\in E^0$ is a \emph{source} if $r^{-1}(v)=\emptyset$; we write $\sour(E)$ for the set of all sources of $E$. A graph $E$ is \emph{essential} if it is regular and $\sour(E)=\emptyset$.

There is a graph move called \emph{source removal}, consisting of removing a source $v$ as well as all edges $e\in s^{-1}\{v\}$. Applying source removal a finite number of times to a finite regular graph $E$, yields an essential graph. Other moves that appear in symbolic dynamics are reviewed in Section~\ref{sec:symb}.

The (reduced) \emph{adjacency  matrix} of a graph $E$  is the matrix 
$A_E\in \Z^{\reg(E)\times  E^0}$ with entries 
\begin{equation}\label{eq:ae}
(A_E)(v,w)=\#\{e\in E^1\,\colon\, s(e)=v,\, r(e)=w\}.
\end{equation}
Put
\begin{equation}\label{eq:I}
 I\in \Z^{E^0\times\reg(E)},\,\, I_{v,w}=\delta_{v,w}.   
\end{equation}
Observe that $I$ results from the identity matrix with the columns corresponding to the singular vertices removed.
The matrices $A^t_E$ and $I$  will be used to calculate the Grothendieck group $K_0$ of the Leavitt path algebra of $E$~(see \ref{ex:k0bf}).
For row-finite $E$, we shall also have occasion to consider the \emph{unreduced} adjacency matrix $A'_E\in\N_0^{E^0\times E^0}$ with entries
\[
(A'_E)_{v,w}=\#\{e\in E^1\,\colon\, s(e)=v,\, r(e)=w\}.
\]
Remark that $A'_E=A_E$ if and only if $E$ is regular.

\begin{rem}\label{rem:orderver}
As defined above, the adjacency matrix $A_E$ is a function $\reg(E)\times E^0\to \N_0$. It does not depend on any ordering of the vertices of $E$. However if we want to represent it graphically, we need to choose an order, and we do it so that the regular vertices appear first.
\end{rem}
\begin{rem}\label{rem:e(a)}
Let $n\in \N$ and $A\in\M_n(\N_0)$. Let $E(A)$ be the graph with vertices $\{1,\dots,n\}$ and exactly $A_{i,j}$ edges with source $i$ and range $j$. It is straightforward to check that, for the given order of the vertices, we have  $A'_{E(A)}=A$ and that there is a graph isomorphism $E(A'_E)\cong E$.
\end{rem}

Next we describe the notion of a covering graph. These graphs appear when we deal with the smash product of a Leavitt path algebra by the group $\mathbb Z$ and when we describe the graded Grothendieck group of a Leavitt path algebra. 
The \emph{covering graph}  $\ol{E}$ 
of $E$ is given by
\begin{gather}\label{eq:olE}
    \ol E^0 = \big\{v_n \mid v \in E^0 \text{ and } n \in \Z \big\},\qquad
    \ol E^1 = \big\{e_n \mid e\in E^1 \text{ and } n\in \Z \big\},\\
    s(e_n) = s(e)_{n-1},\qquad\text{ and } \qquad  r(e_n) = r(e)_{n}.\nonumber
\end{gather}

As examples, consider the following graphs
\begin{equation*}
{\def\labelstyle{\displaystyle}
E : \quad \,\, \xymatrix{
 u \ar@(lu,ld)_e\ar@/^0.9pc/[r]^f & v \ar@/^0.9pc/[l]^g
 }} \qquad \quad
{\def\labelstyle{\displaystyle}
F: \quad \,\, \xymatrix{
   u \ar@(ur,rd)^e  \ar@(u,r)^f
}}
\end{equation*}
Then
\begin{equation}\label{eq:times1}
\ol{E}: \quad \,\,\xymatrix@=15pt{
\dots  {u_{0}} \ar[rr]^-{e_1} \ar[drr]^(0.4){f_1} &&  {u_{1}} \ar[rr]^-{e_2} \ar[drr]^(0.4){f_2} && {u_{2}}  \ar[rr]^-{e_{3}} \ar[drr]^(0.4){f_{3}} && \cdots\\
\dots {v_{0}}   \ar[urr]_(0.3){g_1} && {v_{1}} \ar[urr]_(0.3){g_2}  && {v_{2}}  \ar[urr]_(0.3){g_{3}}&& \cdots
}
\end{equation}
and
\begin{equation*}
\ol{F}: \quad \,\,\xymatrix@=15pt{
\dots  {u_{0}} \ar@/^0.9pc/[r]^{f_1} \ar@/_0.9pc/[r]_{e_1}  &  {u_{1}} \ar@/^0.9pc/[r]^{f_2} \ar@/_0.9pc/[r]_{e_2} & {u_{2}}  \ar@/^0.9pc/[r]^{f_{3}}  \ar@/_0.9pc/[r]_{e_{3}} & \quad \cdots
}
\end{equation*}
Let $E$ be a row-finite graph. Recall that a subset $H \subseteq E^0$ is said to be \emph{hereditary} if
for any $e \in E^1$ we have that $s(e)\in H$ implies $r(e)\in H$. A hereditary subset $H
\subseteq E^0$ is called \emph{saturated} if whenever $v$ is not a sink, we have
$$\{r(e): e\in E^1 \text{~and~} s(e)=v\}\subseteq H\Rightarrow v\in H.$$ 
We let $\TT_E$ denote the set of hereditary saturated subsets of $E^0$, and order two hereditary saturated subsets $H$ and $H'$ by $H\leq H'$ if  
$H\subset H'$. It has been established that the ordered set $\TT_E$ is actually a lattice (see \cite{lpabook}*{Proposition~2.5.6}).
 
For a hereditary saturated subset $H\subset E^0$, the \emph{quotient graph} $E/H$ is the graph such that
$$(E/H)^0=E^0\setminus H,$$
$$(E/H)^1= \{e\in E^1\;|\; r(e)\notin H\}$$ 
with range and source maps defined as the restrictions of $r$ and $s$ to $(E/H)^1$.

For hereditary saturated subsets $H_1$ and $H_2$ of $E$ with  $H_1 \subseteq  H_2$, define the quotient graph $H_2 / H_1 $ as a graph such that 
$(H_2/ H_1)^0=H_2\setminus H_1$ and $(H_2/H_1)^1=\{e\in E^1\;|\; s(e)\in H_2, r(e)\notin H_1\}$. The source and range maps of $H_2/H_1$ are restricted from the graph $E$. If $H_2=E^0$, then $H_2/H_1$ is the \emph{quotient graph} $E/H_1$ defined above (see also \cite{lpabook}*{Definition~2.4.11}).

\subsection{\topdf{$\ast$}{*}- and \topdf{$C^*$}{C*}-algebras}\label{subsec:c*}
Let $\K$ be a field together with an involutive endomorphism $\ast:\K\to \K$. A \emph{$\ast$-algebra} $A$ over $\K$ is an algebra together with an involutive ring homomorphism $*:A\to A^{\op}$ that satisfies 
\[
(\lambda a)^*=\lambda^*a^*\,\, (\forall \,  \lambda\in\K,\, a\in A). 
\]
For the remainder of this subsection, we fix $\K=\C$ equipped with complex conjugation as involution. A $C^*$-algebra is a Banach algebra $\fA$ together with an isometric involution $\ast$ that makes it a $\ast$-$\C$-algebra and satisfies
\begin{equation}\label{eq:c*norm}
||a||^2=||a^*a|| \,\, (\forall \, a\in A).     
\end{equation}
For example the algebra $\cB(H)$ of bounded operators on a Hilbert space $H$, equipped with the operator norm and the adjoint as involution is a $C^*$-algebra, as is any closed $\ast$-subalgebra of $\cB(H)$. In fact the Gelfand-Naimark theorem \cite{david}*{Theorem I.9.12} says that any $C^\ast$-algebra is (isometrically) $\ast$-isomorphic to a closed $\ast$-subalgebra of $\cB(H)$ for some $H$. 

\begin{ex}\label{ex:b(x)}
Let $\fB$ be a $C^*$-algebra and $X$ a locally compact Hausdorff space. Let $\fB(X)$ be the algebra of continuous functions $X\to \fB$ that vanish at infinity, equipped with pointwise addition, multiplication and involution, and the supremum norm. Then $\fB(X)$ is a $C^*$-algebra, and for each $x\in X$, the \emph{evaluation map} $\ev_x:\fB(X)\to \fB$ is a $*$-homomorphism.     
\end{ex}

\section{Leavitt path algebras}\label{sec:leavitt}
\numberwithin{equation}{section}
The main focus of this survey article is on the classification of certain combinatorial algebras arising from directed graphs, namely, Leavitt path algebras. 

Let $\K$ be a field and $n\ge 2$. In \cite{vitt62} Leavitt considered the free unital associative $\K$-algebra $L_n=L_n(\K)$ generated by symbols $\{x_i,x^*_i \mid 1\leq i \leq n\}$ subject to the following relations 
\begin{equation}\label{jh54320}
x^*_ix_j =\delta_{ij}, \text{ for all } 1\leq i,j \leq n,  \text{  and  } \sum_{i=1}^n x_ix^*_i=1.
\end{equation} 
The relations guarantee that the right $L_n$-module homomorphism 
\begin{align}\label{is329ho}
\phi:L_n&\longrightarrow L_n^n\\
a &\mapsto (x^*_1a	,x^*_2a,\dots,x^*_na)\notag
\end{align}
has an inverse 
\begin{align}\label{is329ho9}
\psi:L_n^n&\longrightarrow L_n\\
(a_1,\dots,a_n) &\mapsto  x_1a_1+\dots+x_na_n, \notag 
\end{align}
so $L_n\cong L_n^n$ as right $L_n$-modules. Leavitt showed that 
$n$ is the smallest $m\ge 2$ such that $L_n\cong L_n^m$, and that $L_n$ is a simple ring. More precisely, he showed that for any nonzero $x\in L_n$ there exist $a,b\in L_n$ such that
\begin{equation}\label{eq:lnspi}
axb=1.    
\end{equation}
Note, however, that $L_n$ is not a division ring or even a domain; in fact we have 
\begin{equation}\label{eq:lndom}
x_2^*x_1=0.     
\end{equation}

It turned out the relations (\ref{jh54320}) are quite ubiquitous in mathematics and thus these rings have fascinating properties and appear in many areas. 
For example Cuntz' algebra $\cO_n$, introduced in \cite{On}, is a $C^*$-algebra completion of $L_n(\C)$. Cuntz and Krieger \cite{ck} introduced the $C^*$-algebra $C^*(E)$ of a finite graph $E$, so that $\cO_n=C^*(\cR_n)$ is the algebra of the graph consisting of a single vertex and $n$ loops, sometimes called the \emph{rose of $n$-petals}. Pictorially, $\cR_n$ is the following graph
\[
\xymatrix{
   \bullet \ar@{.}@(l,d) \ar@(ur,dr)^{x_{1}} \ar@(r,d)^{x_{2}} \ar@(dr,dl)^{x_{3}} 
\ar@(l,u)^{x_{n}}}
\]
The Cuntz-Krieger construction was later generalized, first to row-finite graphs \cite{kpr} and then to all \cite{dritom}.  
The Leavitt path algebra $L_\K(E)$ over a field $\K$ was introduced \cites{aap05,amp} as a purely algebraic counterpart to $C^*(E)$. Leavitt's algebra is recovered as $L_\K(\cR_n)=L_n(\K)$.

We briefly recall the construction of Leavitt path algebras and refer the reader to \cite{lpabook} for a comprehensive treatment. 

Let $E$ be a graph and $\K$ a field. The \emph{Cohn path algebra} $C_\K(E)$ of $E$ is the quotient of the free associative $k$-algebra generated by the set $E^0\cup E^1 \cup \{e^*\;|\;  e\in  E^1\}$, subject to the relations:
\begin{itemize}
\item[(0)] $v\cdot w = \delta_{v, w}$ for $v, w\in E^0$;
\item[(1)] $s(e)\cdot e = e = e \cdot r(e)$ for $e\in E^1$;
\item[(2)]  $r(e) \cdot e^* = e^* = e^*\cdot s(e)$ for $e\in E^1$;
\item[(3)]  $e^*\cdot f = \delta_{e, f} r(e)$ for $e, f\in E^1$.
\end{itemize} 
The elements of the form $e^*$ ($e\in E^1$) are called \emph{ghost edges}.
Suppose an involution $\ast:\K\to \K$ is given (e.g. the identity, or, if $\K\subset\C$, complex conjugation). Then $C_\K(E)$ is a $*$-algebra with the involution that fixes the vertices and maps $e$ to $e^*$ for $e\in E^1$. Throughout for a path
$\a=e_{1}e_{2}\cdots e_{n}$,  we write  $\a^*=e_{n}^*e_{n-1}^*\cdots e_{1}^*$.

Let $v\in\reg(E)$; put
\begin{equation}\label{eq:qv}
q_v:=v-\sum_{v\in s^{-1}(v)}ee^*.    
\end{equation}
Consider the ideal $\mathcal{K}(E)= \big \langle q_v\;|\; v\in \reg(E)\big \rangle$ of $C_\K(E)$. 
The \emph{Leavitt path algebra} $L_\K(E)$ of $E$ over the field $\K$ (see \cite{lpabook}*{Definition~1.2.3}) is the quotient algebra $C_\K(E)/\mathcal{K}(E)$. Hence there is an extension of $\K$-algebras
\begin{align}
 \label{sescohn}
 \CD
 0@>>> \mathcal{K}(E)@>{l}>>C_{\K}(E)@>{p}>>L_{\K}(E)@>>>0.
\endCD
\end{align} 

One may also consider the Cohn and Leavitt path algebras $C_R(E)$ and $L_R(E)$ over any unital ring $R$, defined by the same generators and relations as above. Remark that the algebra $C_{R}(E)$ is unital if and only if $L_{R}(E)$ is, if and only if $E^0$ is finite, in which case $1_E=\sum_{v\in E^0}v$ is the unit element. 

Throughout this paper, if $\K$ is an arbitrary field, we simply write $C(E)$ instead of $C_\K(E)$ and $L(E)$ instead of $L_\K(E)$. We keep the subscript when we want to emphasize the ground field or the fact that the ground ring under consideration is not a field.

The algebras above are naturally equipped with a $\Z$-grading (the canonical grading) where the vertices have degree zero, the edges have degree $1$ and the ghost edges have degree $-1$  (see \cite{lpabook}*{Section 2.1}).
This graded structure plays an important role in the theory of Leavitt path algebras as well as in the current paper. We recall one fact on the graded structure of these algebras, showcasing how germane is the $\mathbb Z$-grading of Leavitt path algebras.

Let $E$ be a row-finite graph. Write $\LL^{\gr}\big(L(E)\big)$ for the lattice of graded (two-sided) ideals of $L(E)$. There is a lattice isomorphism between the set $\TT_E$ of hereditary saturated subsets of $E^0$ and the set $\LL^{\gr}\big(L(E)\big)$ (\cite{lpabook}*{Theorem~2.5.8}). The correspondence maps a hereditary saturated set of vertices to the two-sided ideal it generates:
 \begin{align}\label{latticeisosecideal}
 \Phi: \TT_E&\longrightarrow \LL^{\gr}\big(L(E)\big),\\ 
 H &\longmapsto \langle H\rangle. \notag 
 \end{align} 

\begin{rem}\label{rem:infideal}
There is also a structure theorem for graded ideals of $L(E)$ that holds for an arbitrary, not necessarily row-finite graph $E$, see \cite{lpabook}*{Theorem 2.5.8}.
\end{rem}

\section{Elliott's \topdf{$K$}{K}-theoretic classification of ultramatricial algebras }\label{sec:elliott}

\subsection{The Grothendieck group}\label{subsec:k0}
\numberwithin{equation}{subsection}
Let $A$ be a ring; write $\Idem(A)$ for the set of idempotent elements in $A$. We say that two elements $p,q\in\Idem(A)$ are \emph{orthogonal}, and write $p\perp q$, if $pq=qp=0$; in that case $p+q$ is again an idempotent. We say that $e,f\in \Idem(A)$ are \emph{Murray-von Neumann equivalent} and write $e\sim f$, if there are elements $x\in eAf$, $y\in fAe$ such that $xy=e$ and $yx=f$. Borrowing terminology from \cite{B13}*{Cap\'\i tulo 7, Secci\'on 1}, we say that the ring $A$ has \emph{enough room} if given $p,q\in \Idem(A)$ there are $p',q'\in\Idem(A)$ with $p'\sim p$, $q'\sim q$ and $p'\perp q'$. For example, if $R$ is unital, then the non-unital ring 
\[
\M_\infty (R)=\bigcup_{n\ge 1}\M_n(R)
\]
of finite matrices of arbitrary size with coefficients in $R$ has enough room.  If $A$ has enough room then the quotient set $\Idem(A)/\sim$ becomes a monoid under orthogonal sum: to add the classes $[p]$ and $[q]$, one finds orthogonal representatives $p'\in [p]$ and $q'\in [q]$ and sets $[p]+[q]=[p'+q']$. In particular, for $\Idem_\infty(R)=\Idem(\M_\infty (R))$,  $\cV(R)=\Idem_\infty(R)/\sim$ is an abelian monoid.  An element $p\in\Idem_\infty R$ determines an $R$-endomorphism of $R^{(\N)}=\bigoplus_{n\ge 1}R$ and its image $pR^{(\N)}$ is a finitely generated projective right $R$-module. 
The assignment $p\to pR^{(\N)}$ induces an isomorphism of monoids 
\begin{equation}\label{map:vvv}
\cV(R)\iso \mathbb P^{\mathrm{iso}}(R)    
\end{equation}
with the set of isomorphism classes of finitely generated projective right $R$-modules, equipped with the direct sum. The inclusion of the category of abelian groups into that of abelian monoids has a left adjoint $M\mapsto M^+$, called \emph{group completion}. The \emph{Grothendieck group} of $R$ is $K_0(R)=\cV(R)^+$. Thus $K_0(R)$ comes equipped with a monoid homomorphism 
$\iota:\cV(R)\to K_0(R)$ that is universal in the sense that if $G$ is another abelian group and $f:\cV(R)\to G$ is a monoid homomorphism, then there exists a unique group homomorphism $\bar{f}$ making the following diagram commute
\[
\xymatrix{\cV(R)\ar[rr]^{\iota}\ar[dr]^f&& K_0(R)\ar@{.>}[dl]^{\bar{f}}\\
           &G&}
\]
Consider the submonoid $K_0(R)_+=\iota(\cV(R))$; define a pre-order between elements of $K_0(R)$ by $x\le y$ if $y-x\in K_0(R)_+$. This pre-order is compatible with the group structure in the sense that $x\le y\Rightarrow x+z\le y+z$. Thus $(K_0(R),\le)$ is a pre-ordered group. The class $[1]$ of the unit (which corresponds to that of the free module $R$ of rank one under the isomorphism \eqref{map:vvv}) is an \emph{order unit} of $K_0(R)$ in the sense that for any $x\in K_0(R)$ there exists $n\in\N$ such that $x\le n[1]=[I_n]$; here $I_n\in M_n(R)$ is the identity matrix. Thus $K_0(R)$ is a \emph{pointed pre-ordered} abelian group. 

\begin{rem}\label{rem:lemono}
We saw in Subsection \ref{subsec:mono} that any abelian monoid is canonically pre-ordered. In particular, $\cV(R)$ is pre-ordered and $[1]$ is an order unit with respect to this pre-order. The canonical monoid homomorphism $\iota:\cV(R)\to K_0(R)$ is a morphism of pre-ordered monoids. 
\end{rem}

\begin{rem}\label{rem:elepropk0}
 Let $R,S$ be Morita equivalent unital rings and let ${}_RP_S$ be a bimodule implementing the equivalence. Then $-\otimes_RP$ induces an isomorphism
 $\cV(R) \cong  \mathbb P^{\mathrm{iso}}(R) \iso  \mathbb P^{\mathrm{iso}}(S) \cong \cV(S) $ and therefore an isomorphism $K_0(R)\iso K_0(S)$. In particular, $K_0$ is Morita invariant. Similarly, it is straightforward to see that if $A$ and $B$ are unital rings, then the canonical map $\cV(A\times B)\to \cV(A)\times\cV(B)$ is an isomorphism. Hence $K_0$ commutes with finite sums. Let now $\{\sigma_n:R_n\to R_{n+1}\,\colon\, n\ge 1\}$ be an inductive system of unital ring homomorphisms, and let $R=\varinjlim_n R_n$. It is straightfoward to show that  $\cV(R)=\varinjlim_n \cV(R_n)$. It follows from this and the universal property of group completion that $K_0(R)=\varinjlim_n K_0(R_n)$; thus $K_0$ commutes with inductive limits.
\end{rem}
\begin{ex}\label{ex:k0easy}
Let $R$ be a unital ring. There is a canonical map $\N_0\to  \mathbb P^{\mathrm{iso}}(R)$, $n\mapsto [R^n]$. This map is an isomorphism, for instance, when $R$ is a division ring, a local ring or a PID; in particular $K_0(R)=\Z$ in all these cases. Moreover by Remark \ref{rem:elepropk0}, in all these cases, if $r,n_1,\dots,n_r\in\N$, then 
$K_0(\M_{n_1}(R)\oplus\dots \oplus\M_{n_r}(R))=\Z^r$.
\end{ex}

\begin{rem}\label{rem:k0nuni}
The definition of the Grothendieck group extends to not necessarily unital rings as follows. The \emph{unitalization} of a ring $A$ is $\tilde{A}=A\oplus\Z$ equipped with the product $(a,n)(b,m)=(ab+nb+ma,nm)$. Remark that $\tilde{A}$ is a unital ring and that the projection $\pi:\tilde{A}\to \Z$ is a unital ring homomorphism, so that $K_0(\pi)$ is defined. Set $K_0(A)=\ker(K_0(\pi))$. One shows that this definition agrees with the previous one in the unital case, and that $K_0$ is split-exact, so that if $A$ embeds as a two-sided ideal in a ring $R$ and the projection $p:R\to R/A$ is split by a ring homomorphism, then $K_0(A)\cong \ker(K_0(p))$ (see \cite{rosen}*{Chapter 1, Section 5}). 
It follows from this that $K_0$ still commutes with inductive limits when considered as a functor on the category of nonunital rings and ring homomorphisms. Hence, by Example \ref{ex:k0easy}, if
$R$ is a division ring or a PID and $n_1,n_2,\dots\in\N$ is an infinite sequence, then 
\[
K_0\big (\bigoplus_{i=1}^\infty \M_{n_i}(R)\big )=\Z^{(\N)}.
\]
\end{rem}

\subsection{Ultramatricial algebras}\label{subsec:ultra}
A $\K$-algebra $R$ is \emph{matricial} if there exist $r,n_1,\dots n_r\in\N$ and an isomorphism of $\K$-algebras 
$$R\cong\bigoplus_{i=1}^r\M_{n_i}(\K).$$ 
We say that an algebra $R$ is \emph{ultramatricial} if there is a direct system $\{\sigma_n:R_n\to R_{n+1}\}$ of matricial algebras and not-necessarily unital algebra homomorphisms such that $R\cong\varinjlim_n R_n$. An ultramatricial algebra is unital if and only if it can be written as an inductive limit of matricial algebras and unital homomorphisms. 

\begin{rem}\label{rem:locamat}
Ultramatricial algebras are a particular case of \emph{locally matricial} algebras \cite{lpabook}*{Definition 2.6.19}. An algebra $A$ is locally matricial if there exists an upwards directed partially ordered set $I$ and a system $\{\sigma_{i,j}:A_i\to A_j\,\colon\, i\le j\in I\}$ of matricial algebra homomorphisms such that $A=\varinjlim_IA_i$. If $A$ admits such a description for some countable index set $I$, then $A$ is ultramatricial. This follows from the fact that any countable upwards directed set admits a (countable) cofinal chain.
\end{rem}
\begin{ex}\label{ex:ultrale}
Let $E$ be a countable graph. Then $L(E)$ is matricial if and only if $E$ is finite and acyclic \cite{lpabook}*{Theorem 2.6.17}, in which case it is a direct sum of matrix algebras indexed by $\sink(E)$, and thus $K_0(L(E))=\Z^{(\sink(E))}$. If $E$ is infinite acyclic then $L(E)$ is ultramatricial by \cite{lpabook}*{Proposition 2.6.20} and Remark \ref{rem:locamat}. For example, if $E$ is row-finite, then the covering graph $\ol {E}$ of \eqref{eq:olE} is acyclic. Let $E_n$ be the graph with 
\[
E_n^0=\{v_i\,\colon\, -n-1\le i\le n\},\quad E_n^1=\{e_i\,\colon\, -n\le i\le n\}
\]
equipped with the restriction of the source and range maps of $\ol{E}$. Then $L(\ol E)=\varinjlim_n L(E_n)$. Every vertex of $E_n$ connects to a sink and 
\[
\sink(E_n)=\{v_i\,\colon\, v\in\sink(E)\,\, -n-1\le i\le n\}\sqcup\{v_n\,\colon\, v\in\reg(E)\}.
\]

Hence by  Example \ref{ex:k0easy} and Remark \ref{rem:k0nuni},
\[
K_0(L(E_n))=\Z^{(\bigsqcup_{i=-n-1}^n\sink(E)_i)}\oplus \Z^{(\reg(E))}.
\]
One checks that the map $K_0(L(E_n))\to K_0(L(E_{n+1}))$ is the inclusion on the first summand and left multiplication by $A_E^t$ on the second. Thus if $E$ is regular, $K_0(L(\ol{E}))$ is the inductive limit of the system 
\begin{equation}\label{seq:colibare}
\xymatrix{\Z^{E^0}\ar[r]^{A_E^t}&\Z^{E^0}\ar[r]^{A_E^t}&\dots}    
\end{equation}
For example, setting $E=\cR_n$ we obtain that $K_0(L(\ol{\cR_n}))=\Z[1/n]$.
\end{ex}

\begin{ex}\label{ex:kle0}
Let $E$ be a row-finite graph and $L(E)_0$ the degree-zero component of $L(E)$. For each $v\in E^0$ and $i\ge 0$, let $E^iv$ be the set of all paths of length $i$ ending in $v$. The subspace of $L(E)_0$ spanned by those $\alpha\beta^*$ with $\alpha,\beta\in E^i$ and $r(\alpha)=r(\beta)=v$ is a subalgebra isomorphic to $\M_{E^iv}(\K)$. The subspace $L(E)_{0,n}\subset L(E)_0$ generated by all $\alpha\beta^*$ with $\alpha,\beta\in E^i$, with $r(\alpha)=r(\beta)$ and $i\le n$ is also a subalgebra and we have an isomorphism
\[
L(E)_{0,n}\cong \left(\bigoplus_{v\in \sink(E)}\bigoplus_{i=0}^n \M_{E^iv}(\K)\right)\oplus\bigoplus_{v\in\reg(E)}\M_{E^nv}(\K).
\]
Hence by  Example \ref{ex:k0easy},
\[
K_0(L(E)_{0,n})=\left(\bigoplus_{v\in\sink(E)}\Z^{n+1}\right)\oplus\Z^{(\reg(E))}.
\]
At the $K_0$ level, the map induced by the inclusion $L(E)_{0,n}\subset L(E)_{0,n+1}$ is multiplication by  $A^t_E$ on $\Z^{\reg(E)}$ and the inclusion on the summand corresponding to sinks. In particular, if $E$ is regular, $K_0(L(E)_0)$ is the inductive limit of the system \eqref{seq:colibare}. Thus for regular $E$, we have  $K_0(L(\ol{E}))=K_0(L(E)_0)$. 
\end{ex}

The family of classification problems and results that this article is about started with a 1976 paper of Elliott~\cite{elliott}. Elliott's Theorem, as described by Goodearl in the language of $K$-theory, is as follows.

\begin{thm}[\cite{goodearl}*{Theorem 15.26 and Corollary 15.27}]\label{thm:elliott}
Let $A$ and $B$ be unital ultramatricial $\K$-algebras. Then
\begin{enumerate}[\upshape(1)]

\item $A$ and $B$ are Morita equivalent  if and only if $K_0(A) \cong K_0(B)$ as partially ordered abelian groups.

\item $A$ and $B$ are isomorphic if and only if $K_0(A)\cong K_0(B)$ as pointed partially ordered abelian groups. 

\end{enumerate}
\end{thm}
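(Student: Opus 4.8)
The plan is to prove the two easy forward implications by functoriality, and to obtain both converses from a single \emph{exact intertwining} argument that reduces the problem to the representation theory of finite-dimensional semisimple algebras. First I would dispose of the forward directions. If $A$ and $B$ are Morita equivalent, then the equivalence is induced by a monoid isomorphism $\cV(A)\iso\cV(B)$, so the resulting isomorphism $K_0(A)\iso K_0(B)$ of Remark \ref{rem:elepropk0} preserves positive cones and is thus an isomorphism of partially ordered groups; an algebra isomorphism $A\iso B$ moreover sends $[1_A]$ to $[1_B]$ and hence respects order units, giving the forward direction of (2). For the converses, write $A=\varinjlim_n A_n$ and $B=\varinjlim_m B_m$ with each $A_n,B_m$ matricial and, in the unital case, with unital transition maps; since $K_0$ commutes with inductive limits (Remark \ref{rem:elepropk0}) we have $K_0(A)=\varinjlim_n K_0(A_n)$ and likewise for $B$, and each $K_0(A_n)$ is free of finite rank by Example \ref{ex:k0easy}.

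The technical engine is a lifting lemma for matricial algebras. Identifying $K_0$ of $\bigoplus_i\M_{n_i}(\K)$ with $\Z^r$ ordered by $\N_0^r$ and pointed by the order unit $(n_1,\dots,n_r)$ (the class of $1$ expressed in the rank-one idempotent generators), one shows: (i) \emph{every} homomorphism of partially ordered groups $K_0(R)\to K_0(S)$ between matricial algebras respecting order units is induced by a unital $\K$-algebra homomorphism $R\to S$ --- concretely a non-negative integer multiplicity matrix satisfying the dimension equality, realized by block-diagonal embeddings; and (ii) any two lifts of the same $K_0$-map are conjugate by an inner automorphism of $S$. Part (i) is Artin--Wedderburn together with the classification of semisimple representations by multiplicities, and (ii) is the Noether--Skolem uniqueness of such representations. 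Dropping the order-unit condition, the same holds for not-necessarily-unital homomorphisms, the dimension equality being relaxed to an inequality.

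Now fix an isomorphism $\phi\colon K_0(A)\iso K_0(B)$ of pointed partially ordered groups. Because $K_0(A_1)$ is finitely generated, the composite $K_0(A_1)\to K_0(A)\xrightarrow{\phi}K_0(B)$ factors through some $K_0(B_{m_1})$, yielding an order-unit-preserving map which I lift via (i) to a unital $f_1\colon A_1\to B_{m_1}$. Applying $\phi^{-1}$ and again invoking finite generation, I choose $n_2$ so large that the induced map $K_0(B_{m_1})\to K_0(A_{n_2})$ makes its triangle with the transition map $\sigma^A\colon K_0(A_1)\to K_0(A_{n_2})$ commute on the nose --- possible because the two composites agree in the colimit $K_0(A)$ and the source is finitely generated --- and I lift it to $g_1\colon B_{m_1}\to A_{n_2}$. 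Iterating produces a commutative ladder of $K_0$-maps whose colimit is $\phi$, with algebra lifts $f_k\colon A_{n_k}\to B_{m_k}$, $g_k\colon B_{m_k}\to A_{n_{k+1}}$ whose induced $K_0$-triangles commute. By (ii) the relations $g_kf_k=\sigma^A$ and $f_{k+1}g_k=\sigma^B$ (the transition maps) hold only up to inner automorphisms; using this freedom I correct, at each stage, the newest lift by a single inner automorphism that closes the triangle just formed without disturbing the relations already arranged. Passing to the colimit, the $f_k$ assemble into a unital homomorphism $f\colon A\to B$ and the $g_k$ into $g\colon B\to A$ with $gf=\id_A$ and $fg=\id_B$, proving (2).

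For (1) I would reduce to (2) via corners. An isomorphism $\phi\colon K_0(A)\iso K_0(B)$ of partially ordered groups carries the order unit $[1_A]$ to an order unit of $K_0(B)$, which lies in $K_0(B)_+=\iota(\cV(B))$ and is therefore $[p]$ for an idempotent $p\in\M_k(B)$; being an order unit forces $p$ to be full. The corner $p\M_k(B)p$ is then a unital ultramatricial algebra, Morita equivalent to $B$, with $K_0$ canonically $K_0(B)$ pointed by $[p]$, so $\phi$ becomes an isomorphism of \emph{pointed} partially ordered groups $K_0(A)\iso K_0(p\M_k(B)p)$; part (2) gives $A\cong p\M_k(B)p$, and hence $A$ is Morita equivalent to $B$. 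I expect the main obstacle to be the inductive inner-automorphism correction in the intertwining step: upgrading a diagram that commutes only up to conjugacy to one that commutes exactly requires choosing at each stage a single conjugating unit that repairs the newly closed triangle while leaving the previously fixed relations intact, and it is here --- rather than in the $K$-theoretic bookkeeping --- that the real content of the theorem resides.
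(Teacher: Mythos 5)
The paper offers no proof of this theorem---it is quoted directly from Goodearl's book---and your argument is precisely the one given there: the multiplicity-matrix lifting lemma for matricial algebras, uniqueness of lifts up to inner automorphism, the Elliott intertwining with stage-by-stage conjugation corrections, and the reduction of the Morita statement to the pointed one via a full-corner realization of the image of the order unit. The sketch is correct (the only points left tacit, both routine, are that positivity and the order-unit identity must be witnessed at a sufficiently late finite stage of the inductive system before the lifting lemma can be applied, and that fullness of the idempotent $p$ uses the correspondence between ideals of an ultramatricial algebra and order ideals of its $K_0$).
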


\section{Algebraic Kirchberg-Phillips problem}\label{sec:KP}

\subsection{Simplicity and pure infinite simplicity}\label{subsec:simple}
Let $R$ be a nonzero unital ring. We say that $R$ is \emph{simple} if for every nonzero element $x\in R$, there exist $n\ge 1$ and $a,b\in R^n$ such that 
\[
\sum_{i=1}^na_ixb_i=1.
\]
For example, any division ring is simple. We say that $R$ is \emph{simple purely infinite} (SPI) if it is not a division ring and for every nonzero $x\in R$, there exist $a,b\in R$ such that
\[
axb=1. 
\]
\begin{rem}\label{rem:simpnuni}
Simplicity and pure infinite simplicity are also defined for nonunital rings, see \cite{lpabook}*{page 68 and Definition 3.1.8}.
\end{rem}
\begin{thm}[Ara, Goodearl and Pardo \cite{agop}*{Proposition 2.1 and Corollary 2.2}]\label{thm:agop0} 
Let $R$ be an SPI unital ring. Then the subsemigroup $\cV(R)^*=\cV(R)\setminus\{0\}\subset\cV(R)$ is a group, and the monoid homomorphism $\cV(R)\to \cV(R)^*$ defined as the identity on $\cV(R)^*$ is a group completion. In particular $K_0(R)=\cV(R)^*$.
\end{thm}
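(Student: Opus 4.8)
The plan is to reduce the statement to two structural properties of the abelian monoid $\cV(R)$, written additively: \textbf{(A)} \emph{comparison}, that $[e]\le[f]$ for all nonzero $[e],[f]\in\cV(R)$; and \textbf{(B)} \emph{proper infiniteness}, that $2[f]\le[f]$ for every nonzero $[f]$. Granting these, I would finish as follows. As $0$ is the only idempotent equivalent to $0$, the monoid $\cV(R)$ is conical, so $\cV(R)^*$ is closed under addition and is a commutative subsemigroup. I claim the divisibility condition holds: for all $a,b\in\cV(R)^*$ there is $x\in\cV(R)^*$ with $a+x=b$. Indeed, if $b\neq a$ then (A) gives $b=a+c$ with necessarily $c\neq 0$, and $x=c$ works; if $b=a$, then writing $[f]=2[f]+d$ from (B) with $f$ representing $a$ produces the nonzero element $w:=a+d$ with $a+w=a$, so $x=w$ works. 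A commutative semigroup in which $a+x=b$ is always solvable is a group (fix $a_0$, pick $\epsilon$ with $a_0+\epsilon=a_0$; then $\epsilon$ is a neutral element and solving $b+x=\epsilon$ yields inverses). Hence $\cV(R)^*$ is an abelian group, with some idempotent neutral element $\epsilon\neq 0$.

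For the group-completion claim I would argue formally. The map $\iota\colon\cV(R)\to\cV(R)^*$ that is the identity on $\cV(R)^*$ and sends $0\mapsto\epsilon$ is a monoid homomorphism, since $\epsilon$ is neutral in the target and $\iota(a+b)=a+b$ for nonzero $a,b$ by conicality. Given any abelian group $G$ and monoid homomorphism $\phi\colon\cV(R)\to G$, set $\psi:=\phi|_{\cV(R)^*}$; because $\epsilon+\epsilon=\epsilon$ forces $\phi(\epsilon)=0$ in $G$, one gets $\psi\circ\iota=\phi$ (also at $0$), and $\psi$ is the unique group homomorphism with this property because $\iota$ restricts to the identity on $\cV(R)^*$. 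Thus $(\cV(R)^*,\iota)$ is a group completion of $\cV(R)$, and by the universal property $K_0(R)=\cV(R)^+\cong\cV(R)^*$.

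It remains to extract (A) and (B) from the SPI hypothesis, which is the technical core. First I would show $R$ is not Dedekind finite: if $ts=1$ always implied $st=1$, then for nonzero $x$ with $axb=1$, applying this to $a(xb)=1$ and $(ax)b=1$ would make $xb$ and $ax$ invertible, so $x$ would have a left and a right inverse and be a unit; as $x$ is arbitrary, $R$ would be a division ring, a contradiction. So there are $s,t$ with $ts=1$ and $e:=st\neq 1$, giving $1\sim e$ and a nonzero idempotent $g:=1-e$ with $[1]=[e]+[g]=[1]+[g]$. Next, for any nonzero idempotent $f\in R$, choosing $a,b$ with $afb=1$, the map $r\mapsto fbr$ is a split monomorphism $R\into fR$ with retraction $y\mapsto ay$, whence $[1]\le[f]$; applied to $f=g$ this gives $[g]\ge[1]$, so $[1]=[1]+[g]\ge 2[1]$, i.e. (B) for $[1]$. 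To obtain (A) and (B) for all classes I would pass to $\M_n(R)$, which is again SPI, so $[e]\le[I_n]\le[f]$ for nonzero idempotents $e,f\in\M_n(R)$ by the split-injection argument applied to $f$; and I would observe that every corner $f\M_n(R)f$ is again SPI (if $AfB=I_n$ then $(fAf)c(fBf)=f$ for $c\in f\M_n(R)f$), so the proper infiniteness of its unit $f$, already proved for units, yields $2[f]\le[f]$ in general. The main obstacle is precisely this last passage: verifying that matrix rings and corners of $R$ inherit the SPI property, so that the unit-level facts propagate to all nonzero idempotents in $\M_\infty(R)$.
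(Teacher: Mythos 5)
The survey does not prove this theorem; it is quoted verbatim from Ara--Goodearl--Pardo, so there is no in-paper argument to compare against. Your proposal is, in substance, the standard argument from that reference (and from Cuntz's $C^*$-algebraic antecedent), and its formal half is complete and correct: conicality of $\cV(R)$, the solvability of $a+x=b$ in both cases, the ``divisible commutative semigroup is a group'' lemma, and the verification of the universal property of group completion for the identity-on-$\cV(R)^*$ map are all fine, as are the non-Dedekind-finiteness of $R$ and the split-injection argument giving $[1]\le[f]$ for every nonzero idempotent $f$. Two soft spots remain in the technical half. First, your parenthetical for corners is garbled: given $0\ne c\in f\M_n(R)f$ one should choose $A,B$ with $AcB=I_n$ (not $AfB=I_n$) and compute $(fAf)c(fBf)=fAcBf=f$. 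Second, and more substantively, your definition of SPI includes ``not a division ring'', and your unit-level argument producing $s,t$ with $ts=1\ne st$ uses exactly that hypothesis; to run it inside $f\M_n(R)f$ you must also check that this corner is not a division ring, which you do not address (it can be checked, e.g.\ because $f\M_n(R)f$ contains an idempotent $\pi$ with $\pi\,f\M_n(R)\cong \M_n(R)$, forcing either a nontrivial idempotent or $f\M_n(R)f\cong\M_n(R)$). You can avoid corners entirely: once you have comparison (A) and proper infiniteness of $[1]$, iteration gives $[1]\ge m[1]$ for all $m$, and then for nonzero $[f]$ with $[f]\le N[1]$ one gets $2[f]\le 2N[1]\le[1]\le[f]$, which is (B) in full generality. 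With that shortcut (or with the corner verification supplied) your proof closes completely; the matrix-ring inheritance of SPI that you flag as the main obstacle is indeed the content of the auxiliary results in the cited paper.
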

\begin{rem}\label{rem:agop}
It follows from Theorem \ref{thm:agop0} that if $R$ is SPI, then $K_0(R)_+=K_0(R)$. In particular, if $G$ is any partially ordered abelian group, then any group homomorphism $G\to K_0(R)$ is order-preserving. 
\end{rem}
\begin{ex}
By \eqref{eq:lnspi}, if $n\ge 2$, then the Leavitt algebra $L_n$ is SPI. In general, the simplicity and pure infinite simplicity of a Leavitt path algebra $L(E)$ over a field $\K$ are characterized in terms of certain graphic conditions on $E$ \cite{lpabook}*{Theorems 2.9.7 and 3.1.10}. To explain them, we need some notation and vocabulary.
An edge $f$ is an \emph{exit} of a cycle $\alpha=e_1\cdots e_n$ if there exists $1\le i \le n$ such that $s(f)=s(e_i)$ and $f\ne e_i$.
The algebra $L(E)$ is simple if and only if $E$  satisfies the following two conditions \cite{lpabook}*{Theorem 2.9.7}

 \begin{enumerate}[\upshape(1)]

\item Every cycle of $E$ has an exit.
\item Every vertex in $E$ connects to every singular vertex (i.e., every sink or infinite emitter) and every infinite path.

By \cite{lpabook}*{Theorem 3.1.10}, $L(E)$ is SPI if and only if $E$ satisfies (1), (2) and

\item $E$ has at least one cycle. 

\end{enumerate}

We say that $E$ is \emph{simple} if it satisfies (1) and (2). A simple graph $E$ is \emph{SPI} if it satisfies (3). The graph $C^*$-algebra $C^*(E)$ is simple (respectively SPI) if and only if $E$ is \cite{dritom2}*{Corollaries 2.14 and 2.15}.
For example, if $n\ge 2$, then the graph $\cR_n$ satisfies (1)--(3), so as already mentioned, $L_n=L_n(\K)$ is SPI as is the Cuntz algebra $\cO_n$.
\end{ex}

\subsection{Higher algebraic and topological \topdf{$K$}{K}-theory}\label{subsec:hik}
For any ring $A$ there are defined \emph{algebraic $K$-theory} groups $K_n(A)$ for any $n\in\Z$ \cite{rosen}. For example if $A$ is unital, $K_1(A)=\Gl(A)_{\ab}$ is the abelianization of the group of invertible matrices of arbitrary finite size. If $\fA$ is a $C^*$-algebra, then $\Gl(\fA)$ is a topological group, so one can consider its homotopy groups $\pi_n\Gl(\fA)$ $(n\ge 0)$.   The Bott periodicity theorem says that the latter are periodic of period two; we have $\pi_{2n}\Gl(\fA)=\pi_{0}(\Gl(\fA))$ and $\pi_{2n+1}(\fA)=K_0(\fA)$ for all $n\ge 0$. The \emph{topological $K$-theory} groups $K^{\top}_n(\fA)$, $n\in\Z$, are defined to be 
\begin{equation}
K_n^{\top}(\fA)=\left\{\begin{matrix}
K_0(\fA)& n \text{ even, }\\
\pi_0\Gl(\fA)& n \text{ odd.}
\end{matrix}\right.    
\end{equation}
There is a natural map 
\begin{equation}\label{map:compak}
    c_n:K_n(\fA)\to K_n^{\top}(\fA)
\end{equation} 
defined for all $n\in\Z$. It is the identity for $n=0$ and surjective for $n=1$, but it is not an isomorphism in general. For example $K_1(\C)=\C^*$ and $K_1^{\top}(\C)=0$. 

\begin{ex}\label{ex:keasy}
There is a canonical map $R_{\ab}^*=\Gl_1(R)_{\ab}\to \Gl(R)_{\ab}=K_1(R)$. The latter map is an isomorphism if $R$ is a division ring, or a local ring, or a PID. If $R$ is regular supercoherent (e.g. if it is noetherian regular) then $K_n(R)=0$ for all $n\le 0$.
\end{ex}

\begin{ex}\label{ex:k0bf}  
Let $E$ be a graph, and let $A_E$ and $I$ be as in \eqref{eq:ae} and \eqref{eq:I}. Put 
\[
\BF(E)=\coker(I-A_E^t).
\]
Then we have
\begin{equation}\label{eq:kc*e}
K_0(C^*(E))=\BF(E),\,\, K_1^{\top}(C^*(E))=\ker(I-A_E^t).
\end{equation}

Remark that if $E$ is finite and regular, then $I-A_E^t$ is a square matrix, so $\ker(I-A_E^t)$ is free of rank $r=\rk(\BF(E))$. Hence $\BF(E)$ contains all the information about  $K^{\top}_*(C^*(E))$.

We also have
\begin{equation}\label{eq:k01le}
K_0(L(E))=\BF(E),\, K_1(L(E))=\BF(E)\otimes \K^*\oplus\ker(I-A_E^t).
\end{equation}
Moreover if $E^0$ is finite then the isomorphisms for $K_0$ above carry $[1_{C^*(E)}]$ and $[1_{L(E)}]$ to the class of
\[
1_E:=\sum_{v\in E^0}v.
\]
\end{ex}

For all $n\in\Z$, there is a short exact sequence
\begin{equation}\label{seq:kle}
0 \to \BF(E)\otimes K_{n}(\K)\to K_n(L(E))\to \ker((I-A_E^t)\otimes K_{n-1}(\K))\to 0.    
\end{equation}
In particular $K_n(L(E))=0$ for $n<0$. The formulas \eqref{eq:kc*e} were proven in \cite{ck2} for finite regular graphs and then extended in \cite{kpr} for row-finite graphs; the general case was done in \cite{dritom2}, using desingularization. The latter is a method for replacing any countable graph by a row-finite one with Morita equivalent $C^*$ and Leavitt path algebras. The exact sequence \eqref{seq:kle} for Leavitt path algebras over a field appears already in \cite{arajazz}; it was obtained for all unital coefficient rings and row-finite graphs in \cite{abc}. The general case follows from the latter using graph 
desingularization.

\begin{ex}\label{ex:klecomp}
Next we use \eqref{seq:kle} to compute $K_*(L(E))$; we assume that $E$ is finite and regular and that $\det(I-A_E^t)\ne 0$. In this case $\BF(E)$ is finite and 
\[
0\to \Z^{E^0}\overset{I-A_E^t}{\lra}\Z^{E^0}\to \BF(E)\to 0
\]
is a free resolution. In particular 
\begin{equation}\label{eq:kktor}
\ker((I-A_E^t)\otimes K_{n-1}(\K))=\tor_1^\Z(\BF(E), K_{n-1}(\K)).    
\end{equation}

If for example $\K$ is algebraically closed and of characteristic zero, then $K_n(\K)$ is a divisible group for all $n\ge 1$, torsionfree if $n$ is even and with torsion $\Q/\Z$ if $n$ is odd 
\cite{chuk}*{Chapter VI, Theorem 1.6}. Recall that if $M$ is a torsion abelian group, then $M\otimes D=0$ for any divisible group $D$, and $\Tor_1^\Z(M,\Q/\Z)=M$. It follows from this together with \eqref{eq:kktor} and 
\eqref{seq:kle} that $K_n(L(E))=\BF(E)$ for even nonnegative $n$ and zero otherwise. In particular, $K_n(L(E))=K_n(C^*(E))$ for nonnegative $n$. For another example, consider the case $\K=\F_q$, 
the finite field of $q$ elements. In this case we have $K_{2i}(\K)=0$ for $i>0$ and $K_{2i-1}(\K)=\Z/(q^i-1)$ \cite{chuk}*{Chapter IV, Corollary 1.13}, so we get 
$K_{2i-1}(L(E))=\coker(q^i-1:\BF(E)\to \BF(E))$ for $i\ge 1$ and  $K_{2i}(L(E))=\ker(q^i-1:\BF(E)\to \BF(E))$ for $i\ge 0$. In particular if $q=p^r$ with $p$ prime and $\BF(E)$ is a $p$-group, then $K_n(L(E))=0$ for all nonzero $n$. 
\end{ex}

\subsection{Kasparov's bivariant \topdf{$K$}{K}-theory}\label{subsec:KK}

Let $C^*$ be the category of separable $C^*$-algebras and
$*$-homomorphisms. A $*$-homomorphism $H:\fA\to\fB[0,1]$ is called a \emph{homotopy} between $\ev_0\circ H$ and $\ev_1\circ H$. Two $*$-homomorphisms $\phi,\psi:\fA\to\fB$ are \emph{homotopic} if there is a homotopy between them, in which case we write $\phi\sim\psi$. Let $\cC$ be a category. A functor 
\begin{equation}\label{map:fun}
 F:C^*\to \cC   
\end{equation} 
is \emph{homotopy invariant} if it sends homotopic maps to the same map. 

Let $\ell^2$ be the Hilbert space of $2$-summable sequences of complex numbers and $\cB=\cB(\ell^2)$ the $C^*$-algebra of bounded linear operators. The ideal $\cK\triqui\cB$ of \emph{compact operators} is the norm closure of the ideal of finite rank operators $\ell^2\to \ell^2$. For example $\epsilon_{i,j}:\ell^2\to \ell^2$, $\epsilon_{i,j}(a)_s=\delta_{s,i}a_j$ is an operator of finite rank, and thus a compact operator. The algebra $\cK$ is an example of a \emph{nuclear} $C^*$-algebra; for any $\fA\in C^*$, there is a unique norm $||\,||$ on the algebraic tensor product  $ \fA\otimes_\C\cK$ satisfying \eqref{eq:c*norm} and  such that $||x\otimes a||=||x||\cdot ||a||$ for all $x\in\cK$ and $a\in \fA$. For general $\fA,\fB$ there may be infinitely many such norms; we write $\fA\sotimes\fB$ for the completion of $\fA\otimes_\C\fB$ with respect to the smallest one. We have a $\ast$-homomorphism $\iota_\fA:\fA\to \cK\sotimes\fA$, $\iota_\fA(a)=\epsilon_{1,1}\otimes a$. A functor \eqref{map:fun} is \emph{stable} if $F(\iota_\fA)$ is an isomorphism for every $\fA$. 

An \emph{extension}
of $C^*$-algebras is a sequence of $*$-homomorphisms
\begin{equation}\label{ext:c*}
0\to \fA\to \fB\to \fC\to 0    
\end{equation}
which is exact as a sequence of vector spaces and which admits a completely positive splitting (for a definition of completely positive see \cite{david}*{p. 6}). If \eqref{map:fun} takes values in a triangulated category $\cC$, and $C\mapsto C[-n]$ is the $n$-fold suspension in $\cC$, then $F$ is \emph{excisive} if it sends any extension \eqref{ext:c*} to a distinguished triangle
\[
F(\fC)[+1]\overset{\partial}{\lra} F(\fA)\to F(\fB)\to F(\fC).
\]
The map $\partial$ is required to depend naturally on \eqref{ext:c*} and to satisfy certain additional compatibility conditions. 

There exist a triangulated category $KK$, whose objects are those of $C^*$, and a functor $k:C^*\to KK$, defined as the identity on objects, that is homotopy invariant, stable and excisive. Furthermore $k$ is universal initial with respect to the latter properties, which roughly means that if \eqref{map:fun} is another homotopy invariant, stable and excisive functor, then there is a unique triangulated functor $\bar{F}:KK\to\cC$ making the following diagram commute
\[
\xymatrix{C^*\ar[r]^k\ar[d]_F& KK\ar@{.>}[dl]^{\bar{F}}\\
\cC}
\]
We write 
\[
KK_n(\fA,\fB)=\hom_{KK}(k(\fA),k(\fB)[n]), \,\, KK(\fA,\fB)=KK_0(\fA,\fB).
\]
A major feature of Kasparov's theory is that setting the first variable equal to $\C$ recovers topological $K$-theory; we have
\begin{equation}\label{eq:kkca=ka}
KK_n(\C,\fA)=K_n^{\top}(\fA).     
\end{equation}

In this bivariant context, the Bott periodicity theorem says that there is a natural isomorphism $k(\fA)[+1]\cong k(\fA)[-1]$ for all $\fA\in C^*$. For more on $KK$, see \cite{cmr}.

\subsection{The Kirchberg-Phillips theorem}\label{subsec:KP}

Let $\fA$ and $\fB$ be separable $C^*$-algebras. Using composition in the category $KK$ and the isomorphism \eqref{eq:kkca=ka}, one obtains an evaluation map
\begin{equation}\label{map:evKK}
\ev: KK(\fA,\fB)\to 
\bigoplus_{i=0}^1\Hom(K^{\top}_i(\fA),K^{\top}_i(\fB))
\end{equation}
There is also a map \cite{roscho}
\begin{equation}\label{map:brown}
 \ker(\ev)\to \bigoplus_{i=0}^1\Ext_1^{\Z}(K^{\top}_i(\fA),K^{\top}_{i+1}(\fB)).
\end{equation}
The pair $(\fA,\fB)$  satisfies the \emph{Universal Coefficient Theorem} (UCT) if $\fA$ is nuclear, \eqref{map:evKK} is surjective and \eqref{map:brown} is an isomorphism. It follows that in this case we have an exact sequence 
\begin{equation}\label{seq:uct}
0\to \bigoplus_{i=0}^1\Ext_1^{\Z}(K^{\top}_i(\fA),K^{\top}_{i+1}(\fB))\to KK(\fA,\fB)\to \bigoplus_{i=0}^1\Hom(K^{\top}_i(\fA),K^{\top}_i(\fB))\to 0.
\end{equation}
A nuclear separable $C^*$-algebra $\fA$ is in the \emph{UCT class} if $(\fA,\fB)$ satisfies the UCT for every separable $C^*$-algebra $\fB$. The UCT class turns out to be quite large \cite{roscho}.

\begin{rem}\label{rem:uctis}
If both $\fA$ and $\fB$ are in the UCT class, then $k(\fA)\cong k(\fB)$ if and only if $K^{\top}_*(\fA)\cong K^{\top}_*(\fB)$ 
\cite{roscho}*{Proposition 7.3}.
\end{rem}

\begin{ex}\label{ex:uctc*e}
 The $C^*$-algebra of a countable graph $E$ is in the UCT class. Hence it follows from \eqref{eq:kc*e} and \eqref{seq:uct} that if $E$ and $F$ are countable graphs, then there is an exact sequence
 \begin{multline}\label{seq:kkcecf}
0\to \Ext^1_\Z(\BF(E),\ker(I-A_F^t))\to KK(C^*(E),C^*(F))\to \\\hom(\BF(E),\BF(F))\oplus \hom(\ker(I-A_E^t),\ker(I-A_F^t))\to 0
\end{multline}
Thus for example, if $E$ and $F$ are finite and regular, $\det(I-A_E^t)\ne 0$ and $r=\rk(\BF(F))$, then the above sequence simplifies to
\[
0\to \hom(\BF(E),\Q/\Z)^r\to KK(C^*(E),C^*(F))\to \hom(\BF(E),\BF(F))\to 0.
\]
\end{ex}

A $C^*$-algebra $\fA$ is a \emph{Kirchberg} $C^*$-algebra if it is unital, SPI, separable, nuclear and in the UCT class. 

\begin{ex}\label{ex:spic*}
 Let $E$ be a countable graph with finitely many vertices. Then the graph $C^*$-algebra $C^*(E)$ is separable, unital, nuclear and in the UCT class. It is SPI if and only if $E$ is 
 \cite{dritom}*{Corollaries 2.14 and 2.15}. 
\end{ex}

We are in a position to state the deep result of Kirchberg and Phillips on the $K$-theoretic classification of Kirchberg algebras. 

\begin{thm}[Kirchberg-Phillips \cite{P}*{Theorem 4.2.4}]\label{thm:KP}
 Let $\fA$ and $\fB$ be Kirchberg $C^*$-algebras. Assume there is an isomorphism of graded abelian groups $\xi:K_*(\fA)\to K_*(\fB)$ such that 
 $\xi([1_\fA])=[1_{\fB}]$. Then there is a $\ast$-isomorphism $\phi:\fA\iso \fB$ such that $K_*(\phi)=\xi$.
\end{thm}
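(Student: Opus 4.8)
The plan is to run the two-step strategy of Kirchberg and Phillips: first reduce the isomorphism question to an invertibility question in the triangulated category $KK$, and then upgrade the resulting $KK$-equivalence to an honest $\ast$-isomorphism by means of a classification of $\ast$-homomorphisms up to approximate unitary equivalence, together with an Elliott approximate intertwining argument. Throughout, $K_*$ is understood as the $\Z/2$-graded topological $K$-theory $K^{\top}_*$, which is what $KK$ sees via \eqref{eq:kkca=ka}.

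First I would produce an invertible element of $KK(\fA,\fB)$ refining the datum $\xi$. Both $\fA$ and $\fB$ lie in the UCT class, so the sequence \eqref{seq:uct} applies to the pair $(\fA,\fB)$ and its evaluation map \eqref{map:evKK} is surjective; hence I may choose $\alpha\in KK(\fA,\fB)$ with $\ev(\alpha)=\xi$. The invertibility criterion underlying Remark \ref{rem:uctis}---namely that an element of $KK$ between UCT algebras is a $KK$-equivalence as soon as it induces an isomorphism on $K^{\top}_*$---then shows that $\alpha$ is invertible, say with inverse $\beta\in KK(\fB,\fA)$; the residual ambiguity in the lift lies in the $\Ext$-term $\ker(\ev)$ and does not obstruct this. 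Finally, the $K_0$-component of $\ev(\alpha)=\xi$ sends $[1_\fA]$ to $[1_\fB]$ by hypothesis, a normalization that will let me arrange the eventual $\ast$-isomorphism to be unital.

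The heart of the argument is Kirchberg's classification of $\ast$-homomorphisms between Kirchberg algebras, which I would invoke in its existence and uniqueness forms. The \emph{existence theorem} says that, because $\fA$ is separable nuclear SPI and $\fB$ is unital SPI, the invertible class $\alpha$ (whose $K_0$-part sends $[1_\fA]\mapsto[1_\fB]$) is realized by a unital $\ast$-homomorphism $\phi\colon\fA\to\fB$ with $k(\phi)=\alpha$, and likewise $\beta$ is realized by some $\psi\colon\fB\to\fA$. The \emph{uniqueness theorem} says that two unital $\ast$-homomorphisms $\fA\to\fB$ inducing the same class in $KK$ are approximately unitarily equivalent. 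Applying uniqueness to the composites, $k(\psi\circ\phi)=\beta\alpha=1_{\fA}=k(\id_\fA)$, so $\psi\circ\phi$ is approximately unitarily equivalent to $\id_\fA$, and symmetrically $\phi\circ\psi$ to $\id_\fB$. An Elliott two-sided approximate intertwining then converts these approximate equivalences into a genuine $\ast$-isomorphism $\fA\iso\fB$; tracking $K$-theory through the construction shows it induces $\alpha$, hence $\xi$, on $K_*$, and it is unital by the normalization recorded above.

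The main obstacle is precisely the classification of $\ast$-homomorphisms invoked in the previous paragraph: both the existence and the uniqueness statements rest on Kirchberg's absorption theorems---the $\mathcal{O}_\infty$-absorption $\fA\cong\fA\sotimes\mathcal{O}_\infty$ for Kirchberg algebras and the $\mathcal{O}_2$-embedding theorem---whose proofs are the deep technical core and use heavily both hypotheses (nuclearity, via the UCT and approximation by completely positive maps, and pure infiniteness, to manufacture the isometries implementing the required unitary equivalences). Everything upstream (the UCT reduction) and downstream (the Elliott intertwining) is comparatively formal once these absorption results are in hand. Since $\mathcal{O}_\infty$ and $\mathcal{O}_2$ and their absorption properties are not set up in this excerpt, in a survey I would cite \cite{KP} for the absorption theorems and \cite{P} for the assembly into Theorem \ref{thm:KP}, rather than reprove them.
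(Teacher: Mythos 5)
The paper does not prove Theorem \ref{thm:KP}; it is quoted from \cite{P}, and the only indications of its proof given here are the remarks that it rests on Kirchberg's absorption theorems (Theorem \ref{thm:geneva}) and that it proceeds by first obtaining a $KK$-equivalence via the UCT and then upgrading a homotopy-type classification to a $\ast$-isomorphism by analytic means (Remark \ref{rem:homotoKK}). Your outline is a correct sketch of exactly that standard argument and is consistent with the paper's description, so there is nothing to correct.
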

\begin{rem}\label{re:k=kto} In Theorem \ref{thm:KP} above, as throughout this paper, $K_*$ stands for algebraic $K$-theory. The natural map \eqref{map:compak} is an isomorphism for any properly infinite unital $\ast$-algebra $\fA$ \cite{chriswi}*{Theorem 3.2} and in particular for any Kirchberg algebra. Thus $K_*^{\top}$ may be substituted for $K_*$ in Theorem \ref{thm:KP}. 
\end{rem}

Specializing to the case of graph $C^*$-algebras we have the following complete classification result. 

\begin{coro}\label{coro:KPR}
Let $E$ and $F$ be countable SPI graphs with finitely many vertices. Assume there exist group isomorphisms $\xi_0:\BF(E)\iso \BF(F)$ and $\xi_1:\ker(I-A_E^t)\iso \ker(I-A_F^t)$ such that $\xi_0([1_E])=[1_F]$. Then $C^*(E)\cong C^*(F)$ as $C^*$-algebras.
\end{coro}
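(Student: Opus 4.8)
The plan is to deduce this directly from the Kirchberg--Phillips theorem (Theorem \ref{thm:KP}), using the graph-theoretic computation of topological $K$-theory recorded in \eqref{eq:kc*e}. First I would check that $C^*(E)$ and $C^*(F)$ are Kirchberg algebras: since $E$ and $F$ are SPI graphs with finitely many vertices, Example \ref{ex:spic*} tells us that each of $C^*(E)$ and $C^*(F)$ is unital, separable, nuclear, in the UCT class, and SPI, which is exactly the definition of a Kirchberg $C^*$-algebra.

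Next I would assemble the hypothesized component isomorphisms into a single graded isomorphism on $K$-theory. By \eqref{eq:kc*e} we have canonical identifications $K_0(C^*(E))=\BF(E)$ and $K_1^{\top}(C^*(E))=\ker(I-A_E^t)$, and likewise for $F$. Because $K_*^{\top}$ is $2$-periodic, a graded isomorphism $K_*^{\top}(C^*(E))\iso K_*^{\top}(C^*(F))$ is determined by its degree-$0$ and degree-$1$ parts; I would simply take these to be $\xi_0$ and $\xi_1$ respectively. The identifications of \eqref{eq:kc*e} carry $[1_{C^*(E)}]$ to $[1_E]$ and $[1_{C^*(F)}]$ to $[1_F]$, so the hypothesis $\xi_0([1_E])=[1_F]$ says precisely that the degree-$0$ part sends the class of the unit to the class of the unit.

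Finally, by Remark \ref{re:k=kto} the comparison map $K_*\to K_*^{\top}$ is an isomorphism for Kirchberg algebras, so the graded isomorphism just built may be read as an isomorphism $\xi:K_*(C^*(E))\iso K_*(C^*(F))$ of graded abelian groups with $\xi([1_{C^*(E)}])=[1_{C^*(F)}]$. Applying Theorem \ref{thm:KP} then produces a $\ast$-isomorphism $C^*(E)\cong C^*(F)$, as desired. There is no real obstacle here beyond bookkeeping: the content lies entirely in the deep input (Theorem \ref{thm:KP}) and in the $K$-theoretic computation \eqref{eq:kc*e}, and the step requiring the most care is merely verifying that the identifications of \eqref{eq:kc*e} are compatible with the unit classes, so that the pointedness hypothesis of Kirchberg--Phillips is met.
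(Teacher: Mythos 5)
Your proposal is correct and is essentially the paper's own argument: the corollary is presented precisely as a specialization of Theorem \ref{thm:KP}, obtained by noting via Example \ref{ex:spic*} that $C^*(E)$ and $C^*(F)$ are Kirchberg algebras, identifying their $K$-theory with $\BF$ and $\ker(I-A^t)$ through \eqref{eq:kc*e} (including the unit classes, as recorded in Example \ref{ex:k0bf}), and invoking Remark \ref{re:k=kto} to pass between algebraic and topological $K$-theory. (The paper's later argument via flow equivalence and the Cuntz splice is a separate, alternative proof for the finite-graph case only.)
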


\begin{rem}\label{rem:ror}
As explained in Example \ref{ex:k0bf}, for finite regular graphs $E$ and $F$, $\BF(E)\cong \BF(F)$ implies that $\ker(I-A_E^t)\iso \ker(I-A_F^t)$. Hence because finite SPI graphs are regular, the particular case of Corollary \ref{coro:KPR} when both $E$ and $F$ are finite says that if $\xi:\BF(E)\iso \BF(F)$ is an isomorphism such that $\xi([1_E])=[1_F]$, then $C^*(E)\cong C^*(F)$ as $C^*$-algebras. This was proved independently by R\o rdam in \cite{ror}. We give a sketch of the proof of this result using the notion of Cuntz splice after Theorem~\ref{thm:splice}. 
\end{rem}

We conclude this section with one of the main open questions in the theory of Leavitt path algebras (as of 2024). The following is referred to in \cite{lpabook}*{Section~ 7.3.1} as 
``What is generally agreed to be the most compelling unresolved question in the subject of Leavitt path algebras". 

\begin{quest}[\cite{question}]\label{quest:flow}
Let $\K$ be any field and $L=L_{\K}$ the Leavitt path algebra over $\K$.  Let $E$, $F$  be finite SPI graphs. Assume there exists an isomorphism $\xi:\BF(E)\iso \BF(F)$ such that $\xi([1_E])=[1_F]$. Does it follow that $L(E)\cong L(F)$?
\end{quest}

\begin{rem}\label{rem:flow} Ruiz and Tomforde proved \cite{ruto}*{Proposition 10.4} that if $E$ and $F$ are both SPI with finitely many vertices and infinitely many edges, then $L_\K(E)\cong L_\K(F)$ as rings if and only if an isomorphism $\xi$ as in Question \ref{quest:flow} exists and, in addition, $|\sing(E)|=|\sing(F)|$. Recall that $\sing(E)$ is the set of all sinks and infinite emitters (i.e. all singular vertices) of the graph $E$. They also showed that under additional hypothesis on $\K$ the latter condition can be replaced by the condition that $K_1(L_\K(E))\cong K_1(L_\K(F))$. However, in \cite{ruto}*{Example 11.2} they gave an example of vertex-finite $E$ and $F$ such that an isomorphism $\xi:\BF(E)\iso \BF(F)$ as in the question above exists, and such that furthermore $K_1(L_\Q(E))\cong K_1(L_\Q(F))$, but $K_2(L_\Q(E))\not\cong K_2(L_\Q(F))$. In particular $L_\Q(E)$ and $L_\Q(F)$ are not Morita equivalent, and thus of course not isomorphic. 
\end{rem}

\subsection{Tensor products of Leavitt path algebras}\label{subsec:tenso}
Perhaps the first thing that comes to mind as a strategy to address Question \ref{quest:flow} is to try to algebraize the original proof of Theorem \ref{thm:KP}. The latter relies heavily on the following absorption theorem.

\begin{thm}[Kirchberg \cite{KP}*{Theorems 3.8 and 3.15}]\label{thm:geneva}
 Let $\fA$ be a separable unital nuclear $C^*$-algebra. If $\fA$ is simple, then there is a $*$-isomorphism $\cO_2\sotimes \fA\cong \cO_2$. If furthermore $\fA$ is SPI, then there is a
 $*$-isomorphism $\cO_\infty\sotimes\fA\cong\fA$.
\end{thm}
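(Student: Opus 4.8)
The plan is to derive both isomorphisms from a single engine, Elliott's two-sided approximate intertwining, fed by deep structural inputs that differ between the two cases. Recall the intertwining principle: if $A$ and $B$ are separable unital $C^*$-algebras and $\phi\colon A\to B$, $\psi\colon B\to A$ are unital $*$-homomorphisms such that $\psi\phi$ is approximately unitarily equivalent to $\id_A$ and $\phi\psi$ is approximately unitarily equivalent to $\id_B$, then $\phi$ can be perturbed by a sequence of unitaries to a genuine $*$-isomorphism $A\iso B$. So in each case it suffices to exhibit unital maps both ways whose round trips are approximately inner.

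For the first statement I would take $A=\cO_2$ and $B=\cO_2\sotimes\fA$. Nuclearity of $\cO_2$ makes the tensor product unambiguous and nuclear, while simplicity of $\fA$ guarantees that $\cO_2\sotimes\fA$ is again simple, unital, separable and nuclear; this is precisely where the hypothesis on $\fA$ enters, since $\cO_2$ is simple. One map is $\phi(x)=x\otimes 1_\fA$. The other comes from the \emph{$\cO_2$-embedding theorem}: every separable exact $C^*$-algebra embeds into $\cO_2$, and in the unital nuclear case the embedding may be taken unital; applying this to $\cO_2\sotimes\fA$ yields a unital $\psi\colon\cO_2\sotimes\fA\to\cO_2$. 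Both composites $\psi\phi$ and $\phi\psi$ are then unital endomorphisms, and I would close the loop with the \emph{$\cO_2$-uniqueness theorem}: any two unital $*$-homomorphisms from a separable nuclear $C^*$-algebra into a separable unital $\cO_2$-absorbing algebra are approximately unitarily equivalent. The self-absorption $\cO_2\cong\cO_2\sotimes\cO_2$ (which also gives $\cO_2\sotimes\fA\cong\cO_2\sotimes(\cO_2\sotimes\fA)$) ensures both targets are $\cO_2$-absorbing, so the two round trips are approximately unitarily equivalent to the respective identities and intertwining finishes.

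For the second statement $\cO_\infty$ is not $KK$-trivial, so the $\cO_2$-style uniqueness is unavailable and I would instead use a central-sequence (McDuff-type) criterion: since $\cO_\infty$ is strongly self-absorbing, a separable unital $C^*$-algebra $\fA$ satisfies $\fA\cong\cO_\infty\sotimes\fA$ as soon as there is a unital $*$-homomorphism from $\cO_\infty$ into the central sequence algebra $\fA_\omega\cap\fA'$. The task thus reduces to producing inside $\fA$ sequences of isometries $s_1,s_2,\dots$ with mutually orthogonal ranges that are asymptotically central; pure infinite simplicity is exactly what supplies both the abundance of such isometries and the comparison of projections needed to make them asymptotically commute with any finite subset of $\fA$. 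A one-sided intertwining then upgrades the resulting approximately central unital embedding $\cO_\infty\to\fA$ to an isomorphism $\fA\iso\cO_\infty\sotimes\fA$.

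The genuinely hard part in both cases is the uniqueness/absorption machinery, not the intertwining bookkeeping. For $\cO_2$ the crux is the uniqueness theorem, whose proof rests on Kirchberg's analysis of nuclear completely positive maps, Voiculescu-type absorption, and asymptotic representations realizing any two unital maps into $\cO_2$ as approximately unitarily equivalent; establishing the $\cO_2$-embedding theorem in the full exact generality is itself substantial. For $\cO_\infty$ the obstacle is the construction of the asymptotically central family of orthogonal isometries in $\fA_\omega\cap\fA'$, which is where pure infiniteness is used in an essential, quantitative way. I expect the uniqueness theorem to be the true bottleneck: once it is in hand, the remaining steps are essentially formal.
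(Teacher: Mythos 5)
The paper does not prove this statement: it is quoted from Kirchberg's work \cite{KP} and used as a black box, so there is no internal proof to compare yours against; I can only measure your sketch against the proofs in the literature. As a roadmap it is accurate and is essentially the standard architecture (Elliott's two-sided approximate intertwining driven by the $\cO_2$-embedding and $\cO_2$-uniqueness theorems for the first half; a McDuff-type central-sequence criterion for strongly self-absorbing algebras for the second), with the $\cO_\infty$ half phrased in the later Toms--Winter language rather than Kirchberg's original one-sided intertwining against $\fA\sotimes\cO_\infty$. Three points deserve flagging. First, the $\cO_2$-uniqueness theorem you invoke holds for unital \emph{injective} (equivalently, full) nuclear $*$-homomorphisms; simplicity of $\fA$ is therefore used twice, once to make $\cO_2\sotimes\fA$ simple and once to guarantee that the round-trip endomorphisms are automatically injective so that uniqueness applies --- your sketch records only the first use. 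Second, in the $\cO_\infty$ half you attribute the construction of the asymptotically central isometries entirely to pure infiniteness; in fact nuclearity of $\fA$ enters essentially there (one factors finite subsets approximately through matrix algebras via the completely positive approximation property and finds the isometries in the relative commutant of the finite-dimensional image), and without nuclearity the absorption statement is not available. Third, upgrading a unital embedding of $\cO_\infty$ into the central sequence algebra to an isomorphism $\fA\cong\cO_\infty\sotimes\fA$ by one-sided intertwining uses that $\cO_\infty$ is $K_1$-injective, so that the relevant unitaries can be connected to the identity; this is true but is an additional nontrivial input. None of these is a fatal gap --- they live inside the deep theorems your sketch correctly identifies as the real content --- but a complete write-up would have to supply them.
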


The analogue of Theorem \ref{thm:geneva} with $L_n$ substituted for $\cO_n$ and the algebraic tensor product of $\K$-vector spaces $\otimes$ substituted for $\sotimes$ fails to hold. In fact we have

\begin{thm}[Ara and Corti\~nas \cite{aratenso}*{Theorem 5.1 and Proposition 5.3}]\label{thm:aratenso}
\hfill

\begin{enumerate}[\upshape(1)]
\item Let $E_1,\dots,E_m$ and $F_1,\dots,F_n$ be finite graphs. Assume that $m\ne n$ and that each of the $E_i$ and of the $F_j$ has at least one nontrivial closed path. Then the algebras $L(E_1)\otimes\dots\otimes L(E_m)$ and $L(F_1)\otimes\dots\otimes L(F_n)$ are not Morita equivalent.

\item Let $E$ be a finite graph with at least one nontrivial closed path. Then $L_\infty\otimes L(E)$ and $L(E)$ are not Morita equivalent.
\end{enumerate}

\end{thm}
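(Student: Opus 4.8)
The plan is to separate the algebras by a Morita invariant that is computable for Leavitt path algebras and enjoys a clean Künneth formula over the field $\K$, so that the number of tensor factors can be extracted. The natural candidates are Hochschild homology $HH_*$ and cyclic homology $HC_*$ (conveniently packaged via the Chern character $\Ch$). First I would compute $HH_*(L(E))$ for a finite graph $E$ using the standard bimodule resolution of $L(E)$ coming from its presentation, whose two nonzero higher terms record the edges and the Cuntz--Krieger relations at the regular vertices. In analogy with the $K$-theory sequence \eqref{seq:kle}, I expect $HH_*(L(E))$ to be expressible through $\ker(I-A_E^t)$ and $\coker(I-A_E^t)=\BF(E)$ tensored with $HH_*(\K)$; in the cases that matter it is concentrated in degrees $0$ and $1$, with $HH_1$ controlled by $\ker(I-A_E^t)$.

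Granting this, part (1) is transparent whenever each $E_i$ and each $F_j$ is a single cycle. Then $L(E_i)$ is Morita equivalent to $\K[x,x^{-1}]$, so $HH_*(L(E_i))$ is concentrated in degrees $0$ and $1$ with $HH_1\ne 0$. Over a field the Künneth formula has no correction terms, $HH_n(A\otimes_\K B)\cong\bigoplus_{p+q=n}HH_p(A)\otimes_\K HH_q(B)$, so together with Morita invariance it yields $HH_*\big(L(E_1)\otimes\dots\otimes L(E_m)\big)\cong HH_*\big(\K[x_1^{\pm 1},\dots,x_m^{\pm 1}]\big)$, which is concentrated in degrees $0,\dots,m$ with nonzero top group $\bigotimes_i HH_1(L(E_i))$. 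Hence the top nonvanishing Hochschild degree equals the number of factors; being Morita invariant, it distinguishes the two algebras as soon as $m\ne n$.

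The main obstacle is the purely infinite regime, which is exactly where the algebraic statement diverges from Kirchberg's $C^*$-absorption theorem (Theorem \ref{thm:geneva}). If $L(E)$ is additively trivial --- e.g.\ $E=\cR_2$, where $\BF(E)=0$ and $\ker(I-A_E^t)=0$, so that $K_*(L(E))=0$ by \eqref{seq:kle} and, by the computation above, $HH_*(L(E))=0$ --- then every tensor power is again additively trivial, and no Künneth invariant can see the number of factors. Worse, $L_\infty$ is the \emph{unit} for all these invariants: from \eqref{eq:k01le} and \eqref{seq:kle} one gets $K_*(L_\infty)\cong K_*(\K)$, and likewise $HH_*(L_\infty)\cong HH_*(\K)$, so $K_*$, $HH_*$, $HC_*$ --- and hence every excisive, homotopy-invariant, matrix-stable functor, i.e.\ $kk$ --- take equal values on $L_\infty\otimes L(E)$ and on $L(E)$, and on $L(E)^{\otimes m}$ and $L(E)^{\otimes n}$ for SPI $E$. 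Thus both statements, the SPI case of (1) and all of (2), lie beyond the reach of any additive invariant and demand a genuinely finer, non-additive Morita invariant sensitive to the ring structure itself. Constructing such an invariant --- and proving that it counts tensor factors and is not absorbed by $L_\infty$ --- is the crux of the argument and the step I expect to be hardest.
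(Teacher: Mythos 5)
You have correctly identified the paper's strategy: Hochschild homology plus the K\"unneth formula over $\K$ plus Morita invariance, with the number of tensor factors read off as the top nonvanishing degree of $HH_*$. Your treatment of the single-cycle case is fine. But the proposal then goes off the rails at exactly the point where the theorem has content. You assert that $HH_*(L(E))$ should be ``expressible through $\ker(I-A_E^t)$ and $\BF(E)$ tensored with $HH_*(\K)$,'' conclude that $HH_*(L_2)=0$ because $\BF(\cR_2)=0$ and $\ker(I-A_{\cR_2}^t)=0$, and on that basis declare the SPI case of (1) and all of (2) beyond the reach of Hochschild homology. This is false, and it is precisely the content of the result of Ara and Corti\~nas that the paper's sketch invokes: for \emph{every} finite graph $E$ with at least one closed path --- including $\cR_2$ --- one has $HH_0(L(E))\ne 0$ and $HH_1(L(E))\ne 0$, while $HH_i(L(E))=0$ for $i\ge 2$. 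The error is in treating $HH$ as if it were an excisive, homotopy-invariant theory computable from the triangle of Theorem \ref{thm:fundtri} in the way $K$ and $KH$ are via \eqref{seq:kle}. Hochschild homology is Morita invariant, but it is neither excisive nor homotopy invariant, so nothing forces it to vanish when the $K$-theoretic data $\BF(E)$ and $\ker(I-A_E^t)$ vanish; the nonzero classes in $HH_0$ and $HH_1$ of $L(E)$ are supported on closed paths of $E$ and are invisible to $K$-theory. Your own closing remark --- that any invariant factoring through $kk$ cannot work --- is correct, but it does not apply to $HH$, which does not factor through $kk$.

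Once the correct computation is in hand, your K\"unneth argument finishes part (1) verbatim: each factor contributes nonvanishing $HH$ exactly in degrees $0$ and $1$, so the tensor product of $m$ factors has nonvanishing $HH$ exactly in degrees $0$ through $m$, and the top degree is a Morita invariant. Part (2) follows by the same mechanism after writing $L_\infty$ as an inductive limit of Leavitt path algebras of finite graphs with closed paths and using that $HH$ commutes with such colimits, so that $L_\infty\otimes L(E)$ has nonvanishing $HH$ up to degree $2$ while $L(E)$ stops at degree $1$. No ``genuinely finer, non-additive invariant'' needs to be constructed; the crux you anticipate as the hardest step is the computation of $HH_*(L(E))$ itself, which you have assumed to come out the wrong way.
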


\begin{proof}[Sketch of the proof of Theorem \ref{thm:aratenso}] One first shows \cite{aratenso}*{Corollary 4.5} that if $E$ is finite and has at least one closed path, then its Hochschild homology groups $HH_i(L(E))$ are nonzero for $i=0,1$ and vanish in higher dimension. Hence by the K\"unneth formula, the nonvanishing range for $HH_i$ of a tensor product of $m$ such algebras is precisely $0\le i\le m$. Since $HH_i$ is invariant under Morita equivalence, we get (1). Part (2) is proved similarly, writing $L_\infty$ as an inductive limit of Leavitt path algebras of finite graphs, and using that Hochschild homology commutes with such limits. 
\end{proof}

\subsection{Flow invariants and the Cuntz splice}\label{subsec:flowsplice}

Theorem \ref{thm:aratenso} shows that the original argument of Theorem \ref{thm:KP} does not adapt to the purely algebraic case. R\o rdam's proof of Corollary \ref{coro:KPR} for finite graphs uses a different argument which at least in part, translates nicely to the algebraic case. 

The starting point of the argument is Theorem \ref{thm:franks} below. To state it, we need some vocabulary.

Whenever a graph $E$ has vertices $v,w$ such that there is a single edge $e$ with $s(e)=v$ and $r(e)=w$, one can replace $E$ by a graph $E'$ where $v$ and $w$ are identified and $e$ has been contracted to the zero length path $v$; this graph alteration is called a \emph{contraction} and its inverse an \emph{expansion}. For example a graph consisting of a single closed path can be contracted to a single vertex with no edges after a finite number of contractions. There are also other, more complicated graph moves, called \emph{in-splitting} and \emph{out-splitting}, and their inverses, \emph{in-amalgamation} and \emph{out-amalgamation} \cite{flow}; they will be discussed in Section \ref{sec:symb} along with the related notion of strong shift equivalence. We say that two finite essential graphs $E$ and $F$ are \emph{flow equivalent} if one can get from one to the other via a finite sequence of splittings, amalgamations, expansions and contractions. The \emph{trivial flow class} is the flow equivalence class of the graph with a single vertex and no edges; its elements are the graphs consisting of a single closed path. A graph $E$ is \emph{strongly connected} if for any $v,w\in E^0$ there is a path $\alpha$ with $s(\alpha)=v$ and $r(\alpha)=w$.
Remark that an essential graph that is not in the trivial flow class is strongly connected if and only if it is SPI.

\begin{thm}[Franks \cite{franks}*{Theorem}]\label{thm:franks}
 Let $E$ and $F$ be finite, essential SPI graphs. Then the following are equivalent.
\begin{enumerate}[\upshape(1)]
 
\item $E$ and $F$ are flow equivalent; 
\item $\BF(E)\cong \BF(F)$ and $\det(I-A_E)=\det(I-A_F)$. 

\end{enumerate}

\end{thm}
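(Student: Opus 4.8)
The plan is to recognize this as Franks' flow--equivalence classification of nontrivial irreducible shifts of finite type, and to prove the two implications separately, treating $\BF(E)=\coker(I-A_E^t)$ and $\det(I-A_E)$ as the two flow invariants. Since $E$ and $F$ are finite, essential and SPI, their adjacency matrices $A_E,A_F$ are square, irreducible, and have Perron eigenvalue strictly larger than $1$; this places us exactly in the setting where these two invariants are expected to be complete. I would first record that, because $\lvert\det(I-A_E)\rvert$ equals the order of $\BF(E)$ when the latter is finite and $\det(I-A_E)=0$ precisely when $\BF(E)$ is infinite, condition (2) is equivalent to requiring $\BF(E)\cong\BF(F)$ together with $\sg\det(I-A_E)=\sg\det(I-A_F)$; thus the \emph{sign} of the determinant is the genuine second invariant. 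Throughout I would lean on the reduction, due to Parry and Sullivan, of flow equivalence to the relation generated by two kinds of moves already present in the paper's definition: the splittings/amalgamations, which realize strong shift equivalence (conjugacy of the edge shifts), and the expansion/contraction move that inserts or removes a bivalent vertex in the middle of an edge.

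For the implication $(1)\Rightarrow(2)$ I would check that each generating move preserves both invariants. For an elementary strong shift equivalence passing from $A=RS$ to $A'=SR$, with $R,S$ nonnegative integer rectangular matrices, Sylvester's determinant identity gives $\det(I-RS)=\det(I-SR)$, and a symmetric reduction of the block matrix $\bigl(\begin{smallmatrix} I & R\\ S & I\end{smallmatrix}\bigr)$ by integer row and column operations in the two obvious orders yields $\coker(I-RS)\cong\coker(I-SR)$; applying the latter to the transposed factorization $A^t=S^tR^t$ shows $\BF$ is preserved. Chaining along a strong shift equivalence disposes of splittings and amalgamations. For the expansion/contraction move, a direct Laplace expansion along the row and column of the newly inserted vertex shows $\det(I-A)$ is unchanged, and the same elementary manipulation identifies the two cokernels. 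This settles necessity.

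The implication $(2)\Rightarrow(1)$ is the substantial one, and the strategy is to reduce every such graph by flow moves to a normal form depending only on the pair $\bigl(\BF(E),\sg\det(I-A_E)\bigr)$. The key step is to translate the flow moves into operations on the integer matrix $I-A_E$: strong shift equivalences and expansions should realize, after suitably enlarging the matrix, arbitrary integer row and column operations together with the stabilization $C\mapsto C\oplus(1)$, subject to a determinant constraint that records the sign. Running an integer Smith--normal--form reduction then brings $I-A_E$ to a diagonal canonical form whose diagonal entries are the invariant factors of $\BF(E)$ together with one entry carrying $\sg\det(I-A_E)$. This canonical representative is manifestly determined by the two invariants, so two graphs with equal invariants reduce to flow--equivalent normal forms, giving completeness.

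The main obstacle is exactly this translation: the flow moves are only permitted to pass through \emph{nonnegative, irreducible} integer matrices, whereas the Smith reduction wants unrestricted integer row and column operations that will in general destroy positivity. The device I would follow is to first use expansions to replace $A_E$ by a flow--equivalent matrix with enough slack that each desired elementary operation can be carried out while remaining nonnegative and irreducible; Perron--Frobenius positivity for irreducible matrices of spectral radius $>1$ is what guarantees this room exists. Making this positivity bookkeeping precise, and verifying that the sign of the determinant is the only obstruction that survives the reduction, is the technical heart of the argument.
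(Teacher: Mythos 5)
The paper does not actually prove this theorem: it is quoted from Franks' 1984 article with a citation and no argument, so there is no internal proof to compare yours against. Measured against the literature, your outline is the standard one and is essentially Franks' own strategy, building on Parry--Sullivan and Bowen--Franks. The forward implication is complete modulo routine verification: the reformulation of condition (2) as ``$\BF(E)\cong\BF(F)$ together with $\sg\det(I-A_E)=\sg\det(I-A_F)$'' is correct because $|\det(I-A_E)|$ equals the order of $\BF(E)$ when that group is finite and vanishes exactly when it is infinite (here $A_E$ is square since essential graphs are regular); the Sylvester identity $\det(I-RS)=\det(I-SR)$ and the two-sided reduction of the block matrix $\bigl(\begin{smallmatrix} I & R\\ S & I\end{smallmatrix}\bigr)$ do give invariance of both quantities under elementary strong shift equivalence; and the expansion/contraction move is a direct computation.

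The backward implication, however, is a plan rather than a proof, and the entire content of Franks' theorem is concentrated in the step you defer. What is needed, and what you do not state or prove, is the key lemma that within a flow equivalence class of nonnegative irreducible integer matrices of spectral radius greater than $1$ one can realize enough elementary integer row and column operations on $I-A_E$ (together with stabilization) to reach a normal form determined by $\BF(E)$ and $\sg\det(I-A_E)$, while never leaving the class of nonnegative irreducible matrices. Your sketch correctly identifies nonnegativity and irreducibility as the obstructions and Perron--Frobenius ``room'' as the device for overcoming them, but the positivity bookkeeping and the verification that the determinant sign is the only residual obstruction are precisely where Franks' paper does its work. As a roadmap the proposal is accurate; as a proof it has a genuine gap exactly where the theorem is hard.
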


Here is the first application of Franks' theorem to the classification of Leavitt path algebras.

\begin{thm}[Abrams, Louly, Pardo and Smith \cite{flow}*{Theorems 1.25 and 2.5}]\label{thm:flow}\,\negthickspace
Let $E$ and $F$ be as in Theorem \ref{thm:franks}. If condition (2) of Theorem \ref{thm:franks} holds, then $L(E)$ and $L(F)$ are Morita equivalent. If moreover there is an isomorphism $\BF(E)\iso \BF(F)$ such that $[1_E]\mapsto [1_F]$, then $L(E)\cong L(F)$.
\end{thm}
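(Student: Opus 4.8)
The plan is to deduce flow equivalence from the $K$-theoretic hypothesis and then to propagate it through the tower of Leavitt path algebras, one elementary graph move at a time. First I would invoke Franks' theorem (Theorem~\ref{thm:franks}): since $E$ and $F$ are finite essential SPI graphs satisfying condition~(2), they are flow equivalent, so there is a finite chain $E=E_0,E_1,\dots,E_k=F$ in which each $E_{i+1}$ is obtained from $E_i$ by a single elementary move, namely an in-splitting, an out-splitting, one of their inverses (the amalgamations), or an expansion/contraction.

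The core of the first assertion is then a sequence of \emph{move lemmas}: for each elementary move one exhibits an explicit relation between $L(E_i)$ and $L(E_{i+1})$. For the splitting and amalgamation moves I expect to produce an honest $\ast$-isomorphism $L(E_i)\cong L(E_{i+1})$, by writing down the images of the canonical generators (vertices, edges, and ghost edges) of one algebra inside the other and checking the Cohn--Leavitt relations directly. For the expansion/contraction move, which alters the number of vertices, the two algebras will instead be related by a \emph{full corner}, $L(E_{i+1})\cong p\,L(E_i)\,p$ for a suitable idempotent $p$ assembled from vertices and edges; this realizes a Morita equivalence. Composing these relations along the chain shows that $L(E)$ and $L(F)$ are Morita equivalent, which is the first statement.

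For the isomorphism statement I would exploit that every graph in the chain is SPI, so each $L(E_i)$ is a unital SPI ring. By Theorem~\ref{thm:agop0}, $\cV(L(E_i))^\ast=\cV(L(E_i))\setminus\{0\}$ is a group equal to $K_0(L(E_i))$; consequently any two nonzero idempotents of a matrix ring over $L(E_i)$ with the same $K_0$-class are Murray--von Neumann equivalent, so a full corner $p\,\M_n(L(E_i))\,p$ is isomorphic to $L(E_i)$ precisely when $[p]=[1]$ in $K_0(L(E_i))=\BF(E_i)$. The composite equivalence presents $L(F)$ as a full corner $p\,\M_n(L(E))\,p$, and the induced isomorphism $\mu\colon\BF(F)\iso\BF(E)$ carries $[1_F]$ to $[p]$. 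The extra hypothesis supplies an isomorphism $\xi\colon\BF(E)\iso\BF(F)$ with $\xi([1_E])=[1_F]$; the remaining task is to arrange, using $\xi$, that the implementing corner has unit class $[p]=[1_E]$, whereupon $p\sim 1$ and $L(E)\cong L(F)$.

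The main obstacle is twofold. The bulk of the labor lies in the move lemmas themselves, and among these the splitting moves are the delicate ones: the isomorphism (or corner) must be constructed by hand and every relation verified, while one keeps careful track of how the class $[1]$ of the unit transforms---bookkeeping that the expansion/contraction step threatens to disturb. The second, more conceptual difficulty is the passage from Morita equivalence to isomorphism: a priori the Morita-induced map $\mu$ need not agree with the abstract unit-preserving $\xi$, so one must either refine the chain of moves into a \emph{pointed} flow equivalence that transports $[1_E]$ to $[1_F]$, or adjust the implementing corner so that its idempotent acquires class $[1_E]$. It is exactly here that pure infinite simplicity is indispensable, since the group structure on $\cV^\ast$ from Theorem~\ref{thm:agop0} is what makes such an adjustment of the unit class possible.
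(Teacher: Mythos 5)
Your argument for the first assertion is essentially the paper's: invoke Franks' theorem to convert the $K$-theoretic hypothesis into a chain of elementary graph moves, and then prove a ``move lemma'' for each elementary move showing that it preserves the Morita equivalence class of the Leavitt path algebra. That is exactly the structure of \cite{flow}*{Theorem 1.25} as summarized in the paper, and your expectations about which moves give honest isomorphisms and which only give full corners are reasonable (in fact out-splitting even yields a graded isomorphism, cf.\ \cite{question}*{Theorem 2.8}).

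For the second assertion, however, there is a genuine gap. You correctly isolate the problem: the Morita equivalence built from the chain of moves induces some isomorphism $\mu:\BF(F)\to\BF(E)$ sending $[1_F]$ to the class $[p]$ of the implementing corner, and there is no reason for $\mu$ to be inverse to the given pointed isomorphism $\xi$, so $[p]$ need not equal $[1_E]$. But the two escape routes you name are not interchangeable, and neither is actually carried out. Pure infinite simplicity and Theorem \ref{thm:agop0} only give you the implication $[p]=[1_E]\Rightarrow p\sim 1\Rightarrow pM_n(L(E))p\cong L(E)$; they give you no mechanism whatsoever for \emph{changing} the class of the corner produced by the chain of moves. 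What is actually needed is the other route: one must realize the automorphism $\mu\circ\xi$ of $\BF(E)$ (equivalently, arrange the flow equivalence so that it induces the prescribed pointed isomorphism) by an explicit self flow equivalence of $E$. This is a nontrivial theorem in symbolic dynamics due to Huang \cite{huang} on automorphisms of Bowen--Franks groups of shifts of finite type, and it is precisely the ingredient \cite{flow}*{Theorem 2.5} relies on. Attributing the resolution to the group structure of $\cV^*$ from Theorem \ref{thm:agop0} misplaces where the real work lies: that group structure is the easy final step, while the realization of the automorphism by graph moves is the substantive missing input.
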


\begin{proof}[Sketch of the proof] We sketch the proof of the first assertion; for the proof of the second, which uses some results of Huang \cite{huang}, see \cite{flow}*{Theorem 2.5}. In \cite{flow} the authors first show that each of the graph moves that define flow equivalence yield Morita equivalent Leavitt path algebras. Then they apply the implication (2)$\Rightarrow$ (1) of Theorem \ref{thm:franks}. 
\end{proof}

The analogue of Theorem \ref{thm:flow} for graph $C^*$-algebras also holds \cite{ror}*{page 1}. To conclude his argument, R\o rdam uses an idea of Cuntz, which consists of considering another graph alteration, called the \emph{Cuntz splice}. For a finite regular graph $E$, let $E^0=\{v_1,\dots,v_n\}$ be an enumeration of its vertices. The Cuntz splice of $E$ at $v_n$ is the graph $E^-$
whose adjacency matrix is 
\begin{equation}
A_{E^-} \, = \, \begin{bmatrix}
  &   &  &  & 0 & 0 \\
  & A_E  &  &  & \vdots & \vdots \\
  &   &  &  & 0 & 0 \\
  &   &  &  & 1 & 0 \\
 0 & \cdots  & 0 & 1 & 1 & 1 \\
 0 & \cdots  & 0 & 0 & 1 & 1 \\
\end{bmatrix} 
\qquad
E^{-} \ = \ \xy
(0,0)*[F**:<8pt>]++{\xy
    (0,10)*{};
    (0,-10)*{};
    (3,5)*{E};
    (20,0)*{}="vv";
    (10,7)*{}="vv1";
    (10,0)*{}="vv2";
    (10,-7)*{}="vv3";
    {\ar@{.} "vv1" ;"vv" };
    {\ar@{.} "vv2" ;"vv" };
    {\ar@{.} "vv3" ;"vv" };
\endxy };
(10,0)*{\bullet}="v";
(13,0)*{\scriptstyle v_n};
(20,0)*{\bullet}="v1";
(30,0)*{\bullet}="v2";
{\ar@/^/     "v" ;"v1" };
{\ar@/^/     "v1";"v"  };
{\ar@/^/     "v1";"v2" };
{\ar@/^/     "v2";"v1" };
{\ar@(ul,ur) "v1";"v1" };
{\ar@(ur,dr) "v2";"v2" };
\endxy
\end{equation}

The following proposition summarizes the key properties of the Cuntz splice.
\begin{thm}[\cite{ror}*{page 54}, \cite{flow}*{Proposition 2.12}]\label{prop:e-}
Let $E$ be a finite regular graph. Then 
\begin{equation}
\BF(E)\cong \BF(E^-),\,\text{ and }\, \det(I-A_{E^-})=-\det(I-A_E).
\end{equation}
Moreover $E$ is SPI if and only if $E^-$ is. 
\end{thm}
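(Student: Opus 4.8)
The plan is to split the three assertions into two groups: the cokernel and determinant identities are linear algebra over $\Z$, while the SPI equivalence is graph combinatorics, and each group wants its own technique. For the algebraic part I would order the vertices of $E^-$ so that the two new vertices $u_1,u_2$ come last and $v_n$ is the $n$-th vertex. Then $I-A_{E^-}$ takes the block form $\begin{pmatrix} I-A_E & C\\ D & F\end{pmatrix}$, where $F=\begin{pmatrix}0&-1\\-1&0\end{pmatrix}$, the block $C\in\Z^{n\times 2}$ has its only nonzero entry $-1$ in the $(v_n,u_1)$ slot, and $D\in\Z^{2\times n}$ has its only nonzero entry $-1$ in the $(u_1,v_n)$ slot.

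For the determinant I would use the Schur complement with respect to $F$: since $F$ is invertible with $\det F=-1$ and $F^{-1}=F$, one gets $\det(I-A_{E^-})=\det F\cdot\det\bigl(I-A_E-CF^{-1}D\bigr)$. A one-line check shows $CF^{-1}D=0$, because the only surviving row of $CF^{-1}$ pairs against the identically zero second row of $D$; hence $\det(I-A_{E^-})=-\det(I-A_E)$. For the cokernel I would compute $\BF(E^-)=\coker(I-A_{E^-}^t)$ directly from the transposed block matrix. Reading the last two columns as relations on the standard generators $e_1,\dots,e_{n+2}$ of the target, the $(n{+}2)$-nd column gives $e_{n+1}=0$ and the $(n{+}1)$-st column gives $e_{n+2}=-e_n$; substituting these into the $n$-th column turns it into exactly the $n$-th column relation of $I-A_E^t$. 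Eliminating the two redundant generators collapses the presentation to that of $\coker(I-A_E^t)$, so $\BF(E^-)\cong\BF(E)$. It is worth recording that under this isomorphism $[1_{E^-}]\mapsto[1_E]-[v_n]$ rather than $[1_E]$, so the naive isomorphism is not unit-preserving; this already signals the subtlety behind the later splice theorem and is why only the unpointed isomorphism is asserted here.

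For the SPI equivalence I would work from the combinatorial characterization recalled above, noting first that a finite regular graph always contains a cycle (following out-edges in a sink-free finite graph forces a repeated vertex), so condition (3) is automatic and SPI reduces to Condition~(L) together with cofinality, i.e. every vertex connects to every cycle. The two new vertices form a strongly connected tail attached to the rest of $E^-$ only through the edges $v_n\leftrightarrow u_1$. I would then show $E$ is strongly connected if and only if $E^-$ is: in one direction every vertex of $E$ reaches $v_n$ and hence the tail, while the tail reaches $v_n$ and hence everything; in the other, any path in $E^-$ joining two vertices of $E$ enters and leaves the tail only at $v_n$, so each excursion into the tail can be short-circuited to a path lying inside $E$. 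Combined with the recalled fact that an essential graph outside the trivial flow class is SPI precisely when it is strongly connected, and with the observation that $E^-$ is essential and never a single cycle whenever $E$ is essential, this yields the equivalence.

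The step I expect to be the main obstacle is precisely this last one, because the transfer of cofinality is delicate exactly at the splice vertex: it genuinely requires $v_n$ to lie on a cycle, so that gluing the tail neither destroys nor manufactures cofinality. If $v_n$ is a source, or if $E$ is a single cycle, the naive equivalence fails — for instance the Cuntz splice of a single loop is SPI while the loop itself is not — so the clean statement really lives in the essential, non-trivial-flow setting that is relevant to the classification of finite SPI graphs. I would therefore first normalize by source removal to reach an essential graph and arrange the enumeration so that $v_n$ is a cycle vertex; the strong-connectivity argument above then applies in both directions and delivers the stated equivalence.
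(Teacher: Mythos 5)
The survey gives no proof of this statement---it is imported from \cite{ror} and \cite{flow}---so your argument stands or falls on its own. The linear-algebra half is correct and complete. With your ordering of the vertices the block form of $I-A_{E^-}$ is right, $F=\left(\begin{smallmatrix}0&-1\\-1&0\end{smallmatrix}\right)$ is its own inverse with determinant $-1$, and $CF^{-1}D=0$ for exactly the reason you give, so the Schur complement yields $\det(I-A_{E^-})=-\det(I-A_E)$. The presentation argument for the cokernel is also correct: the last two columns of $I-A_{E^-}^t$ give $e_{n+1}=0$ and $e_{n+2}=-e_n$, after which the $n$-th column relation collapses to that of $I-A_E^t$ and the presentation reduces to the one for $\BF(E)$. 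This is essentially the computation in \cite{flow}*{Proposition 2.12}. Your remark that the isomorphism carries $[1_{E^-}]$ to $[1_E]-[v_n]$ is correct and genuinely relevant to why Theorem \ref{thm:splice} is not a formality.

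The SPI half is where the problem lies, and you have correctly located it in the statement rather than only in your proof. As printed the biconditional is false: a single loop is finite and regular and not SPI while its splice is SPI; and if $E^0=\{v_1,v_2\}$ with one edge $v_1\to v_2$ and two loops at $v_2$, then $E$ is SPI, but splicing at $v_1$ makes $\{v_2\}$ a nontrivial hereditary saturated subset of $(E^-)^0$, so $E^-$ is not even simple. The cited sources exclude such cases by their standing hypotheses (irreducible, non-permutation matrices in \cite{ror}). However, your proposed repair---``normalize by source removal and arrange the enumeration so that $v_n$ is a cycle vertex''---is not a legitimate reduction: both operations replace the pair $(E,E^-)$ by a different pair, since source removal changes $E$ and re-enumerating changes which graph $E^-$ is. What your strong-connectivity transfer (which is itself correct, including the short-circuiting of excursions into the tail through $v_n$) actually proves is the corrected statement: for $E$ essential and not a single cycle, $E$ is SPI if and only if $E^-$ is. So the right fix is to add that hypothesis (or the hypothesis that every vertex of $E$ connects to $v_n$ and $E$ is not a single cycle) to the statement, not to pretend the reduction preserves the claim; as written, your proof does not establish the theorem for the graphs on which it is literally asserted, because for some of those graphs it is not true.
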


\begin{thm}[R\o rdam \cite{ror}*{Lemma 6.4 and Theorem 7.2}]\label{thm:splice}
Let $E$ be a finite SPI graph. Then there is a $*$-isomorphism $C^*(E)\cong C^*(E^-)$. 
\end{thm}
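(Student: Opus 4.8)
The plan is to deduce the isomorphism from the Kirchberg--Phillips classification in its graph-theoretic form, Corollary~\ref{coro:KPR}, by checking that the spliced graph carries the same $K$-theoretic data as $E$. First I would verify the hypotheses of that corollary. Since $E$ is a finite SPI graph, Theorem~\ref{prop:e-} tells us that $E^-$ is SPI as well, and it is clearly again finite; hence by Example~\ref{ex:spic*} both $C^*(E)$ and $C^*(E^-)$ are Kirchberg $C^*$-algebras. It therefore remains to produce an isomorphism $\xi_0\colon\BF(E)\iso\BF(E^-)$ carrying $[1_E]$ to $[1_{E^-}]$, together with an isomorphism $\ker(I-A_E^t)\iso\ker(I-A_{E^-}^t)$, and to feed these into Corollary~\ref{coro:KPR}.

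Most of this $K$-theoretic input is handed to us by Theorem~\ref{prop:e-}, which already asserts $\BF(E)\cong\BF(E^-)$. To make the isomorphism usable I would read it off the block form of $A_{E^-}$: denoting the two spliced vertices by $w_1,w_2$, the two columns of $I-A_{E^-}^t$ indexed by $w_1$ and $w_2$ force, in $\coker(I-A_{E^-}^t)$, the relations $[w_1]=0$ and $[w_2]=-[v_n]$, while the remaining columns are exactly the defining relations of $\BF(E)=\coker(I-A_E^t)$; this produces the isomorphism $\BF(E)\iso\BF(E^-)$. The $K_1$-side then comes for free: finite SPI graphs are regular (Remark~\ref{rem:ror}), so $I-A_E^t$ and $I-A_{E^-}^t$ are square and, as noted in Example~\ref{ex:k0bf}, their kernels are free of rank $\rk\BF(E)=\rk\BF(E^-)$; an isomorphism of the cokernels thus automatically yields one of the kernels.

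The hard part, and the essential content of R\o rdam's Lemma~6.4, is to make the Grothendieck-group isomorphism \emph{pointed}, that is, to send $[1_E]$ to $[1_{E^-}]$. This is where I expect the main obstacle to lie, because the canonical cokernel isomorphism of the previous paragraph does not manifestly respect the units: tracing $1_{E^-}=\sum_{v\in(E^-)^0}v$ through it returns $[1_E]$ altered by the class $[v_n]$ of the spliced vertex, so unit-compatibility cannot simply be read off the matrix manipulation and is precisely the step that must exploit the SPI hypothesis. Concretely I would try to correct the isomorphism by an order-automorphism of the pointed group $\BF(E)$ absorbing the discrepancy $[v_n]$, falling back---as in R\o rdam's original argument, which predates Kirchberg--Phillips---on a direct $C^*$-algebraic matching of the unit classes when a purely $K$-theoretic correction is unavailable.

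Granting the pointed isomorphism $\xi_0$, the argument closes at once: Corollary~\ref{coro:KPR}, equivalently the Kirchberg--Phillips Theorem~\ref{thm:KP} applied to the Kirchberg algebras $C^*(E)$ and $C^*(E^-)$ whose graded $K$-theory agrees compatibly with the class of the unit, delivers the desired $*$-isomorphism $C^*(E)\cong C^*(E^-)$.
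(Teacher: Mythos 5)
Your strategy is not the paper's, and it cannot be completed as written. The paper's proof is a two-step reduction that invokes no classification theorem at all: by an argument of Cuntz (explained in \cite{ror}*{Theorem 7.2}) the general case is reduced to the single case $E=\cR_2$, i.e.\ to the isomorphism $\cO_2\cong\cO_{2^-}$, which R\o rdam establishes directly in \cite{ror}*{Lemma 6.4}. Note also that the logical order in the paper is the reverse of yours: Theorem \ref{thm:splice} is used as an \emph{ingredient} in the proof of Corollary \ref{coro:KPR} for finite graphs, given immediately afterwards, reflecting the fact that R\o rdam's argument predates and is independent of Kirchberg--Phillips. Your derivation is non-circular only if the full Theorem \ref{thm:KP} is taken as a black box, which defeats the purpose of this part of the section.

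More seriously, the step you isolate as ``the hard part'' is not merely hard; the input you need is false in general. You correctly observe that the canonical isomorphism $\BF(E^-)\iso\BF(E)$ carries $[1_{E^-}]$ to $[1_E]-[v_n]$, and you propose to absorb the discrepancy by an automorphism of $\BF(E)$. For $E=\cR_3$ this is impossible: one computes $\BF(\cR_3)\cong\BF(\cR_3^-)\cong\Z/2\Z$, with $[1_{\cR_3}]=[v]$ the generator while $[1_{\cR_3^-}]=[1_{\cR_3}]-[v]=0$, and the only automorphism of $\Z/2\Z$ is the identity, so no isomorphism of Bowen--Franks groups can match the units (the invariant of $C^*(\cR_3^-)$ is that of $\M_2(\cO_3)$, not of $\cO_3$). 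Consequently the Kirchberg--Phillips route can at best yield a stable isomorphism $C^*(E)\sotimes\cK\cong C^*(E^-)\sotimes\cK$ via the unpointed statement of Remark \ref{rem:uctis}; your proposed ``fall back on a direct $C^*$-algebraic matching of the unit classes'' is precisely the content that would have to be supplied, and for $\cR_3$ it is unavailable. The paper's route sidesteps this obstruction entirely by working at the level of $\cO_2$, where $K_0=0$ and no unit class issue arises, and then transporting the result by Cuntz's reduction.
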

\begin{proof}[Structure of the proof of Theorem \ref{thm:splice}] By an argument of Cuntz, explained in \cite{ror}*{Theorem 7.2}, it suffices to establish the theorem for the case $E=\cR_2$; this is done in \cite{ror}*{Lemma 6.4}.
\end{proof}

\begin{proof}[Proof of Corollary \ref{coro:KPR} in the finite case] Let $E$ and $F$ be finite SPI graphs and $\xi:\BF(E)\iso \BF(F)$ an isomorphism such that $\xi([1_E])=[1_F]$. Then by the Smith normal form, we have $|\det(I-A_E)|=|\det(I-A_F)|$. Hence we must either have $\det(I-A_F)=\det(I-A_E)$ or $\det(I-A_F)=-\det(I-A_E)=\det(I-A_{E^-})$. By the $C^*$-analogue of Theorem \ref{thm:flow}, we have $C^*(F)\cong C^*(E)$ in the first case and  $C^*(F)\cong C^*(E^-)$ in the second. Now apply Theorem \ref{thm:splice}.
\end{proof}

\begin{quest}\label{quest:splice}
Let $E$ be a finite SPI graph. Is $L(E)\cong L(E^-)$?
\end{quest}

Note that if the answer to Question \ref{quest:splice} is positive, then the argument above would apply verbatim to show that the answer to Question \ref{quest:flow} is positive too. 

Let $n\ge 2$; write $L_{n^-}=L(\cR_n^-)$. The following is a particular case of Question \ref{quest:splice}.

\begin{quest}\label{quest:l2}
Are $L_2$ and $L_{2}^-$ isomorphic?
\end{quest}

As explained above, Cuntz showed that a positive answer to the analogous question of whether $\cO_2\cong\cO_{2}^-$ (later confirmed by R\o rdam) would imply $\cO(E)\cong \cO(E^-)$ for all finite SPI graphs $E$. A partial analogue of Cuntz' result in the purely algebraic context was obtained in \cite{flow}*{Theorem 2.13}; it says that if the answer to Question \eqref{quest:l2} is positive and the isomorphism satisfies certain properties, then the answer to \eqref{quest:splice} is positive also. 
Remark however that since $\BF(\cR_{2}^-)=\BF(\cR_2)=0$, if the answer to \ref{quest:l2} is negative, then the answer to Question \ref{quest:flow} is also negative.

Next we list a couple of results that impose restrictions on extra properties that an isomorphism $L_2\iso L_{2}^-$ or $L(E)\iso L(E^-)$, if it exists, may have.

If $R$ is a commutative ring with unit and $E$ is a finite graph, then the \emph{diagonal} of $L_R(E)$ is the subalgebra
\[
D_R(E)=\mspan_R\{\alpha\alpha^*\,\colon\, \alpha\in \Path(E)\}.
\]
A \emph{unital $\ast$-subring} of $\C$ is a subring $R\subset \C$ containing $\Z$ and closed under complex conjugation.
\begin{thm}[Johansen and S\o rensen \cite{johsor}*{Theorem 3.6}]\label{thm:sgn=diag} 
Let $E$ and $F$ be finite essential SPI graphs and let $R\subset \C$ be a unital $\ast$-subring. Let $\phi:L_R(E)\to L_R(F)$ be a $\ast$-homomorphism
such that $\phi(D_R(E))\subset D_R(F)$. Then $\sg(\det(I-A_E))=\sg(\det(I-A_F))$.
\end{thm}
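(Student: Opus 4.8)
The plan is to read the hypothesis through the dictionary between the pair $(L_R(E),D_R(E))$ and the one-sided symbolic dynamics of $E$. Since $E$ is essential and SPI it is strongly connected, so $L_R(E)$ is simple; hence a nonzero $\phi$ is automatically injective, and the vacuous case $\phi=0$ may be discarded. The conceptual point to keep in mind throughout is that, by Theorem~\ref{prop:e-}, the \emph{sign} of $\det(I-A_E)$ is exactly the flow-type datum that separates a graph from its Cuntz splice, whereas $\BF(E)$ (equivalently $|\det(I-A_E)|$, when it is nonzero) cannot see the splice. Consequently the argument must make genuine use of the diagonal-preserving hypothesis, since a general $*$-isomorphism can reverse the sign.

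First I would recover the shift dynamics from the diagonal. The commutative algebra $D_R(E)$ is generated by the commuting self-adjoint idempotents $p_\alpha=\alpha\alpha^*$, and the relation $p_\alpha p_\beta=p_\alpha$ whenever $\beta$ is a prefix of $\alpha$ reproduces the tree of finite paths of $E$; its characters are the boundary (infinite) paths of $E$, carrying the one-sided shift. Because $\phi$ is a $*$-homomorphism sending $D_R(E)$ into $D_R(F)$ and sending each generator $e$ and $e^*$ to a partial isometry normalising the diagonal, dualising $\phi|_{D_R(E)}$ produces a continuous, shift-compatible map on boundary-path spaces, i.e. a continuous-orbit-equivalence-type relation (in the possibly non-surjective, one-directional form appropriate to an embedding) between the subshifts attached to $A_F$ and $A_E$. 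This is the algebraic counterpart of the Matsumoto--Matui correspondence between diagonal-preserving isomorphisms and continuous orbit equivalence, and it is where a reconstruction of the graph groupoid from $(L_R(E),D_R(E))$ enters.

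It then remains to prove that such a relation forces $\sg(\det(I-A_E))=\sg(\det(I-A_F))$. The sign of $\det(I-A_E)=\prod_i(1-\lambda_i)$ is a spectral quantity: it vanishes exactly when $1$ is an eigenvalue of $A_E$, and otherwise equals $(-1)^m$, where $m$ is the number of real eigenvalues of $A_E$ exceeding $1$ (complex pairs contribute positive factors). I would encode this parity as a $\mathbb{Z}/2$-valued cohomological/index invariant of the orbit relation --- an orientation datum of the periodic-orbit structure --- and verify that the shift-compatible map of the previous step respects it. Concretely one tracks the behaviour of $\phi$ on the idempotents supported on cycles and on the induced order structure, separating the sign from the absolute value $|\BF(E)|$, which is the part a non-surjective embedding need not preserve (and which the theorem wisely does not claim).

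The main obstacle is precisely this last step: isolating the sign, as opposed to $|\BF(E)|$, and proving that it, and not merely its absolute value, is an invariant of a diagonal-preserving homomorphism. The difficulty is intrinsic, since the Cuntz splice realises a (non-diagonal-preserving) isomorphism that flips the sign while fixing $\BF$; thus the delicate part is the bookkeeping of how the continuous cocycle implementing the orbit relation interacts with the determinant, showing that it can alter $|\det(I-A)|$ but must preserve its sign.
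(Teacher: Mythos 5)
First, a point of reference: the paper does not prove this statement at all --- it is quoted from Johansen and S\o rensen \cite{johsor} --- so your proposal can only be measured against the proof given there. Your opening move, reading the hypothesis through the spectrum of the commutative $*$-algebra $D_R(E)$, whose characters are the boundary paths of $E$, so that a diagonal-preserving $*$-map induces orbit-level data on the one-sided shift spaces, is the right first step and is essentially the one taken in \cite{johsor} (via reconstruction of the graph groupoid from the pair $(L_R(E),D_R(E))$, in the spirit of Renault's Weyl groupoid and of Matsumoto--Matui). The genuine gap is in the second half. The known proof does not construct a new ``$\Z/2$-valued cohomological/index invariant'' of the orbit relation; it invokes the theorem of Matsumoto and Matui that a continuous orbit equivalence between irreducible one-sided topological Markov shifts (of non-permutation type) forces the two-sided shifts to be \emph{flow equivalent}, and then concludes from the Parry--Sullivan/Bowen--Franks/Franks classification of flow equivalence (Franks' theorem is even stated in this paper as Theorem \ref{thm:franks}), under which $\sg(\det(I-A_E))$, together with $\BF(E)$, is precisely the complete invariant. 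All of the difficulty of the theorem is concentrated in the implication ``orbit data $\Rightarrow$ flow equivalence,'' and your sketch explicitly defers it (``the main obstacle is precisely this last step''), offering only the intention to build an orientation invariant whose existence is essentially equivalent to what is to be proved. As written, the argument establishes nothing beyond the correct but standard translation of the hypothesis into dynamics.

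Second, your attempt to accommodate a non-surjective $\phi$ by a ``one-directional'' orbit relation has no counterpart in the machinery you are appealing to: continuous orbit equivalence, and the Matsumoto--Matui theorem, require an isomorphism of the pairs $(L_R(E),D_R(E))\cong(L_R(F),D_R(F))$, equivalently of the associated groupoids; a mere diagonal-preserving embedding gives a map of spectra in the wrong direction with no control over orbits in the target, and simplicity of $L_R(F)$ does not force surjectivity (unital non-surjective endomorphisms of $L_2$ exist). The source result, and the way the present paper uses it immediately after the statement (``if a $*$-isomorphism $L_R(E)\to L_R(E^-)$ exists, it cannot preserve the diagonal''), concern diagonal-preserving $*$-isomorphisms. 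So even granting the missing dynamical input, your proposal would still need to reduce to the bijective case before it could be completed.
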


In view of Proposition \ref{prop:e-} and Theorem \ref{thm:sgn=diag}, if a $\ast$-isomorphism $L_R(E)\to L_R(E^-)$ exists, it cannot preserve the diagonal subalgebra.
Let $R$ be a unital $*$-subring of $\C$. We say that $R$ is \emph{kind} if for $n\ge 2$ and $x\in R^n$,
\[
x_1=\sum_{j=1}^n|x_j|^2\Rightarrow x_2=\dots=x_n=0.
\]
\begin{ex}\label{ex:kindstein}
A first basic example of a kind ring is $\Z$. Other examples include $\Z[X]$ for any set $X\subset \R$ algebraically independent over $\Q$, as well as $\Z[\sqrt{n}]$ and $\Z[i\sqrt{n}]$ for $n\in \Z_{\ge 2}$. A kind ring cannot contain $1/n$ for any $n\ge 2$; in particular, it cannot be a field. For proofs of these assertions and permanence properties of the class of kind rings see 
\cite{steinkind}*{Proposition 3}.
\end{ex}

Let $A$ be a ring with involution $\ast$. An element $a\in A$ is \emph{self-adjoint} if $a^\ast=a$. A \emph{projection} in $A$ is a self-adjoint idempotent. 
\begin{ex}\label{ex:proje}
Let $E$ be a graph and $\alpha\in \Path(E)$. Let $R$ be a ring with involution. Then $\alpha\alpha^*\in L_R(E)$ is a projection. In particular $D_R(E)$ is $R$-linearly generated by projections. 
\end{ex}
\begin{prop}[Carlsen \cite{tokel2}*{Proposition 4}]\label{prop:tokel2}
Let $R$ be commutative unital ring with involution, $E$ a graph and $p\in L_R(E)$ a projection. If $R$ is kind, then $p\in D_R(E)$.
\end{prop}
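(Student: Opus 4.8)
The plan is to exploit the $\Z$-grading of $L_R(E)$ together with the locally matricial structure of its degree-zero part, and to feed the idempotent relation into the kindness hypothesis exactly once, at the level of diagonal matrix coefficients. Write $p=\sum_{d\in\Z}p_d$ for the decomposition into homogeneous components, $p_d\in L(E)_d$. Since $\ast$ reverses degrees, self-adjointness $p=p^\ast$ gives $p_{-d}=p_d^\ast$. Applying the $R$-linear grading projection $\Phi_0\colon L_R(E)\to L(E)_0$ to $p=p^2$, and using that it kills all components of nonzero degree, yields the identity
\[
p_0=\Phi_0(p)=\Phi_0(p^2)=\sum_{d\in\Z}p_dp_{-d}=p_0^2+\sum_{d>0}\bigl(p_dp_d^\ast+p_d^\ast p_d\bigr),
\]
which now lives entirely in $L(E)_0$.

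Next I would pass to the matricial picture. As in Example \ref{ex:kle0}, $L(E)_0$ is locally matricial: a union of subalgebras, each a direct sum of (finitely supported) matrix algebras over $R$ whose matrix units are the $\mu\nu^\ast$ with $r(\mu)=r(\nu)$ and $|\mu|=|\nu|$, whose diagonal matrix units $\mu\mu^\ast$ span $D_R(E)\cap L(E)_0=D_R(E)$, and whose involution is the conjugate-transpose. Since $p$ is a single element, after expanding each $p_d$ via $v=\sum_{s(e)=v}ee^\ast$ so that its range legs share a common length and the various products are carried to a common level, every term of the displayed identity lands in one block $\bigoplus_v \M_{\,\cdot\,}(R)$, where I can read off matrix coefficients. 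The key point is that these coefficients are sums of squared magnitudes: writing $p_d=\sum_{\mu,\nu}s_{\mu,\nu}\mu\nu^\ast$ with $|\nu|$ fixed, the relation $\nu^\ast\nu'=\delta_{\nu,\nu'}r(\nu)$ collapses $p_dp_d^\ast$ to $\sum_{\mu,\mu',\nu}s_{\mu,\nu}s_{\mu',\nu}^\ast\,\mu\mu'^\ast$, so its diagonal coefficient at $\mu\mu^\ast$ is $\sum_\nu|s_{\mu,\nu}|^2$; symmetrically for $p_d^\ast p_d$; and by self-adjointness of $p_0$ the diagonal coefficient of $p_0^2$ at $\zeta\zeta^\ast$ is $\sum_{\zeta'}|(p_0)_{\zeta,\zeta'}|^2$.

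Reading the diagonal coefficient at a fixed unit $\zeta\zeta^\ast$ in the identity therefore produces an equation of the precise shape
\[
(p_0)_{\zeta,\zeta}=\sum_{j}|x_j|^2,
\]
in which the left-hand side occurs on the right as the summand $|(p_0)_{\zeta,\zeta}|^2$, while the remaining $x_j$ run over the off-diagonal entries $(p_0)_{\zeta,\zeta'}$ ($\zeta'\neq\zeta$) and over all coefficients $s_{\mu,\nu}$ of the $p_d$ with $d\neq0$ contributing there. Kindness of $R$ then forces every such $x_j$ to vanish. Letting $\zeta$ range over all diagonal units shows $p_0$ is diagonal and $p_d=0$ for all $d\neq0$, whence $p=p_0\in D_R(E)$.

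I expect the main obstacle to be organizational rather than conceptual: making the matricial bookkeeping uniform when $E$ has infinite emitters (so that Example \ref{ex:kle0}, stated there for row-finite graphs, must be invoked in its general form, with matrices finitely supported over infinite index sets), and choosing the common expansion level so that $p_0^2$, $p_dp_d^\ast$ and $p_d^\ast p_d$ are simultaneously expressed in one block with a single finite kind-equation per diagonal index. The conceptual heart is the identity $p_0=p_0^2+\sum_{d>0}(p_dp_d^\ast+p_d^\ast p_d)$ combined with the observation that all of its diagonal coefficients are sums of squared magnitudes, which is exactly the configuration the kindness hypothesis is designed to annihilate.
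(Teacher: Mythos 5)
The paper does not actually prove this proposition: it is quoted from Carlsen \cite{tokel2}, and Remark \ref{rem:kind} indicates that the known proofs run through groupoid algebras, evaluating the idempotent identity at units of the groupoid. Your argument is that same idea transported to the matricial picture of the graded components, and its skeleton is correct: the identity $p_0=p_0^2+\sum_{d>0}(p_dp_d^*+p_d^*p_d)$, the conjugate-transpose involution on the matrix blocks, and the observation that each diagonal coefficient of the right-hand side is a finite sum of squared magnitudes, one of which is the square of the left-hand side, so that kindness annihilates all the remaining entries. For row-finite $E$ this is a complete proof: every column index can be lengthened via $v=\sum_{s(e)=v}ee^*$ to a common level, after which $\nu^*\nu'=\delta_{\nu,\nu'}r(\nu)$ holds and your coefficient computation is exact, with the diagonal units $\mu\mu^*$ lying in $D_R(E)$ as required.

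The gap is where you located it, but it is more than organizational. If $r(\nu)$ is an infinite emitter, the term $\mu\nu^*$ cannot be expanded, so the columns of $p_d$ cannot all be brought to a common length. If $p_d$ contains both $s_{\mu,\nu}\,\mu\nu^*$ and $s_{\mu\gamma,\nu\gamma}\,(\mu\gamma)(\nu\gamma)^*$ for a nontrivial path $\gamma$ (a configuration that expansion cannot eliminate exactly when $r(\nu)$ is an infinite emitter), then $(\mu\nu^*)\bigl(\nu\gamma(\mu\gamma)^*\bigr)=(\mu\gamma)(\mu\gamma)^*$, so the diagonal coefficient of $p_dp_d^*$ at $(\mu\gamma)(\mu\gamma)^*$ acquires the cross term $s_{\mu,\nu}s_{\mu\gamma,\nu\gamma}^*+s_{\mu\gamma,\nu\gamma}s_{\mu,\nu}^*$, which is not a sum of squared magnitudes; the same happens to $p_0^2$. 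At that point the kindness hypothesis no longer applies in the form you invoke. The repair is to refine the matrix units: rewrite $\mu\nu^*=\mu q_F\nu^*+\sum_{e\in F}(\mu e)(\nu e)^*$ with $q_F=r(\nu)-\sum_{e\in F}ee^*$ for a suitable finite $F\subset s^{-1}(r(\nu))$, iterating until distinct terms of each $p_d$ are supported on mutually orthogonal generalized matrix units. These units again form full (finitely supported) matrix algebras with conjugate-transpose involution, their diagonal elements $\mu q_F\mu^*$ still lie in $D_R(E)$, and your computation then goes through verbatim. So the proof is correct modulo this repair, which you flagged but did not carry out.
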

\begin{coro}\label{coro:tokel2}
Let $R$ be a kind ring, $E$ and $F$ graphs, and $\phi:L_R(E)\to L_R(F)$ a homomorphism of $\ast$-algebras. Then $\phi(D_R(E))\subset D_R(F)$.
\end{coro}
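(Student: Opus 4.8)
The plan is to deduce the statement directly from Proposition \ref{prop:tokel2}, which is the only substantial input; the role of the hypothesis that $R$ is kind is entirely absorbed into that proposition. First I would recall from Example \ref{ex:proje} that $D_R(E)$ is generated as an $R$-module by the elements $\alpha\alpha^*$ with $\alpha\in\Path(E)$, and that each such generator is a projection. Hence it suffices to show that $\phi$ carries each of these generators into $D_R(F)$: since $\phi$ is $R$-linear and $D_R(F)$ is an $R$-submodule of $L_R(F)$, any $R$-linear combination of images of generators again lies in $D_R(F)$.

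Next I would observe that a homomorphism of $\ast$-algebras sends projections to projections. Indeed, if $p=\alpha\alpha^*$, then $\phi(p)^2=\phi(p^2)=\phi(p)$ because $\phi$ is multiplicative and $p$ is idempotent, while $\phi(p)^*=\phi(p^*)=\phi(p)$ because $\phi$ commutes with the involution and $p$ is self-adjoint. Thus $\phi(\alpha\alpha^*)$ is a projection in $L_R(F)$.

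Finally, since $R$ is kind, Proposition \ref{prop:tokel2} applies to the graph $F$ and guarantees that every projection of $L_R(F)$ lies in $D_R(F)$; in particular $\phi(\alpha\alpha^*)\in D_R(F)$ for every $\alpha\in\Path(E)$. Combining this with the $R$-linearity of $\phi$ and the reduction to generators from the first step yields $\phi(D_R(E))\subseteq D_R(F)$, as desired.

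The argument contains no real obstacle, precisely because the difficulty has been isolated in Proposition \ref{prop:tokel2}: the genuinely delicate point is that kindness of $R$ forces arbitrary projections of a Leavitt path algebra to be diagonal, and once that is granted the corollary is essentially a functoriality statement. If anything deserves a moment of care, it is only the verification that $\phi$ is $R$-linear (so that it respects $R$-linear spans) and that it preserves idempotency and self-adjointness simultaneously, which together are exactly what ``homomorphism of $\ast$-algebras'' provides.
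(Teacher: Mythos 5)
Your argument is correct and follows exactly the same route as the paper's proof: reduce to the projection generators $\alpha\alpha^*$ of $D_R(E)$, use that a $\ast$-homomorphism preserves projections and is $R$-linear, and invoke Proposition \ref{prop:tokel2} to place the images in $D_R(F)$. There is nothing to add.
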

\begin{proof}
By Example \ref{ex:proje}, $D_R(E)$ is $R$-linearly generated by projections. Because $\phi$ is a $\ast$-homomorphism, it maps projections to projections. Because it is a homomorphism of $R$-algebras, it is $R$-linear. Hence $\phi(D_R(E))\subset D_R(F)$, by Proposition \ref{prop:tokel2}. 
\end{proof}

\begin{rem}\label{rem:kind}
Proposition \ref{prop:tokel2} was first proved for finite graphs in \cite{johsor}*{Theorem 5.6} and for $\ast$-subrings $R\subset\C$ such that for
$n\ge 2$ and $x\in R^n$,
\[
1=\sum_{j=1}^n|x_j|^2\Rightarrow \exists !\, 1\le j\le n \text{ such that } x_j\ne 0. 
\]
It was shown in \cite{tokel2}*{Section 3} that any kind ring satisfies the above property. In fact the converse is also true, as was proved later by Steinberg in \cite{steinkind}*{Theorem 2}. The latter theorem further shows that kind rings are precisely the unital $\ast$-subrings of $\C$ such that for any Hausdorff ample groupoid $\cG$, every projection in the groupoid algebra $\cA_R(\cG)$ belongs to the diagonal subalgebra. Since Leavitt path algebras are particular cases of such groupoid algebras \cite{gpdgen}*{Proposition 4.3}, Proposition \ref{prop:tokel2} follows from \cite{steinkind}*{Theorem 2}.
\end{rem}

Next we turn our attention to graded homomorphisms. It was noted in \cite{apgrad}*{Example 4.2} that there is no $\Z$-graded isomorphism $L_2\iso L_{2}^-$. One may also consider, for $m\ge 2$, the associated $\Z/m\Z$-grading 
\[
L(E)_{\ol{r}}=\bigoplus_{q\in \Z}L(E)_{mq+r},\,\, (r\in \{0,\dots,m-1\}). 
\]
\begin{thm}[Arnone and Corti\~nas \cite{ac1}*{Theorem 1.1}]\label{thm:noexistis}
Let $R$ be a field or a principal ideal domain, $n\in \N_{\ge 2}$ and $m\in \{0\}\cup \N_{\ge 2}$. Then there is no unital $\Z/m\Z$-graded ring homomorphism $L_n(R)\to L_{n^-}(R)$ nor in the opposite direction. 
\end{thm}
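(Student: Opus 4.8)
The plan is to pass to graded $K$-theory and argue entirely at the level of the graded Grothendieck group. Write $R_m=\Z[\Z/m\Z]$, so that $R_0=\Z[x,x^{-1}]$ and $R_m=\Z[x]/(x^m-1)$ for $m\ge 2$, the variable $x$ recording the grading shift. Because $R$ is a field or a PID, for any finite regular graph $E$ the graded Grothendieck group, taken with its $\Z/m\Z$-grading, is the pointed $R_m$-module
\[
K_0^{\gr}(L_R(E))\cong\coker\big(I-xA_E^{t}\colon R_m^{E^0}\to R_m^{E^0}\big),
\]
the vertex classes $[v]$ being the images of the standard basis vectors and the order unit being $[1_{L_R(E)}]=\sum_{v}[v]$; this is the $\Z/m\Z$-graded analogue of the computation in Example \ref{ex:ultrale}, obtained from the $\Z$-graded statement by base change along $\Z[x,x^{-1}]\to R_m$ (see Section \ref{sec:gradK}). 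A unital $\Z/m\Z$-graded homomorphism induces an order-unit-preserving homomorphism of $R_m$-modules on $K_0^{\gr}$, so it suffices to show that no such module map exists in either direction. For $E=\cR_n$ one has $K_0^{\gr}(L_n)=R_m/(1-nx)$, cyclic on the order unit $[1]$; for $E=\cR_n^-$ one gets a module $M$ on three generators $[v],[u_1],[u_2]$ subject to the three column relations of $I-xA_{\cR_n^-}^{t}$, with order unit $w=[v]+[u_1]+[u_2]$.

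First I would dispose of the direction $L_{n^-}\to L_n$. Any $R_m$-module map $g\colon M\to R_m/(1-nx)$ lands in a module on which $nx$ acts as the identity, so that $x$ is invertible. Feeding this into the relations of $M$, the first relation $(1-nx)[v]=x[u_1]$ forces $g([u_1])=0$, and the second relation $(1-x)[u_1]=x[v]+x[u_2]$ then forces $g([v])+g([u_2])=0$; hence $g(w)=0$. But a unital map must send $w$ to $[1]$, and $[1]\ne 0$ since $R_m/(1-nx)\cong\Z/(n^m-1)$ for $m\ge2$ (and $\cong\Z[1/n]$ for $m=0$), with $n^m-1\ge 3$. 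This contradiction is uniform in $m$, and is the easy half.

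The remaining direction $L_n\to L_{n^-}$ reduces, since $K_0^{\gr}(L_n)$ is cyclic with the single relation $1-nx$, to proving that $(1-nx)\,w\ne 0$ in $M$. When $m=0$ this is clean: the determinant $D=\det(I-xA_{\cR_n^-}^{t})$ is a cubic in $x$ with constant term $1$ and no rational roots (one checks $D(1)=n-1$ and $D(-1)=3n+1$), hence irreducible over the UFD $\Z[x,x^{-1}]$, and a short adjugate computation (via Cramer, the auxiliary determinants reduce to $1-x$ and $1-nx$) identifies the annihilator of $w$ as the principal ideal $(D)$. Since $\deg D=3$ while $1-nx$ has degree $1$, we have $1-nx\notin(D)$, and therefore $(1-nx)w\ne 0$.

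The main obstacle is this same direction for finite $m\ge 2$. Here the degree argument collapses: the roots of $D$ sum to $1-2n$ and multiply to $-1$, so they cannot all be roots of unity, whence $D$ and $x^m-1$ are coprime over $\Q$; consequently $(1-nx)w$ becomes \emph{torsion} in $M$, and one must detect a nonzero integral torsion class. My plan is to evaluate at a primitive $d$-th root of unity $\zeta$ with $d\mid m$ and $d>1$: a solution of $(I-xA^t_{\cR_n^-})a=(1-nx)w$ over $R_m$ would specialize to an integral solution over $\Z[\zeta]$, while Cramer's rule shows the unique solution over $\Q(\zeta)$ has entries with denominator $D(\zeta)$. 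It therefore suffices to exhibit one such $\zeta$ with $D(\zeta)\nmid(1-n\zeta)^2$ in $\Z[\zeta]$. When $m$ is even one may take $\zeta=-1$, where $D(-1)=3n+1$ and $(1-n(-1))^2=(n+1)^2$, and an elementary estimate gives $3n+1\nmid(n+1)^2$ for $n\ge 2$; the general case is handled by a norm computation in $\Z[\zeta]$ for a primitive $p$-th root of unity attached to a prime $p\mid m$. Making this non-divisibility hold simultaneously for all $m\ge2$ and all $n\ge2$ is the technical heart of the argument.
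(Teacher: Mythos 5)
Your reduction is the same as the paper's: pass to $K_0^{\gr}$ with its $\Z[\Z/m\Z]$-module structure and order unit, identify $K_0^{\gr}(L_n)$ with the cyclic module $R_m/(1-nx)$ and $K_0^{\gr}(L_{n^-})$ with the cokernel $M$ of $I-xA^t_{\cR_{n^-}}$ (this is exactly Example \ref{ex:kmgrle} and Lemma \ref{lem:bfmrn}, with $\det(I-xA^t_{\cR_{n^-}})=\xi_n(x)$), and then rule out pointed module maps in each direction. Your treatment of the direction $L_{n^-}\to L_n$ is complete and correct, and agrees in substance with the paper's ``easy half'' (the order unit of $M$ is forced to $0$ because $1-nx$ acts as zero and $x$ acts invertibly on the target). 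Your $m=0$ argument for the other direction (irreducibility of $\xi_n$ plus the Cramer identification $\mathrm{Ann}(w)=(\xi_n)$ using $\Delta_1=1-x$) is also correct, though the paper instead disposes of $m=0$ at the outset by noting that a $\Z$-graded homomorphism is in particular $\Z/2\Z$-graded.

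The genuine gap is the direction $L_n\to L_{n^-}$ for finite $m\ge 2$, which is precisely the part the paper describes as ``more involved'' and defers to \cite{ac1}*{Lemma 6.1, Proposition 6.2 and Theorem 6.3}. You correctly reduce it to showing $(1-nx)w\ne 0$ in $M$, and your specialization strategy is sound as far as it goes: a solution of $Ba=(1-nx)(1,1,1)^t$ over $\Z[x]/(x^m-1)$ would specialize to one over $\Z[\zeta_d]$ for each $d\mid m$, forcing $\xi_n(\zeta_d)\mid(1-n\zeta_d)^2$ there. But you only verify the failure of this divisibility at $\zeta=-1$ (even $m$); for odd $m$ you assert an unexecuted ``norm computation,'' and you yourself flag it as the technical heart. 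Two concrete worries: first, the required non-divisibility must hold uniformly in $n\ge 2$ and in the prime $p\mid m$, and nothing you write guarantees the norm $N(\xi_n(\zeta_p))$ fails to divide $N((1-n\zeta_p)^2)=\bigl((n^p-1)/(n-1)\bigr)^2$ in general; second, even granting the arithmetic, detecting the obstruction in a single quotient $\Z[\zeta_d]$ is only a sufficient condition --- since $\Z[x]/(x^m-1)\to\prod_{d\mid m}\Z[\zeta_d]$ is injective but not surjective, $(1-nx)w$ could a priori vanish in every cyclotomic specialization while remaining nonzero over $\Z[x]/(x^m-1)$, in which case your method would be silent. So the key step of the theorem for general finite $m$ is not established by the proposal.
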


The proof of Theorem \ref{thm:noexistis} uses graded $K$-theory; it will be discussed in Section \ref{subsec:gradK}. 

\subsection{Bivariant algebraic~\topdf{$K$}{K}-theory}\label{subsec:kk}
Let $\K$ be a field and let $\phi,\psi:A\to B$ be $\K$-algebra homomorphisms. An \emph{elementary homotopy} $H:\phi\to \psi$ is an algebra homomorphism $H:A\to B[t]$ such that $H(a)(0)=\phi(a)$ and $H(a)(1)=\psi(a)$, for all $a\in A$. We say that $\phi$ and $\psi$ are \emph{homotopic} and write $\phi\sim\psi$ if there are a finite sequence of algebra homomorphisms $\phi=\phi_0,\dots,\phi_n=\psi$ and elementary homotopies $H_i:\phi_i\to \phi_{i+1}$, where  $0\le i\le n-1$. 
Let $\aha$ be the category of associative algebras over $\K$ and $\K$-linear ring homomorphisms, $\cC$ a category, and 
\begin{equation}\label{map:funh}
\cH:\aha\to\cC    
\end{equation}
a functor. Call $\cH$ \emph{homotopy invariant} if it sends homotopic maps to the same map. Let $\iota_A:A\to \M_\infty(A)$, $a\mapsto \epsilon_{1,1}a$; $\cH$ is called \emph{matricially stable} if $\cH(\iota_A)$ is an isomorphism for all $A\in\aha$. If $\cH$ is matricially stable,  then $\cH$ sends $A\to \M_n(A)$, $a\mapsto \epsilon_{1,1}a$, to an isomorphism, for $1\le n<\infty$ \cite{friendly}*{Section 2.2}. An \emph{extension} of algebras is a sequence of algebra homomorphisms
\[
0\to A\to B\to C\to 0
\]
which is exact as a sequence of vector spaces. The functor \eqref{map:funh} is \emph{excisive} if $\cC$ is triangulated and $\cH$ maps extensions to distinguished triangles. There exists a triangulated category $kk$, whose objects are those of $\aha$, and a functor $j:\aha\to kk$ defined as the identity on objects. The functor $j$ is homotopy invariant, matricially stable and excisive, and universal initial with respect to the latter properties. We write
\[
 kk_n(A,B)=\hom_{kk}(j(A),j(B)[n]),\,\, kk(A,B)=kk_0(A,B).
\]
Furthermore setting the first variable equal to the ground ring, recovers homotopy algebraic $K$-theory 
\begin{equation}\label{eq:kh=kk}
kk_n(\K,B)=KH_n(B).
\end{equation}
Homotopy algebraic $K$-theory $KH$ was defined by Weibel in \cite{kh}; see \cite{friendly}*{Section 5} for an elementary introduction. It follows from \eqref{eq:kh=kk} that 
$KH$ satisfies excision, is homotopy invariant and matricially stable on all $\K$-algebras. Ordinary (i.e. Quillen's) algebraic $K$-theory $K$ has none of those properties, at least not for all algebras and all algebra extensions; so $K$ and $KH$ are different in general. 
There is a natural comparison map 
\[
c_j:K_j(B)\to KH_j(B) \,\, (j\in\Z).
\]
The ring $B$ is called \emph{$K_n$-regular} if the natural map $K_n(B)\to K_n(B[t_1,\dots,t_m])$ is an isomorphism for all $m$, and \emph{$K$-regular} if it is $K_n$-regular for all $n$. If $B$ is $K_n$-regular then $c_j$ is an isomorphism for all $j\le n$ \cite{kh}*{Proposition 1.5}. Thus if $R$ is $K$-regular then $c_j$ is an isomorphism for all $j$.

\begin{ex}\label{ex:khlpa}
If $R$ is regular supercoherent (e.g. if it is regular noetherian) and $E$ is a graph, then $L_R(E)$ is $K$-regular (i.e. $K_n$-regular for all $n\in\Z$) and thus $c_j:K_j(L_R(E))\to KH_j(L_R(E))$ is an isomorphism for all $j\in\Z$ (see \cite{classinvo}*{Lemma 4.3}).
\end{ex}

\subsection{Leavitt path algebras in \topdf{$kk$}{kk}}\label{subsec:leavkk}

\begin{thm}[Corti\~nas and Montero \cite{cm1}*{Theorem 5.4 and Example 5.5}]\label{thm:fundtri} Let $E$ be a countable graph with $|E^0|<\infty$. Then there is a distinguished triangle in $kk$
\[
j(L(E))[1]\to j(\K)^{\reg(E)}\overset{I-A_E^t}{\lra} j(\K)^{E^0}\lra j(L(E)).
\]
\end{thm}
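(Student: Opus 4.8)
The plan is to apply the excisive functor $j$ to the Cohn extension \eqref{sescohn}, namely $0\to \mathcal{K}(E)\to C(E)\to L(E)\to 0$, and then to identify the two outer terms and the connecting map. Since $j$ is excisive, this extension produces a distinguished triangle
\[
j(L(E))[1]\to j(\mathcal{K}(E))\to j(C(E))\to j(L(E)),
\]
so the theorem reduces to three tasks: computing $j(\mathcal{K}(E))$, computing $j(C(E))$, and recognizing the induced map $j(\mathcal{K}(E))\to j(C(E))$ as $I-A_E^t$.

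For the ideal I would use the standard description of $\mathcal{K}(E)$ as a direct sum of matrix algebras. Writing $\Lambda_v$ for the set of paths $\alpha\in\Path(E)$ with $r(\alpha)=v$, the elements $\alpha q_v\beta^*$ (for $\alpha,\beta\in\Lambda_v$ and $v\in\reg(E)$) form matrix units, identifying $\mathcal{K}(E)\cong\bigoplus_{v\in\reg(E)}\M_{\Lambda_v}(\K)$, each summand a matrix algebra over $\K$ of finite or countable size (countable when $v$ lies above a cycle). Matricial stability of $j$ gives $j(\M_{\Lambda_v}(\K))\cong j(\K)$, and since $\reg(E)$ is finite and $j$ preserves finite direct sums, $j(\mathcal{K}(E))\cong j(\K)^{\reg(E)}$, with the $v$-th summand generated by the rank-one projection $[q_v]$.

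The crux, and the step I expect to be the main obstacle, is showing that the vertex inclusion $\K^{E^0}=\mspan\{v:v\in E^0\}\hookrightarrow C(E)$ is a $kk$-equivalence, so that $j(C(E))\cong j(\K)^{E^0}$. This is the algebraic avatar of the theorem that the Toeplitz (Cuntz--Pimsner) algebra is $KK$-equivalent to its coefficient algebra: one views $C(E)$ as the Toeplitz algebra of the bimodule of edges over $\K^{E^0}$. The naive attempt to contract $C(E)$ onto its vertices by scaling the edges $e\mapsto te$, $e^*\mapsto te^*$ fails, because relation (3) $e^*f=\delta_{e,f}r(e)$ is not preserved. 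Instead I would construct a genuine elementary homotopy, or a short chain of them, built from the partial isometries and ghost edges --- the algebraic analogue of the Fock-space shift used by Cuntz --- connecting $\mathrm{id}_{C(E)}$ to a map factoring through the vertices, thereby exhibiting an inverse to the inclusion in $kk$. A convenient reformulation is that $C(E)\cong L(\hat E)$, where $\hat E$ adjoins to each $v\in\reg(E)$ a new sink $v'$ and a single edge realizing $q_v=\alpha_v\alpha_v^*$; the equivalence then amounts to contracting the added spikes. Making this homotopy exist and be compatible with the grading is the delicate point, and as a byproduct $C(E)\cong L(\hat E)$ is $K$-regular by Example \ref{ex:khlpa}.

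Finally, under these identifications the connecting map $j(\mathcal{K}(E))\to j(C(E))$ is induced by the inclusion of the ideal, and I would pin it down on $K_0$, which suffices because $\hom_{kk}(j(\K),j(\K))=KH_0(\K)=\Z$ forces the map to be given by an integer matrix (all algebras in sight being $K$-regular, so that $K_0=KH_0=kk_0(\K,-)$ and the $kk$-morphism is determined by its effect on $K_0$). Since $ee^*$ is Murray--von Neumann equivalent to $e^*e=r(e)$, one has $[q_v]=[v]-\sum_{e\in s^{-1}(v)}[ee^*]=[v]-\sum_{w}(A_E)_{v,w}[w]$ in $K_0(C(E))=\Z^{E^0}$. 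Thus the inclusion sends the generator indexed by $v\in\reg(E)$ to the $v$-th column of $I-A_E^t$, identifying the map with $I-A_E^t\colon j(\K)^{\reg(E)}\to j(\K)^{E^0}$ and completing the triangle.
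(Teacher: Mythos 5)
Your proposal follows the paper's argument essentially step for step: apply the excisive functor $j$ to the Cohn extension \eqref{sescohn}, identify $j(\cK(E))\cong j(\K)^{\reg(E)}$ via the matrix units $\alpha q_v\beta^*$ and matricial stability, invoke the (genuinely laborious, and in the paper only cited) fact that the vertex inclusion $\K E^0\hookrightarrow C(E)$ is a $kk$-equivalence, and then pin down the connecting map as $I-A_E^t$ by computing on $K_0$ using $kk(\K,\K)=\Z$ and $[ee^*]=[r(e)]$. You correctly single out the $kk$-equivalence $\K E^0\simeq C(E)$ as the one step requiring real work; this is exactly the content of \cite{cm1}*{Theorem 4.2}, which the paper likewise quotes without proof.
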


\begin{proof}[Sketch of the proof] The ideal $\cK(E)\triqui C(E)$ of \eqref{sescohn} is the direct sum over $v\in \reg(E)$ of the ideal $\cK(E)_v$ generated by the element $q_v$ of \eqref{eq:qv}. The elements $\alpha q_v\beta^*$ where $\alpha$ and $\beta$ are paths with $r(\alpha)=r(\beta)=v$ form a basis of $\cK(E)_v$, and the assignment $\alpha q_v\beta^*\mapsto \epsilon_{\alpha,\beta}$ induces an isomorphism from $\cK(E)_v$ to the algebra of finitely supported matrices indexed by the set of all paths in $E$ ending in $v$. Since $E$ is countable, so is the latter set. Hence by matricial stability, the algebra homomorphism 
\[
\K \reg(E)=\bigoplus_{v\in \reg(E)}\K\to \cK(E), \, v\mapsto q_v
\]
is a isomorphism in $kk$. A more laborious argument \cite{cm1}*{Theorem 4.2} shows that the inclusion $\K E^0$ into the Cohn algebra $C(E)$ is a $kk$-isomorphism as well. Thus up to isomorphism, in $kk$ the sequence \eqref{sescohn} gives rise to a triangle which, omitting the functor $j$, looks as follows
\begin{equation}\label{eq:triang}
L(E)[+1]\to \K \reg(E)\overset{f}{\lra}\K E^0\to L(E).    
\end{equation}

Here $f$ is induced by the inclusion $l$ of \eqref{sescohn}. Using \eqref{eq:kh=kk} and Example \ref{ex:khlpa}, we have
\begin{equation*}
kk(\K \reg(E),\K E^0)=kk(\K,\K)^{E^0\times \reg(E)}=KH_0(\K)^{E^0\times \reg(E)}
=K_0(\K)^{E^0\times \reg(E)}=\Z^{E^0\times \reg(E)}.
\end{equation*}
It is not hard to get from this and from \eqref{eq:qv} that $f=I-A_E^t$.
The theorem now follows using that $j$ is excisive. 
\end{proof}
Let $E$ be as in Theorem \ref{thm:fundtri}. In the next corollary and elsewhere we write
\[
\BF^\vee(E)=\coker(I^t-A_E).
\]

\begin{coro}\label{coro:fundtri} For any algebra $R$ there is an exact sequence
\[
0\to \BF^\vee(E)\otimes KH_1(R)\to kk(L(E),R)\to \hom(\BF(E),KH_0(R))\to 0.
\]   
\end{coro}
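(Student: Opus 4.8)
The plan is to apply the representable (hence cohomological) contravariant functor $kk(-,R)=\hom_{kk}(j(-),j(R))$ to the distinguished triangle of Theorem \ref{thm:fundtri} and to read off the resulting long exact sequence. Since $j$ is additive and $kk_n(\K,R)=KH_n(R)$ by \eqref{eq:kh=kk}, for every finite set $S$ one has $kk_n(\K^{S},R)=KH_n(R)^{S}$; applied to $S=\reg(E)$ and $S=E^0$ (both finite, as $|E^0|<\infty$) this identifies the groups attached to $j(\K)^{\reg(E)}$ and $j(\K)^{E^0}$ with $KH_n(R)^{\reg(E)}$ and $KH_n(R)^{E^0}$. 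The morphism $I-A_E^{t}\colon j(\K)^{\reg(E)}\to j(\K)^{E^0}$, regarded via $kk(\K\reg(E),\K E^0)=\Z^{E^0\times\reg(E)}$ as the integer matrix $I-A_E^{t}$, induces on $kk_n(-,R)$ its $\Z$-linear dual, i.e.\ the transpose $I^{t}-A_E$ acting on coefficient vectors, $(I^{t}-A_E)\otimes\id\colon KH_n(R)^{E^0}\to KH_n(R)^{\reg(E)}$.

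With these identifications, and because the suspension in the triangle sits on $j(L(E))$, the segment of the long exact sequence surrounding $kk(L(E),R)=kk_0(L(E),R)$ reads
\[
KH_1(R)^{E^0}\xrightarrow{\,I^{t}-A_E\,}KH_1(R)^{\reg(E)}\xrightarrow{\,\partial\,}kk(L(E),R)\xrightarrow{\,v^{*}\,}KH_0(R)^{E^0}\xrightarrow{\,I^{t}-A_E\,}KH_0(R)^{\reg(E)}.
\]
A five-term exact sequence of this shape collapses formally to a short exact sequence $0\to\coker\bigl(I^{t}-A_E\text{ on }KH_1\bigr)\to kk(L(E),R)\to\ker\bigl(I^{t}-A_E\text{ on }KH_0\bigr)\to 0$, so it remains only to identify the two outer groups.

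For the right-hand kernel I would apply the left-exact functor $\hom(-,KH_0(R))$ to the presentation $\Z^{\reg(E)}\xrightarrow{I-A_E^{t}}\Z^{E^0}\to\BF(E)\to 0$ of $\BF(E)=\coker(I-A_E^{t})$; this exhibits $\ker\bigl(I^{t}-A_E\colon KH_0(R)^{E^0}\to KH_0(R)^{\reg(E)}\bigr)$ as $\hom(\BF(E),KH_0(R))$ and shows $v^{*}$ surjects onto it. For the left-hand cokernel I would instead apply the right-exact functor $-\otimes KH_1(R)$ to the presentation $\Z^{E^0}\xrightarrow{I^{t}-A_E}\Z^{\reg(E)}\to\BF^\vee(E)\to 0$ of $\BF^\vee(E)=\coker(I^{t}-A_E)$, identifying $\coker\bigl(I^{t}-A_E\text{ on }KH_1(R)\bigr)$ with $\BF^\vee(E)\otimes KH_1(R)$. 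Substituting both identifications into the collapsed sequence yields the asserted short exact sequence.

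The result is, at bottom, a formal consequence of Theorem \ref{thm:fundtri}, and the only delicate points are bookkeeping ones. The first is the transposition: the very same integer matrix appears as $I-A_E^{t}$ inside the triangle but as its dual $I^{t}-A_E$ after applying $kk(-,R)$, and it is exactly this switch that makes the right-hand term involve $\BF(E)$ while the left-hand term involves $\BF^\vee(E)$. The second is the degree shift: one must verify that the connecting homomorphism feeds $KH_1(R)^{\reg(E)}$ (and not $KH_{-1}$) into $kk(L(E),R)$, which is dictated by the placement of the suspension in the triangle of Theorem \ref{thm:fundtri}. Once these are pinned down, the dichotomy between a $\hom$-term (arising from left-exactness, hence a kernel and $\BF(E)$) and a tensor term (arising from right-exactness, hence a cokernel and $\BF^\vee(E)$) is automatic, and I expect no genuine obstacle beyond this careful accounting.
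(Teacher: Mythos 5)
Your proposal is correct and follows essentially the same route as the paper: apply the contravariant functor $kk(-,R)$ to the triangle of Theorem \ref{thm:fundtri}, use $kk_n(\K,R)=KH_n(R)$ to identify the outer terms, and then recognize the resulting cokernel as $\BF^\vee(E)\otimes KH_1(R)$ (right-exactness of $-\otimes KH_1(R)$) and the kernel as $\hom(\BF(E),KH_0(R))$ (left-exactness of $\hom(-,KH_0(R))$). The two bookkeeping points you flag — the transposition of the matrix under dualization and the placement of the suspension — are exactly the ones the paper's proof silently relies on.
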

\begin{proof}
Apply $kk(-,R)$ to the triangle \eqref{eq:triang} and use \eqref{eq:kh=kk} to get an exact sequence
\begin{equation}\label{seq:uct1}
KH_1(R)^{E^0}\overset{I^t-A_E}{\lra} KH_1(R)^{\reg(E)}\lra kk(L(E),R)\to 
KH_0(R)^{E^0}\overset{I^t-A_E}{\lra} KH_0(R)^{\reg(E)}   
\end{equation}
The corollary follows upon interpreting the first map as the result of tensoring $I^t-A_E:\Z^{E^0}\to \Z^{\reg(E)}$ with $K_1(R)$ and the last map as resulting from applying $\hom_{\Z}(-,KH_0(R))$ to $I-A_E^t$.   
\end{proof}
\begin{rem}\label{rem:k-1=bfv}
The following computation follows from Theorem \ref{thm:fundtri} (see 
\cite{cm1}*{Section 6})
\begin{equation}\label{eq:k-1=bfv}
kk_{-1}(L(E),\K)=\BF^\vee(E).    
\end{equation}
It was shown in \cite{cm1}*{Lemma 7.21} that under the identifications \eqref{eq:k-1=bfv} and \eqref{eq:kh=kk}, the map $\BF^\vee(E)\otimes KH_1(R)\to kk(L(E),R)$ is given by composition in $kk$. 
\end{rem}

\begin{rem}\label{rem:uct2} 
By interpreting the first map in \eqref{seq:uct1} as the result of applying $\hom(-,KH_1(R))$
to $I-A_E^t:\Z^{\reg(E)}\to \Z^{E^0}$ and proceeding as in \cite{cm1}*{Theorem 7.12 and Corollary 7.20}, one obtains an exact sequence
\begin{equation*}
0\to \Ext^1_\Z(\BF(E),KH_1(R))\to kk(L(E),R)\to \\
\hom(\BF(E),KH_0(R))\oplus\hom(\ker(I-A_E^t),KH_1(R))\to 0    
\end{equation*}
Remark the similarities between the exact sequence above and that of the UCT for $KK$ of $C^*$-algebras \eqref{seq:uct}. Next we specialize to the case when $R=L(F)$ for some finite graph $F$; by Example \ref{ex:khlpa}, we may replace $KH$ by $K$, and use \eqref{eq:k01le}.
If $\K=\C$, then $\BF(F)\otimes \K^*$ is injective, so we obtain 
\begin{multline}\label{seq:kklelf}
0\to \Ext_\Z^1(\BF(E), \ker(I-A_F^t))\to kk(L(E),L(F))\to\\
\hom(\BF(E),\BF(F))\oplus \hom(\ker(I-A_E^t),\ker(I-A_F^t)\oplus \BF(F)\otimes\C^*)\to 0  
\end{multline}
Comparing the above sequence with \eqref{seq:kkcecf} we see that \eqref{seq:kklelf}  has an extra term $\hom(\ker(I-A_E^t), \BF(F)\otimes\C^*)$, which vanishes if $\BF(F)$ is finite. 
\end{rem}
\begin{rem}\label{rem:ojouct} 
It is not hard to show, using  Corollary \ref{coro:fundtri}, that if $E$ and $F$ are vertex-finite graphs and $\xi\in kk(L(E),L(F))$ is such that $K_i(\xi)$ is an isomorphism for $i=0,1$ then $\xi$ is an isomorphism in $kk$ \cite{cm1}*{Proposition 5.10}. However the existence of group isomophisms $\xi_i:K_i(L(E))\iso K_i(L(F))$ for $i=0,1$ does not imply that $j(L(E))\cong j(L(F))$, see \cite{cm1}*{Remark 5.11}. Thus the $kk$-analogue of Remark \ref{rem:uctis} does not hold. 
\end{rem}

In \cite{cm1}*{Theorem 6.10}, Theorem \ref{thm:fundtri} was used to establish a structure theorem for the $kk$-isomorphism class of Leavitt path algebras of vertex-finite graphs. Then the structure theorem was used to deduce the following. 

\begin{thm}[\cite{cm1}*{Corollary 6.11}]\label{thm:kksing} Let $E$ and $F$ be graphs with finitely many vertices. Then the following are equivalent. 
\item[i)] $j(L(E))\cong j(L(F))$.
\item[ii)] $K_0(L(E))\cong K_0(L(F))$ and  $kk_{-1}(L(E),\K)\cong kk_{-1}(L(F),\K)$.
\item[iii)] $\BF(E)\cong\BF(F)$ and $|\sing(E)|=|\sing(F)|$.
\end{thm}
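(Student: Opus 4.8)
The plan is to prove the three implications i) $\Rightarrow$ ii) $\Rightarrow$ iii) $\Rightarrow$ i), after first translating conditions ii) and iii) into statements about the two finitely generated abelian groups $\BF(E)=\coker(I-A_E^t)$ and $\BF^\vee(E)=\coker(I^t-A_E)$. By Example \ref{ex:k0bf} we have $K_0(L(E))=\BF(E)$, and by equation \eqref{eq:k-1=bfv} in Remark \ref{rem:k-1=bfv} we have $kk_{-1}(L(E),\K)=\BF^\vee(E)$. Thus ii) reads $\BF(E)\cong\BF(F)$ and $\BF^\vee(E)\cong\BF^\vee(F)$, while iii) reads $\BF(E)\cong\BF(F)$ and $|\sing(E)|=|\sing(F)|$.

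The heart of the equivalence ii) $\Leftrightarrow$ iii) is a linear-algebra lemma: for any homomorphism $\phi\colon\Z^m\to\Z^n$ of free abelian groups, the torsion subgroups of $\coker(\phi)$ and $\coker(\phi^t)$ are isomorphic, while $\rk\coker(\phi)-\rk\coker(\phi^t)=n-m$. First I would prove this by putting $\phi$ in Smith normal form $D=\diag(d_1,\dots,d_r)$ with $r=\rk\phi$: then $\coker(\phi)\cong\Z^{\,n-r}\oplus\bigoplus_i\Z/d_i$ and $\coker(\phi^t)\cong\Z^{\,m-r}\oplus\bigoplus_i\Z/d_i$, so they share the torsion $\bigoplus_i\Z/d_i$ and have ranks differing by $n-m$. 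Applying this to $\phi=I-A_E^t$, where $m=|\reg(E)|$ and $n=|E^0|$ so that $n-m=|\sing(E)|$, yields that $\BF(E)$ and $\BF^\vee(E)$ have the same torsion and that $\rk\BF^\vee(E)=\rk\BF(E)-|\sing(E)|$. Since a finitely generated abelian group is determined by its rank and torsion, $\BF(E)\cong\BF(F)$ together with either $\BF^\vee(E)\cong\BF^\vee(F)$ or $|\sing(E)|=|\sing(F)|$ forces the other, giving ii) $\Leftrightarrow$ iii).

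For i) $\Rightarrow$ ii) I would use that an isomorphism $j(L(E))\cong j(L(F))$ in $kk$ induces isomorphisms on all $kk$-groups with $j(\K)$ in the complementary slot. Applying $\hom_{kk}(j(\K),-)$ and \eqref{eq:kh=kk} gives $KH_0(L(E))\cong KH_0(L(F))$; as $L(E)$ is $K$-regular (Example \ref{ex:khlpa}), $KH_0=K_0$, so $K_0(L(E))\cong K_0(L(F))$. Applying $\hom_{kk}(-,j(\K)[-1])$ gives $kk_{-1}(L(E),\K)\cong kk_{-1}(L(F),\K)$ directly.

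The main obstacle is iii) $\Rightarrow$ i), where I would reconstruct the structure theorem behind Theorem \ref{thm:fundtri} rather than cite it. Writing $\phi_E=I-A_E^t$, that triangle exhibits $j(L(E))$ as the cofiber of $\phi_E\colon j(\K)^{\reg(E)}\to j(\K)^{E^0}$, an integer matrix regarded in $kk(\K,\K)^{E^0\times\reg(E)}=\Z^{E^0\times\reg(E)}$. Choosing $P,Q$ over $\Z$ with $P\phi_E Q=D$ in Smith normal form, and noting that invertible integer matrices give isomorphisms of the objects $j(\K)^{E^0},j(\K)^{\reg(E)}$ in $kk$, the standard fact that a morphism of distinguished triangles which is an isomorphism on two terms is one on the third identifies $j(L(E))$ with the cofiber of $D$. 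As $D$ is a diagonal sum of the blocks $\Z\xrightarrow{d_i}\Z$, of $m-r$ blocks $\Z\to 0$, and of $n-r$ blocks $0\to\Z$, the cofiber splits:
\[
j(L(E))\cong\Big(\bigoplus_{i=1}^r C_{d_i}\Big)\oplus j(\K)^{\oplus(n-r)}\oplus j(\K)[1]^{\oplus(m-r)},
\]
where $C_d$ is the cofiber of $d\cdot\id_{j(\K)}$, depending only on $d$. By the rank/torsion computation, $n-r=\rk\BF(E)$, $m-r=\rk\BF^\vee(E)$, and the $d_i$ are the invariant factors of the common torsion of $\BF(E)$ and $\BF^\vee(E)$; so iii), which supplies $\BF(E)\cong\BF(F)$ and $\BF^\vee(E)\cong\BF^\vee(F)$, matches all three families of summands and yields $j(L(E))\cong j(L(F))$. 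The point demanding most care is the absence of Bott periodicity in algebraic $kk$, which keeps $j(\K)$ and $j(\K)[1]$ genuinely distinct and makes the rank bookkeeping faithful; verifying that the diagonal form of $D$ is respected so that the cofiber splits as an honest direct sum is the delicate step.
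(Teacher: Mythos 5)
Your proof is correct and follows essentially the same route as the paper's: the theorem is deduced in \cite{cm1} from a structure theorem (\cite{cm1}*{Theorem 6.10}) obtained by applying the Smith normal form to the cofiber presentation of Theorem \ref{thm:fundtri}, which is precisely the decomposition you reconstruct, and the rank/torsion bookkeeping for $\BF(E)$ versus $\BF^\vee(E)$ is the same. The only cosmetic imprecision is that invariant factors $d_i=1$ do not appear in the torsion of $\BF(E)$, but since the corresponding cones $C_1$ vanish this does not affect the matching of summands.
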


\begin{rem} In \cite{ruto}*{Theorem 7.4}, Ruiz and Tomforde showed that if $E$ and $F$ are simple and both have infinitely many edges, then condition (iii) of Theorem \ref{thm:kksing} holds if and only if $L(E)$ and $L(F)$ are Morita equivalent. Thus for the Leavitt path algebras of such graphs, Morita equivalence is the same as $kk$-isomorphism. In particular for $E$ and $F$ as in \cite{ruto}*{Example 11.2}, $L_\Q(E)$ is not Morita equivalent to $L_\Q(F)$. This is proved in loc. cit. by showing that $K_2(L_\Q(E))\not\cong K_2(L_\Q(E))$. For an alternative proof, use  Theorem \ref{thm:fundtri} and the matrix calculations of \cite{ruto} to show that, over any field $\K$, $j(L(E))\cong j(\K)^2$ and $j(L(F))=j(\K)^2\oplus j(\K)[-1]$, so that $kk_{-1}(L(E),\K)=0$ and $kk_{-1}(L(F),\K)=\Z$. 
\end{rem}
\subsection{Homotopy Classification}\label{subsec:homotopy1}
 Let $B$ be a $\K$-algebra, where $\K$ is a field and  $\iota_2:B\to \M_2(B)$, $\iota_2(b)=\epsilon_{1,1}b$,  the upper left corner inclusion into the matrix algebra. We say that two algebra homomorphisms $\phi,\psi:A\to B$ are \emph{$\M_2$-homotopic}, and write $\phi\sim_{\M_2}\psi$, if $\iota_2\circ\phi\sim\iota_2\circ\psi$.

\begin{thm}[Corti\~nas and Montero \cite{cm2}*{Theorem 1.1}]\label{thm:cm2}
 Let $E$ and $F$ be finite SPI graphs and let $\K$ be a field.  Let 
 $\xi_0:K_0(L(E))\iso K_0(L(F))$ be an isomorphism of groups. Then there are algebra homomorphisms $\phi:L(E)\leftrightarrows L(F):\psi$ such that $\psi\circ\phi\sim_{\M_2}\id_{L(E)}$ and $\phi\circ\psi\sim_{\M_2}\id_{L(F)}$. If furthermore $\xi_0([L(E)])=[L(F)]$, then $\phi$ and $\psi$ can be chosen to be unital homomorphisms so that $\psi\circ\phi\sim\id_{L(E)}$ and $\phi\circ\psi\sim\id_{L(F)}$
\end{thm}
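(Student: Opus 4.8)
The plan is to factor the statement through the bivariant homology category $kk$: first realize $\xi_0$ by a genuine $kk$-isomorphism, and then represent that isomorphism and its inverse by honest algebra homomorphisms whose composites are homotopic to the identities. Throughout I use that a finite SPI graph is regular (Remark~\ref{rem:ror}), so that $\sing(E)=\sing(F)=\emptyset$, the matrix $I$ is the identity, and $K_0(L(E))=\BF(E)$, $K_0(L(F))=\BF(F)$; since such a graph is strongly connected and is not a single cycle, its adjacency matrix has spectral radius $>1$, whence $\det(I-A_E)\ne 0$ and both $\BF(E)$ and $\BF^\vee(E)=\coker(I^t-A_E)$ are finite. \emph{Step 1.} By Example~\ref{ex:khlpa} we may replace $KH$ by $K$ throughout, so Corollary~\ref{coro:fundtri} with $R=L(F)$ furnishes a surjection $kk(L(E),L(F))\twoheadrightarrow\hom(\BF(E),\BF(F))$ carrying a class to its effect on $K_0$. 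I would lift $\xi_0$ to $\tilde\xi$ and $\xi_0^{-1}$ to $\tilde\eta$; then $\tilde\eta\circ\tilde\xi$ and $\id$ have the same image $\id_{\BF(E)}$, so $\tilde\eta\circ\tilde\xi=\id+n$ with $n$ in the kernel ideal $J=\im\!\big(\BF^\vee(E)\otimes K_1(L(E))\to kk(L(E),L(E))\big)$ of Corollary~\ref{coro:fundtri}. The crucial point is that $J^2=0$: a product of two elements of $J$ factors through the pairing $kk_{-1}(L(E),\K)\otimes kk_1(\K,L(E))\to kk(\K,\K)=\Z$, and this pairing vanishes because $kk_{-1}(L(E),\K)=\BF^\vee(E)$ is finite by Remark~\ref{rem:k-1=bfv} and so maps trivially to the torsion-free group $\Z$. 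Hence $\id+n$ is invertible, and the same argument on the $L(F)$ side shows $\tilde\xi\circ\tilde\eta$ invertible; therefore $\tilde\xi$ is a $kk$-isomorphism with $K_0(\tilde\xi)=\xi_0$ (consistently with $j(L(E))\cong j(L(F))$ from Theorem~\ref{thm:kksing}).

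\emph{Step 2.} The heart of the proof is a realization theorem: for finite SPI graphs $E,F$ the natural map from $\M_2$-homotopy classes of algebra homomorphisms $[L(E),L(F)]$ to $kk(L(E),L(F))$ is a bijection. The source is a group because $L(F)$ is SPI, hence properly infinite: a choice of orthogonal generators $s_1,s_2$ with $s_i^*s_j=\delta_{ij}$ lets one add homomorphisms by $(\phi\oplus\psi)(a)=s_1\phi(a)s_1^*+s_2\psi(a)s_2^*$, well defined up to homotopy. For surjectivity I would use that an algebra homomorphism $L(E)\to L(F)$ is exactly an $E$-family of projections and partial isometries in $L(F)$ obeying the Leavitt relations; since $\cV(L(F))^*=K_0(L(F))$ is a group (Theorem~\ref{thm:agop0}) and $L(F)$ is properly infinite, one constructs such a family realizing any prescribed value on $K_0$, and then uses the operation $\oplus$ to also hit the kernel summand $\BF^\vee(E)\otimes K_1(L(F))$ of Corollary~\ref{coro:fundtri}, so that the entire class $\tilde\xi$ (not merely $\xi_0$) is represented. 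For injectivity one shows that two families inducing the same class in $kk$ are $\M_2$-homotopic, implementing the relevant Murray--von Neumann equivalences in the properly infinite algebra $L(F)$ by explicit homotopies.

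Granting the realization theorem, choose $\phi:L(E)\to L(F)$ and $\psi:L(F)\to L(E)$ with $j(\phi)=\tilde\xi$ and $j(\psi)=\tilde\eta=\tilde\xi^{-1}$. Then $j(\psi\circ\phi)=\id=j(\id_{L(E)})$ and $j(\phi\circ\psi)=\id=j(\id_{L(F)})$, so injectivity of the realization gives $\psi\circ\phi\sim_{\M_2}\id_{L(E)}$ and $\phi\circ\psi\sim_{\M_2}\id_{L(F)}$, which is the first assertion. For the unital refinement one assumes $\xi_0([L(E)])=[L(F)]$; then the prescribed $K_0$-data respect the order units $[1_{L(E)}],[1_{L(F)}]$, so the realizing $E$-families may be taken with $\sum_{v}Q_v=1_{L(F)}$, i.e.\ $\phi,\psi$ unital, and one proves the unital form of the realization theorem in which unit-preserving homomorphisms that coincide in $kk$ are joined by honest (unstabilized) homotopies through unital homomorphisms, eliminating the passage to $\M_2$.

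The decisive obstacle is the realization theorem of Step~2, the algebraic counterpart of the Kirchberg--Phillips homotopy classification. Both halves are substantial: surjectivity requires building graph families in $L(F)$ that represent arbitrary bivariant classes, and injectivity requires upgrading equality in $kk$ to an explicit homotopy of homomorphisms. One cannot imitate the $C^*$-argument via tensorial absorption, since by Theorem~\ref{thm:aratenso} the algebraic analogues of Kirchberg's absorption theorems fail; it is the proper infiniteness of SPI Leavitt path algebras, in place of $\cO_\infty$-stability, that must be exploited throughout.
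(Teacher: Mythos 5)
Your overall architecture --- lift $\xi_0$ to a class in $kk(L(E),L(F))$, show the lift is invertible because the kernel of the evaluation onto $\hom(\BF(E),\BF(F))$ is nilpotent, and then invoke a realization theorem identifying $[L(E),L(F)]_{\M_2}\setminus\{0\}$ with $kk(L(E),L(F))$ --- is exactly the paper's route. But your Step~1 contains a genuine error. You assert that a finite SPI graph is strongly connected with spectral radius $>1$, ``whence $\det(I-A_E)\ne 0$ and both $\BF(E)$ and $\BF^\vee(E)$ are finite.'' Neither implication holds. Finite SPI graphs need not be strongly connected (take $u\to v$ with two loops at $v$); and even for a strongly connected SPI graph, a spectral radius greater than $1$ does not prevent $1$ from being an eigenvalue of $A_E$. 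For the graph $E$ with two vertices, two loops at each vertex and one edge in each direction (which is SPI), one has
\[
I-A_E^t=\begin{pmatrix}-1&-1\\-1&-1\end{pmatrix},\qquad \det(I-A_E^t)=0,\qquad \BF(E)\cong\Z.
\]
Your proof that the kernel ideal $J$ squares to zero rests entirely on $\BF^\vee(E)$ being finite, so that the composition pairing lands torsion into the torsion-free group $\Z$; for graphs as above this argument collapses, while the theorem is asserted for all finite SPI graphs. The paper does not prove the square-zero property by a torsion count: it deduces it structurally from \cite{cm1}*{Lemma 7.21} via the argument of \cite{bullift}*{Theorem 3.2}, and that argument must (and does) cover the case $\det(I-A_E^t)=0$, where $\ker(I-A_E^t)\neq 0$ and $\BF^\vee(E)$ has a free summand. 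You would need to substitute that structural argument, or at least handle the free parts of $\BF^\vee(E)$ and $K_1(L(E))$ separately.

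Concerning Step~2: you correctly identify the bijectivity of $[L(E),L(F)]_{\M_2}\setminus\{0\}\to kk(L(E),L(F))$ as the decisive point, and your outline (Cuntz addition via proper infiniteness, construction of $E$-families in $L(F)$ realizing prescribed $K_0$-data using Theorem~\ref{thm:agop0}, then adjusting to hit the kernel summand, and injectivity via explicit homotopies implementing Murray--von Neumann equivalences) matches the strategy of \cite{cm2} at the level of detail the paper itself gives. One discrepancy worth noting: the paper reaches the kernel summand of \eqref{seq:uctle} not by Cuntz-adding further homomorphisms but by twisting a fixed lift $\phi$ by units, using that $K_1(R)=R^*_{\ab}$ for SPI $R$ (\cite{agop}*{Theorem 2.3}); moreover both this surjectivity and the injectivity require a concrete description of the injection in \eqref{seq:uctle}, obtained either by adapting R\o rdam's argument or via Poincar\'e duality. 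These are precisely the parts your proposal acknowledges but does not supply, so the realization theorem remains an unproved black box in your write-up.
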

\begin{proof}[Sketch of the proof] 
If $R$ is $K$-regular (e.g. if it is a Leavitt path algebra) then the exact sequence of Corollary \ref{coro:fundtri} becomes
\begin{equation}\label{seq:uctle}
0\to \BF(E)^\vee\otimes K_1(R)\to kk(L(E),R)\to \hom(\BF(E),K_0(R))\to 0.    
\end{equation}
In particular, setting $R=L(F)$, we see that the element $\xi_0$ of the statement of the theorem lifts to $\xi\in kk(L(E),L(F))$. One checks that the first onto map above is induced by the functor $kk(\K,-)=K_0(-)$. In particular, we have a ring extension
\[
0\to \BF(E)^\vee\otimes K_1(L(E))\to kk(L(E),L(E))\to \hom(\BF(E),\BF(E))\to 0.
\]
It turns out that the kernel of the extension above is a square-zero ideal (this follows from \cite{cm1}*{Lemma 7.21} by the argument of the proof of \cite{bullift}*{Theorem 3.2}). In particular, any element of $ kk(L(E),L(E))$ mapping to the identity
of $\BF(E)$ is invertible. It follows from this that the element $\xi\in kk(L(E),L(F))$ above is a $kk$-isomorphism. Next observe that since $j$ is homotopy invariant and matricially stable, for any $A,B\in\aha$, the map $j:\hom_{\aha}(A,B)\to kk(A,B)$ factors through
\[
[A,B]_{\M_2}=\hom_{\aha}(A,B)/\sim_{\M_2}.
\]
The key part of the proof is to show that for $E$ and $F$ as in the theorem, the induced map is a bijection
\begin{equation}\label{map:homotokk}
[L(E),L(F)]_{\M_2}\setminus\{0\}\iso kk(L(E),L(F)).
\end{equation}
This holds more generally with $L(F)$ replaced by any SPI $K_1$-regular algebra $R$ \cite{cm2}*{Theorem 5.8}. As a first step for doing this, one shows that for any element $\eta_0\in\hom(\BF(E),K_0(R))$ there exists a nonzero homomorphism of algebras $\phi:L(E)\to R$ such that $K_0(\phi)=\eta_0$, which can be chosen to be unital if $\eta_0([L(E)])=[R]$. This is rather elementary using the relations defining $L(E)$ and the fact that any SPI algebra is properly infinite \cite{cm2}*{Theorem 3.1}, 
\cite{classinvo}*{Theorem 9.3}. If now $\eta\in kk(L(E),R)$ maps to $\eta_0$, then $\eta-j(\phi)$ lies in the image of $\BF(E)^\vee\otimes K_1(R)$. By definition, $K_1$ is the abelianization of the full general linear group $\Gl=\bigcup_{n\ge 1}\Gl_n$. The fact that $R$ is SPI implies that it coincides with the abelianization of $R^*=\Gl_1(R)$; $K_1(R)=R^*_{\ab}$ \cite{agop}*{Theorem 2.3}. From here one deduces  that $\eta=j(\psi)$ for some (unital) algebra homomorphism $\psi$ which results by twisting $\phi$ by an element $\zeta$ coming from $\BF^{\vee}(F)\otimes R^*_{\ab}$. This, and the injectivity of \eqref{map:homotokk} are the more delicate parts of the proof; both require certain understanding of the injection
in the exact sequence \eqref{seq:uctle}. In \cite{cm2}, this is done by adapting R\o rdam's argument in \cite{ror}; a different approach using Poincar\'e duality is applied in \cite{classinvo}. The latter establishes \cite{classinvo}*{Theorem 11.2} that if $E$ is a finite essential graph and $E_t$ is the transpose graph (so that $A_{E_t}=(A_E)^t$), then for every pair of algebras $(S,R)$, there is a canonical isomorphism
\[
kk(L(E)\otimes S,R)\cong kk_1(S,R\otimes L(E_t)).
\]
In particular, setting $S=\K$ and using \eqref{eq:kh=kk}, we obtain
\begin{equation}\label{eq:kk=kh1}
kk(L(E),R)\cong KH_1(R\otimes L(E_t)).    
\end{equation}

Moreover one shows \cite{classinvo}*{Lemma 12.3} that under the isomorphism \eqref{eq:kk=kh1}, the injection in \eqref{seq:uctle} corresponds to that obtained by applying $KH_1(-\otimes R)$ to the projection $C(E_t)\to L(E_t)$. This map is well-understood and allows one to understand the injection of \eqref{seq:uctle}, at least when $E$ is essential. By subsequently removing sources that are also regular vertices, one can get replace any finite graph by another without regular sources, and the resulting Leavitt path algebra is Morita equivalent to the original one (this is stated in \cite{flow}*{Proposition 1.4} for the case when $E$ is simple, but it holds in general, see  \cite{B13}*{Proposici\'on 6.1.1}). If the starting graph is SPI, then it has no sinks, so the process just described produces an essential graph. Since being SPI is preserved by Morita equivalence \cite{agop}*{Corollary 1.7}, one can reduce to the case when $E$ is an essential SPI graph and proceed as indicated above. 
\end{proof}

\begin{rem}\label{rem:homotoKK}
Let $\fA,\fB$ be Kirchberg $C^*$-algebras and let $[[\fA,\fB]]$ and $[[\fA,\fB]]_{M_2}$ be the sets of (continuous) homotopy equivalence classes and $\M_2$-homomotopy equivalence classes of $\ast$-homomorphisms $\fA\to\fB$. By \cite{bullift}*{Lemma A.5}, $[[\fA,\fB\sotimes\cK]]=[[\fA,\fB]]_{\M_2}$. The analogue of \eqref{map:homotokk}, stating that 
\begin{equation}\label{map:homotoKK}
[[\fA,\fB\sotimes\cK]]\setminus\{0\}\to KK(\fA,\fB)
\end{equation}
is bijective is part of \cite{P}*{Theorem 4.1.1}. The proof of Theorem \ref{thm:KP} uses the UCT \eqref{seq:uct} to see that $K_*(\fA)\cong K_*(\fB)$ implies that $\fA$ and $\fB$ are isomorphic in $KK$ and then \eqref{map:homotoKK} to derive the analogue of Theorem \ref{thm:cm2} for Kirchberg algebras.  However the argument to pass from a homotopy equivalence to an isomorphism in the $C^*$-setting is quite analytic and does not have an obvious algebraic counterpart that one can apply to derive a positive answer to Question \ref{quest:flow} from Theorem \ref{thm:cm2}. 
 \end{rem}

The following is a strong version of Question \ref{quest:flow}.
\begin{quest}\label{quest:starflow}
Let $E$ and $F$ be finite SPI graphs and let $\xi:\BF(E)\iso \BF(F)$ be an isomorphism such that $\xi[1_E]=[1_F]$. Is there a $\ast$-isomorphism $\phi:L_\C(E)\to L_\C(F)$ such that $K_0(\phi)=\xi$?
\end{quest}

 Remark that a $\ast$-homomorphism $\phi:L_\C(E)\to L_\C(F)$ passes to the completion, giving a $\ast$-homomorphism $\hat{\phi}:C^*(E)\to C^*(F)$. This construction preserves composition and identity maps, and thus $\hat{\phi}$ is an isomorphism whenever $\phi$ is. Thus a positive answer to Question \ref{quest:starflow} would directly imply R\o rdam's theorem that Corollary \ref{coro:KPR} holds for finite SPI graphs.

Theorem \ref{thm:bullift} addresses Question \ref{quest:starflow} up to homotopy. Its proof involves mapping bivariant algebraic $K$-theory of Leavitt path algebras to Kasparov's bivariant $K$-theory of their $C^*$-completions.

\begin{thm}[Corti\~nas \cite{bullift}*{Theorem 4.10}]\label{thm:bullift}
Let $E$ and $F$ be finite SPI graphs and let $\xi:C^*(E)\to C^*(F)$ be a unital $\ast$-homomorphism. Then there exists a unital 
$\ast$-homomorphism  $\phi:L_\C(E)\to L_\C(F)$ whose completion $\hat{\phi}:C^*(E)\to C^*(F)$ is $C^*$-homotopic to $\xi$. Moreover $\xi$ is a homotopy equivalence 
in the $C^*$-sense if and only if $\phi$ is a homotopy equivalence in the algebraic, polynomial sense. 
\end{thm}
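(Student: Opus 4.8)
The plan is to run the algebraic analogue of the Kirchberg--Phillips argument and then transport it across the completion functor. Since $E$ and $F$ are finite SPI graphs, $C^*(E)$ and $C^*(F)$ are Kirchberg algebras (Example \ref{ex:spic*}), so $\xi$ has a class $[\xi]\in KK(C^*(E),C^*(F))$ and, by Remark \ref{re:k=kto}, topological and algebraic $K$-theory coincide on them. The first step is to observe that sending a $\C$-algebra homomorphism to the induced $\ast$-homomorphism of completions is functorial and homotopy-compatible, so it descends to a natural transformation $\operatorname{cpl}\colon kk(L_\C(E),L_\C(F))\to KK(C^*(E),C^*(F))$ intertwining the universal coefficient sequences \eqref{seq:kklelf} and \eqref{seq:kkcecf}. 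Comparing those sequences, the $\Ext^1_\Z(\BF(E),\ker(I-A_F^t))$ subgroups agree and the induced map on quotients is the obvious surjection; the only discrepancy is the extra algebraic summand $\hom(\ker(I-A_E^t),\BF(F)\otimes\C^*)$, which lies in $\ker(\operatorname{cpl})$. A short diagram chase then shows $\operatorname{cpl}$ is surjective, so $[\xi]$ lifts to some $\zeta\in kk(L_\C(E),L_\C(F))$; because $\operatorname{cpl}$ respects $K_0$, we get $K_0(\zeta)=K_0(\xi)\colon\BF(E)\to\BF(F)$ carrying $[1_E]$ to $[1_F]$.

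Second, I would realize $\zeta$ by an actual map. Applying the bijection \eqref{map:homotokk} for the SPI, $K_1$-regular algebra $R=L_\C(F)$, together with the unital refinement from the proof of Theorem \ref{thm:cm2} (realize $K_0(\zeta)$ by a Cuntz--Krieger $E$-family inside the properly infinite $\ast$-algebra $L_\C(F)$, then twist by a unit to reach the full class $\zeta$), I obtain a unital $\ast$-homomorphism $\phi\colon L_\C(E)\to L_\C(F)$ with $j(\phi)=\zeta$. By construction $\operatorname{cpl}(j(\phi))=[\xi]$, that is, $\hat\phi$ and $\xi$ have the same class in $KK(C^*(E),C^*(F))$ and the same unit. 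Both being unital $\ast$-homomorphisms of Kirchberg algebras, the Kirchberg--Phillips uniqueness theorem underlying Theorem \ref{thm:KP} (equivalently, the $C^*$ dictionary \eqref{map:homotoKK}) then forces $\hat\phi\sim\xi$ as $\ast$-homomorphisms, proving the first assertion.

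For the second assertion, I would characterize each notion of equivalence through bivariant $K$-theory. Via \eqref{map:homotokk}, $\phi$ is a polynomial homotopy equivalence if and only if $\zeta=j(\phi)$ is invertible in $kk$: an inverse $\zeta^{-1}$ is realized by some $\psi$, and injectivity of \eqref{map:homotokk} converts $\zeta^{-1}\zeta=\id$ and $\zeta\zeta^{-1}=\id$ into $\psi\circ\phi\sim\id$ and $\phi\circ\psi\sim\id$, the unital refinement upgrading $\M_2$-homotopy to honest homotopy. Dually, $\xi$ is a $C^*$-homotopy equivalence iff $[\xi]$ is invertible in $KK$, equivalently (UCT, Remark \ref{rem:uctis}) iff $K_0(\xi)$ and $K_1^{\top}(\xi)$ are isomorphisms. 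Functoriality of $\operatorname{cpl}$ gives one implication; for the converse, $K_0(\zeta)=K_0(\xi)$ is then an isomorphism, the $\ker(I-A_E^t)$-summand of $K_1(\zeta)$ equals $K_1^{\top}(\xi)$ and is an isomorphism, and the remaining summand $\BF(E)\otimes\C^*$ of $K_1(L_\C(E))$ in \eqref{eq:k01le} is acted on as $K_0(\zeta)\otimes\id_{\C^*}$ through the $K_*(\C)$-module structure of $kk$, hence is an isomorphism as well; by Remark \ref{rem:ojouct}, $\zeta$ is invertible in $kk$, so $\phi$ is a polynomial homotopy equivalence.

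The main obstacle is the passage between the two dictionaries at the level of \emph{unital $\ast$-maps} rather than bare $kk$- and $KK$-classes. Two points demand care: realizing the class $\zeta$ by a genuine unital \emph{$\ast$-homomorphism} $\phi$ (not merely an algebra homomorphism up to $\M_2$-homotopy), which forces one to build an explicit Cuntz--Krieger family and to check that the subsequent $K_1$-twist can be performed while preserving both the involution and the unit; and upgrading the mere equality $[\hat\phi]=[\xi]$ in $KK$ to an honest continuous homotopy, which is precisely the analytic core of Kirchberg--Phillips and has no formal algebraic substitute. Throughout, one must keep track of the discrepancy $\ker(\operatorname{cpl})=\hom(\ker(I-A_E^t),\BF(F)\otimes\C^*)$, the algebraic $K_1$-information that completion discards, to be sure the homotopy class produced on the $C^*$ side is exactly that of $\xi$.
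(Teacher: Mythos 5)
Your overall strategy --- lift the $KK$-class of $\xi$ along a comparison map $kk(L_\C(E),L_\C(F))\to KK(C^*(E),C^*(F))$, realize the lift by a unital $\ast$-homomorphism via \eqref{map:homotokk}, and then deduce both directions of the homotopy-equivalence statement from invertibility in $kk$ and $KK$ --- is the same as the paper's. But there is a genuine gap at the very first step: you obtain the comparison map $\operatorname{cpl}$ by asserting that sending a $\C$-algebra homomorphism to the induced $\ast$-homomorphism of completions is ``functorial and homotopy-compatible, so it descends'' to $kk\to KK$. This does not work. A general algebra homomorphism $L_\C(E)\to L_\C(F)$ need not be a $\ast$-homomorphism and need not extend to the $C^*$-completions, a polynomial homotopy does not complete to a continuous one, and elements of $kk(L(E),L(F))$ are not a priori represented by homomorphisms at all; so there is no functor to descend. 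The existence of a functor $\comp$ from the full subcategory of $kk$ spanned by Leavitt path algebras of finite regular graphs to $KK$ which is full, reflects isomorphisms, and satisfies $\comp(j(\phi))=k(\hat{\phi})$ for $\ast$-homomorphisms $\phi$ is precisely the main technical content of \cite{bullift}*{Theorem 3.2}. It is constructed not by completing maps but via Poincar\'e duality on both sides, identifying $kk(L(E),L(F))\cong KH_1(L(E_t)\otimes L(F))$ and $KK(C^*(E),C^*(F))\cong K_1^{\top}(C^*(E_t)\sotimes C^*(F))$, with $\comp$ induced by the canonical comparison from homotopy algebraic $K$-theory to topological $K$-theory. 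In particular the compatibility $\comp(j(\phi))=k(\hat{\phi})$ and the surjectivity you invoke are theorems about this construction, not consequences of a definition, and your ``short diagram chase'' comparing the two UCT sequences presupposes exactly the intertwining that has to be proved.

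A second, smaller gap: the bijection \eqref{map:homotokk} produces algebra homomorphisms, not $\ast$-homomorphisms, so realizing the lifted class by a unital \emph{$\ast$-homomorphism} $\phi$ requires the involution-preserving refinement of the homotopy classification from \cite{classinvo}; you correctly flag this as an obstacle but do not supply it, and without it the statement that $\hat{\phi}$ even exists is unjustified. Once these two inputs are granted, the remainder of your argument (invertibility in $kk$ versus $KK$, the extra summand $\hom(\ker(I-A_E^t),\BF(F)\otimes\C^*)$ lying in the kernel of $\comp$, and the upgrade from $\M_2$-homotopy to unital homotopy) does match the intended proof.
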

\begin{proof}[Idea of the proof] One has to map $[L(E),L(F)]_{\M_2}$ to
$[[C^*(E),C^*(F)]]$ and show that it is onto and reflects homotopy equivalences. By \eqref{map:homotokk} and Remark \ref{rem:homotoKK} one may replace homotopy classes by $kk(L(E),L(F))$ and $KK(C^*(E),C^*(F))$. To do this, one uses Poincar\'e duality, both in $kk$ \cite{classinvo}*{Theorem 11.2} and in $KK$ \cite{kamiput}*{Section 4}. As explained in the sketch of the proof of Theorem \ref{thm:cm2}, the $kk$-version of Poincar\'e duality implies that for finite regular graphs $E$ and $F$ with $E$ essential with transpose graph $E_t$, there is a canonical isomorphism $KH_1(L(E_t)\otimes L(F))\cong kk(L(E),L(F))$; similarly $K_1^{\top}(C^*(E_t)\sotimes C^*(F))\cong KK(C^*(E),C^*(F))$. Using this and the canonical comparison map from algebraic homotopy to topological $K$-theory of a $C^*$-algebra, one obtains a map $\comp:kk(L(E),L(F))\to KK(C^*(E),C^*(F))$. It is shown in \cite{bullift}*{Theorem 3.2} that the latter map defines a full functor $\comp$ from the full subcategory of $kk$ whose objects are the Leavitt path algebras of finite regular graphs to $KK$, mapping $L(E)\mapsto C^*(E)$. It is further shown that $\comp$ reflects isomorphisms and $\comp(j(\phi))=k(\hat{\phi})$ whenever $\phi$ is a $\ast$-homomorphism. All this is used in \cite{bullift}*{Theorem 4.10} to prove the theorem.
\end{proof}

\section{Symbolic dynamics: conjugacy and (strong) shift equivalence}\label{sec:symb}
\numberwithin{equation}{section}
The central objects in the theory of symbolic dynamics are the shifts of finite type, also called topological Markov chains. A general reference for this section is \cite{lind}. In this article, by a \emph{dynamical system} we understand a pair $(X,\sigma)$ consisting of a Hausdorff topological space $X$ and a homeomorphism $\sigma$. Two dynamical systems $(X,\sigma)$ and $(Y,\tau)$ are \emph{conjugate} if there is an homeomorphism $f:X\to Y$ such that $\tau\circ f=f\circ\sigma$.

Let $E$ be a finite essential graph and give $E^1$ the discrete topology. Then  $(E^1)^\Z$ with the product topology is Hausdorff and compact, by Tychonov's theorem. The subset $X_E\subset (E^1)^\Z$ of all bi-infinite paths in $E$ is closed, and therefore a compact Hausdorff space. The map $\sigma:X_E\to X_E$, $\sigma(\alpha)_n=\alpha_{n+1}$ that shifts a path to the left is continuous, and so the pair $(X_E,\sigma)$ is a dynamical system; it is called \emph{an edge shift}. Although we will not define shifts of finite type, let us mention that any shift of finite type is conjugate to an edge shift. 

As a consequence of the decomposition theorem for conjugacies \cite{lind}*{Corollary 7.1.5} two shifts of finite type $X_E$ and $X_F$ are conjugate if and only if $F$ can be obtained from $E$ by a series of graphs moves, called \emph{in-splittings} and \emph{out-splittings}, and their inverses, called in and out \emph{amalgamations}. A theorem of Williams further characterizes the existence of a conjugacy between $X_E$ and $X_F$ in terms of the adjacency matrices of $E$ and $F$. Two square nonnegative integer matrices $A$ and $B$ are called {\it elementary shift equivalent}, and denoted by $A\sim_{ES} B$, if there are nonnegative matrices $R$ and $S$ such that $A=RS$ and $B=SR$. 
The equivalence relation $\sim_S$  on square nonnegative integer matrices generated by elementary shift equivalence is called {\it strong shift equivalence}.

\begin{thm}[Williams~\cite{williams}]\label{willmnhfhf}
Let $A$ and $B$ be two square nonnegative integer matrices and let $E$ and $F$ be their associated graphs. Assume that $E$ and $F$ are essential. Then $A$ is strongly shift equivalent to $B$ if and only if $F$ can be obtained from $E$ by a sequence of in-splittings, out-splittings, and their inverses.
\end{thm}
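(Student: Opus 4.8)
The plan is to derive the statement from the original form of Williams' theorem with the help of the decomposition theorem for conjugacies quoted just above. That theorem already identifies the condition ``$F$ is obtained from $E$ by a finite sequence of in-splittings, out-splittings and their inverses'' with the condition that the edge shifts $X_E$ and $X_F$ are conjugate. Thus it suffices to prove that $A\sim_S B$ if and only if $X_E$ and $X_F$ are conjugate, and I would establish the two implications by genuinely different routes so that the decomposition theorem is used only once, avoiding any circularity.

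For the implication ``splittings $\Rightarrow A\sim_S B$'' I would argue combinatorially and locally, reducing to a single elementary move. A single out-splitting partitions, at each vertex $v$, the edges emitted by $v$ into finitely many blocks and introduces one copy of $v$ per block; encoding this data by two nonnegative integer matrices---a $0$--$1$ division matrix and an edge-assembly matrix---one finds that their products in the two orders are exactly $A$ and the adjacency matrix of the split graph, so the latter is \emph{elementary} shift equivalent to $A$ (see \cite{lind}*{\S 7.2}). In-splitting is symmetric, and amalgamations are the inverse moves and hence also elementary shift equivalences. Since $\sim_S$ is by definition the equivalence relation generated by $\sim_{ES}$, composing along the given sequence of moves yields $A\sim_S B$.

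For the converse, ``$A\sim_S B\Rightarrow X_E\cong X_F$'', it is enough to treat one elementary shift equivalence $A=RS$, $B=SR$ with $R,S$ nonnegative integer matrices, because conjugacy is an equivalence relation stable under composition and $\sim_S$ is generated by $\sim_{ES}$. Here I would pass to the bipartite matrix
\[
C=\begin{pmatrix}0 & R\\ S & 0\end{pmatrix},\qquad C^2=\begin{pmatrix}RS & 0\\ 0 & SR\end{pmatrix}=\begin{pmatrix}A & 0\\ 0 & B\end{pmatrix}.
\]
Essentialness of $E$ and $F$ forces $R$ and $S$ to have no zero row or column, so $E(C)$ is again essential and its edge shift $X_C$ is defined. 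Every bi-infinite path in $E(C)$ alternates between an $R$-edge and an $S$-edge, so $X_C$ splits into two ``phases''; grouping consecutive pairs of edges identifies $(X_E,\sigma)$ with one phase equipped with $\sigma_C^2$ and $(X_F,\sigma)$ with the other, via the block decomposition of $C^2$. The single shift $\sigma_C$ interchanges the two phases and commutes with $\sigma_C^2$, hence under these identifications it restricts to a homeomorphism $X_E\iso X_F$ that intertwines the shifts, i.e. a conjugacy. Composing these conjugacies along the chain of elementary shift equivalences gives $X_E\cong X_F$, and the decomposition theorem then supplies the desired sequence of splittings.

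I expect the main obstacle to be the conjugacy construction in the converse direction: one must check that the recoding induced by $\sigma_C$ is a genuine topological conjugacy---a sliding-block code with a sliding-block inverse---and not merely a bijection of path spaces, and that the phase bookkeeping matches the factorizations $A=RS$, $B=SR$ entry by entry. By contrast the combinatorial direction is routine once the division and edge-assembly matrices of a splitting are written down, since it reduces to multiplying two explicit $0$--$1$ matrices in both orders.
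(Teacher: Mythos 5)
The paper does not actually prove this theorem: it is quoted from Williams' paper, and the surrounding text only observes that, in combination with the decomposition theorem \cite{lind}*{Corollary 7.1.5}, the classical form of Williams' result (strong shift equivalence of $A$ and $B$ is equivalent to conjugacy of the edge shifts $X_E$ and $X_F$) gives the formulation in terms of graph moves. Your proposal carries out exactly that derivation and, in addition, reproves the classical form from scratch; both halves of your argument are the standard ones of \cite{lind}*{Sections 7.1--7.2}, and the logical structure (splittings $\Rightarrow$ SSE $\Rightarrow$ conjugacy $\Rightarrow$ splittings, with the decomposition theorem used only in the last step) is sound and non-circular. The forward direction via the division matrix $D$ and edge matrix $E$ with $A=DE$ and $A'=ED$ is correct, as is the converse via the bipartite matrix $C$ built from $R$ and $S$. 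The one point you should tighten is in the converse: a strong shift equivalence is a chain of elementary ones passing through intermediate square matrices $A=A_0,\dots,A_n=B$ which need \emph{not} correspond to essential graphs, so you cannot invoke essentialness of the endpoints to justify each link of the chain. This is harmless --- the bipartite conjugacy argument works for an arbitrary nonnegative integer matrix because the space of bi-infinite paths automatically sees only the maximal essential subgraph, and the bijections between $A_{IJ}$ edges and length-two paths through the middle layer exist regardless --- but your write-up should make that explicit rather than lean on essentialness of $E$ and $F$, which only controls the first and last links.
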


In the same article \cite{williams} Williams also introduced the weaker notion of shift equivalence, defined as follows. The square nonnegative integer matrices $A$ and $B$ are called {\it shift equivalent} if there exist $\ell\ge 1$ and nonnegative matrices $R$ and $S$ such that 
\begin{gather}
AR=RB,\, SA=BS,\label{eq:equifuera}\\
RS=A^\ell,\, SR=B^\ell.\label{eq:equidentro}
\end{gather}

Observe that for $\ell=1$, conditions \eqref{eq:equifuera} and \eqref{eq:equidentro} hold if and only if $A\sim_{ES}B$. It follows that strong shift equivalence implies shift equivalence. 

The problem of deciding whether the converse is true remained open for a long time and was known as Williams' conjecture. It was disproved by the counterexamples of Kim and Roush \cites{kr1,kr2}. 

The following question remains open at the time of writing. 

\begin{quest}\label{quest:semuove}
 Is there a list of graph moves so that two graphs have shift equivalent adjacency matrices if and only if one can transform one into the other by a finite sequence of moves in the list?   
\end{quest} 

An advantage of shift equivalence over strong shift equivalence is that it can be detected by a simple invariant, Krieger's dimension group, which we will introduce next.

A nonnegative integral $n\times n$ matrix $A$ gives rise to a direct system of free abelian groups with $A$ acting as an order-preserving group homomorphism
\[\mathbb Z^n \stackrel{A}{\longrightarrow} \mathbb Z^n \stackrel{A}{\longrightarrow}  \mathbb Z^n \stackrel{A}{\longrightarrow} \cdots
\]
We regard $\mathbb Z^n$ as a partially ordered group with positive cone $\mathbb N^n$. The direct limit of this system, $\Delta_A:= \varinjlim_{A} \mathbb Z^n$  is a partially ordered group whose positive cone $\Delta^+$ is the direct limit of the associated direct system of positive cones. Multiplication by $A$ induces an automorphism $\delta_A$ of partially ordered groups. The triple  $(\Delta_A, \Delta_A^+, \delta_A)$ is called \emph{Krieger's dimension group}. \index{Krieger's dimension group}

\begin{rem}\label{rem:DeltaA}
Krieger's dimension group $\Delta_A$ is in fact a module over the Laurent polynomial ring $\Z[x,x^{-1}]$ via $x\cdot v=\delta_A^{-1}(v)$. Moreover the submonoid $\Delta_A^+$ is closed under multiplication by $\N_0[x,x^{-1}]$. Hence we may regard the Krieger triple as a partially ordered $\Z[x,x^{-1}]$-module, and there is a module isomorphism
\begin{equation}\label{eq:DeltA}
    \Delta_A\cong\coker(I-xA:\Z[x,x^{-1}]^n\to \Z[x,x^{-1}]^n)
\end{equation}
mapping $\Delta_A^+$ isomorphically onto the image of $\N_0[x,x^{-1}]^n$. 
\end{rem}

\begin{ex}\label{ex:DeltaA}
Let $n\in\N$, $\cR_n$ the $n$-petal rose, $\cR_{n^-}$ its Cuntz' splice, and $A$ and $A^-$  their adjacency matrices. Then it is immediate from  \eqref{eq:DeltA} that 
$$
\Delta_A=\Z[x,x^{-1}]/\langle 1-xn\rangle=\Z[1/n].
$$
It also follows from \eqref{eq:DeltA} and another straightforward, but longer calculation \cite{ac1}*{Lemma 3.3} that
$$
\Delta_{A^-}\cong\Z[x,x^{-1}]/\langle x^3+(2n-1)x^2-(n+2)x+1\rangle.
$$
\end{ex}

The following theorem was proved by Krieger (\cite{krieger}*{Theorem~4.2}; see also~\cite{lind}*{Section 7.5} for a detailed algebraic treatment). 

\begin{thm}[Krieger \cite{krieger}]\label{kriegerthm}
Let $A$ and $B$ be two square nonnegative integer matrices. Then $A$ and $B$ are shift equivalent if and only if 
\[(\Delta_A, \Delta_A^+, \delta_A) \cong (\Delta_B, \Delta_B^+, \delta_B).\]
\end{thm}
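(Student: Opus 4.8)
I would treat the two implications separately; the forward one is a direct diagram chase and the converse carries all the weight.

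\emph{Shift equivalence implies isomorphism of dimension triples.} Write $[v,k]$ for the class in $\Delta_A=\varinjlim_A\Z^n$ of a vector $v$ sitting at the $k$-th stage of the directed system, and similarly for $\Delta_B$. Suppose $A\sim_S B$ is witnessed by nonnegative matrices $R$ (of size $n\times m$) and $S$ (of size $m\times n$) and a lag $\ell\ge 1$ with $AR=RB$, $SA=BS$, $RS=A^\ell$ and $SR=B^\ell$. The relation $AR=RB$ says exactly that $R$ is a morphism of the directed systems computing $\Delta_B$ and $\Delta_A$, so it induces a group homomorphism $\bar R\colon\Delta_B\to\Delta_A$, $[v,k]\mapsto[Rv,k]$; symmetrically $S$ induces $\bar S\colon\Delta_A\to\Delta_B$. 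As $R,S$ have nonnegative entries they carry nonnegative vectors to nonnegative vectors, so $\bar R,\bar S$ preserve the positive cones, and the intertwining relations give $\bar R\circ\delta_B=\delta_A\circ\bar R$ and $\bar S\circ\delta_A=\delta_B\circ\bar S$. Finally $RS=A^\ell$ and $SR=B^\ell$ become $\bar R\circ\bar S=\delta_A^{\,\ell}$ and $\bar S\circ\bar R=\delta_B^{\,\ell}$, both automorphisms; hence $\bar R$ is bijective with $\bar R^{-1}=\delta_B^{-\ell}\circ\bar S$. Since $\delta_B^{-1}$ also preserves the positive cone (it sends $[v,k]$ to $[v,k+1]$), the inverse is positive too, and $\bar R$ is an isomorphism $(\Delta_B,\Delta_B^+,\delta_B)\iso(\Delta_A,\Delta_A^+,\delta_A)$.

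\emph{Isomorphism of dimension triples implies shift equivalence.} Let $\theta\colon(\Delta_A,\Delta_A^+,\delta_A)\iso(\Delta_B,\Delta_B^+,\delta_B)$ be an isomorphism with inverse $\varphi=\theta^{-1}$. The plan is to realise $\theta$ and $\varphi$ by honest nonnegative integer matrices at finite stages, and then read off the four defining equations. Each generator $[e_i,0]\in\Delta_A^+$ is positive, so $\theta[e_i,0]\in\Delta_B^+$ is represented by a nonnegative integer vector at some finite level; choosing a common level $p$ for $i=1,\dots,n$ yields a nonnegative matrix $S$ with $\theta[e_i,0]=[Se_i,p]$. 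Running the same argument for $\varphi$ gives a nonnegative matrix $R$ representing $\varphi$ at some level $q$. Because $\theta$ commutes with the shift — equivalently, it is a $\Z[x,x^{-1}]$-module homomorphism under the identification of Remark \ref{rem:DeltaA} — compatibility with the defining relations of $\Delta_A$ forces $S$ to intertwine the matrices, $SA=BS$, and symmetrically $AR=RB$.

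It remains to compose. The equalities $\varphi\circ\theta=\id_{\Delta_A}$ and $\theta\circ\varphi=\id_{\Delta_B}$ say that the endomorphisms induced by $RS$ and by $SR$ are the identities of $\Delta_A$ and of $\Delta_B$. In a direct limit this means that $RS$ agrees with a power of $A$, and $SR$ with a power of $B$, only \emph{eventually}, i.e. after multiplying by a sufficiently high power of $A$ (resp. $B$); absorbing that power into the lag produces an $\ell$ with $RS=A^{\ell}$ and $SR=B^{\ell}$. Together with the intertwining relations $SA=BS$, $AR=RB$ and the nonnegativity of $R$ and $S$ already established, this is precisely a shift equivalence of lag $\ell$ between $A$ and $B$.

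\emph{Main obstacle.} The delicate part is entirely in the converse, in the passage from ``equal in the colimit'' to the on-the-nose identities $RS=A^\ell$ and $SR=B^\ell$: one must simultaneously (i) choose the representing matrices $R,S$ nonnegative, which uses that $\theta$ preserves the order and that the positive cone of $\Delta_A$ is exactly the union of the images of the $\N^n$ at the finite stages, and (ii) track the stage shifts so that the various ``eventually'' clauses collapse into a single lag $\ell$ common to both composites. Keeping this bookkeeping consistent for $\theta$, for $\varphi$, and for both composites at once is the crux; I would organise it through the standard correspondence between dimension-group homomorphisms and equivalence classes of eventually-nonnegative intertwining matrices, following \cite{lind}*{Section 7.5}.
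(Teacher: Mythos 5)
The paper does not prove this theorem: it is quoted from Krieger \cite{krieger}*{Theorem 4.2}, with \cite{lind}*{Section 7.5} cited for a detailed algebraic treatment, so there is no in-paper argument to compare against. Your outline is exactly the standard proof from that reference: the forward direction is correct as written, and the converse correctly identifies the real work as converting colimit-level equalities into on-the-nose matrix identities with a single common lag.

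One caution on the converse: you assert that compatibility with the shift \emph{forces} $SA=BS$ and $AR=RB$ on the nose, reserving the ``eventually equal'' issue for the composites $RS$ and $SR$. In fact the intertwining relations suffer from the same defect: representing $\theta$ at a finite stage only yields $B^{s}(SA-BS)=0$ for some $s\ge 0$, and one must first replace $S$ by $B^{s}S$ (and likewise $R$ by a suitable $A^{t}R$) before the relations hold exactly. This is harmless --- the replacements stay nonnegative and still represent $\theta$ and $\theta^{-1}$ at shifted levels, and the same trick of multiplying by a common high power of $A$ (using $SA^{k}=B^{k}S$ and $A^{k}R=RB^{k}$ to transport powers across) then equalizes the two lags into a single $\ell$ --- but it belongs to the same bookkeeping you flag as the crux, not to the part you present as already established.
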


\begin{rem}\label{rem:kriegerk0}
Let $E$ be a finite regular graph and $A_E$ its adjacency matrix. In view of Example \ref{ex:ultrale}, $\Delta_{A^t}=K_0(L(\ol{E}))$ and $\Delta_{A^t}^+=K_0(L(\ol{E}))_+$.
\end{rem}

\section{Graded rings and graded  \topdf{$K$}{K}-theory}\label{sec:gradK}

\numberwithin{equation}{subsection}

\subsection{Graded rings}\label{grringslablel}

Let $\Gamma$ be an abelian  group with identity denoted by $0$. A ring $A$ (possibly without unit)
is called a \emph{$\Gamma$-graded ring} if $ A=\bigoplus_{ \gamma \in \Gamma} A_{\gamma}$
such that each $A_{\gamma}$ is an additive subgroup of $A$ and $A_{\gamma}  A_{\delta}
\subseteq A_{\gamma+\delta}$ for all $\gamma, \delta \in \Gamma$. The group $A_\gamma$ is
called the $\gamma$-\emph{homogeneous component} of $A.$ When it is clear from context
that a ring $A$ is graded by the group $\Gamma,$ we simply say that $A$ is a  \emph{graded
ring}.  The set $\Gamma_A=\{ \gamma \in \Gamma \mid A_\gamma \not =
0 \}$ is called the \emph{support}  of $A$. We say that a $\Gamma$-graded ring $A$ is
\emph{trivially graded} if the support of $A$ is the trivial group
$0$---that is, $A_0=A$, so $A_\gamma=0$ for $\gamma \in \Gamma
\backslash \{0\}$. Any ring admits a trivial grading by any group. If $A$ is a $\Gamma$-graded ring that happens to be an algebra over a trivially graded ring $R$, then $A$ is called a \emph{graded algebra}
if $A_{\gamma}$ is an $R$-submodule for any $\gamma \in \Gamma$.
A $\Gamma$-graded ring $A=\bigoplus_{\gamma\in\Gamma}A_{\gamma}$ is called \emph{strongly graded} if
$A_{\gamma}A_{\delta}=A_{\gamma+\delta}$ for all $\gamma,\delta$ in $\Gamma$.

The elements of $$A_*:=\bigcup_{\gamma \in \Gamma} A_{\gamma}$$ in a graded ring $A$ are called
\emph{homogeneous elements} of $A.$ The nonzero elements of $A_\gamma$ are called
\emph{homogeneous of degree $\gamma$} and we write $|a| = \gamma$ for $a \in
A_{\gamma}\backslash \{0\}.$  An ideal $I$ of a graded ring $A$ is called a \emph{graded ideal} if it is generated by homogeneous elements. This is equivalent to  $I=\bigoplus_{ \gamma \in \Gamma} I\cap A_{\gamma}$. 
We say a $\Gamma$-graded ring $A$ has \emph{graded local units} if there is a set $\cU$ of homogeneous idempotents such that for any finite set of
homogeneous elements  $\{x_{1}, \cdots, x_{n}\}\subseteq A$, there exists $e\in \cU$ such that $\{x_{1}, \cdots, x_{n}\}\subseteq eAe$. Such a set $\cU$ is called a \emph{system of graded local units}. Remark that if $A$ has graded local units, then the set of all homogeneous idempotent elements of $A$ is a system of graded local units.

For a $\Gamma$-graded $\K$-algebra $R$ and $\ol{\a}=(\a_1, \cdots, \a_n)\in\Gamma^n$ we produce a $\Gamma$-grading on $\M_n(R)$ and denote this graded ring by $\mathbb{M}_{n}(R)(\ol{\alpha})$. 
For each $\gamma\in \Gamma$, the elements of $\mathbb{M}_{n}(R)(\ol{\alpha})_{\gamma}$ are the
$n\times n$-matrices over $R$ with the degree shifted as follows 
 \begin{equation}\label{eq:weightgrading}
     \mathbb{M}_{n}(R)(\ol{\a})_{\gamma}=
     {\begin{pmatrix}
 	R_{\gamma+\a_1-\a_1}& R_{\gamma+\a_2-\a_1}&\cdots& R_{\gamma+\a_n-\a_1}\\
  R_{\gamma+\a_1-\a_2}& R_{\gamma+\a_2-\a_2}&\cdots& R_{\gamma+\a_n-\a_2}\\
  \vdots& \vdots& \ddots& \vdots\\
  R_{\gamma+\a_1-\a_n}& R_{\gamma+\a_2-\a_n}&\cdots& R_{\gamma+\a_n-\a_n}
 \end{pmatrix}}.
 \end{equation} 

We have \[\mathbb{M}_{n}(R)(\ol{\a}) = \bigoplus_{\alpha \in \Gamma} \mathbb{M}_{n}(R)(\ol{\a})_{\gamma},\] and one can check that this makes 
$\mathbb{M}_{n}(R)(\ol{\a})$ a $\Gamma$-graded ring. Similarly, any infinite sequence $\ol{\a}\in\Gamma^\N$ defines a grading on $\M_\infty(R)$, and we write 
$\M_{\infty}(R)(\ol{\a})$ for the resulting graded ring.

If $A$ is a ring with local units, we call a right $A$-module $M$ \emph{unital} if $MA=M$, and write $\Modd A$ for the category of unital $A$-modules and module homomorphisms. If $A$ is $\Gamma$-graded with graded local units, we denote by $\Gr A$ the category of unital graded right $A$-modules with morphisms preserving the grading. For a graded right $A$-module $M$, we define the $\a$-\emph{shifted} graded right 
$A$-module $M(\a)$ as
\begin{equation*}M(\a)=\bigoplus_{\gamma\in \Gamma}M(\a)_{\gamma},
\end{equation*}
where $M(\a)_{\gamma}=M_{\a+\gamma}$.  That is, as an ungraded module, $M(\alpha)$ is a copy of
$M$, but the grading is shifted by $\alpha$. For $\a\in\Gamma$, the \emph{shift functor}
\begin{equation}\label{shiftshift}
\mathcal{T}_{\a}: \Gr A\longrightarrow \Gr A,\quad M\mapsto M(\a)
\end{equation}
is an isomorphism with the property that $\mathcal{T}_{\a}\mathcal{T}_{\beta}=\mathcal{T}_{\a+\beta}$ for $\a,\beta\in\Gamma$.

These shift functors are one of the main features of the graded theory that distinguishes it from the non-graded theory. The fact that one can shift a module (back and forth), introduces a dynamic into abstract algebra.

Let $\Z^{(\Gamma)}$ be the ring of finitely supported functions $\Gamma\to \Z$, equipped with pointwise operations. For example if $g\in\Gamma$ then the characteristic function $\chi_g$ of the set $\{g\}$ is in $\Z^{(\Gamma)}$, and $\{\chi_g\,\colon\, g\in \Gamma\}$ is a $\Z$-linear basis of $\Z^{(\Gamma)}$. Let $A$ be a $\Gamma$-graded ring.  The \emph{crossed (or smash) product} $\Gamma\hltimes A$ is the abelian group $\Z^{(\Gamma)}\otimes_\Z A$ equipped with the product defined below, where we write $\hltimes$ for $\otimes_\Z$ and $|a|$ for the degree of a homogeneous element. 
\[
(\chi_g\hltimes a)(\chi_h\hltimes b)=\delta_{h,g+|a|}\chi_g\hltimes ab\,\, (a\in A_*,\, b\in A).
\]
The group $\Gamma$ acts on $\Gamma\hltimes A$ by automorphisms, via
\[
g(\chi_h\hltimes a)=\chi_{g+h}\hltimes a.
\]
For a right $\Gamma\hltimes A$-module $N$, write $N^g$ for the right $\Gamma\hltimes A$-module with the same underlying group as $N$ and multiplication
\[
x\cdot_g a=xg(a)\,\, (x\in N,\, a\in A).
\]
\begin{prop}[\cite{ahls}*{Section 3.3} and \cite{ac}*{Section 3}]\label{prop:cross}
Let $A$ be a $\Gamma$-graded ring with graded local units. There is an equivalence of categories
\[
\Psi:\Gr A\to \Modd(\Gamma\hltimes A)
\]
with the property that, for each $g\in \Gamma$ and $M\in\Gr A$
\[
\Psi(M(g))=\Psi(M)^g.
\]    
\end{prop}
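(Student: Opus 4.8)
The plan is to construct $\Psi$ explicitly together with a quasi-inverse, and then read off the shift identity by direct computation. First I would define $\Psi$ on objects: given $M=\bigoplus_\gamma M_\gamma\in\Gr A$, let $\Psi(M)$ have underlying abelian group $M$, with $\Gamma\hltimes A$ acting by
\[
x\cdot(\chi_g\hltimes a)=\pi_g(x)\,a,
\]
where $\pi_g\colon M\to M_g$ is the projection onto the degree-$g$ component and juxtaposition is the $A$-action; this is extended $\Z$-bilinearly to $\Z^{(\Gamma)}\otimes_\Z A$. Using the multiplication rule $(\chi_g\hltimes a)(\chi_h\hltimes b)=\delta_{h,g+|a|}\chi_g\hltimes ab$ one checks associativity, since for homogeneous $a,b$ both $(x\cdot(\chi_g\hltimes a))\cdot(\chi_h\hltimes b)$ and $x\cdot\big((\chi_g\hltimes a)(\chi_h\hltimes b)\big)$ equal $\delta_{h,g+|a|}\,\pi_g(x)\,(ab)$. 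Two structural facts make the bookkeeping work: every homogeneous idempotent of $A$ automatically has degree $0$ (as $e=e^2\in A_{2|e|}$ forces $|e|=0$), and the idempotents $\chi_g\hltimes e$ are orthogonal for distinct $g$. Unitality of $\Psi(M)$ then follows from unitality of $M$: if $x\in M_g$ and $e\in\cU$ fixes $x$, then $\chi_g\hltimes e$ is an idempotent with $x\cdot(\chi_g\hltimes e)=x$. On morphisms $\Psi$ is the identity on underlying maps, and a graded $A$-linear map is $\Gamma\hltimes A$-linear because it commutes with the $\pi_g$ and with the $A$-action.

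Next I would build a quasi-inverse $\Phi\colon\Modd(\Gamma\hltimes A)\to\Gr A$. For a unital module $N$, the orthogonal idempotents $\chi_g\hltimes e$ suggest setting the degree-$g$ part to be $N_g=\{x\in N: x\cdot(\chi_g\hltimes e)=x \text{ for some } e\in\cU \text{ fixing } x\}$, with $A$ acting on $x\in N_g$ by $x\cdot a=x\cdot(\chi_g\hltimes a)$. The content here, and what I expect to be the main obstacle, is the local-units bookkeeping: one must show that the assignment $x\mapsto x\cdot(\chi_g\hltimes e)$ is independent of the fixing idempotent (using that $\cU$ is upward directed under the relation declaring $e\le e'$ when $ee'=e'e=e$), that $N=\bigoplus_g N_g$ as abelian groups (the direct sum coming from unitality together with the cross-$g$ orthogonality), and that the resulting $A$-action is well defined and sends $N_g\cdot A_\delta$ into $N_{g+\delta}$. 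In the unital case this is the classical Cohen--Montgomery duality; the only genuine work in our setting is to carry it through with graded local units in place of a unit.

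Finally I would verify that $\Phi$ and $\Psi$ are mutually quasi-inverse. Starting from $M$, the grading recovered from $\Psi(M)$ is the original one, since $x\in M_g$ is precisely the condition that $\chi_g\hltimes e$ fix $x$, and the recovered $A$-action is the original one by the definition of $\Psi$; conversely, reassembling the $N_g$ returns $N$ with its action. With the functors in hand the shift identity is immediate: both $\Psi(M(g))$ and $\Psi(M)^g$ carry the underlying group $M$, and for the first the action is $\pi^{M(g)}_h(x)\,a=\pi^{M}_{g+h}(x)\,a$ because $M(g)_h=M_{g+h}$, while for the second, using the $\Gamma$-action $g(\chi_h\hltimes a)=\chi_{g+h}\hltimes a$ on the ring $\Gamma\hltimes A$, the action of $\chi_h\hltimes a$ is $x\cdot(\chi_{g+h}\hltimes a)=\pi^{M}_{g+h}(x)\,a$. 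These agree on the nose, giving $\Psi(M(g))=\Psi(M)^g$.
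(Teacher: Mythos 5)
Your construction is exactly the one the paper uses: $\Psi(M)$ is $M$ with $m\cdot(\chi_g\hltimes a)=m_ga$, and the quasi-inverse $\Phi$ regrades a unital $\Gamma\hltimes A$-module $N$ by $N_g=\bigcup_{u\in\cU}N\cdot(\chi_g\hltimes u)$, which coincides with your fixed-point description since each $\chi_g\hltimes u$ is idempotent (homogeneous idempotents having degree $0$, as you note). The proposal is correct and essentially identical to the paper's argument, down to the direct verification that both $\Psi(M(g))$ and $\Psi(M)^g$ act by $x\cdot(\chi_h\hltimes a)=x_{g+h}a$; the local-units bookkeeping you flag is real but routine, and the paper's sketch elides it as well.
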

\begin{proof}[Sketch of the proof]
For $M\in \Gr A$, $\Psi(M)$ is the abelian group $M$, equipped with the following right $\Gamma\hltimes A$-module structure
\[
m\cdot(\chi_g\hltimes a)=m_ga
\]
If $f:M\to N$ is a homomorphism in $\Gr A$, then $\Psi(f)$ is the same function $f$. To show that $\Psi$ is an equivalence, one defines a functor $\Phi:\Modd(\Gamma\hltimes A)\to \Gr A$ as follows. Let $\cU$ be a system of graded local units of $A$. On an object $N$, $\Phi(N)$ is the same abelian group with the grading 
\[
N_g=\bigcup_{u\in\cU}N\cdot(\chi_g\hltimes u)
\]
and the $A$-module structure defined by 
\[
x\cdot a=\sum_{g\in \Gamma}x_g(\chi_g\hltimes a).
\]
If $f:N\to P$ is a homomorphism in $\Modd(\Gamma\hltimes A)$, and $x\in N$, define $\Phi(f)(x_g)=f(x)_g$. 
\end{proof}

As we will see the graded theory is the right setting to relate the theory of Leavitt path algebras to symbolic dynamics. These algebras have a natural $\mathbb Z$-graded structure and  have graded local units. We refer the reader to \cite{hazbook} for an account of graded ring theory.

\begin{ex}\label{ex:crocov}
Let $E$ be a row-finite graph and $\ol{E}$ its covering graph \eqref{eq:olE}. Then $\Z$ acts on $\ol{E}$ via $v_n\mapsto v_{n+1}$, $e_n\mapsto e_{n+1}$; this defines a $\Z$-action by $\ast$-automorphisms on $L(E)$, and the map
\[
L(\ol{E})\to \Z\hltimes L(E),\,\, x_n\mapsto \chi_{-n}\hltimes x\,\, (x\in E^0\sqcup E^1)
\]
is an isomorphism of algebras. Moreover it is \emph{$\Z$-equivariant} in the sense that it intertwines the $\Z$-actions on its domain and codomain \cite{ac}*{Proposition 2.7}.
\end{ex}

Below we state Dade's theorem which says that for strongly graded rings, the category $\Gr A$ is equivalent to the module category of $A_{0}$.

\begin{thm}[Dade\,\cite{dade}*{Theorem 2.8}]\label{thm:dade}
Let $R$ be a unital $\Gamma$-graded ring. Consider the functors 
\begin{gather}
 -\otimes_{R_{0}}R:\Modd R_0\to \Gr R,\,\, M\mapsto M\otimes_{R_0}R\label{map:ugrtogr}\\
(-)_0:\Gr R\to \Modd R_0,\,\, M\mapsto M_0 \label{map:grtougr}
\end{gather}
The following are equivalent.

\begin{enumerate}[\upshape(1)]

\item $R$ is strongly graded;

\item The functors \eqref{map:ugrtogr} and \eqref{map:grtougr} are inverse category equivalences. 
\end{enumerate}

\end{thm}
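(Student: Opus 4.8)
The plan is to realize the functors \eqref{map:ugrtogr} and \eqref{map:grtougr} as an adjoint pair and then reduce the whole statement to a single property of the adjunction counit. First I would check that $-\otimes_{R_0}R$ is left adjoint to $(-)_0$: a graded $R$-module homomorphism out of $M\otimes_{R_0}R$ is determined by its restriction to the degree-zero part $M\otimes_{R_0}R_0$, and this restriction is forced to land in $N_0$, yielding a natural isomorphism $\Hom_{\Gr R}(M\otimes_{R_0}R,N)\cong\Hom_{R_0}(M,N_0)$. The unit $\eta_M\colon M\to (M\otimes_{R_0}R)_0$ is $m\mapsto m\otimes 1$; since $(M\otimes_{R_0}R)_0=M\otimes_{R_0}R_0$ and $m\otimes r_0\mapsto mr_0$ identifies the latter with $M$, the unit is an isomorphism for every $M$, with no hypothesis on $R$. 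The counit $\varepsilon_N\colon N_0\otimes_{R_0}R\to N$ is multiplication, $n\otimes r\mapsto nr$, a homogeneous map whose degree-$\gamma$ component is $N_0\otimes_{R_0}R_\gamma\to N_\gamma$. Because an adjunction is a pair of inverse equivalences precisely when both unit and counit are natural isomorphisms, and the unit always is, condition (2) is equivalent to the assertion that $\varepsilon_N$ is an isomorphism for every $N\in\Gr R$.

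For (1)$\Rightarrow$(2) I would first record the standard reduction that $R$ is strongly graded if and only if $R_\gamma R_{-\gamma}=R_0$ for every $\gamma\in\Gamma$; indeed, granting this, $R_{\gamma+\delta}=R_{\gamma+\delta}R_{-\delta}R_\delta\subseteq R_\gamma R_\delta\subseteq R_{\gamma+\delta}$. Fixing $\gamma$ and using $R_{-\gamma}R_\gamma=R_0$, I would choose $v_i\in R_{-\gamma}$ and $u_i\in R_\gamma$ with $\sum_i v_iu_i=1$, and define $\sigma_\gamma\colon N_\gamma\to N_0\otimes_{R_0}R_\gamma$ by $n\mapsto\sum_i (nv_i)\otimes u_i$, noting that $nv_i\in N_0$. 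Then $\varepsilon_N\sigma_\gamma(n)=\sum_i (nv_i)u_i=n\cdot 1=n$, while for $x\in N_0$ and $r\in R_\gamma$ the product $rv_i\in R_\gamma R_{-\gamma}\subseteq R_0$ may be passed across the tensor, giving $\sigma_\gamma\varepsilon_N(x\otimes r)=\sum_i (xrv_i)\otimes u_i=x\otimes r\bigl(\sum_i v_iu_i\bigr)=x\otimes r$. Hence every degree component of $\varepsilon_N$ is an isomorphism, so $\varepsilon_N$ is one, and the two functors are inverse equivalences.

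For (2)$\Rightarrow$(1) I would test the counit on the shifted modules $R(\delta)$ coming from the shift functor of \eqref{shiftshift}. For $N=R(\delta)$ one has $N_0=R_\delta$ and $N_\gamma=R_{\delta+\gamma}$, so the degree-$\gamma$ component of $\varepsilon_N$ is the multiplication map $R_\delta\otimes_{R_0}R_\gamma\to R_{\delta+\gamma}$; its surjectivity forces $R_\delta R_\gamma=R_{\delta+\gamma}$. Letting $\delta$ and $\gamma$ range over $\Gamma$ yields exactly the strong grading condition, completing the cycle of implications.

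The naturality of $\eta$ and $\varepsilon$ is routine, and (assuming (1)) surjectivity of $\varepsilon_N$ alone is immediate from $N_\gamma=N_\gamma R_0=N_\gamma R_{-\gamma}R_\gamma\subseteq N_0R_\gamma$. The hard part is the injectivity of $\varepsilon_N$, that is, the explicit construction of the inverse $\sigma_\gamma$ together with the verification that it is two-sided; this is precisely where the strong grading hypothesis enters essentially, through the ``dual basis'' pair $\{u_i\},\{v_i\}$, and keeping the bookkeeping of degrees and the $R_0$-balancing in the identity $\sigma_\gamma\varepsilon_N=\id$ correct is the only delicate point.
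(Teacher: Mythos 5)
The paper does not actually prove this statement; it is quoted from Dade's paper with a citation only, so there is no internal proof to compare against. Your argument is correct and complete: the adjunction $-\otimes_{R_0}R\dashv(-)_0$ with always-invertible unit, the dual-basis construction of $\sigma_\gamma$ from $\sum_i v_iu_i=1$ with $v_i\in R_{-\gamma}$, $u_i\in R_\gamma$ for (1)$\Rightarrow$(2), and testing the counit on the shifts $R(\delta)$ for (2)$\Rightarrow$(1) is exactly the classical proof of Dade's theorem as found in the standard references.
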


In the case of Leavitt path algebras of finite graphs, strong gradedness is characterized by the following theorem.

\begin{thm}[Clark, Hazrat and Rigby,\, \cite{strogra}*{Corollary 4.4}]\label{thm:lestrogra}
Let $E$ be a finite graph and $R$ a unital commutative ring. Then $L_R(E)$, equipped with its canonical $\Z$-grading, is strongly graded if and only if $E$ is regular. 
\end{thm}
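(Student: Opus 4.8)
The plan is to combine the standard strong‑gradedness criterion for $\Z$‑graded rings with the Cuntz--Krieger relation $v=\sum_{e\in s^{-1}(v)}ee^*$ of \eqref{eq:qv}, which holds in $L_R(E)$ at every regular vertex. Recall (see \cite{hazbook}) that a unital $\Z$‑graded ring $A=\bigoplus_{n\in\Z}A_n$ is strongly graded if and only if $A_1A_{-1}=A_0=A_{-1}A_1$; since $E^0$ is finite, $L_R(E)$ is unital with $1_E=\sum_{v\in E^0}v$, and since $E$ is finite it is regular exactly when it has no sinks. I would prove the two implications separately, writing $A=L_R(E)$ and $A_n=L_R(E)_n$ for the canonical grading.

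For the forward implication I would show that a sink obstructs the equality $A_1A_{-1}=A_0$. Suppose $v\in\sink(E)$. Every homogeneous element of degree $1$ is an $R$‑combination of monomials $\alpha\beta^*$ with $|\alpha|=|\beta|+1\ge 1$, so each such $\alpha$ begins with an edge emitted by $s(\alpha)$; as $v$ emits no edge, left multiplication by $v$ annihilates all of them, whence $vA_1=0$. If we had $v=\sum_i x_iy_i$ with $x_i\in A_1$ and $y_i\in A_{-1}$, then $v=v\cdot v=\sum_i(vx_i)y_i=0$, a contradiction. Thus $v\in A_0\setminus A_1A_{-1}$, so $A$ is not strongly graded. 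Because a finite non‑regular graph has a sink, this shows that strong gradedness forces regularity.

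For the converse, assume $E$ is regular, so every vertex emits at least one edge. The equality $A_1A_{-1}=A_0$ is immediate: for each $v\in E^0$ the relation $v=\sum_{e\in s^{-1}(v)}ee^*$ exhibits $v$ as an element of $A_1A_{-1}$ (each $e\in A_1$, $e^*\in A_{-1}$), and summing over $v$ gives $1_E\in A_1A_{-1}$. The substantive point, and the step I expect to be the main obstacle, is the reverse equality $A_{-1}A_1=A_0$, because the relation $e^*e=r(e)$ only exhibits the \emph{non‑source} vertices as lying in $A_{-1}A_1$; a source has no incoming edge and must be treated indirectly. Here I would use iterated Cuntz--Krieger relations: since every vertex is regular, for any $L\ge 0$ one has $v=\sum_{\mu}\mu\mu^*$, the (finite) sum over all paths $\mu$ with $s(\mu)=v$ and $|\mu|=L$. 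Fixing $L\ge|E^0|$, any such $\mu$ repeats a vertex and hence traverses a cycle, so its endpoint $z_\mu=r(\mu)$ is reachable from a cycle and therefore carries incoming paths of every sufficiently large length, in particular a path $\nu_\mu$ with $r(\nu_\mu)=z_\mu$ and $|\nu_\mu|=L+1$. Then $\nu_\mu^*\nu_\mu=r(\nu_\mu)=r(\mu)$ gives
\[
\mu\mu^*=\mu\,(\nu_\mu^*\nu_\mu)\,\mu^*=(\mu\nu_\mu^*)(\nu_\mu\mu^*),
\]
with $\mu\nu_\mu^*\in A_{-1}$ and $\nu_\mu\mu^*\in A_1$; hence each $\mu\mu^*\in A_{-1}A_1$, so $v\in A_{-1}A_1$ and finally $1_E\in A_{-1}A_1$. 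This yields $A_{-1}A_1=A_0$ and completes the converse.

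The only delicate ingredient is the combinatorial claim that for $L\ge|E^0|$ the endpoint of every length‑$L$ path out of a vertex admits an incoming path of length $L+1$; I would verify it by extracting a cycle $c$ from $\mu$ based at a vertex $z'$ lying at distance $t\le L$ before $z_\mu$ along $\mu$, and prepending to the sub‑path $z'\to z_\mu$ a backward walk of length $L+1-t\ge 1$ around $c$. An alternative to this cycle argument is to reduce to the essential case by source removal (where $A_{-1}A_1=A_0$ is as immediate as $A_1A_{-1}=A_0$, every vertex then being a range) and to check that strong gradedness is preserved under that move; the direct argument above has the advantage of not invoking such an invariance statement.
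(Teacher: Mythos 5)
Your proof is correct. Note that the paper offers no proof of this statement: it is quoted from Clark--Hazrat--Rigby \cite{strogra}, where it appears as Corollary 4.4 of a groupoid-theoretic criterion (their Theorem 4.2 characterizes strong gradedness of $L_R(E)$ for arbitrary graphs via strong gradedness of the graded graph groupoid underlying the Steinberg-algebra model). Your argument is instead elementary and self-contained, and each step checks out: the reduction to $1_E\in A_1A_{-1}$ and $1_E\in A_{-1}A_1$ is the standard criterion for $\Z$-graded unital rings; the sink obstruction $vA_1=0$ correctly rules out strong gradedness when $E$ has a sink (here you tacitly use that $v\neq 0$ in $L_R(E)$ for an arbitrary unital commutative $R$ --- true, but it rests on the faithfulness of the coefficient ring on vertices, which deserves a citation); the iterated Cuntz--Krieger identity $v=\sum_{|\mu|=L,\,s(\mu)=v}\mu\mu^*$ is valid precisely because all vertices are regular and $E$ is finite; and the key combinatorial point --- that for $L\ge|E^0|$ every length-$L$ path $\mu$ contains a cycle $c$ based at some $z'$ at distance $t\le L$ from $r(\mu)$, so that the length-$(L+1-t)$ suffix of $c^N$ followed by the tail $z'\to r(\mu)$ is a genuine path of length $L+1$ into $r(\mu)$ --- is exactly what is needed to factor $\mu\mu^*=(\mu\nu_\mu^*)(\nu_\mu\mu^*)$ with factors in $A_{-1}$ and $A_1$. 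What your route buys is a direct, purely graph-and-relations proof for finite graphs; what the cited route buys is a characterization valid for arbitrary (non-row-finite) graphs, which the paper also alludes to via \cite{strogra}*{Theorem 4.2}.
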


For a characterization of strong gradedness of $L_R(E)$ for arbitrary graphs, see \cite{strogra}*{Theorem 4.2}.

\subsection{Graded \topdf{$K$}{K}-theory}\label{subsec:gradK}

Let $A$ be a $\Gamma$\!-graded unital ring and let $\mathcal V^{\gr}(A)$ denote the monoid of graded isomorphism classes of graded finitely generated   right projective modules over $A$ with direct sum as the addition operation. For a graded finitely generated projective $A$-module $P$, we denote the graded isomorphism class of $P$ by $[P]$, which is an element of $\mathcal V^{\gr}(A)$.  Thus for $[P], [Q] \in \mathcal V^{\gr}(A)$, we have $[P]+[Q]=[P\oplus Q]$. 
Recall that the shift functor $\mathcal T_\alpha:\Gr A\rightarrow \Gr A$, $M \mapsto M(\alpha)$ (see~\ref{shiftshift})  is an isomorphism which restricts to $\Pgrp A$, the  category of graded finitely generated  
right projective $A$-modules. Thus the abelian group $\Gamma$ acts on $\mathcal V^{\gr}(A)$ via 
\begin{equation}\label{hhffwwq}
(\alpha, [P]) \mapsto  [P(\alpha)].
\end{equation}
The \emph{graded Grothendieck group}, $K_0^{\gr}(A)$, is defined as the group completion of  the monoid $\mathcal V^{\gr}(A)$. 
Thus $K_0^{\gr}(A)$ naturally inherits a $\Gamma$\!-module structure via (\ref{hhffwwq}). Further, the image $K_0^{\gr}(A)_+$ of $\cV^{\gr}(A)$ is a $\Gamma$-submonoid, and thus the pre-order relation $x\ge y\iff x-y\in K_0^{\gr}(A)_+$ is preserved by the $\Gamma$-action. Hence  $K_0^{\gr}(A)$ is a \emph{pre-ordered $\Z[\Gamma]$-module}, \emph{pointed} by the class $[A]$.

One proves, using Proposition \ref{prop:cross}, that there is an isomorphism of pre-ordered $\Z[\Gamma]$-modules 
\begin{equation}\label{map:k0gr=k0hltimes}
K_0^{\gr}(A)\iso K_0(\Gamma\hltimes A).    
\end{equation}

\begin{rem}\label{ex:hikgr}
Let $A$ be a unital $\Gamma$-graded ring. The graded $K$-groups $K_n^{\gr}(A)$ ($n\in\Z$) are defined to be the $K$-groups of the category $\Pgrp A$; the shift action equips $K_n^{\gr}(A)$ with a $\Z[\Gamma]$-module structure. One uses Proposition \ref{prop:cross} to prove that $K_n^{\gr}(A)\cong K_n(\Gamma\hltimes A)$ as $\Z[\Gamma]$-modules \cite{ac}*{Theorem 3.4}.
\end{rem}

\begin{rem}
The unitalization $\tilde{A}$ of a not necessarily unital $\Gamma$-graded ring $A$ is again $\Gamma$-graded, with $\tilde{A}_\gamma=A_\gamma$ for $\gamma\ne0$ and $\tilde{A}_0=A_0\oplus \Z$. For $n\in\Z$, the graded $\Z[\Gamma]$ module $K_n^{\gr}(A)$ is defined as the kernel of $K_n^{\gr}(\tilde{A})\to K_n^{\gr}(\Z)$. The isomorphism of Remark \ref{ex:hikgr} holds whenever $A$ has graded local units.
\end{rem}

\begin{ex}\label{ex:kgrle}
Let $E$ be a row-finite graph, $\ol{E}$ its covering graph \eqref{eq:olE}, $\K$ a field, and $L=L_\K$. By \eqref{map:k0gr=k0hltimes} and Example \ref{ex:crocov}, 
$K_0^{\gr}(L(E))=K_0(L(\ol{E}))$, which is computed in Example \ref{ex:ultrale}. If $E$ is regular, it is the inductive limit \eqref{seq:colibare}; the fact, noted in Example \ref{ex:kle0}, that it agrees with $K_0(L(E)_0)$ is justified by Dade's theorem \ref{thm:dade}. 

The \emph{graded Bowen-Franks module} is the $\Z[x,x^{-1}]$-module
\[
\BF_{\gr}(E)=\coker (I-xA_E^t:\Z[x,x^{-1}]^{(\reg(E))}\to \Z[x,x^{-1}]^{(E^0)}).
\]
It is not hard to see from Example \ref{ex:ultrale} that if $E$ is any row-finite graph, then identifying $v_n$ with $vx^n$, one gets an isomorphism of $\Z[x,x^{-1}]$-modules 
\begin{equation*}\label{eq:k0grbfe}
K_0^{\gr}(L(E))\cong \BF_{\gr}(E).    
\end{equation*}
   
It was shown in \cite{ac}*{Corollary 5.4} that for any ring $R$ with local units and trivial $\Z$-grading, 
\[
K^{\gr}_n(L_R(E))\cong\BF_{\gr}(E)\otimes K_n(R) \,\, (n\in\Z).
\]
\end{ex}
\begin{ex}\label{ex:kmgrle}
Let $E$ be a finite regular graph, $m\ge 2$ and $\tau_m$ a generator of $\Z/m\Z$, written multiplicatively, so that $\Z[\Z/m\Z]=\Z[\tau_m]$. Put
\[
\BF_m(E)=\coker(I-\tau_mA_E^t:\Z[\tau_m]^{E^0}\to \Z[\tau_m]^{E^0}).
\]
If $R$ is a field or a PID, then by \cite{ac1}*{Lemma 4.2}, there is an isomorphism of $\Z[\tau_m]$-modules 
\[
K_0^{\Z/m\Z-\gr}(L_R(E))\cong\BF_m(E). 
\]
Further, the isomorphism maps $[L(E)]\mapsto [1_E]$. 
\end{ex}

\subsection{Non-existence of graded homomorphisms \topdf{$L_n\leftrightarrows L_{n-}$}{}}\label{subsec:nonex}

As a first application of graded $K$-theory, we sketch a proof of Theorem \ref{thm:noexistis}.
Let $m\ge 2$; as in \ref{ex:kmgrle}, we write $\Z[\tau_m]$ for the group ring of $\Z/m\Z$ over $\mathbb Z$. 
\begin{lem}[\cite{ac1}*{Lemmas 3.1 and 3.3}]\label{lem:bfmrn}
Let $m,n\ge 2$. Set
\[
\xi_n(x)=x^3+(2n-1)x^2-(n+2)x+1\in\Z[x].
\]
Then there are isomorphisms of $\Z[\tau_m]$-modules
\begin{gather}
\BF_m(\cR_n)\cong \Z/(n^m-1)\Z\label{eq:bfmrn}\\
\BF_m(\cR_{n^-})\cong \Z[\tau_m]/\langle\xi_n(\tau_m) \rangle.\label{eq:bfmrn-}
\end{gather}
The isomorphism \eqref{eq:bfmrn} sends $[1_{\cR_n}]\mapsto [1]$ and $\tau_m$ acts on the right hand side as multiplication by $n^{m-1}$. The isomorphism \eqref{eq:bfmrn-} sends $[1_{\cR_{n^-}}]\mapsto [1-n\tau_m]$.
\end{lem}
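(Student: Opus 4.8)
The plan is to compute both Bowen--Franks modules directly from the defining presentation $\BF_m(E)=\coker(I-\tau_mA_E^t)$, using throughout that $\tau_m$ is a unit of $\Z[\tau_m]=\Z[x]/\langle x^m-1\rangle$ (indeed $\tau_m^{-1}=\tau_m^{m-1}$), and writing $x=\tau_m$. The first observation is that the relevant adjacency matrices are easy to write down: $A_{\cR_n}=[\,n\,]$ is $1\times1$, while the Cuntz splice formula gives the symmetric matrix $A_{\cR_{n^-}}=\left(\begin{smallmatrix} n&1&0\\ 1&1&1\\ 0&1&1\end{smallmatrix}\right)$, so that $A_{\cR_{n^-}}^t=A_{\cR_{n^-}}$ and no transpose bookkeeping is needed.

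For \eqref{eq:bfmrn} we have $\BF_m(\cR_n)=\Z[\tau_m]/\langle 1-n\tau_m\rangle=\Z[x]/\langle x^m-1,\ nx-1\rangle$. I would identify this ring in two steps: $\Z[x]/\langle nx-1\rangle=\Z[1/n]$ via $x\mapsto 1/n$, after which killing $x^m-1$ amounts to killing $n^{-m}-1$, equivalently $n^m-1$ since $n$ is invertible there. Thus the ring is $\Z[1/n]/\langle n^m-1\rangle=\Z/(n^m-1)\Z$, the last equality holding because $n$ is already a unit modulo $n^m-1$ (as $n\cdot n^{m-1}\equiv1$). Tracking the generator shows $[1_{\cR_n}]=[1]\mapsto 1$ and $\tau_m=x=1/n\equiv n^{m-1}$, giving the asserted action.

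For \eqref{eq:bfmrn-}, present $\BF_m(\cR_{n^-})$ by generators $e_1,e_2,e_3$ (the vertices $v_n,v_1,v_2$) subject to the three column relations of $I-xA_{\cR_{n^-}}$. Since the coefficient of $e_2$ in the first relation and of $e_3$ in the second are both the unit $-x$, I would solve these to express $e_2=(x^{-1}-n)e_1$ and $e_3=(x^{-2}-(n+1)x^{-1}+(n-1))e_1$ as $\Z[\tau_m]$-multiples of $e_1$. Substituting into the remaining (third) relation and multiplying by the unit $x^{2}$ collapses it to the single relation $\xi_n(x)\,e_1=0$; hence the module is cyclic on $e_1$ and $\BF_m(\cR_{n^-})\cong\Z[\tau_m]/\langle\xi_n(\tau_m)\rangle$. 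As a cross-check this matches Example \ref{ex:DeltaA} through the base change along $\Z[x,x^{-1}]\to\Z[\tau_m]$: since $\coker$ is right exact, $\BF_m(\cR_{n^-})\cong\Delta_{A_{\cR_{n^-}}}\otimes_{\Z[x,x^{-1}]}\Z[\tau_m]$ (though this route is essentially the same computation, as Example \ref{ex:DeltaA} rests on the very statement being proved).

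The main obstacle is not the cokernel computation but the bookkeeping of the marked point. Summing the expressions above gives $[1_{\cR_{n^-}}]=e_1+e_2+e_3=x^{-2}(1-nx)\,e_1$, which equals $1-n\tau_m$ only up to the unit $x^{-2}$. I would therefore normalize the isomorphism $\BF_m(\cR_{n^-})\cong\Z[\tau_m]/\langle\xi_n(\tau_m)\rangle$ so that $e_1\mapsto x^{2}$ (a unit, hence still a generator, and compatible with the relation since $x^{2}\xi_n(x)=0$); with this choice the order unit lands on $1-n\tau_m$ as required. One should finally note that every step is an equality of $\Z[\tau_m]$-modules, so the $\tau_m$-action (multiplication by $x$) is automatically preserved.
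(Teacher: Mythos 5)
Your computation is correct and is essentially the route the paper takes (the survey itself only cites \cite{ac1} for this lemma, describing the $\cR_{n^-}$ case as a ``straightforward, but longer calculation'' from the presentation $\coker(I-\tau_mA^t)$, which is exactly what you carry out): the matrix $A_{\cR_{n^-}}$ is symmetric, the unit entries $-\tau_m$ let you eliminate $e_2$ and $e_3$, and the surviving relation is $\tau_m^{-2}\xi_n(\tau_m)e_1=0$. You also correctly identify and resolve the one genuinely delicate point, namely that $[1_{\cR_{n^-}}]=\tau_m^{-2}(1-n\tau_m)e_1$, so the isomorphism must be normalized by the unit $\tau_m^{2}$ (i.e.\ $e_1\mapsto\tau_m^2$) for the order unit to land on $[1-n\tau_m]$ as stated.
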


\begin{proof}[Sketch of the proof of Theorem \ref{thm:noexistis}] Since any $\Z$-graded homomorphism of $\Z$-graded algebras is also $\Z/2\Z$-graded, we may assume that $m\ge 2$. In view of Example \ref{ex:kmgrle} and Lemma \ref{lem:bfmrn}, it suffices to show that there are no $\Z[\tau_m]$-linear homomorphisms between the right-hand sides of \eqref{eq:bfmrn} and \eqref{eq:bfmrn-} preserving the distinguished element. This is done in \cite{ac1}*{Theorem 5.1 and Theorem 6.3}. One direction is easy; if $\phi:\Z[\tau_m]/\langle\xi_n(\tau_m)\rangle\to \Z/(n^m-1)\Z$ is a $\Z[\tau_m]$-module homomorphism such that  $\phi([1-n\tau_m])=[1]$, then 
\[
[1]=\phi([1])(1-n\tau_m)=\phi(1)(1-m^n)=0,
\]
a contradiction. The other direction is more involved; see \cite{ac1}*{Lemma 6.1, Proposition 6.2 and Theorem 6.3}.
\end{proof}

\begin{rem}\label{rem:noexistisinfty}
It follows from Theorem \ref{thm:noexistis} that $L_n$ and $L_{n^-}$ are not isomorphic as $\Z$-graded algebras. One can also derive this directly from Examples \ref{ex:DeltaA} and \ref{ex:kgrle}. Indeed, for $M=\Z[x,x^{-1}]/\langle x+1\rangle$, we have 
\[
K_0^{\gr}(L_n)\otimes_{\Z[x,x^{-1}]}M=\Z[1/n]\otimes_{\Z[x,x^{-1}]} M=\Z/(n+1)\Z
\]
while
\[
K_0^{\gr}(L_{n^-})\otimes_{\Z[x,x^{-1}]}M=\Z[x,x^{-1}]/\langle \xi(-1)\rangle =\Z/(3n+1)\Z.
\]
\end{rem}
\section{Graded classification and symbolic dynamics}\label{sec:gradsymb}

Since Leavitt path algebras are graded algebras, it is natural to take into account the grading of these algebras when trying to determine an invariant for them. In this section we will look at the graded Grothendieck group $K_0^{\gr}$ as such an invariant. The main conjectures pertaining to  Leavitt path algebras we consider in this section state that the graded Grothendieck group, along with its positive cone (and the position of the identity) is a complete invariant (Conjectures~\ref{conjalgiso}, \ref{conj:full} and \ref{conjmogr}).

\subsection{Polycephaly graphs and the graded classification conjecture}\label{subsec:poly}
We write $C_n$ for the graph consisting of a single cycle of length $n$. 
A graph having only one cycle $C$ such that all vertices are connected to some vertex in the support of $C$ is called a \emph{$C_n$-comet} if $|C|=n$. A \emph{multiheaded comet} is a graph consisting of a finite number of comets such that cycles are mutually disjoint and every vertex connects to at least one cycle. A finite graph $E$ is \emph{polycephaly} if each vertex connects to disjoint cycles, or to sinks or roses.   For formal definitions and more details  of  multiheaded comets and polycephaly graphs, see \cite{haz2013}*{page 296}. 

\begin{thm}[Hazrat~\cite{haz2013}*{Theorem 9}]\label{mani543}
Let $E$ and $F$ be polycephaly graphs. Then $L(E)\cong_{\gr} L(F)$ if and only if there is an order-preserving $\mathbb Z[x,x^{-1}]$-module isomorphism
\[\big (K_0^{\gr}(L(E)),[L(E)]\big ) \cong \big (K_0^{\gr}(L(F)),[L(F)]\big ).\] 
\end{thm}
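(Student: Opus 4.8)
The plan is to treat the two implications separately, the forward one being formal and the converse requiring a reduction to a normal form. If $L(E)\cong_{\gr}L(F)$, then the equivalence carries graded finitely generated projective modules to graded finitely generated projectives, commuting with $\oplus$ and with every shift functor $\cT_\a$; it therefore induces a $\Z[x,x^{-1}]$-module isomorphism $K_0^{\gr}(L(E))\cong K_0^{\gr}(L(F))$ preserving the positive cones and sending $[L(E)]\mapsto[L(F)]$. For the converse I would first invoke the structure theory of polycephaly Leavitt path algebras from \cite{haz2013}: for polycephaly $E$ there is a graded isomorphism
\[
L(E)\cong_{\gr}\bigoplus_{j}\M_{n_j}(R_j)(\ol{\a}_j),
\]
where each $R_j$, with its canonical $\Z$-grading, is either $\K$ (contributed by a sink), $\K[x^{\ell},x^{-\ell}]$ (by a cycle of length $\ell$, the head of a comet), or a Leavitt algebra $L_m$ with $m\ge 2$ (by a rose of $m$ petals), and $\ol{\a}_j$ is a shift vector encoding how the trees feed into the corresponding head; the same holds for $F$. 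Since this is a genuine direct sum, $K_0^{\gr}$ splits accordingly, and because $K_0^{\gr}$ is invariant under graded Morita equivalence one has $K_0^{\gr}(\M_{n_j}(R_j)(\ol{\a}_j))\cong K_0^{\gr}(R_j)$ as $\Z[x,x^{-1}]$-modules.

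The next step is to read the block data off the invariant. Applying the formula $K_0^{\gr}(L(E))\cong\BF_{\gr}(E)$ of Example \ref{ex:kgrle} to a single sink, to the $\ell$-cycle $C_{\ell}$, and to the rose $\cR_m$ (compare Example \ref{ex:DeltaA} and \eqref{eq:DeltA}) gives, up to graded Morita equivalence, $K_0^{\gr}(\K)\cong\Z[x,x^{-1}]$ (free, with $x$ acting freely), $K_0^{\gr}(\K[x^{\ell},x^{-\ell}])\cong\Z[x,x^{-1}]/(x^{\ell}-1)$ ($x$-periodic of period $\ell$), and $K_0^{\gr}(L_m)\cong\Z[x,x^{-1}]/(1-mx)\cong\Z[1/m]$ (with $x$ acting invertibly as $1/m$). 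These are pairwise non-isomorphic as $\Z[x,x^{-1}]$-modules and are detected intrinsically: the sink summands form the torsion-free part, the comet summands are the torsion submodules annihilated by some $x^{\ell}-1$ (finitely generated over $\Z$), and the rose summands are the torsion submodules annihilated by $1-mx$ (not finitely generated over $\Z$). Hence any $\Z[x,x^{-1}]$-module isomorphism must match the summands of $E$ and of $F$ type by type and, within the comet and rose types, match the periods $\ell$ and the petal numbers $m$.

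It then remains to recover the matrix sizes and shift vectors and to lift the matching to an algebra isomorphism. Under graded Morita equivalence the distinguished class $[\M_{n_j}(R_j)(\ol{\a}_j)]$ corresponds to $\sum_i\cT_{\a_{j,i}}[R_j]$, a positive sum of shifts of the rank-one free class. In the sink case the atoms of the ordered module $\Z[x,x^{-1}]$ are exactly the $\cT_t[\K]$, an order-isomorphism permutes them, and compatibility with the pointing forces the shift multiset to agree up to a global shift --- precisely the ambiguity of a graded isomorphism of $\M_{n}(\K)(\ol{\a})$. In the comet case the relation $x^{\ell}=1$ means the multiset is recovered modulo $\ell$, matching the reduction allowed by the homogeneous unit $x^{\ell}$ of degree $\ell$; in the rose case the relation $[L_m]=m\,\cT_{\pm1}[L_m]$ coming from the defining Leavitt relation $\sum_i x_ix_i^*=1$ is exactly the shift ambiguity of $\M_n(L_m)(\ol{\a})$. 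Having matched, block by block, the rings $R_j$, the sizes $n_j$ and the shift vectors up to the admissible equivalence, I would apply the classification of graded matrix rings --- $\M_n(R)(\ol{\a})\cong_{\gr}\M_n(R)(\ol{\b})$ whenever $\ol{\b}$ arises from $\ol{\a}$ by a permutation, a global shift, and (when $R$ carries a homogeneous invertible element of degree $d$) reductions of the entries modulo $d$ --- and assemble the resulting block isomorphisms with the structure isomorphisms above to obtain $L(E)\cong_{\gr}L(F)$ realizing the given pointing.

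The main obstacle is the interplay between the last two steps: proving that the pointed, ordered $\Z[x,x^{-1}]$-module recovers the shift vector \emph{exactly} up to the equivalence under which the graded matrix rings are isomorphic. Over $\K$ this is transparent because the order singles out the atoms, but over $\K[x^{\ell},x^{-\ell}]$ and over $L_m$ the invertible homogeneous elements force nontrivial reductions of the shifts, and one must show that the reduction seen on the algebra side (via explicit graded isomorphisms of matrix rings) coincides with the one seen on the $K_0^{\gr}$ side (via the relations $x^{\ell}=1$ and $[L_m]=m\,\cT_{\pm1}[L_m]$), uniformly across all blocks at once and compatibly with the direct-sum decomposition dictated by the polycephaly structure. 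This bookkeeping, together with establishing the graded-matrix-ring classification itself, is the technical heart of the argument.
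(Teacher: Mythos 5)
Your proposal is correct and follows essentially the same route as the paper's sketch of \cite{haz2013}*{Theorem 9}: decompose $L(E)$ for polycephaly $E$ as a graded direct sum of weighted matrix algebras over $\K$, $\K[x^{\ell},x^{-\ell}]$ and $L_m$, compute $K_0^{\gr}$ of each block as a pointed partially ordered $\Z[x,x^{-1}]$-module, and then match blocks and shift data through the module isomorphism. The extra detail you supply on distinguishing the three block types and recovering the shift vectors up to the admissible equivalence is exactly the content the paper delegates to \cite{haz2013}*{Theorems 6, 7, 8 and 9}.
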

\begin{proof}[Strategy of the proof] It was shown in \cite{haz2013}*{Theorem 2} that if $E$ is polycephaly, then $L(E)$ decomposes as a direct sum of matrix algebras, graded by certain sequences of weights as in \eqref{eq:weightgrading}, with one matrix algebra over $\K$ for each sink, one over $\K[x^n,x^{-n}]$ for each comet $C_n$ and one over $L_{n}$ for each $n$-petal rose in $E$ $(n\ge 2)$. Then $K_0^{\gr}$ was computed for these three types of algebras \cite{haz2013}*{Theorems 6, 7 and 8} as a pointed, partially ordered $\Z[x,x^{-1}]$-module. Finally the theorem was proved in \cite{haz2013}*{Theorem 9} using those calculations.     
\end{proof}

Theorem~\ref{mani543} motivated the following conjecture.

\begin{conj}[\emph{Graded Classification Conjecture} \cite{haz2013}]\label{conjalgiso}
Let $E$ and
$F$ be finite graphs. 

There is an order-preserving $\Z[x,x^{-1}]$-module
isomorphism $\phi: K_0^{\gr}(L(E)) \iso K_0^{\gr}(L(F))$ with 
$\phi([L(E)])=[L(F)]$ 
if and only if there exists a unital $\mathbb Z$-graded $\K$-isomorphism $\psi: L(E) \rightarrow L(F)$ such that $K^{\gr}_0(\psi) = \phi$.

\end{conj}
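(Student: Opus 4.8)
The ``only if'' direction is immediate from functoriality: a unital $\Z$-graded $\K$-isomorphism $\psi\colon L(E)\to L(F)$ induces an order-preserving $\Z[x,x^{-1}]$-module isomorphism $K_0^{\gr}(\psi)$ carrying the class $[L(E)]$ of the identity to $[L(F)]$, so one takes $\phi=K_0^{\gr}(\psi)$. All the content lies in the ``if'' direction, where one must manufacture a graded algebra isomorphism from the abstract module isomorphism $\phi$. The plan is to split this into a \emph{fullness} step, which produces a lift of $\phi$, and a \emph{conservativity} step, which promotes the lift to an isomorphism.

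For the fullness step, the goal is to promote the pointed ordered $\Z[x,x^{-1}]$-module map $\phi$ to an honest unital $\Z$-graded algebra homomorphism $\psi\colon L(E)\to L(F)$ with $K_0^{\gr}(\psi)=\phi$. The template is Theorem \ref{mani543}: for polycephaly $E$ and $F$ one splits $L(E)$ and $L(F)$ as graded direct sums of weighted matrix algebras over $\K$, over $\K[x^n,x^{-n}]$, and over the $L_n$, and reads the graded isomorphism off the explicit $K_0^{\gr}$-computation. For a general finite graph there is no such decomposition, so the lift must be assembled at the level of generators and relations: one chooses homogeneous elements of $L(F)$ whose classes realize $\phi$ on the vertex and edge data of $E$ and verifies that the Cohn--Leavitt relations persist. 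That such lifts always exist is exactly the fullness statement of Theorem \ref{thm:full}, established by Arnone and Va\v{s}; in carrying it out one uses that $\phi$ respects both the order and the shift, the latter being the transported $\Z$-action in the smash-product picture of Proposition \ref{prop:cross}.

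Granting the lift $\psi$, it remains only to show that $\psi$ is an isomorphism, and here the argument would be purely formal \emph{if} one had the appropriate conservativity input. It would suffice to know that $K_0^{\gr}$ \emph{reflects isomorphisms} on unital graded homomorphisms between Leavitt path algebras: then $\psi$, whose induced map $K_0^{\gr}(\psi)=\phi$ is invertible, would itself be invertible. Equivalently---lifting $\phi^{-1}$ to some $\psi'\colon L(F)\to L(E)$ and composing---it would suffice that every unital graded endomorphism inducing the identity on $(K_0^{\gr},[L(\cdot)])$ be a graded automorphism, whence $\psi'\psi$ and $\psi\psi'$ are isomorphisms and the usual two-sided-inverse argument forces $\psi$ to be one.

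The hard part, and the reason the statement remains a conjecture, is precisely this conservativity: it is \emph{not} known that a unital graded endomorphism lifting the identity on $K_0^{\gr}$ must be invertible, since a priori it could differ from the identity by a nontrivial twist. The partial results isolate exactly this difficulty. Ara--Pardo (Theorem \ref{theor:Kiffgr-iso unital}) prove the conjecture only after allowing the structural corner isomorphism to be adjusted by a locally inner automorphism---itself such a twist---and the graded-homotopy classification of Theorem \ref{thm:guidotopy} produces the desired map only up to graded homotopy, mirroring the homotopy-versus-isomorphism gap already present in the non-graded Kirchberg--Phillips problem (compare Theorem \ref{thm:cm2}). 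A complete proof would therefore have to control these twists, the most plausible routes being either a rigidity theorem on the symbolic-dynamics side---upgrading the pointed shift equivalence encoded by $\phi$ (via Theorem \ref{h99}) to a strong shift equivalence compatible with the pointing and grading---or an algebraic $K$-theoretic nilpotence argument in the spirit of the square-zero-kernel phenomenon exploited in the proof of Theorem \ref{thm:cm2}.
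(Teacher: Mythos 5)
The statement you are asked to prove is Conjecture~\ref{conjalgiso}, which the paper records as an open problem; there is no proof of it in the paper, and none is known. Your proposal is therefore not, and could not be, a complete proof, and to your credit you say so explicitly. What you have written is an accurate strategic map of the surrounding results: the ``only if'' direction is indeed trivial functoriality; the fullness step is exactly Theorem~\ref{thm:full} of Arnone and Va\v{s} (which moreover gives that the lift $\psi$ of an injective $\phi$ is injective, so the genuinely missing piece is surjectivity of the lift, equivalently the conservativity you describe); and the partial results you cite --- Ara--Pardo's Theorem~\ref{theor:Kiffgr-iso unital} resolving the conjecture up to a locally inner twist of the corner isomorphism, and Arnone's graded homotopy classification, Theorem~\ref{thm:guidotopy} --- are precisely the paper's evidence that the obstruction is concentrated in controlling such twists. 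This all matches the paper's own discussion.

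The one place where your outlined escape routes need a caveat is the suggestion of ``upgrading the pointed shift equivalence encoded by $\phi$ (via Theorem~\ref{h99}) to a strong shift equivalence.'' The Kim--Roush counterexamples \cites{kr1,kr2} show that shift equivalence does not imply strong shift equivalence in general, so this route cannot work unconditionally; at best one could hope for the weaker compatibility conditions of Theorem~\ref{thm:recast}, and whether shift equivalence implies those is itself open, as the paper notes after Proposition~\ref{prop:ssegse}. Likewise, the square-zero-kernel argument from the proof of Theorem~\ref{thm:cm2} lives in $kk$, where it yields invertibility only up to ($\M_2$-)homotopy; transporting it to an actual graded algebra isomorphism is exactly the homotopy-versus-isomorphism gap you already identify. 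So the proposal should be read as a correct survey of why the conjecture is hard, not as a proof of it.
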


\begin{rem}\label{rem:nuniconjalgiso}
One can also formulate a version of Conjecture \ref{conjalgiso} for row-finite graphs with an infinite number of vertices, with the condition that $\phi$ preserves the order unit replaced by the condition that it preserves the generating interval \cite{hazlia}*{p. 450}. Then one may further strengthen the conjecture by requiring that $\psi$ be a $\ast$-homomorphism. This strong form of the conjecture was established by Hazrat and Va\v{s} in \cite{hazlia}*{Theorem 5.5} for a certain class $\cC$ of row-finite countable graphs and under some hypothesis on the ground field $\K$. A countable graph $E$ belongs to $\cC$ if and only if $L(E)$ decomposes as a direct sum of countably many summands, each of them a finite or countably infinite matrix algebra with coefficients in either $\K$ or $\K[x,x^{-1}]$. A graph theoretic description of such graphs is given in \cite{chain}*{Theorem 3.7}. The condition on ground field $\K$ is that it be $2$-proper and $\ast$-Pythagorean (see \cite{hazlia}*{p. 420} for a definition of these terms); for example $\C$, $\R$ and, in general, any quadratically closed $\ast$-subfield of $\C$ satisfy these conditions. 
\end{rem}

\subsection{The graded classification conjecture up to a twist}\label{subsec:twist}

Recall that an automorphism $g $ of a unital algebra $\mathcal A$ is said to be {\it locally inner}
if given any finite number of elements $x_1, \dots , x_n$ in $\mathcal A$ there is an inner automorphism $h$ of $\mathcal A$
such that $h(x_i)= \varphi (x_i)$ for $i=1, \dots , n$. It is well-known (\cite{goodearl}*{Lemma 15.23(b)}) that any automorphism $\varphi $ of a unital ultramatricial algebra $\mathcal A$,
such that $K_0(\varphi ) = \text{id}$, is locally inner.

Let $R$ be a ring with identity and $p$ an idempotent of $R$. Observe that if $\phi:R\to R$ is a not-necessarily unital ring homomorphism then for $p=\phi(1)$ we have $\phi(R)\subset pRp$. We call $\phi$
a \emph{corner isomorphism} if the induced homomorphism $\phi:R\lra pRp$ is an isomorphism. The \emph{corner skew Laurent polynomial ring}  \index{corner skew Laurent polynomial ring} associated to $(R,\phi)$, denoted by $R[t_{+},t_{-},\phi]$, is a unital ring constructed as follows. For $n\ge 0$, set $p_n=\phi^n(1)$. The elements of $R[t_{+},t_{-},\phi]$ are the formal expressions
\[t^j_{-}r_{-j} +t^{j-1}_{-}r_{-j+1}+\dots+t_{-}r_{-1}+r_0 +r_1t_{+}+\dots +r_it^i_{+},\,\, (i,j\ge 0).\]
Addition is componentwise, and multiplication is determined by the distributive law and the following rules.
\begin{equation}\label{oiy53} 
t_{-}t_{+} =1, \qquad t_{+}t_{-} =p, \qquad rt_{-} =t_{-}\phi(r),\qquad  t_{+}r=\phi(r)t_{+}.
\end{equation}

Corner skew Laurent polynomial rings are studied in~\cite{arabrucom}; their $K$-groups were calculated in \cite{abc}*{Theorem 3.6}. Assigning degrees $-1$ to $t_{-}$ and $1$ to $t_{+}$ makes $A:=R[t_{+},t_{-},\phi]$ a $\mathbb Z$-graded ring with $A=\bigoplus_{i\in \mathbb Z}A_i$, where for $p_i=\phi^i(1)$ $(i\ge 0)$ we have
\[
A_i=\left\{\begin{matrix}
Rp_it^i_{+} & \text{ for  } i>0,\\
t^{-i}_{-}p_{-i}R & \text{ for } i<0,\\
R & \text{ for } i=0.
\end{matrix}\right. 
\]

Setting $p=1$ and $\phi$ the identity map, $R[t_{+},t_{-},\phi]$ reduces to the familiar Laurent polynomial ring $R[t,t^{-1}]$.

  \begin{thm}[Ara, Gonz\'{a}lez-Barroso, Goodearl, Pardo~\cite{skew}*{Lemma 2.4}]\label{jhby67}
Let $A$ be a $\mathbb Z$-graded unital ring  possessing  a left invertible element $t_{+} \in A_1$. Then $t_{+}$ has a left inverse $t_{-}\in A_{-1}$, and for $p=t_+t_-$ and
\begin{align}\label{liyang}
\phi:A_0 &\longrightarrow pA_0p,\\
 a &\longmapsto t_{+}at_{-}\notag
\end{align} 
we have a ring isomorphism $A\cong A_0[t_{+},t_{-},\phi]$.
\end{thm}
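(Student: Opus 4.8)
The plan is to produce the degree $-1$ left inverse first, then check that $\phi$ genuinely is a corner isomorphism so that $A_0[t_{+},t_{-},\phi]$ is defined, and finally to build a graded isomorphism onto $A$ by sending the formal symbols $t_{\pm}$ to the given homogeneous elements and verifying bijectivity degree by degree.

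To begin, I would write any left inverse $s$ of $t_{+}$ in homogeneous components, $s=\sum_{n}s_{n}$ with $s_{n}\in A_{n}$. Since $st_{+}=1$ and $s_{n}t_{+}\in A_{n+1}$, comparing the degree-$0$ parts of $st_{+}=1$ forces $t_{-}:=s_{-1}\in A_{-1}$ to satisfy $t_{-}t_{+}=1$, which gives the desired left inverse in degree $-1$. Setting $p=t_{+}t_{-}\in A_0$, the relation $t_{-}t_{+}=1$ yields $p^{2}=t_{+}(t_{-}t_{+})t_{-}=p$, so $p$ is idempotent. The map $\phi(a)=t_{+}at_{-}$ of \eqref{liyang} lands in $pA_0p$ because $pt_{+}=t_{+}$ and $t_{-}p=t_{-}$; it is multiplicative since the factor $t_{-}t_{+}=1$ cancels in $\phi(a)\phi(b)=t_{+}a(t_{-}t_{+})bt_{-}$; and the assignment $b\mapsto t_{-}bt_{+}$ is a two-sided inverse on $pA_0p$. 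Hence $\phi\colon A_0\to pA_0p$ is a corner isomorphism and $A_0[t_{+},t_{-},\phi]$ is well defined.

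Next I would define a ring homomorphism $\Theta\colon A_0[t_{+},t_{-},\phi]\to A$ sending $r\in A_0$ to itself and the formal symbols $t_{\pm}$ to the given elements $t_{\pm}\in A_{\pm 1}$. To see $\Theta$ is well defined it suffices to check that the given elements satisfy the defining relations \eqref{oiy53}: the relations $t_{-}t_{+}=1$ and $t_{+}t_{-}=p$ hold by construction, while $rt_{-}=t_{-}\phi(r)$ and $t_{+}r=\phi(r)t_{+}$ follow by inserting $t_{-}t_{+}=1$, for instance $t_{-}\phi(r)=t_{-}t_{+}rt_{-}=rt_{-}$. Writing $p_i=\phi^i(1)=t_{+}^{i}t_{-}^{i}$, the map $\Theta$ is a homomorphism of graded rings, carrying the degree-$i$ summand $A_0p_it_{+}^{i}$ into $A_i$ for $i>0$ and symmetrically for $i<0$.

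It then remains to check that $\Theta$ is bijective in each degree. Iterating $t_{-}t_{+}=1$ gives $t_{-}^{i}t_{+}^{i}=1$, so for $i\ge 0$ every $x\in A_i$ equals $(xt_{-}^{i})t_{+}^{i}$ with $xt_{-}^{i}\in A_0$; using $p_it_{+}^{i}=t_{+}^{i}$ this shows $A_i=A_0t_{+}^{i}=\Theta(A_0p_it_{+}^{i})$, and symmetrically $A_{-i}=t_{-}^{i}A_0$, giving surjectivity. For injectivity, suppose a degree-$i$ element $rp_it_{+}^{i}$ maps to $0$, i.e. $rt_{+}^{i}=0$ in $A$; multiplying on the right by $t_{-}^{i}$ gives $rp_i=0$, and since by the description of the grading of $A_0[t_{+},t_{-},\phi]$ its degree-$i$ summand is exactly the copy $A_0p_it_{+}^{i}$ of $A_0p_i$, this forces $rp_it_{+}^{i}=0$ in the domain. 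The cases $i<0$ are symmetric and $i=0$ is the identity on $A_0$, so $\Theta$ is a graded isomorphism. The main obstacle is precisely this injectivity step, where one must invoke that the corner skew Laurent polynomial ring is built so that each graded piece is an isolated copy of $A_0p_i$ (respectively $p_iA_0$) with no further collapsing, so that vanishing of the image in $A$ is already detected by the coefficient $rp_i$; everything else is routine bookkeeping with the two relations $t_{-}t_{+}=1$ and $t_{+}t_{-}=p$.
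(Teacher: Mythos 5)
The paper does not prove this statement; it only cites \cite{skew}*{Lemma 2.4}, so there is no in-text argument to compare against. Your proof is correct and is the standard one implicit in the paper's setup: extracting $t_-$ as the degree $-1$ component of any left inverse (using $1\in A_0$), verifying $\phi$ is a corner isomorphism, and checking the evaluation map $A_0[t_+,t_-,\phi]\to A$ degree by degree, where surjectivity comes from $t_-^it_+^i=1$ and injectivity from recovering the coefficient $rp_i$ (resp.\ $p_ir$) by multiplying by $t_-^i$ (resp.\ $t_+^i$); you correctly isolate the only delicate point, namely that the degree-$i$ component of the corner skew Laurent polynomial ring is by construction a faithful copy of $A_0p_i$.
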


Let $E$ be a finite graph without sources. Using Theorem~\ref{jhby67}, one can present $L(E)$ as a corner skew Laurent polynomial ring as follows. For each $v\in E^0$,  choose an edge $e_v$ such that $r(e_v)=v$ and consider $$t_{+}=\sum_{v\in E^0}e_v\in L(E)_1.$$ Then $t_{-}= t_+^*$ satisfies $t_-t_+=1$; set $p=t_+t_-$ and let
\begin{align*}
\alpha:L(E)_0 &\longrightarrow pL(E)_0 p\\
 a &\longmapsto t_{+}at_{-}.
\end{align*}
One checks that $\alpha$ is an isomorphism. Then thanks to Theorem~\ref{jhby67}, we have $$L(E)=L(E)_0[t_{+},t_{-},\alpha].$$
Now let $g$ be a locally inner automorphism of $L(E)_0$. Then we define $L^g(E)$ by
$$L^g(E)= L(E)_0[t_+,t_-, g\circ \alpha ].$$
The graded algebra $L^g(E)$ has the same graded $K$-theory as $L(E)$. Indeed the graded $K$-groups are the same because both are strongly graded with the same $0$-degree component, and the $\Z$-actions on them coincide because $g$ induces the identity in $K$-theory. We are now in a position to describe the following result of Ara and Pardo.

\begin{thm}[Ara-Pardo~\cite{apgrad}*{Theorem 4.1}]\label{theor:Kiffgr-iso unital}
If $E,F$ are finite essential graphs, then the following are equivalent:
\begin{enumerate}[\upshape(1)]
\item There is an order-preserving $\mathbb Z[x,x^{-1}]$-module isomorphism
$$\big (K_0^{\gr}(L(E)),[L(E)]\big ) \cong \big (K_0^{\gr}(L(F)),[L(F)]\big );$$ 
\item  There exists a locally inner automorphism $g$ of $L(E)_0$ such that  $L^g(E)\cong_{\gr}L(F)$.
\end{enumerate}
\end{thm}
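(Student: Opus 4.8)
The plan is to translate both implications into statements about the degree-zero subalgebras $L(E)_0$ and $L(F)_0$, which are unital ultramatricial by Example~\ref{ex:kle0}. First I would record the relevant identifications. Since $E,F$ are finite essential, they are regular, so by Theorem~\ref{thm:lestrogra} the algebras $L(E)$ and $L(F)$ are strongly $\Z$-graded; having no sources, each admits a corner skew Laurent presentation $L(E)=L(E)_0[t_+,t_-,\alpha]$ as in Theorem~\ref{jhby67}, with $\alpha$ the corner isomorphism of \eqref{liyang}. By Dade's theorem~\ref{thm:dade} together with Example~\ref{ex:kgrle}, the pointed pre-ordered $\Z[x,x^{-1}]$-module $K_0^{\gr}(L(E))$ is identified with $K_0(L(E)_0)$, pointed by $[L(E)_0]=[1]$, ordered by its natural cone, and with $x$ acting as the automorphism $\alpha_*$ of $K_0(L(E)_0)$ induced by $\alpha$ (an automorphism because the $x$-action is invertible); similarly for $F$.

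For $(2)\Rightarrow(1)$, which is the easy direction, a graded isomorphism $L^g(E)\cong_{\gr}L(F)$ induces, by functoriality of $K_0^{\gr}$, an isomorphism of pointed pre-ordered $\Z[x,x^{-1}]$-modules $K_0^{\gr}(L^g(E))\cong K_0^{\gr}(L(F))$. It therefore suffices to see that $K_0^{\gr}(L^g(E))\cong K_0^{\gr}(L(E))$ as such modules. Both algebras are strongly graded (the idempotent $\alpha(1)$ is full, hence so is $g(\alpha(1))$) with the same degree-zero component $L(E)_0$; under the identification above the $x$-action on $K_0^{\gr}(L^g(E))$ is induced by $g\circ\alpha$, hence equals $K_0(g)\circ\alpha_*$. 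As $g$ is locally inner it induces the identity on $K_0(L(E)_0)$, so $K_0(g)=\mathrm{id}$ and the two $x$-actions agree; the pointings agree as well. Composing the two module isomorphisms yields $(1)$.

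For $(1)\Rightarrow(2)$ I would proceed as follows. Under the identification of the first paragraph, the order-preserving pointed module isomorphism $\phi$ becomes an isomorphism of pointed partially ordered groups $\phi_0\colon K_0(L(E)_0)\iso K_0(L(F)_0)$ with $\phi_0([1])=[1]$ and $\phi_0\circ\alpha_{E,*}=\alpha_{F,*}\circ\phi_0$. Since $L(E)_0$ and $L(F)_0$ are unital ultramatricial, Elliott's theorem~\ref{thm:elliott}(2) realizes $\phi_0$ by an algebra isomorphism $\theta\colon L(E)_0\iso L(F)_0$ with $K_0(\theta)=\phi_0$. Transporting the corner presentation of $L(F)$ along $\theta$ gives $L(F)\cong_{\gr}L(E)_0[t_+,t_-,\beta]$, where $\beta:=\theta^{-1}\circ\alpha_F\circ\theta$ is a corner isomorphism of $R:=L(E)_0$, and on $K_0$ one computes $\beta_*=K_0(\theta)^{-1}\alpha_{F,*}K_0(\theta)=\phi_0^{-1}\alpha_{F,*}\phi_0=\alpha_{E,*}$. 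Thus $\alpha_E$ and $\beta$ are two corner isomorphisms of the ultramatricial algebra $R$ inducing the same map on $K_0(R)$, and it remains to produce a locally inner automorphism $g$ of $R$ with $\beta=g\circ\alpha_E$, for then $L(E)_0[t_+,t_-,\beta]=L^g(E)$ and the proof is complete.

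The hard part, and the main obstacle, is this last step: it is a corner analogue of the statement used to define $L^g(E)$ and recorded in \cite{goodearl}*{Lemma 15.23(b)}, that a $K_0$-trivial automorphism of a unital ultramatricial algebra is locally inner. Writing $p=\alpha_E(1)$ and $q=\beta(1)$, the equality $\beta_*=\alpha_{E,*}$ forces $[p]=[q]$ and, since $\phi_0$ fixes $[1]$, also $[1-p]=[1-q]$ in $K_0(R)$; as $R$ is unit-regular, $p$ and $q$ are then conjugate by a unit of $R$. Absorbing this conjugation into $\theta$ (composing with an inner automorphism of $L(F)_0$, which leaves $\phi_0=K_0(\theta)$ unchanged) I may assume $p=q$, so that $g_0:=\beta\circ\alpha_E^{-1}$ is an automorphism of the corner $pRp$ inducing the identity on $K_0(pRp)$. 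Since $pRp$ is again unital ultramatricial, $g_0$ is locally inner on $pRp$ by \cite{goodearl}*{Lemma 15.23(b)}; representing $g_0$ locally by conjugation by units $w\in pRp$ and using the units $w+(1-p)$ of $R$, I would extend $g_0$ to a locally inner automorphism $g$ of $R$ acting as the identity on $(1-p)R(1-p)$. Then $g$ restricts to $g_0$ on $pRp$, which contains the image of $\alpha_E$, whence $g\circ\alpha_E=g_0\circ\alpha_E=\beta$, completing the argument.
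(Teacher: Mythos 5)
The survey does not actually prove this theorem; it only states it with a citation to Ara--Pardo and supplies the surrounding setup (the corner skew Laurent presentation, the definition of $L^g(E)$, and the observation that $L^g(E)$ and $L(E)$ have the same graded $K$-theory). Your overall strategy -- reduce everything to the unital ultramatricial algebra $L(E)_0$ via Dade's theorem, realize the pointed ordered isomorphism by Elliott, and compare the two corner isomorphisms $\alpha_E$ and $\beta=\theta^{-1}\circ\alpha_F\circ\theta$ -- is sound and is essentially the route of the cited paper. Your $(2)\Rightarrow(1)$ direction is correct and matches the paper's own remark. The reduction to $p=q$ via unit-regularity is also fine.

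The gap is in the very last step of $(1)\Rightarrow(2)$. You form $g_0:=\beta\circ\alpha_E^{-1}$, an automorphism of the corner $pRp$ with trivial $K_0$, and propose to ``extend'' it to a locally inner automorphism $g$ of $R$ by letting it act as the identity on $(1-p)R(1-p)$. But $R$ is not the direct sum of its diagonal corners: the Peirce pieces $pR(1-p)$ and $(1-p)Rp$ are nonzero in general (indeed they must be, since $p$ is full and proper), and prescribing $g$ on $pRp$ and $(1-p)R(1-p)$ does not determine an automorphism. The device of replacing a local conjugator $w\in pRp$ by the unit $w+(1-p)$ of $R$ only produces, for each finite subset of $pRp$, an inner automorphism of $R$ agreeing with $g_0$ there; in the purely algebraic setting these local approximations do not assemble into a single automorphism without a genuine intertwining argument (compatible choices of the $w$'s along an exhaustion of $R$ by matricial subalgebras), which you have not supplied. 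So the existence of $g$ is asserted rather than proved.

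The fix is short and avoids the extension problem entirely: compose in the other order. The map $g:=\alpha_E^{-1}\circ\beta$ is already a globally defined automorphism of $R=L(E)_0$, and $K_0(g)=K_0(\alpha_E)^{-1}K_0(\beta)=\mathrm{id}$, so $g$ is locally inner by \cite{goodearl}*{Lemma 15.23(b)}. Then $\beta=\alpha_E\circ g$, and the transport-of-structure isomorphism $R[t_+,t_-,\phi]\cong_{\gr}R[t_+,t_-,\sigma\circ\phi\circ\sigma^{-1}]$ (given by $r\mapsto\sigma(r)$, $t_{\pm}\mapsto t_{\pm}$) applied with $\sigma=g$ and $\phi=\alpha_E\circ g$ yields
\[
L(F)\cong_{\gr}R[t_+,t_-,\beta]=R[t_+,t_-,\alpha_E\circ g]\cong_{\gr}R[t_+,t_-,g\circ\alpha_E]=L^g(E),
\]
which is exactly statement (2).
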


\subsection{The fullness conjecture}\label{subsec:full}

One of the conjectures raised in~\cite{haz2013} is that the graded Grothendieck group $K_0^{\gr}$, as a functor  between the category of Leavitt path algebras and the category of 
pre-ordered $\mathbb Z[x,x^{-1}]$-modules with order unit is  full.

\begin{conj}[Hazrat, \cite{haz2013}]\label{conj:full}
For any order-preserving $\Z[x,x^{-1}]$-module homomorphism $\phi: K^{\gr}_0(L(E)) \rightarrow K^{\gr}_0(L(F))$ with 
$\phi([L(E)])=[L(F)]$, there exists a unital $\mathbb Z$-graded $\K$-homomorphism $\psi: L(E) \rightarrow L(F)$ such that $K^{\gr}_0(\psi) = \phi$.    
\end{conj}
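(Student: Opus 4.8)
The plan is to transport the statement into a question about ultramatricial algebras equipped with a $\Z$-action, where Elliott's machinery applies. First I would pass to covering graphs. Since $E$ and $F$ are finite, the covering graphs $\ol E$ and $\ol F$ are acyclic, so $L(\ol E)$ and $L(\ol F)$ are ultramatricial (Example \ref{ex:ultrale}), and by Example \ref{ex:crocov} they carry $\Z$-actions under which $L(\ol E)\cong \Z\hltimes L(E)$ $\Z$-equivariantly. By Example \ref{ex:kgrle} together with \eqref{map:k0gr=k0hltimes}, this identifies $K_0^{\gr}(L(E))$ with $K_0(L(\ol E))$ as $\Z[x,x^{-1}]$-modules, the action of $x$ being induced by the generator of the $\Z$-action, and it carries $[L(E)]$ to the class of the distinguished degree-zero idempotent. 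The algebra-level counterpart of Proposition \ref{prop:cross} (functoriality of the smash product on graded homomorphisms) shows that unital $\Z$-graded homomorphisms $L(E)\to L(F)$ correspond to $\Z$-equivariant homomorphisms $L(\ol E)\to L(\ol F)$, compatibly with these identifications on $K_0$. This reduction is uniform for all finite graphs, including those with sinks or sources, since it never invokes strong gradedness. So it suffices to lift the order-preserving, order-unit-preserving, $\Z$-equivariant map $\phi\colon K_0(L(\ol E))\to K_0(L(\ol F))$ to a $\Z$-equivariant algebra homomorphism.

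Second, I would produce a lift while temporarily ignoring equivariance. The functor $K_0$ realizes every morphism of pointed pre-ordered groups between the $K_0$-groups of (possibly non-unital, locally unital) ultramatricial algebras by an algebra homomorphism: this is Elliott's theory in the form behind Theorem \ref{thm:elliott}, with the additional fact that two homomorphisms inducing the same map on $K_0$ differ by a locally inner automorphism (\cite{goodearl}*{Lemma 15.23(b)}, as used for Theorem \ref{theor:Kiffgr-iso unital}). Applying this to $\phi$ yields an algebra homomorphism $\Psi_0\colon L(\ol E)\to L(\ol F)$ with $K_0(\Psi_0)=\phi$, and preservation of the order unit ensures $\Psi_0$ is unital on the relevant corner.

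The hard part will be upgrading $\Psi_0$ to a genuinely $\Z$-equivariant homomorphism. Writing $\sigma_E,\sigma_F$ for the generators of the two $\Z$-actions, the map $\sigma_F^{-1}\circ\Psi_0\circ\sigma_E$ is again a lift of $\phi$ (here equivariance of $\phi$ is used), so it differs from $\Psi_0$ by a locally inner automorphism. This is a one-cocycle condition, and the crux is to kill it: I would run an Elliott-type intertwining argument along the ultramatricial filtration, at each finite stage adjusting the lift by the available locally inner automorphism so that it commutes with the shift up to a controlled error, and then passing to the limit to obtain an honest equivariant $\Psi$ with $K_0(\Psi)=\phi$. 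Equivalently, in the corner skew Laurent picture $L(E)=L(E)_0[t_+,t_-,\alpha]$ of Subsection \ref{subsec:twist}, this amounts to constructing the image $u=\psi(t_+)\in L(F)_1$ of the distinguished degree-one element, namely an element implementing the twisted intertwining $u\,\psi_0(r)=\psi_0(\alpha_E(r))\,u$ for $r\in L(E)_0$ with $\psi(t_-)\,u=1$; the locally inner flexibility is exactly what makes such a $u$ available. Once $\Psi$ (equivalently $u$) is in hand, the descent of the first paragraph returns a unital $\Z$-graded homomorphism $\psi\colon L(E)\to L(F)$ with $K_0^{\gr}(\psi)=\phi$. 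This equivariant refinement, rather than the mere existence of a lift, is the genuinely new content, and it is where the arguments of Arnone and Va\v{s} do their work.
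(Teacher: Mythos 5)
The paper records this statement only as a conjecture and, in Theorem \ref{thm:full}, cites the proofs of Arnone and Va\v{s} without reproducing them; so your attempt must be measured against those. Your reduction to the covering graph is reasonable in outline, but it hides two real gaps. First, the asserted dictionary between unital $\Z$-graded homomorphisms $L(E)\to L(F)$ and $\Z$-equivariant homomorphisms $L(\ol E)\to L(\ol F)$ is not automatic: an equivariant map of smash products need not carry $\chi_0\hltimes L(E)$ into $\chi_0\hltimes L(F)$ nor preserve the distinguished idempotent $\chi_0\hltimes 1_E$, and without such compatibility there is no descent to a graded homomorphism. Second, and more seriously, the step where you propose to ``kill the one-cocycle'' of locally inner automorphisms by an Elliott-type intertwining is precisely the point at which the analogous argument is known to get stuck: Ara and Pardo (Theorem \ref{theor:Kiffgr-iso unital}) obtain a graded isomorphism only \emph{up to} a locally inner twist of the corner isomorphism, and removing that twist is the still-open Conjecture \ref{conjalgiso}. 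In a purely algebraic inductive limit there is no topology in which approximately intertwining maps converge, so you must produce the degree-one element $u=\psi(t_+)$ exactly; you assert that ``the locally inner flexibility is exactly what makes such a $u$ available'' but give no construction, and this is the entire content of the theorem.

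The actual proofs take a different, direct route that sidesteps equivariant intertwining altogether. Both Arnone and Va\v{s} exploit the universal property of $L(E)$ (generators $E^0\cup E^1\cup\{e^*\}$ and the Cuntz--Krieger relations): one locates in $L(F)$ a family of orthogonal homogeneous idempotents $p_v$ summing to $1$ whose classes realize $\phi([vL(E)])$, together with homogeneous elements of degree $\pm1$ implementing the relations $[p_v]=\sum_{e\in s^{-1}(v)}{}[p_{r(e)}](1)$ dictated by the talented monoid. Conicality and cancellativity of $\cV^{\gr}(L(F))$ (Section \ref{talentedmoni}) guarantee that the order-preserving $\phi$ restricts to a monoid map of positive cones and that the needed decompositions of idempotents exist inside $L(F)$. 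Your proposal is a plausible heuristic, but as written the crucial step is missing and the known evidence (the unresolved locally inner twist in Theorem \ref{theor:Kiffgr-iso unital}) suggests it would not go through in the form you describe.
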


 G. Arnone in \cite{guido2023} and L. Va\v{s} in \cite{vas} proved Conjecture~\ref{conj:full}, independently and with different approaches. In fact Va\v{s} considers graphs with finitely many vertices, but allows infinitely many edges, while Arnone requires that the graphs be finite, but shows that the lifting map can be chosen to be a diagonal preserving graded $*$-homomorphism.

\begin{thm}[Arnone~\cite{guido2023}, Va\v{s}~\cite{vas}]\label{thm:full}
Let $E$ and $F$ be finite graphs and  $\phi: K^{\gr}_0(L(E)) \rightarrow K^{\gr}_0(L(F))$ a pre-ordered $\mathbb Z[x,x^{-1}]$-module homomorphism with $\phi([L(E)])=[L(F)]$. Then there exists a unital $\mathbb Z$-graded homomorphism $\psi: L(E) \rightarrow L(F)$ such that $K^{\gr}_0(\psi) = \phi$. Furthermore if $\phi$ is injective, so is $\psi$.
\end{thm}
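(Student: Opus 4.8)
The plan is to recognize $K_0^{\gr}(L(E))$ as the ordinary Grothendieck group of an ultramatricial algebra carrying a distinguished automorphism, and to lift $\phi$ by an equivariant refinement of Elliott's intertwining argument. Combining \eqref{map:k0gr=k0hltimes} with Examples \ref{ex:crocov} and \ref{ex:kgrle}, there is an isomorphism of $\Z[x,x^{-1}]$-modules $K_0^{\gr}(L(E))\cong K_0(L(\ol E))$ under which multiplication by $x$ is induced by the shift automorphism $\sigma_E$ of the covering-graph algebra $L(\ol E)$, and by Example \ref{ex:ultrale} the algebra $L(\ol E)$ is ultramatricial. Moreover, by Proposition \ref{prop:cross} together with Example \ref{ex:crocov}, producing a unital $\Z$-graded homomorphism $\psi\colon L(E)\to L(F)$ is equivalent to producing a $\sigma$-equivariant homomorphism $L(\ol E)\to L(\ol F)$. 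Thus the whole problem becomes: lift the order-preserving, $\sigma$-equivariant group homomorphism $\bar\phi\colon K_0(L(\ol E))\to K_0(L(\ol F))$ (preserving the distinguished interval coming from $[L(F)]=[1_F]$) to a $\sigma$-equivariant algebra homomorphism. When $E$ is regular this is the concrete picture in which $L(E)$ is strongly graded (Theorem \ref{thm:lestrogra}), $K_0^{\gr}(L(E))\cong K_0(L(E)_0)$ by Dade's theorem \ref{thm:dade}, and $\sigma$ is realized by the corner isomorphism $\alpha_E$ of the corner skew Laurent presentation $L(E)=L(E)_0[t_+,t_-,\alpha_E]$ of Theorem \ref{jhby67}.

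First I would lift $\bar\phi$ \emph{without} worrying about equivariance. By the existence (lifting) part of Elliott's classification for ultramatricial algebras (the companion of Theorem \ref{thm:elliott}, in the form valid for algebras with local units; see \cite{goodearl}*{Chapter 15}), the order-preserving, interval-preserving map $\bar\phi$ is realized by an algebra homomorphism $\theta\colon L(\ol E)\to L(\ol F)$ with $K_0(\theta)=\bar\phi$. The equivariance of $\bar\phi$ then says exactly that the two homomorphisms $\theta\circ\sigma_E$ and $\sigma_F\circ\theta$ induce the same map on $K_0$.

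The hard part, and the step I expect to be the main obstacle, is to promote this equality on $K_0$ to a genuine equality of homomorphisms, that is, to arrange that $\theta$ can be chosen $\sigma$-equivariant. A priori $\theta\circ\sigma_E$ and $\sigma_F\circ\theta$ differ only by a locally inner automorphism of the target (the uniqueness half of Elliott's theory, \cite{goodearl}*{Lemma 15.23}); in the regular case this discrepancy is precisely the twist $g$ of the Ara--Pardo theorem \ref{theor:Kiffgr-iso unital}, and the entire new content is to absorb it. I would do this by an equivariant Elliott back-and-forth: writing $L(\ol E)=\varinjlim_n B_n$ and $L(\ol F)=\varinjlim_n D_n$ as nested unions of matricial subalgebras chosen compatibly with $\sigma_E$ and $\sigma_F$, I would construct the restrictions of $\theta$ and the partial isometries conjugating $\theta\circ\sigma_E$ to $\sigma_F\circ\theta$ simultaneously, by induction on $n$, using at each finite stage that the equivariance of $\bar\phi$ forces the block multiplicities on the two sides to agree, so that the required intertwining partial isometries exist inside the ultramatricial target. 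Passing to the colimit produces a $\sigma$-equivariant $\theta$, hence the desired graded $\psi$ with $K_0^{\gr}(\psi)=\phi$.

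It remains to verify the injectivity addendum, so suppose $\phi$ is injective. If $\psi$ were not, its kernel would be a nonzero graded ideal of $L(E)$, corresponding under \eqref{latticeisosecideal} to a nonempty hereditary saturated set $H\subseteq E^0$; picking $v\in H$ gives $\psi(v)=0$, whence $\phi([v])=K_0^{\gr}(\psi)([v])=0$ in $K_0^{\gr}(L(F))$. But $[v]\neq 0$: the talented monoid $\cV^{\gr}(L(E))=T_E$ is cancellative (Subsection \ref{subsec:mono} and \ref{talentedmoni}) and therefore embeds in its group completion $K_0^{\gr}(L(E))$, so the nonzero element $v\in T_E$ has nonzero image. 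This contradicts the injectivity of $\phi$, and hence $\psi$ is injective, completing the plan.
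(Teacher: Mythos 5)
The paper itself does not prove this theorem: it is quoted from Arnone \cite{guido2023} and Va\v{s} \cite{vas}, and both of those proofs construct $\psi$ \emph{directly on the generators} of $L(E)$, starting from a matrix over $\N_0[x,x^{-1}]$ that represents the order-preserving module map $\phi$ and realizing it by explicit sums of elements $\alpha\beta^*$ in $L(F)$; neither uses an intertwining argument. Your route is genuinely different, and it has two gaps. The smaller one is the asserted equivalence between unital $\Z$-graded homomorphisms $L(E)\to L(F)$ and $\sigma$-equivariant homomorphisms $L(\ol{E})\to L(\ol{F})$: Proposition \ref{prop:cross} is an equivalence of \emph{module} categories, and while a graded algebra map induces an equivariant map of the (nonunital) smash products, going back requires a Cohen--Montgomery-type duality that only recovers a map between matrix algebras over $L(E)$ and $L(F)$ up to corners, and unitality of the descended map is not automatic. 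This direction needs an actual argument.

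The serious gap is the step where you ``absorb the locally inner twist by an equivariant back-and-forth.'' This is not a routine refinement of Elliott's intertwining; it is precisely the obstruction isolated by Ara--Pardo in Theorem \ref{theor:Kiffgr-iso unital}, where the non-equivariant Elliott machinery yields only $L^g(E)\cong_{\gr}L(F)$ for some locally inner $g$, and removing $g$ is the open Conjecture \ref{conjalgiso}. Your induction requires, at stage $n+1$, an inner correction that both kills the equivariance defect of $\theta$ on $B_{n+1}$ and restricts compatibly to the corrections already chosen on $B_n$; the observation that ``the block multiplicities agree'' gives existence of an intertwiner at a single stage but says nothing about this coherence, and since $\sigma_E$ shifts the filtration rather than preserving it, the corrections propagate forward and there is no reason for them to stabilize (nor, in the purely algebraic setting, any topology in which to take a limit of the correcting units). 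If this scheme worked it would equally well run the two-sided intertwining for an isomorphism $\phi$ and settle Conjecture \ref{conjalgiso}, which remains open. By contrast, your injectivity addendum is correct as written: the kernel of a graded $\psi$ is a graded ideal, \eqref{latticeisosecideal} produces a vertex $v$ in it, and $[v]\neq 0$ because $T_E$ is conical and cancellative, contradicting injectivity of $\phi$.
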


\subsection{\topdf{$C^*$}{C*}-version of the classification conjecture}\label{subsec:c*ver}
Write $\T$ for the circle group, $\gamma_E$ for the gauge circle action on $C^*(E)$, and $K^\T_0(C^*(E))$ for $\T$-equivariant $K$-theory~\cite{chrisp}.  Let $\ol{E}$ be as in \eqref{eq:times1}. There are canonical 
order-preserving isomorphisms  of $\Z[x,x^{-1}]$-modules (see~\cite{haz2013}*{p. 275}, \cite{haziso} and \cite{eilers2}*{Proof of Theorem A}) 
\begin{equation}\label{copenhag}
K^{\gr}_0(L(E)) \cong K_0(L(\ol{E})) \cong  K_0(C^*(\ol{E}))\cong K_0^{\mathbb T}(C^*(E)).
\end{equation}

Thus one can pose the analytic version of Conjecture~\ref{conjalgiso} as follows. 

\begin{conj}\label{conjanal}
Let $E$ and $F$ be finite graphs. Then there is an order-preserving $\mathbb Z[x,x^{-1}]$-module
isomorphism
\[ \phi: K^\T_0(C^*(E)) \iso K_0^\T(C^*(F)),\] 
with 
$\phi([C^*(E)])=[C^*(F)]$ 
 if and only if
 there is a $\T$-equivariant $\ast$-isomorphism $C^*(E) \iso C^*(F)$.
\end{conj}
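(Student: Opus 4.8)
The plan is to prove both implications by transporting everything to the algebraic side through the chain of order-preserving $\Z[x,x^{-1}]$-module isomorphisms \eqref{copenhag}, and then to combine the (conjectural) graded classification of Leavitt path algebras with a completion argument in the spirit of Theorem \ref{thm:bullift}. The reverse implication is the routine one: a $\T$-equivariant $\ast$-isomorphism $C^*(E)\iso C^*(F)$ induces, by functoriality of $\T$-equivariant $K$-theory, an isomorphism $K_0^\T(C^*(E))\iso K_0^\T(C^*(F))$ preserving the order and sending $[C^*(E)]$ to $[C^*(F)]$; since the $\Z[x,x^{-1}]$-module structure is built from the dual $\hat\T=\Z$-action, this isomorphism is automatically $\Z[x,x^{-1}]$-linear, giving the required $\phi$.

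For the forward implication I would first use \eqref{copenhag} to convert $\phi$ into an order-preserving $\Z[x,x^{-1}]$-module isomorphism $\bar\phi\colon K_0^{\gr}(L_\C(E))\iso K_0^{\gr}(L_\C(F))$ with $\bar\phi([L(E)])=[L(F)]$. The heart of the argument is then to realize $\bar\phi$ by a graded $\ast$-isomorphism $\psi\colon L_\C(E)\to L_\C(F)$; this is exactly the Graded Classification Conjecture \ref{conjalgiso}, strengthened to produce a $\ast$-homomorphism, as in Arnone's diagonal-preserving form of the fullness Theorem \ref{thm:full}. Granting such a $\psi$, one completes it: because $\psi$ is $\Z$-graded and $\ast$-preserving it intertwines the gauge actions, hence extends to a $\T$-equivariant $\ast$-homomorphism $\hat\psi\colon C^*(E)\to C^*(F)$, and since the inverse of $\psi$ is likewise graded, $\hat\psi$ is a $\T$-equivariant $\ast$-isomorphism. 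A final compatibility check, using that completion is compatible with the identifications \eqref{copenhag}, confirms $K_0^\T(\hat\psi)=\phi$.

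The main obstacle is plain: this route rests on the Graded Classification Conjecture \ref{conjalgiso}, which is open in general; only the fullness half (Theorem \ref{thm:full}) is a theorem, and lifting a module map to an algebra map does not by itself produce an isomorphism. A more self-contained route would bypass the algebraic conjecture through equivariant analysis. By the Green--Julg isomorphism $K_0^\T(C^*(E))\cong K_0(C^*(E)\rtimes\T)$, which under \eqref{copenhag} is carried to $K_0$ of the AF algebra $C^*(\ol E)$ equipped with its dual $\Z$-action, the problem becomes the equivariant Elliott classification of $\Z$-actions on AF algebras by the pointed ordered dimension group together with its generating automorphism, precisely a Krieger-type invariant (compare Theorem \ref{kriegerthm}), after which Takai duality returns the desired $\T$-equivariant isomorphism of $C^*(E)$ and $C^*(F)$. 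The true source of difficulty in either route is the rigidity step: upgrading an isomorphism of invariants, or an equivariant homotopy equivalence of the kind produced in Theorem \ref{thm:bullift}, to an honest equivariant $\ast$-isomorphism, which requires controlling the approximate innerness of the automorphisms implementing the $\Z$-action.
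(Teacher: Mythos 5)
This statement is a conjecture: the paper offers no proof of it, only the observation immediately following its statement that the strong ($\ast$-preserving) form of Conjecture~\ref{conjalgiso} implies it --- namely, transport $\phi$ through \eqref{copenhag} to an isomorphism $K_0^{\gr}(L_{\C}(E))\cong K_0^{\gr}(L_{\C}(F))$, obtain a graded $\ast$-isomorphism $L_{\C}(E)\cong_{\gr}L_{\C}(F)$, and pass to completions --- which is precisely your forward-direction argument, and your reverse direction is the same routine functoriality. You correctly flag that this route is conditional on the open graded classification conjecture (so no unconditional proof is obtained, by you or by the paper); your alternative Green--Julg/Takai-duality route does not appear in the paper, and the rigidity step you single out (upgrading an isomorphism of invariants or an equivariant homotopy equivalence to an honest equivariant $\ast$-isomorphism) is indeed the genuine obstruction.
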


In fact in Conjecture~\ref{conjalgiso} if the graded Grothendieck groups are isomorphic, one expects to have a graded $\ast$-isomorphism between the Leavitt path algebras. This stronger version of Conjecture~\ref{conjalgiso} implies Conjecture~\ref{conjanal}. For, if $K^\T_0(C^*(E)) \cong K_0^\T(C^*(F))$, then by (\ref{copenhag}), $K_0^{\gr}(L_{\mathbb C}(E)) \cong K_0^{\gr}(L_{\mathbb C}(F))$, so there is a graded $\ast$-isomorphism $L_{\mathbb C}(E)\cong_{\gr} L_{\mathbb C}(F)$. Passing to the completion, we get a $\T$-equivariant $\ast$-isomorphism $C^*(E) \iso C^*(F)$ (see~\cite{abramstomforde}*{Theorem~4.4}).

\subsection{The graded classification conjecture for amplified graphs}\label{subsec:ample}

A directed graph $E$ is called an \emph{amplified graph} if for any $v,w\in E^0$, the set of edges from $v$ to $w$ is either empty or infinite. The following theorem shows that the graded Grothendieck group precisely pinpoints the isomorphism class of amplified graphs; namely if the graded Grothendieck groups are isomorphic, then the associated graphs are isomorphic, and then of course so are their Leavitt path algebras.

\begin{thm}[Eilers, Ruiz, Sims~\cite{eilers2}*{Theorem A}]\label{thm-main}
Let $E$ and $F$ be countable amplified graphs and let $\K$ be a field.  Then the following are
equivalent.
\begin{enumerate}[\upshape(1)]
\item $E \cong F$;

\item There is an order-preserving $\mathbb Z[x,x^{-1}]$-module isomorphism $K_0^{\gr}(L_{\K}(E)) \cong K_0^{\gr}(L_{\K}(F));$
\item There is an order-preserving $\mathbb Z[x,x^{-1}]$-module isomorphism
    $K_0^{\mathbb T}(C^*(E)) \cong K_0^{\mathbb T}(C^*(F))$
    of $\mathbb T$-equivariant $K_0$-groups.
\end{enumerate}
\end{thm}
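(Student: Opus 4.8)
The plan is to prove the cycle $(1)\Rightarrow(2)\Rightarrow(1)$ together with $(2)\Leftrightarrow(3)$, so that everything reduces to the single hard step $(2)\Rightarrow(1)$. The implication $(1)\Rightarrow(2)$ is immediate from functoriality: a graph isomorphism $E\cong F$ induces a unital graded $\K$-algebra isomorphism $L_\K(E)\cong_{\gr}L_\K(F)$, hence an order-preserving $\Z[x,x^{-1}]$-module isomorphism on $K_0^{\gr}$; the same isomorphism induces a $\T$-equivariant $\ast$-isomorphism $C^*(E)\cong C^*(F)$, giving $(1)\Rightarrow(3)$. For $(2)\Leftrightarrow(3)$ I would invoke the natural chain \eqref{copenhag}, checking that the displayed isomorphism $K_0^{\gr}(L_\C(E))\cong K_0^\T(C^*(E))$ is natural enough to transport an isomorphism from one side to the other. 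Combined with the observation that the ordered $\Z[x,x^{-1}]$-module $K_0^{\gr}(L_\K(E))$ does not depend on $\K$ (it equals $K_0(L(\ol E))$ via \eqref{map:k0gr=k0hltimes}, and $\ol E$ is always acyclic so $L(\ol E)$ is ultramatricial by Example \ref{ex:ultrale}, whose ordered $K_0$ is computed from integer multiplicity matrices), this lets us pass freely between $(2)$ and $(3)$.

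The heart of the matter is $(2)\Rightarrow(1)$, which I would attack by reconstruction through the talented monoid. First I would identify the positive cone $K_0^{\gr}(L(E))_+$, together with the $\Z$-action given by multiplication by $x$, with the talented monoid $T_E$ of Section~\ref{talentedmoni}: since $T_E$ is conical and cancellative, it embeds into its group completion, so an order-preserving $\Z[x,x^{-1}]$-module isomorphism $K_0^{\gr}(L(E))\cong K_0^{\gr}(L(F))$ restricts to an isomorphism of $\Z$-monoids $T_E\cong T_F$. The problem then becomes purely combinatorial: recover the amplified graph from its talented monoid. Here I would use that in a conical cancellative monoid atoms coincide with minimal nonzero elements, so the vertices of $E$ are recovered (up to the $\Z$-action) as the orbits of atoms of $T_E$, while the lattice $\cL(T_E)$ of $\Z$-order-ideals — which corresponds to the hereditary saturated subsets $\TT_E$ via \eqref{latticeisosecideal} — recovers the reachability preorder on $E^0$. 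The $\Z$-action finally separates vertices lying on cycles (periodic elements) from the rest (aperiodic elements), pinning down which strongly connected components are nontrivial.

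To finish I would establish, and then exploit, a normal-form classification for amplified graphs: such a graph is determined up to isomorphism by its reachability preorder on vertices together with the labelling recording which components support a cycle, and this combined datum is exactly what the previous step extracts from $T_E$. The amplified hypothesis is what makes the reconstruction clean: because between any two connected vertices there are \emph{infinitely} many edges, the monoid relations detect only the presence or absence of a connection and not a finite multiplicity, so no arithmetic information is lost and the reconstructed adjacency is unambiguous.

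I expect the main obstacle to be precisely this reconstruction step. Two difficulties stand out. First, amplified graphs are genuinely non-row-finite (every nonisolated vertex is an infinite emitter), so one cannot use the matrix model $\BF_{\gr}(E)$ of Example \ref{ex:kgrle} directly and must instead work with the general graded-ideal structure theory (Remark \ref{rem:infideal}) or first pass to a row-finite model by desingularization while carefully tracking the graph-determining data. Second, and more seriously, one must prove that the $\Z$-monoid $T_E$ is a \emph{complete} invariant of the amplified graph, give an explicit procedure rebuilding $E$ from it — matching atoms to vertices, $\Z$-order-ideals to the connectivity order, and periodicity of the action to the cyclic components — and then verify that the isomorphism $T_E\cong T_F$ coming from $(2)$ carries one reconstruction to the other, producing a bona fide graph isomorphism $E\cong F$.
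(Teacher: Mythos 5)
The paper does not prove this theorem; it is quoted verbatim from \cite{eilers2} with only a one-sentence gloss, so there is no in-paper argument to compare yours against. Judged on its own terms, your proposal is sound on the easy implications ($(1)\Rightarrow(2),(3)$ by functoriality; $(2)\Leftrightarrow(3)$ via \eqref{copenhag} and field-independence of the ordered $K_0^{\gr}$, which does hold since $L(\ol E)$ is locally matricial), but the entire content of the theorem is the reconstruction step $(2)\Rightarrow(1)$, and there you have a genuine gap on two levels. First, you defer it: ``I would establish\dots a normal-form classification'' and ``one must prove that $T_E$ is a complete invariant'' is a description of the theorem, not a proof of it. Second, and more seriously, the specific mechanism you propose --- vertices recovered as orbits of atoms of the talented monoid --- fails for amplified graphs precisely because they are not row-finite. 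The talented monoid of Section \ref{talentedmoni} is defined only for row-finite graphs, and its relation \eqref{monoidrelation2} is imposed only at \emph{regular} vertices; in an amplified graph every non-sink is an infinite emitter, so that presentation would yield the free $\Z$-monoid on $E^0$ and carry no edge information at all. The correct object, $\cV^{\gr}(L(E))\cong\cV(L(\ol E))$, has extra generators $[q_{v,Z}]=[v-\sum_{e\in Z}ee^*]$ for finite $Z\subset s^{-1}(v)$, giving decompositions $[v]=[q_{v,Z}]+\sum_{e\in Z}[r(e)]$ with both summands nonzero; since the $[q_{v,Z}]$ decompose further as $Z$ grows, the classes $[v]$ of non-sinks are not atoms and the monoid typically has no atoms at all away from sinks. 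So your dictionary ``atoms $\leftrightarrow$ vertices'' does not get off the ground.

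What survives of your intuition is that the adjacency relation of an amplified graph is encoded in the order: there is an edge $v\to w$ exactly when $[v]\ge n\,{}^{1}[w]$ for all $n$, and the lattice of order-ideals does recover the hereditary (for amplified graphs automatically saturated, with no breaking vertices) subsets and hence the reachability preorder. A correct proof along the lines of \cite{eilers2} extracts the graph from the ordered $\Z[x,x^{-1}]$-module by analysing order-ideals, their subquotients, and such divisibility phenomena in the positive cone, rather than by locating atoms. To repair your argument you would need to (i) work with the general (non-row-finite) graph monoid or a carefully tracked desingularization, and (ii) replace the atom-counting step by an order-theoretic characterisation of the vertex generators and of the presence of an (infinite) edge, then verify that an abstract ordered-module isomorphism matches these data up. As written, the proposal identifies the right target but does not supply the reconstruction that constitutes the theorem.
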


\begin{rem}\label{rem:isotoo}
It follows from Theorem \ref{thm-main} that statements (1), (2) and (3) are further equivalent to the existence of a $\Z$-graded isomorphism (of $\K$-algebras or of rings) $L_{\K}(E)\cong L_{\K}(F)$.
\end{rem}

\subsection{Graded Morita equivalence}\label{subsec:gradmor} 

The notion of graded Morita theory plays an important role in this subject. It relates the theory of Leavitt path algebras to the theory of symbolic dynamics. One of the main conjectures in the theory is around the notion of graded Morita equivalence (Conjecture~\ref{conjmogr}). 

\begin{defi} \label{grdeffsa} 
Let $A$ and $B$ be unital $\Gamma$-graded rings.

\begin{enumerate}

\item A functor $\phi:\Gr A \rightarrow \Gr B$ is called a \emph{graded functor} if $\phi \mathcal T_{\alpha} = \mathcal T_{\alpha} \phi$ for all $\alpha \in \Gamma$.

\item A graded functor $\phi:\Gr A \rightarrow \Gr B$ is called a \emph{graded equivalence} if there is a graded functor $\psi:\Gr B \rightarrow \Gr A$ such that $\psi \phi \cong 1_{\Gr A}$ and $\phi \psi \cong 1_{\Gr B}$.

\item If there is a graded equivalence between $\Gr A$ and $\Gr B$, we say $A$ and $B$ are {\it graded equivalent} or {\it graded Morita equivalent} and write 
$\Gr A \approx_{\gr} \Gr B$.

\item A functor $\phi: \Modd A \rightarrow \Modd B$ is called a \emph{graded functor} if  there is a graded functor $\phi': \Gr A \rightarrow \Gr B$ such that the following diagram, where the vertical functors are forgetful functors, commutes.
\begin{equation}\label{njhysi}
\xymatrix{
\Gr A \ar[r]^{\phi'} \ar[d]_{U}& \Gr B \ar[d]^{U}\\
\Modd A \ar[r]^{\phi}  & \Modd B.
}
\end{equation}

The functor $\phi'$ is called an \emph{associated graded functor} of $\phi$.

\item A functor $\phi:\Modd A \rightarrow \Modd B$ is called a \emph{graded equivalence} 
if it is graded and it is an equivalence.
\end{enumerate}
\end{defi}

Recall that for any ring $R$ we let $\M_\infty(R)$ denote the (nonunital) ring consisting of those countably infinite square matrices over $R$ that contain at most finitely many nonzero entries (see Section \ref{subsec:k0}). Furthermore, we say an idempotent $e$ in a ring $R$ is \emph{full}, if the two-sided ideal generated by $e$ is the whole ring $R$.

The equivalence between statements (1)-(5) in the following theorem can be found in \cite{hazbook}*{Theorem 2.3.8} and the equivalence of the latter with (6) is a recent result of Abrams, Ruiz and Tomforde~\cite{abramsmori}. They are all parallel to the classical results in the non-graded Morita theory, with the added complexity of shifts being present in the graded theory. Once we assume the grade group $\Gamma$ is trivial, we recover the classical results.

\begin{thm}\label{grmorim11} 
Let $A$ and $B$ be two unital $\Gamma$-graded rings. The following are equivalent/

\begin{enumerate}[\upshape(1)]

\item $\Gr A$ is graded equivalent to $\Gr B$.

\item $\Modd A$ is graded equivalent to $\Modd B$.

\item $B\cong_{\gr} \End_A(P)$ for a graded $A$-progenerator $P$.

\item $B\cong_{\gr} e \M_n(A)(\ol \delta) e$ for a full homogeneous idempotent $e \in \M_n(A)(\ol \delta)$, where $\ol \delta=(\delta_1,\dots,\delta_n)$, $\delta_i \in \Gamma$. 

\item There are graded finitely generated projective $(A,B)$-bimodule $P$ and $(B,A)$-bimodule $Q$, such that $P\otimes_B Q\cong_{\gr} A$ and $Q\otimes_A P\cong_{\gr} B$ as $(A,A)$ and $(B,B)$-bimodules, respectively.

\item  There exists a sequence $(\gamma_m)_{m\in \mathbb N}$ in $\Gamma$ such that $\M_\infty(A)$ is graded isomorphic
to $\M_\infty(B)((\gamma_m)_{m\in \mathbb N})$.

\end{enumerate}
 \end{thm}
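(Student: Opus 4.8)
The plan is to mirror the classical Morita theory, in its Eilenberg--Watts formulation, while carrying the $\Gamma$-grading and the shift functors $\mathcal T_\alpha$ along at every step. Concretely I would prove the cycle $(1)\Rightarrow(5)\Rightarrow(3)\Rightarrow(4)\Rightarrow(1)$, then dispatch $(1)\Leftrightarrow(2)$ from the definitions, and finally treat $(1)\Leftrightarrow(6)$, which is the genuinely new ingredient. Throughout, the guiding principle is that $\mathcal T_\alpha$ is itself a tensor operation, $\mathcal T_\alpha=-\otimes_AA(\alpha)$, so that ``commuting with shifts'' becomes a statement about graded bimodules.

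For $(1)\Rightarrow(5),(3)$ I would establish a \emph{graded Eilenberg--Watts theorem}: any graded equivalence $F:\Gr A\to\Gr B$ is naturally isomorphic to $-\otimes_A P$ for the graded $(A,B)$-bimodule $P=F(A)$, where the left $A$-action comes from $A\cong\End_A(A)$ together with the naturality of $F$. Because $F$ commutes with every $\mathcal T_\alpha$ and $\mathcal T_\alpha=-\otimes_A A(\alpha)$, the grading on $P$ is forced and compatible. Applying the same to a graded quasi-inverse $G$ produces $Q=G(B)$, and the natural isomorphisms $GF\cong\mathrm{id}$, $FG\cong\mathrm{id}$ translate into the bimodule relations $P\otimes_B Q\cong_{\gr}A$ and $Q\otimes_A P\cong_{\gr}B$ of $(5)$. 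Standard adjunction then identifies $B\cong_{\gr}\End_A(P)$ and shows $P$ is a graded progenerator, giving $(3)$. The reverse implications $(5)\Rightarrow(1)$ and $(3)\Rightarrow(1)$ are immediate: tensoring with $P$ and $Q$ defines mutually inverse functors, and each commutes with $\mathcal T_\alpha$ because shifting is a tensor operation, so both are graded equivalences.

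For $(3)\Rightarrow(4)$, realize the graded progenerator $P$ as a graded direct summand of a finitely generated graded free module: there exist $\ol\delta=(\delta_1,\dots,\delta_n)$ and a homogeneous idempotent $e\in\M_n(A)(\ol\delta)=\End_A\big(\bigoplus_{i=1}^nA(\delta_i)\big)$ with $e\,A^n(\ol\delta)\cong_{\gr}P$, whence $\End_A(P)\cong_{\gr}e\M_n(A)(\ol\delta)e$; the shifts $\delta_i$ record the degrees in which the chosen generators of $P$ sit. Fullness of $e$ is exactly the condition that $P$ generates $\Gr A$, i.e.\ is part of being a progenerator, and $(4)\Rightarrow(3)$ is the reverse, setting $P=e\,A^n(\ol\delta)$ and checking it is a graded progenerator using fullness. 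The equivalence $(1)\Leftrightarrow(2)$ then follows directly from Definition \ref{grdeffsa}: a graded equivalence of module categories is by definition one covered through the forgetful functors by a graded equivalence of $\Gr$-categories, and conversely the equivalence $-\otimes_A P$ of $\Gr$-categories descends to $\Modd$ by forgetting the grading of the representing bimodule.

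The hard and genuinely new part is $(1)\Leftrightarrow(6)$, the graded stabilization statement. For $(4)\Rightarrow(6)$ I would start from $B\cong_{\gr}e\M_n(A)(\ol\delta)e$ and pass to infinite matrices: padding with zeros embeds $\M_n(A)(\ol\delta)$ gradedly into $\M_\infty(A)$, and fullness of $e$ supplies, after stabilization, a graded analogue of the ``enough room'' isomorphism of Subsection \ref{subsec:k0} that absorbs the corner and yields a graded isomorphism $\M_\infty(B)\big((\gamma_m)_{m\in\N}\big)\cong_{\gr}\M_\infty(A)$ for a shift sequence $(\gamma_m)$ assembled from infinitely many copies of $\ol\delta$ together with the degrees of a full system of matrix units for $e$. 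The main obstacle I expect here is precisely the bookkeeping of this shift sequence: one must verify that the degrees needed to keep every stage of the stabilization homogeneous can be organized into a single $(\gamma_m)\in\Gamma^{\N}$ independent of the intermediate choices, which is the feature with no counterpart in the ungraded theory. For $(6)\Rightarrow(1)$, note that $\epsilon_{1,1}$ is a full homogeneous idempotent of both $\M_\infty(A)$ and $\M_\infty(B)\big((\gamma_m)\big)$, so each of $A$ and $B$ is graded Morita equivalent to its own infinite-matrix ring by the already-proved $(4)\Leftrightarrow(1)$; a graded isomorphism $\M_\infty(A)\cong_{\gr}\M_\infty(B)\big((\gamma_m)\big)$ then composes these to give $\Gr A\approx_{\gr}\Gr B$.
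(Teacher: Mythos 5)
The paper does not actually prove Theorem \ref{grmorim11}: it cites \cite{hazbook}*{Theorem 2.3.8} for the equivalence of (1)--(5) and \cite{abramsmori} for the equivalence with (6). Your outline follows essentially the same route as those sources: a graded Eilenberg--Watts argument to represent a graded equivalence by a graded bimodule (using that commuting with the shifts $\mathcal T_\alpha=-\otimes_AA(\alpha)$ is what upgrades the $\End_{\Gr A}(A)$-action to a full graded $A$-action on $P=F(A)$), the standard progenerator/corner dictionary for (3)$\Leftrightarrow$(4), and for (6) the graded stabilization argument of Abrams--Ruiz--Tomforde, whose crux you correctly identify as the bookkeeping of the shift sequence $(\gamma_m)$ when absorbing a full corner into $\M_\infty$.

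Two steps are weaker than you claim. First, your justification of (2)$\Rightarrow$(1) misreads Definition \ref{grdeffsa}: a graded equivalence $\phi:\Modd A\to\Modd B$ is by definition a functor that is \emph{graded} (i.e.\ covered by some graded functor $\phi'$) \emph{and} an equivalence; it is not defined as being covered by a graded \emph{equivalence} of the $\Gr$-categories. Showing that such a $\phi'$ is itself (or can be replaced by) a graded equivalence of $\Gr A$ with $\Gr B$ is a genuine step --- this is precisely the content of the Gordon--Green-type argument in \cite{hazbook} --- so (2)$\Rightarrow$(1) does not ``follow directly from the definitions'' as written. Second, in (6)$\Rightarrow$(1) you invoke ``the already-proved (4)$\Leftrightarrow$(1)'' for $\epsilon_{1,1}\in\M_\infty(A)$, but (4) concerns full homogeneous idempotents in the \emph{unital} finite matrix rings $\M_n(A)(\ol\delta)$, whereas $\M_\infty(A)$ is non-unital; you need the separate (standard, but not yet established in your scheme) fact that a unital graded ring is graded Morita equivalent to its stabilization $\M_\infty(A)((\gamma_m))$ in the sense of rings with graded local units. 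Both points are repairable, but as stated they are gaps rather than corollaries of what precedes them.
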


\subsection{Graded Morita equivalence and shift equivalence }\label{subsec:art}

Recall the notion of shift equivalence from Section~\ref{sec:symb}. The theorem below provides a bridge between the theory of symbolic dynamics and Leavitt path algebras through graded Grothendieck groups. 

 \begin{thm}\label{h99}
Let $E$ and $F$ be finite regular graphs and let $A_E$ and $A_F$ be their adjacency matrices. Then
$A_E$ is shift equivalent to $A_F$ if and only if there is an order-preserving $\mathbb Z[x,x^{-1}]$-module  isomorphism
$K_0^{\gr}(L(E)) \cong K_0^{\gr}(L(F))$.
\end{thm}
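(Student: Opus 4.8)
The plan is to run everything through Krieger's theorem (Theorem~\ref{kriegerthm}), after identifying the graded Grothendieck group with Krieger's dimension group of the transposed adjacency matrix. As a preliminary reduction, I would record that shift equivalence is invariant under transposition. If $A_E$ and $A_F$ are shift equivalent, witnessed by nonnegative matrices $R,S$ and $\ell\ge 1$ with $A_ER=RA_F$, $SA_E=A_FS$, $RS=A_E^\ell$ and $SR=A_F^\ell$, then transposing these relations and setting $R'=S^t$, $S'=R^t$ produces nonnegative matrices realising a shift equivalence between $A_E^t$ and $A_F^t$ with the same lag $\ell$; the reverse implication is symmetric. Hence $A_E$ is shift equivalent to $A_F$ if and only if $A_E^t$ is shift equivalent to $A_F^t$.

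Next I would assemble the identification of $K_0^{\gr}(L(E))$ with Krieger's triple of $A_E^t$. By Example~\ref{ex:kgrle}, $K_0^{\gr}(L(E))\cong K_0(L(\ol E))$ as partially ordered $\Z[x,x^{-1}]$-modules, and by Remark~\ref{rem:kriegerk0} the latter is exactly the dimension group $\Delta_{A_E^t}$ together with its positive cone $\Delta_{A_E^t}^+$; indeed both arise as the inductive limit of the system $\Z^{E^0}\overset{A_E^t}{\lra}\Z^{E^0}\overset{A_E^t}{\lra}\cdots$ of Example~\ref{ex:ultrale}. Using Remark~\ref{rem:DeltaA} (equivalently the common presentation as $\coker(I-xA_E^t)$), I would check that the $\Z[x,x^{-1}]$-module structure carried by $K_0^{\gr}(L(E))$ through the shift functor corresponds to the action $x\cdot v=\delta_{A_E^t}^{-1}(v)$ on $\Delta_{A_E^t}$, and that the positive cone $K_0^{\gr}(L(E))_+=\im(\cV^{\gr}(L(E)))$ corresponds to $\Delta_{A_E^t}^+$. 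The upshot is an isomorphism of partially ordered $\Z[x,x^{-1}]$-modules $K_0^{\gr}(L(E))\cong(\Delta_{A_E^t},\Delta_{A_E^t}^+)$, and likewise for $F$.

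Finally I would observe that, for dimension groups, an order-preserving $\Z[x,x^{-1}]$-module isomorphism is precisely an isomorphism of Krieger triples: an additive bijection intertwines the automorphisms $\delta_{A_E^t}$ and $\delta_{A_F^t}$ if and only if it is $\Z[x,x^{-1}]$-linear (because $x$ acts as $\delta^{-1}$), and it carries $\Delta_{A_E^t}^+$ onto $\Delta_{A_F^t}^+$ if and only if it is order-preserving. Combining this with Krieger's Theorem~\ref{kriegerthm} applied to $A_E^t$ and $A_F^t$ gives
\begin{align*}
A_E^t \text{ shift equivalent to } A_F^t
&\iff (\Delta_{A_E^t},\Delta_{A_E^t}^+,\delta_{A_E^t})\cong(\Delta_{A_F^t},\Delta_{A_F^t}^+,\delta_{A_F^t})\\
&\iff K_0^{\gr}(L(E))\cong K_0^{\gr}(L(F)),
\end{align*}
and prepending the transpose reduction of the first paragraph finishes the proof.

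The step I expect to be the main obstacle is the identification in the second paragraph: making sure every structure lines up under the chain of identifications, in particular that the shift action on $K_0^{\gr}$ matches $\delta_{A_E^t}^{-1}$ with the correct direction of the inverse, and that the positive cones correspond. Each individual ingredient is already available in the earlier remarks and examples, but the bookkeeping of transposes together with the inverse appearing in the module structure is where a direction-error could slip in; once this is settled, the rest is formal given Krieger's theorem.
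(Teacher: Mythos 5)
Your proof is correct and follows exactly the paper's argument: observe that shift equivalence is preserved under transposition, identify $K_0^{\gr}(L(E))$ with Krieger's dimension triple of $A_E^t$ via Remark~\ref{rem:kriegerk0}, and conclude by Krieger's Theorem~\ref{kriegerthm}. The paper states this in two sentences; you have simply spelled out the same reductions in more detail.
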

\begin{proof}
Observe that two matrices are shift equivalent if and only if their transposes are. Hence in view of Remark \ref{rem:kriegerk0}, the theorem follows from Theorem \ref{kriegerthm}.
\end{proof}

The following Proposition  relies heavily  on  Williams' Theorem ~\ref{willmnhfhf} on graph moves. 
One can check that these graph moves preserve the associated Leavitt path algebras up to graded Morita equivalence. Thus we have the following proposition. 

\begin{prop}[Hazrat~\cite{hazd}*{Proposition 15}]\label{hgysweet}
Let $E$ and $F$ be finite graphs.
\begin{enumerate}[\upshape(1)]
\item If $E$ is essential and $F$ is obtained from an in-splitting or an out-splitting of the graph $E$, then 
$L(E)$ is graded Morita equivalent to $L(F)$.

\item For essential $E$ and $F$, if the adjacency matrices $A_E$ and $A_F$  are strongly shift equivalent then $L(E)$ is graded Morita equivalent to $L(F)$.

\item  If $E$ and $F$ are regular and $L(E)$ is graded Morita equivalent to $L(F)$, then the adjacency matrices $A_E$ and $A_F$  are shift equivalent.

\end{enumerate}
\end{prop}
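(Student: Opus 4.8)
The three parts have quite different character, so I would prove them separately, in the order (1), (3), (2), since (2) is essentially a formal consequence of (1) together with Williams' theorem. \emph{Part (1)} is the computational heart. The plan is to treat in-splittings and out-splittings one at a time and, for each move producing $F$ from $E$, to exhibit a graded Morita equivalence between $L(E)$ and $L(F)$, either as an outright graded isomorphism or, more generally, by realizing one of the two algebras as a full corner $e\,\M_n(L(E))(\ol\delta)\,e$ of a graded matrix ring over the other, thereby verifying criterion (4) of Theorem \ref{grmorim11}. Concretely, I would record how the move redistributes edges: each vertex $v\in E^0$ is replaced by finitely many copies $v^{(1)},\dots,v^{(m(v))}$, and the edge set is reorganized according to a partition of the incoming edges (for in-splitting) or of the outgoing edges (for out-splitting). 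Using the Leavitt and Cohn relations I would express the vertex projections and the (ghost) edge generators of $L(F)$ as homogeneous combinations of those of $L(E)$, assemble them into an idempotent $e$, and check that $e$ is homogeneous and full and that the induced map is a graded isomorphism onto the corner. The weight vector $\ol\delta$ is forced by the degrees of the edge generators appearing in these expressions: out-splitting reorganizes edges without changing their degree one, so the needed shifts are mild, whereas in-splitting mixes paths of different lengths and so requires genuinely nontrivial entries $\delta_i$.

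\emph{Part (3).} Suppose $L(E)\approx_{\gr}L(F)$ with $E,F$ regular. A graded equivalence $\Gr L(E)\to\Gr L(F)$ commutes with all shift functors $\mathcal T_\alpha$ and carries graded finitely generated projectives to graded finitely generated projectives; hence it restricts to an equivalence of the categories $\Pgrp L(E)$ and $\Pgrp L(F)$ and induces a monoid isomorphism $\cV^{\gr}(L(E))\cong\cV^{\gr}(L(F))$ intertwining the $\Z$-actions coming from $\mathcal T_1$. Passing to group completions yields an order-preserving $\Z[x,x^{-1}]$-module isomorphism $K_0^{\gr}(L(E))\cong K_0^{\gr}(L(F))$; here I make no claim about the order unit, since Morita equivalence need not fix $[L(E)]$, and Theorem \ref{h99} does not require it. The conclusion that $A_E$ and $A_F$ are shift equivalent then follows directly from Theorem \ref{h99}.

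\emph{Part (2).} Given (1) and the symmetry and transitivity of graded Morita equivalence, this is formal. Since $E$ and $F$ are essential and $A_E\sim_S A_F$, Williams' Theorem \ref{willmnhfhf} provides a finite sequence of essential graphs $E=G_0,G_1,\dots,G_k=F$ in which each $G_{i+1}$ is obtained from $G_i$ by an in-splitting, an out-splitting, or the inverse of one of these (an amalgamation), essentiality being preserved at every step. For a splitting step, part (1) gives $L(G_i)\approx_{\gr}L(G_{i+1})$ directly. For an amalgamation step, $G_i$ is a splitting of the essential graph $G_{i+1}$, so part (1) gives $L(G_{i+1})\approx_{\gr}L(G_i)$, and symmetry of $\approx_{\gr}$ furnishes the relation in the required direction. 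Composing along the chain yields $L(E)\approx_{\gr}L(F)$.

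The main obstacle is \emph{Part (1)}: everything else reduces to it via standard functoriality of $K_0^{\gr}$ under graded equivalence and via Williams' theorem. The delicate points there are (i) choosing the weight vector $\ol\delta$ so that the degree shifts introduced by the move are correctly absorbed, and (ii) checking fullness of the idempotent $e$, i.e. that the two-sided ideal it generates in $\M_n(L(E))(\ol\delta)$ is the whole ring. Both amount to verifying that the copies of the split vertex, together with the reorganized edges, recover all the defining relations of $L(E)$ inside the matrix corner, and this has to be done case by case for the two splitting types.
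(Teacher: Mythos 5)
Your proposal is correct and follows essentially the same route as the paper: part (1) is verified move by move (the paper's text says precisely that ``one can check that these graph moves preserve the associated Leavitt path algebras up to graded Morita equivalence,'' and realizing one algebra as a full homogeneous corner of a graded matrix ring over the other, as in criterion (4) of Theorem \ref{grmorim11}, is how this is done in \cite{hazd}), part (2) is the formal consequence of (1) and Williams' Theorem \ref{willmnhfhf}, and part (3) goes through the induced order-preserving $\Z[x,x^{-1}]$-module isomorphism on $K_0^{\gr}$ together with Theorem \ref{h99}. Your observation that the order unit plays no role in part (3) is accurate, and the only refinement worth noting is that for out-splitting one in fact gets a graded isomorphism of the Leavitt path algebras (\cite{question}*{Theorem 2.8}), which is stronger than what you need.
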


\begin{rem}
Conjecture~\ref{conjmogr} says that the converse of assertion (3) in Proposition~\ref{hgysweet} also holds.     
\end{rem}

\begin{ex}
Consider the graph below on the left. Williams' out/in-splitting starts with a partition $E^1=\bigsqcup_{i=1}^nE^1_i$ of the set of edges. The latter induces, for each $v\notin\sink(E)$, a partition $\s^{-1}\{v\}=\bigsqcup_{i=1}^n(E^1_i\cap s^{-1}(v))$ of the set of outgoing  edges, and similary, if $v\notin\sour(E)$, a partition on the set $r^{-1}\{v\}$ incoming edges. If we start with the partition of $E^1$ which consists of one edge in each term, and apply Williams' out-splitting change of the graph, we obtain the graph on the right. Then our result implies that the Leavitt path algebras associated to these graphs are graded Morita equivalent. In fact it was proved in \cite{question}*{Theorem 2.8} that out-splitting yields graded isomorphic Leavitt path algebras.

\includegraphics[scale=0.15]{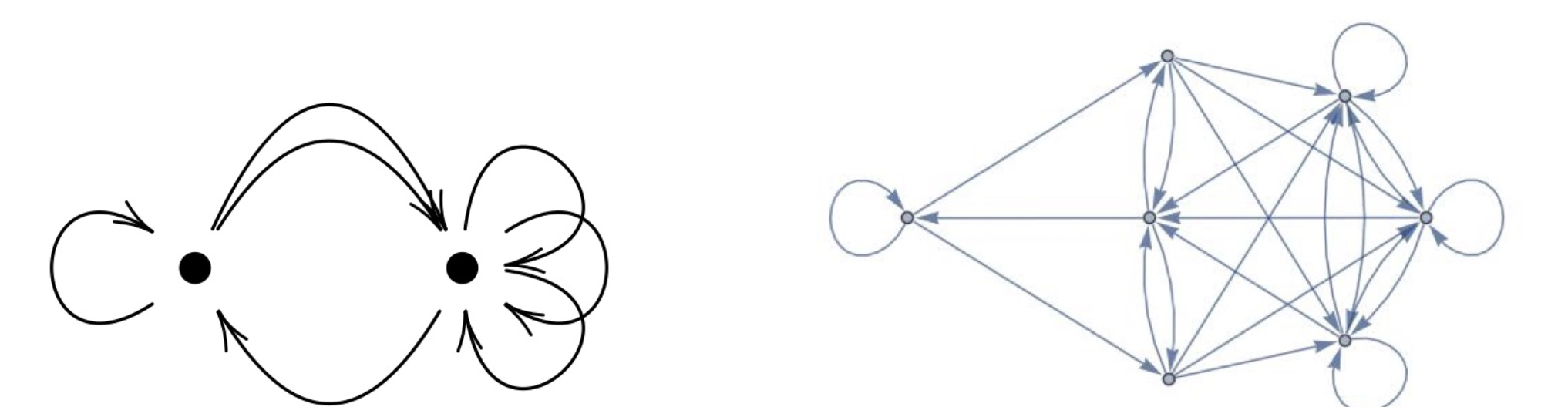}
    
\end{ex}

\subsection{Graded Morita equivalence, derived equivalence and the singularity category}\label{subsec:moritasing}

Let $\K$ be a field and $E$ a finite graph. The \emph{path algebra} $\K E$ is the tensor algebra of the $\K^{E^0}$-bimodule $\K^{E^1}$. Let $\K E\quitri J$ be the two-sided ideal generated by the paths of length greater or equal to $1$. Set $A(E)=\K E/J^2$. 
There are close relations between Leavitt path algebras and the representation theory of $A(E)$. For a unital algebra $A$, the {\it singularity category of $A$}, introduced by Buchweitz and Orlov,  is defined as the quotient category 
\[
\DD_{\singg}(A)=D^b(\Modd A)/\Perf(A),
\]
where $D^b(\Modd A)$ is the bounded derived category of $A$ and $\Perf(A)$ is the subcategory of perfect complexes. One can show that the global dimension of $A$  is finite if and only if $\DD_{\singg}(A)$ is trivial. 

The following theorem, due to Chen and Yang~\cite{chen}, uses results of Paul Smith~\cite{smith3} and Hazrat~\cite{haz2013}. Recall that two differential graded algebras (dgas) are \emph{derived equivalent} if the derived categories of their categories of differential graded modules are equivalent. Any $\Z$-graded algebra can be considered as a dga with trivial differential. In particular this applies to Leavitt path algebras; in part (2) of Theorem \ref{thm:yache}, derived equivalence should be understood in this sense.

\begin{thm}[Chen, Yang~\cite{chen}]\label{thm:yache}
Let $E$ and $F$ be finite regular graphs. Then the following statements are equivalent. 

\begin{enumerate}[\upshape(1)]

\item The Leavitt path algebras $L(E)$ and $L(F)$ are graded Morita equivalent.

\item The Leavitt path algebras $L(E)$ and $L(F)$, regarded as dgas with trivial differential, are derived equivalent.

\item The singularity categories $\DD_{\singg}(A(E))$  and $\DD_{\singg}(A(F))$  are triangulated equivalent.
\end{enumerate}
\end{thm}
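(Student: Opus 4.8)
The three conditions describe three a priori different categories attached to a graph, so the plan is to connect all of them through the canonical $\Z$-grading of $L(E)$. The technical heart, which I would isolate first, is a single triangle equivalence
\[
\DD_{\singg}(A(E))\simeq \Perf\big(L(E)\big),
\]
where on the right $L(E)$ is regarded as a dg algebra with trivial differential and with its canonical $\Z$-grading playing the role of the cohomological degree, and $\Perf$ denotes the perfect derived category (the thick subcategory generated by the free module). This is where the results of Smith~\cite{smith3} and Hazrat~\cite{haz2013} enter: Smith's category equivalences relating graded modules over path algebras of quivers to modules over Leavitt path algebras identify the graded side of the radical-square-zero algebra $A(E)=\K E/J^2$ with the Leavitt side, while the Buchweitz--Orlov description of $\DD_{\singg}$ via acyclic complexes of projectives (equivalently Gorenstein-projective modules), which is explicit for a radical-square-zero algebra, lets one translate this into the asserted dg statement. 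I would arrange the identification so that the suspension of $\DD_{\singg}(A(E))$ corresponds to the grading shift $\mathcal{T}_1$ on $L(E)$, using Hazrat's analysis of the shift action; this bookkeeping will be crucial later.

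Granting this equivalence, the implication (2)$\Leftrightarrow$(3) becomes formal. Indeed, a derived equivalence of the dgas $L(E)$ and $L(F)$ (an equivalence $D(L(E))\simeq D(L(F))$ commuting with coproducts) restricts to an equivalence of compact objects $\Perf(L(E))\simeq\Perf(L(F))$, and conversely any triangle equivalence of the (idempotent complete) perfect derived categories extends to the full derived categories by passing to coproduct completions; this is dg Morita theory. Combined with the equivalence of the first paragraph, $\Perf(L(E))\simeq\Perf(L(F))$ is the same as $\DD_{\singg}(A(E))\simeq\DD_{\singg}(A(F))$, which is exactly (2)$\Leftrightarrow$(3).

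For (1)$\Rightarrow$(2) I would use the bimodule characterisation of graded Morita equivalence in Theorem~\ref{grmorim11}(5): such an equivalence is implemented by graded projective bimodules $P,Q$ with $P\otimes_{L(F)}Q\cong_{\gr}L(E)$ and $Q\otimes_{L(E)}P\cong_{\gr}L(F)$. Viewed as dg bimodules with trivial differential these are perfect, and the derived tensor functors $-\otimes^{\mathbf L}P$ and $-\otimes^{\mathbf L}Q$ are mutually inverse equivalences of dg derived categories, so $L(E)$ and $L(F)$ are derived equivalent. The reverse implication (2)$\Rightarrow$(1) is the genuinely hard step, and it is where the regularity hypothesis is used: by Theorem~\ref{thm:lestrogra} the algebras are strongly graded, so Dade's Theorem~\ref{thm:dade} gives $\Gr L(E)\simeq\Modd L(E)_0$ with the shift $\mathcal{T}_1$ corresponding to twisting by the corner isomorphism of Theorem~\ref{jhby67}. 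The plan is to show that the triangulated equivalence of (2) can be chosen compatibly with this shift datum, and then to descend it to an equivalence of the abelian graded module categories, which is precisely graded Morita equivalence.

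The main obstacle is exactly this descent: a triangle equivalence of perfect (or singularity) categories is a priori much weaker than an equivalence of the underlying graded abelian categories, and recovering the latter cannot be done by formal nonsense alone. I expect that the rigidity needed comes entirely from the $\Z[x,x^{-1}]$-action, i.e.\ from matching the grading shift with the suspension throughout (as arranged in the first paragraph) and from the strongly graded structure, which together pin down the abelian category $\Gr L(E)$ inside the triangulated data. Carrying this matching through all the equivalences is the crux of Chen and Yang's argument in~\cite{chen}, and it is the step I would expect to absorb most of the work.
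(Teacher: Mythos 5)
The first thing to say is that the paper does not prove this theorem: it is stated as a result of Chen and Yang \cite{chen}, with only the remark that the proof uses Smith's category equivalences \cite{smith3} and Hazrat's analysis of the graded structure \cite{haz2013}. So there is no in-paper argument to compare yours against step by step; what follows measures your strategy against the actual argument in \cite{chen}. Your overall architecture --- a triangle equivalence $\DD_{\singg}(A(E))\simeq\Perf(L(E))$ as the technical heart, dg Morita theory for (2)$\Leftrightarrow$(3), graded projective bimodules for (1)$\Rightarrow$(2), and a descent problem for (2)$\Rightarrow$(1) --- does match the shape of Chen and Yang's proof, and you correctly locate the hard direction.

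Two points in your sketch are genuine gaps. First, the claim that ``any triangle equivalence of the (idempotent complete) perfect derived categories extends to the full derived categories\dots this is dg Morita theory'' is not true for bare triangle equivalences: Keller's Morita theory produces a derived equivalence from a compact generator together with a quasi-isomorphism of its dg endomorphism algebra onto the target dga, and an unenhanced triangle equivalence of $\Perf$'s does not by itself supply that quasi-isomorphism. Second, and more seriously, the engine you propose for (2)$\Rightarrow$(1) --- strong gradedness, Dade's Theorem \ref{thm:dade}, and ``matching the shift with the suspension'' --- omits the ingredient that actually makes the descent (and the enhancement issue just mentioned) work: for a finite regular graph, $L(E)_0$ is ultramatricial (Example \ref{ex:kle0}), hence von Neumann regular, so $L(E)$ is graded von Neumann regular. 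This homological collapse is what forces dg $L(E)$-modules to be formal, identifies morphism spaces in the derived category of the dga with morphisms of graded modules, and thereby converts a triangulated equivalence into an equivalence of the abelian categories $\Gr L(E)\simeq\Gr L(F)$ commuting with $\mathcal{T}_1$, i.e.\ into graded Morita equivalence. Strong gradedness and Dade alone do not ``pin down the abelian category inside the triangulated data''; without the regularity of $L(E)_0$ your descent step has no mechanism.
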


Assembling all the results so far, we have the following conjecture:

\begin{conj}\label{conjmogr}
Let $E$ and $F$ be finite regular graphs. Then the following are equivalent.

\begin{enumerate}[\upshape(1)]

\item There is an isomorphism of partially ordered $\mathbb Z[x,x^{-1}]$-modules
$K_0^{\gr}(L(E)) \iso K_0^{\gr}(L(F))$;

\item There is a gauge-preserving Morita equivalence between $C^*(E)$ and   $C^*(F)$;

\item There is a graded Morita equivalence between $L(E)$ and $L(F)$;

\item The singularity categories $\DD_{\singg}(A(E))$  and $\DD_{\singg}(A(F))$  are triangulated equivalent;

\item The adjacency matrices $A_E$ and $A_F$ are shift equivalent. 

\end{enumerate}

\end{conj}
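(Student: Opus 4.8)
The plan is to prove the five conditions equivalent by assembling a cycle of implications, drawing most of the arrows from results already recorded in the paper and isolating the single genuinely open link. Theorem \ref{h99} gives $(1)\Leftrightarrow(5)$ outright, and the Chen--Yang theorem \ref{thm:yache} gives $(3)\Leftrightarrow(4)$, so it is natural to organize everything around $(3)$, the graded Morita equivalence of $L(E)$ and $L(F)$. Proposition \ref{hgysweet}(3) already supplies $(3)\Rightarrow(5)$. For the analytic statement, I would use the identifications \eqref{copenhag}: a gauge-preserving Morita equivalence of $C^*(E)$ and $C^*(F)$ induces an order-preserving $\Z[x,x^{-1}]$-module isomorphism of $\T$-equivariant $K_0$-groups, hence of $K_0^{\gr}(L_\C(E))$ and $K_0^{\gr}(L_\C(F))$, giving $(2)\Rightarrow(1)$; conversely, over $\K=\C$ a graded Morita equivalence should pass to completions to yield a gauge-equivariant Morita equivalence, exactly as in the completion argument of Subsection \ref{subsec:c*ver}, giving $(3)\Rightarrow(2)$. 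Consequently the whole conjecture reduces to the single implication $(1)\Rightarrow(3)$, which is precisely the converse of Proposition \ref{hgysweet}(3).

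To attack $(1)\Rightarrow(3)$ I would exploit the corner skew Laurent description together with Dade's theorem. Since $E$ and $F$ are regular, $L(E)$ and $L(F)$ are strongly $\Z$-graded (Theorem \ref{thm:lestrogra}), so by Theorem \ref{thm:dade} the graded module categories are equivalent to the \emph{ungraded} module categories of the degree-zero components $L(E)_0$ and $L(F)_0$, and by Theorem \ref{jhby67} we have presentations $L(E)=L(E)_0[t_+,t_-,\alpha]$, $L(F)=L(F)_0[t_+,t_-,\beta]$. The components $L(E)_0$, $L(F)_0$ are ultramatricial (Example \ref{ex:kle0}), and under the identification of Remark \ref{rem:kriegerk0} hypothesis $(1)$ is an order-isomorphism of Krieger dimension groups intertwining the shift automorphisms induced by $A_E^t$ and $A_F^t$. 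Elliott's theorem \ref{thm:elliott}, applied to the underlying ordered $K_0$-isomorphism, produces a Morita equivalence between $L(E)_0$ and $L(F)_0$; the task is then to promote it to one compatible with the corner maps $\alpha,\beta$, i.e. to a genuinely graded Morita equivalence. As supporting evidence in the pointed case, Ara--Pardo's theorem \ref{theor:Kiffgr-iso unital} already delivers a locally inner automorphism $g$ of $L(E)_0$ and a graded isomorphism $L^g(E)\cong_{\gr}L(F)$, so the question becomes whether $L(E)$ and its twist $L^g(E)$ are graded Morita equivalent for locally inner $g$.

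The hard part is exactly this last step: upgrading a dimension-group isomorphism that is equivariant at the level of $K_0$ into an \emph{equivariant} Morita equivalence, equivalently proving that twisting the corner map by a locally inner automorphism leaves the graded Morita class unchanged. Because Williams' conjecture is false, one cannot route through strong shift equivalence and Proposition \ref{hgysweet}(2); a direct equivariant argument is forced. One promising route is to pass to the crossed products via Proposition \ref{prop:cross} and Example \ref{ex:crocov}, reducing the claim to a $\Z$-equivariant Morita equivalence between $L(\ol E)$ and $L(\ol F)$, and then to establish an equivariant refinement of Elliott's classification for ultramatricial algebras carrying a $\Z$-action. Controlling the locally inner twist $g$ inside such an equivariant classification — note that the Morita statement $(1)$ is \emph{unpointed}, so the pointed result of Ara--Pardo does not apply verbatim — is where I expect the real difficulty to lie, and it is the reason the converse of Proposition \ref{hgysweet}(3), and hence Conjecture \ref{conjmogr}, remains open.
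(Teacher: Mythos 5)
The statement you are asked about is a \emph{conjecture}; the paper offers no proof of it, only Remark \ref{rem:conjemoves}, which records exactly the implications that are currently known: $(1)\Leftrightarrow(5)$ by Theorem \ref{h99}, $(3)\Leftrightarrow(4)$ by Theorem \ref{thm:yache}, and $(3)\Rightarrow(5)$ by Proposition \ref{hgysweet}(3). Your write-up assembles precisely these arrows, correctly isolates $(1)\Rightarrow(3)$ (the converse of Proposition \ref{hgysweet}(3)) as the genuinely open core, and is candid that your proposed attack on it --- Dade plus Elliott on the degree-zero ultramatricial components, then promoting to an equivariant Morita equivalence compatible with the corner endomorphisms --- runs into exactly the difficulty that keeps the conjecture open. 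You also rightly observe that Ara--Pardo (Theorem \ref{theor:Kiffgr-iso unital}) does not apply verbatim because $(1)$ is unpointed and their result requires essential graphs and a pointed isomorphism. So your proposal is an accurate reduction of the conjecture to its known-open kernel rather than a proof, and in that respect it matches the paper's own (non-)treatment.

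Two caveats on the arrows you treat as routine. First, $(3)\Rightarrow(2)$ is not established by the paper: Subsection \ref{subsec:c*ver} only explains how a graded $\ast$-isomorphism of $L_\C(E)$ and $L_\C(F)$ passes to the completion. A graded algebraic Morita equivalence over $\C$ (e.g.\ via Theorem \ref{grmorim11}(6)) need not be implemented by $\ast$-compatible bimodules or $C^*$-correspondences, so ``passing to completions'' is itself conjectural here, not a consequence of known results. Second, $(2)\Rightarrow(1)$ via \eqref{copenhag} is plausible but is likewise not spelled out in the paper; one must check that a gauge-equivariant Morita equivalence induces an order-preserving $\Z[x,x^{-1}]$-module isomorphism on $K_0^{\T}$. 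Neither point detracts from your main conclusion, since the conjecture stands or falls with $(1)\Rightarrow(3)$, but you should not present those two implications as already secured by the cited results.
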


\begin{rem}\label{rem:conjemoves} 
By Theorem \ref{thm:yache}, conditions (3) and (4) of Conjecture \ref{conjmogr} are equivalent. By Proposition \ref{hgysweet}, (3) implies (5). Assuming the response to Question \ref{quest:semuove} is positive, (5) would further be equivalent to the existence of a finite sequence of moves that carries $E$ to $F$. 
\end{rem}

\subsection{Talented monoids and the Morita equivalence conjecture for meteor graphs}\label{talentedmoni}
 For a finite essential graph $E$, the Krieger monoid $\Delta_{A_E^t}^+$ is the positive cone of $K_0^{\gr}(L(E))$ (this follows by the same argument as in \ref{h99}). The latter is the \emph{talented monoid} $T_E$; $K_0^{\gr}(L(E))$ is its group completion. Therefore by Theorem \ref{kriegerthm}, from a shift equivalence of finite essential graphs $E$ and $F$ we obtain a $\mathbb Z$-monoid isomorphism of talented monoids $T_E\cong T_F$. This isomorphism gives us control over elements of the monoids (such as minimal elements, atoms, etc.) and consequently on the geometry of the graphs (the number of cycles, their lengths, etc.). 
 
We start with a definition of talented monoids via generators and relations. 

\begin{defi}
    For a row-finite graph $E$, the \emph{talented monoid} $T_E$ is the commutative monoid generated by $\{v(i) \mid v\in E^0, i\in \mathbb Z\}$ subject to
\begin{equation}\label{monoidrelation2}
v(i)=\sum_{e\in s^{-1}(v)}r(e)(i+1), 
\end{equation}
for every $v\in \reg(E)$ and $i \in \mathbb Z$.
\end{defi}

The above relations define a congruence relation  which also respects the action of $\mathbb Z$. Namely,  the monoid $T_E$ is equipped by a natural $\mathbb Z$-action: $${}^n v(i)=v(i+n)$$ for $n,i \in \mathbb Z$. Proposition~5.7 of \cite{ahls} relates this monoid to the theory of Leavitt path algebras: there is a  $\mathbb Z$-module isomorphism 
$T_E \cong \mathcal{V}^{\gr}(L(E))$.  In fact we have 
\begin{align*}
T_E &\cong\, \mathcal V^{\gr}(L(E)) \cong\mathcal V(L(\ol E)),\\
v(i) &\longmapsto   \big [(L(E)v\big) (i)]    \longmapsto [L(\ol E) v_i]
 \end{align*} 
Here, as in \eqref{eq:olE}, $\ol E$ is the covering graph of $E$. 
Thus the talented monoid $T_E$  is conical and cancellative (\cite{ahls}*{Section 5}). Here we use left $L(E)$-modules to construct $\mathcal V^{\gr}(L(E))$ so that it matches the definition of our talented monoid. 

The following theorem shows how the geometry of graphs translates into properties of their talented monoids. A graph $E$ is said to satisfy \emph{Condition (L)} if every cycle in $E$ has an exit. A closed path $\alpha=e_1\cdots e_n$ based at a vertex $v$ is \emph{simple} if $s(e_i)=v$ then $i=1$. We say that $E$ satisfies \emph{Condition (K)} if for every vertex $v$ in the support of a closed simple path there exist at least two distinct closed simple paths based at $v$.

\begin{thm}[\cite{hazli}]\label{conLm}
Let $E$ be a row-finite graph and $T_E$ the talented monoid associated to $E$. 

\begin{enumerate}[\upshape(1)]
\item The graph $E$ satisfies Condition (L) if and only if $\mathbb Z$ acts freely on $T_E$.

\item The graph $E$ satisfies Condition (K) if and only if $\mathbb Z$ acts freely on any quotient of $T_E$ by an $\Z$-order-ideal. 
\end{enumerate}
\end{thm}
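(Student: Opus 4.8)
The plan is to reduce the freeness of the $\mathbb Z$-action to the non-existence of nonzero periodic elements, and to analyze periodic elements through the confluence (common-refinement) description of $T_E$ together with the direct-limit/adjacency-matrix picture coming from $T_E\cong\cV(L(\ol E))$. Concretely, I represent each element of $T_E$ by a finitely supported function $E^0\times\Z\to\N_0$, i.e. a formal sum $\sum n_{v,i}\,v(i)$, and let $\to$ be the rewriting relation replacing one occurrence of $v(i)$ (for $v\in\reg(E)$) by $\sum_{e\in s^{-1}(v)}r(e)(i+1)$. Adapting the confluence lemma of Ara--Moreno--Pardo \cite{amp} to the $\Z$-graded setting (as in \cite{ahls}), two representatives are equal in $T_E$ iff they admit a common expansion $x\to^* z\leftarrow^* y$. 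I will use two consequences: every generator $v(i)$ is nonzero (an expansion of a nonempty sum is nonempty, since regular vertices emit an edge), and if $w(j)$ occurs in an expansion of $v(i)$ then there is a path $v\rightsquigarrow w$ of length $j-i$. Recall also that $\,{}^n v(i)=v(i+n)$ and that $T_E$ is conical and cancellative.

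For the easy direction of (1), suppose Condition (L) fails, so there is a cycle $e_1\cdots e_n$ based at $v$ with no exit. ``No exit'' forces $s^{-1}(s(e_k))=\{e_k\}$, so every cycle vertex is regular and emits only the cycle edge; applying the defining relation around the cycle yields $v(0)=r(e_1)(1)=\cdots=v(n)$, i.e. $\,{}^nv(0)=v(0)$ with $v(0)\neq0$, so $\mathbb Z$ does not act freely.

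The hard direction of (1) --- Condition (L) implies freeness --- is the crux, and I argue the contrapositive. First treat a regular graph, where $T_E$ is the positive cone $\Delta_{A_E^t}^+$ realised as $\varinjlim(\N_0^{E^0}\xrightarrow{B}\N_0^{E^0}\to\cdots)$ with $B=A_E^t$ and the action by multiplication by $B$ (Remark \ref{rem:kriegerk0}, Example \ref{ex:ultrale}). A nonzero periodic element then means a finitely supported $w\in\N_0^{E^0}\setminus\{0\}$ with $B^pw=w$. Summing coordinates and using $(B^p)_{v,u}=\#\{\text{paths }u\to v\text{ of length }p\}$ gives $\sum_u w_u=\sum_u w_u\,\nu_p(u)$, where $\nu_p(u)$ counts length-$p$ paths out of $u$; regularity gives $\nu_p(u)\geq1$, so the equality forces $\nu_p(u)=1$ for every $u$ in the finite set $S=\supp(w)$. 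Since $\nu_p(u)=\sum_{e\in s^{-1}(u)}\nu_{p-1}(r(e))\geq\#s^{-1}(u)$, this forces out-degree one at $u$ and, inductively, along its unique forward path; and the endpoint $u'$ of that path satisfies $w_{u'}\geq(B^p)_{u',u}w_u>0$, so the forward orbit of any $u\in S$ stays in the finite set $S$ with constant out-degree one. Finiteness of $S$ then forces this deterministic ray to return, producing a closed path all of whose vertices have out-degree one --- a cycle without exit. The main obstacle is exactly this last step: upgrading ``there is a cycle'' (which a mere inequality already yields) to ``there is a cycle with no exit.'' It is the \emph{equality} $B^pw=w$, together with integrality and the finiteness of $\supp(w)$, that kills every potential exit; dropping any of these (e.g. on the infinite line, where the support would have to be infinite) destroys the conclusion. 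For general row-finite $E$ the sinks contribute free, aperiodic generators and the acyclic part leading only to sinks carries no periodicity (exactly as in the infinite-line computation), so a short localization confines any periodic element to the regular core, where the argument above applies.

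For (2), I reduce to (1). First, the $\Z$-order-ideals of $T_E$ are in inclusion-preserving bijection with the hereditary saturated subsets $H\subseteq E^0$, matching the lattice isomorphism of \eqref{latticeisosecideal}, and the corresponding quotient is $\Z$-equivariantly isomorphic to the talented monoid of the quotient graph, $T_E/I_H\cong T_{E/H}$ (\cite{hazli}, cf. \cite{amp},\cite{lpabook}). Hence $\Z$ acts freely on every quotient of $T_E$ by a $\Z$-order-ideal iff it acts freely on $T_{E/H}$ for every hereditary saturated $H$, which by part (1) means exactly that $E/H$ satisfies Condition (L) for every such $H$. Finally I invoke the standard graph-theoretic fact that $E$ satisfies Condition (K) iff every quotient graph $E/H$ satisfies Condition (L) (\cite{lpabook}); chaining these equivalences proves (2). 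The only genuinely new input is part (1); the two reductions --- the order-ideal/quotient dictionary and the combinatorial $(K)\Leftrightarrow(L)$-in-all-quotients characterization --- are standard.
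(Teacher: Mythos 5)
The survey states Theorem \ref{conLm} only as a citation to \cite{hazli} and contains no proof of it, so there is nothing in the paper itself to compare against; I will instead assess your argument and compare it with the original one in \cite{hazli}, which works directly with the confluence (common-refinement) presentation of $T_E$ and traces paths between common refinements of representatives of $a$ and ${}^n a$. Your overall architecture is sound, and your treatment of the key implication is genuinely different. The easy direction of (1) and all of (2) are correct and standard: the computation ${}^n v(0)=v(0)$ for a cycle without exit is right (and you correctly note that generators are nonzero), and reducing (2) to (1) via the dictionary between $\Z$-order-ideals and hereditary saturated sets, the identification $T_E/I_H\cong T_{E/H}$, and the combinatorial fact that $E$ satisfies Condition (K) precisely when every quotient $E/H$ satisfies Condition (L), is exactly the intended route. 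Your proof of the hard direction of (1) for \emph{regular} graphs---realizing $T_E$ as $\varinjlim\big(\N_0^{(E^0)}\xrightarrow{A_E^t}\N_0^{(E^0)}\to\cdots\big)$, extracting from a nonzero period-$p$ element a finitely supported $w\ge 0$ with $B^pw=w$, and using the coordinate-sum identity to force out-degree one along a forward orbit trapped in the finite set $\supp(w)$---is complete and correct; the point that the \emph{equality} of masses, not a mere inequality, is what kills every potential exit is exactly the right observation, and this linear-algebraic route is arguably cleaner than the path-tracing argument of \cite{hazli}.

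The one step you assert but do not prove is the reduction of general row-finite graphs to the regular case: ``a short localization confines any periodic element to the regular core'' is not immediate, because in the presence of sinks the colimit presentation is not a direct sum (regular generators expand into sink generators, so $T_E$ does not split off $\N_0^{(\sink(E)\times\Z)}$). A correct way to close this gap: for a representative $x=\sum_j v_j(i_j)$ of $a$, consider the multiset of sink generators $w(l)$ occurring in some (equivalently, by confluence, in every sufficiently refined) expansion of $x$; this multiset depends only on $a$, is bounded below in the degree $l$, and is sent to its shift by $p$ when ${}^p a=a$, hence must be empty. It follows that no $v_j$ connects to a sink, so $a$ lies in the image of $T_{E_H}$ for the hereditary set $H$ of vertices not connecting to any sink; the restriction $E_H$ is regular, inherits Condition (L), and $T_{E_H}$ embeds $\Z$-equivariantly in $T_E$, so your regular-graph argument applies. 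With that paragraph supplied, the proof is complete.
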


A \emph{meteor graph} is an essential connected graph consisting of two disjoint cycles and the paths connecting these cycles. We can give a positive answer to Conjecture~\ref{conjmogr} for the case of meteor graphs.

\begin{thm}[Cordeiro, Gillaspy, Gon\c{c}alves, Hazrat~\cite{oberwol}] \label{alergy1}
    Let \(E\) and \(F\) be essential graphs, where $E$ is a meteor graph. Then the following are equivalent.
    \begin{enumerate}[\upshape(1)]

        \item The Leavitt path algebras \(L(E)\) and \(L(F)\) are graded Morita equivalent.

\item  The graph $C^*$-algebras $C^*(E)$ and $C^*(F)$ are equivariantly Morita equivalent.

        \item There is an order-preserving 
        $\mathbb Z[x,x^{-1}]$-module isomorphism
$K_0^{\gr}(L(E))\iso K_0^{\gr}(L(F))$.       
        \item The talented monoids \(T_E\) and \(T_F\) are \(\mathbb{Z}\)-isomorphic.

        \item The graphs \(E\) and \(F\) are shift equivalent.
        
        \item The graphs \(E\) and \(F\) are strongly shift equivalent.
        
    \end{enumerate}
    \label{thm:main}
\end{thm}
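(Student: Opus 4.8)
My plan is to prove the cycle of implications
\[
(6)\Rightarrow(1)\Rightarrow(5)\Rightarrow(6),\qquad (5)\Leftrightarrow(3)\Leftrightarrow(4),
\]
and then to attach the $C^*$-statement $(2)$ to this cycle at both ends. Since a meteor graph is essential, hence regular, every result quoted below applies to it. Most of these implications are immediate from earlier material. The implication $(6)\Rightarrow(1)$ is Proposition \ref{hgysweet}(2), and $(1)\Rightarrow(5)$ is Proposition \ref{hgysweet}(3) (both $E$ and $F$ being essential and therefore regular). The equivalence $(5)\Leftrightarrow(3)$ is exactly Theorem \ref{h99}. For $(3)\Leftrightarrow(4)$ I would use that, as recalled in Subsection \ref{talentedmoni}, $T_E\cong\mathcal V^{\gr}(L(E))$ is the positive cone of $K_0^{\gr}(L(E))$, and $K_0^{\gr}(L(E))$ is its group completion; since $T_E$ is conical and cancellative, the canonical map $T_E\to K_0^{\gr}(L(E))$ is injective with image $K_0^{\gr}(L(E))_+$ and is compatible with the $\mathbb Z$-actions. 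Hence an order-preserving $\mathbb Z[x,x^{-1}]$-module isomorphism restricts to a $\mathbb Z$-monoid isomorphism of positive cones, while conversely a $\mathbb Z$-monoid isomorphism $T_E\cong T_F$ group-completes to an order-preserving $\mathbb Z[x,x^{-1}]$-module isomorphism.

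This leaves the single genuinely new implication $(5)\Rightarrow(6)$: for meteor graphs, shift equivalence must be upgraded to strong shift equivalence. This is where the hypothesis that $E$ be a meteor graph is essential, since shift equivalence does \emph{not} imply strong shift equivalence for general graphs (the Kim--Roush examples). The plan is to extract from the invariant a complete set of combinatorial data and then realize a strong shift equivalence explicitly. First I would translate $(5)$ into a $\mathbb Z$-monoid isomorphism $T_E\cong T_F$ using the already-established $(5)\Leftrightarrow(4)$ (equivalently, via Theorem \ref{kriegerthm} and the identification of $T_E$ with Krieger's positive cone). Using Theorem \ref{conLm}, together with an analysis of the $\mathbb Z$-order-ideals of $T_E$ and of its minimal elements and atoms, I would read off the graph-theoretic invariants of a meteor graph that are preserved by the isomorphism: the number of cycles (two), their lengths, and the combinatorial pattern of the connecting paths. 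The minimal nonzero $\mathbb Z$-order-ideals detect the two cycles and their periods, while the lattice of order-ideals records how the connecting paths are attached.

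Having shown that shift-equivalent meteor graphs share this data, the remaining step is to prove that any two meteor graphs with matching invariants are strongly shift equivalent. Here I would invoke Williams' Theorem \ref{willmnhfhf} and reduce each graph to a canonical representative by a finite sequence of in- and out-splittings and their inverse amalgamations, each of which realizes an elementary shift equivalence; one simplifies the connecting paths down to a normal form determined by the recorded data, so that two meteor graphs with the same invariants reduce to the same representative and hence are strongly shift equivalent. I expect this reduction to be the main obstacle: controlling the splitting and amalgamation moves on the connecting paths while keeping the two cycles intact, and verifying that the resulting normal form depends only on the monoid invariants and not on incidental choices, is the delicate combinatorial heart of the argument.

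Finally, for the $C^*$-statement $(2)$, I would close it into the cycle at both ends. The implication $(6)\Rightarrow(2)$ follows from the $C^*$-analogue of Proposition \ref{hgysweet}(2): the in- and out-splitting moves implementing a strong shift equivalence induce gauge-equivariant Morita equivalences of the corresponding graph $C^*$-algebras. Conversely, a gauge-equivariant Morita equivalence $C^*(E)\approx C^*(F)$ induces an order-preserving $\mathbb Z[x,x^{-1}]$-module isomorphism $K_0^{\mathbb T}(C^*(E))\cong K_0^{\mathbb T}(C^*(F))$, which by the identifications \eqref{copenhag} yields $(3)$. Thus $(2)$ is sandwiched between $(6)$ and $(3)$, completing the equivalence of all six statements.
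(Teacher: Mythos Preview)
Your proposal is correct and follows essentially the same route as the paper's sketch: the standard implications are handled via Proposition~\ref{hgysweet} and Theorem~\ref{h99}, and the one genuinely new step --- that for meteor graphs a talented-monoid (equivalently, shift-equivalence) isomorphism forces strong shift equivalence --- is obtained by reading off the cycle lengths and connecting-path data from $T_E$ and then reducing both graphs to a common standard form via in/out-splittings, exactly as the paper indicates. Your treatment is slightly more complete in that you explicitly close condition~(2) into the cycle via \eqref{copenhag}, which the paper's sketch leaves implicit.
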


\begin{proof}[Sketch of the proof] An 
order-preserving $\mathbb Z[x,x^{-1}]$-module isomorphism
 $K_0^{\gr}(L(E))\iso K_0^{\gr}(L(F))$, gives an isomorphism on the level of talented monoids $T_E \cong T_F$. Then, a careful analysis of the monoid isomorphism allows us to show that $E$ and $F$ can be transformed, via in- and out-splitting and their inverses, 
into each other. Applying Williams' Theorem~\ref{willmnhfhf}, we obtain that the graphs $E$ and $F$ are strongly shift equivalent and, as a consequence of Proposition~\ref{hgysweet}, we get that $L(E)$ is graded Morita equivalent to $L(F)$.
\end{proof}

Since a meteor graph consists of only two cycles and paths emitting from one cycle lands exclusively into the other cycle, using in/out-splitting one can transform this graph into a ``standard form'' graph, without changing the $K$-theoretical data. This allows us to compare two meteor graphs by converting them into the standard form. It is not clear how to adopt this approach to graphs which have more than two cycles. Although Theorem~\ref{alergy1} very likely holds for graphs with disjoint cycles, this is yet to be established. 

\subsection{Graded homotopy classification}\label{subsec:homotopy2}

Let $B=\bigoplus_{i\in \mathbb Z}B_i$ be a $\Z$-graded algebra and $B[t]$ the polynomial ring graded so that $B_n[t]$ is the graded component of degree $n$. Let $\phi,\psi:A\to B$ be graded homomorphisms of $\Z$-graded algebras. We say that $\phi$ and $\psi$ are \emph{graded homotopic}, and write $\phi\sim_{\gr}\psi$, if there is a finite sequence of elementary homotopies $H_0:\phi\to \phi_1,\dots, H_n:\phi_n\to \psi$ such that each $H_i$ is a graded homomorphism. The following theorem is concerned with primitive graphs. A finite graph $E$ is \emph{primitive} if there is an $N$ such that for every pair of vertices $(v,w)$ of $E$ there is a path $\alpha$ of length $N$ with $s(\alpha)=v$ and $r(\alpha)=w$.

The talented monoid $T_E$ (Section \ref{talentedmoni}) of a finite graph $E$ is pointed by $1_E=\sum_{v\in E^0}v(0)$. 
\begin{thm}[Arnone \cite{guidotopy}*{Theorem 8.1}]\label{thm:guidotopy}  
Let $E$ and $F$ be two finite, primitive graphs. 
If there exists an isomorphism of $\Z$-monoids $T_E\iso T_F$ sending $1_E\mapsto 1_F$
then the algebras $L(E)$ and $L(F)$ are graded homotopy
equivalent.
\end{thm}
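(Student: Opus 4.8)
The plan is to lift the given monoid isomorphism to graded algebra maps in both directions and then prove that the two composites are graded homotopic to the respective identities. First I would record, as explained in Section~\ref{talentedmoni}, that the talented monoid $T_E$ is the positive cone of $K_0^{\gr}(L(E))$, that $K_0^{\gr}(L(E))$ is its group completion, and that the $\Z$-action on $T_E$ is multiplication by $x$. Hence a $\Z$-monoid isomorphism $T_E\iso T_F$ with $1_E\mapsto 1_F$ extends uniquely to an order-preserving isomorphism of pointed $\Z[x,x^{-1}]$-modules $\phi\colon K_0^{\gr}(L(E))\iso K_0^{\gr}(L(F))$ with $\phi([L(E)])=[L(F)]$. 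Applying the fullness theorem (Theorem~\ref{thm:full}) to $\phi$ and to $\phi^{-1}$ then yields unital $\Z$-graded homomorphisms $\psi\colon L(E)\to L(F)$ and $\psi'\colon L(F)\to L(E)$ with $K_0^{\gr}(\psi)=\phi$ and $K_0^{\gr}(\psi')=\phi^{-1}$.

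The composites satisfy $K_0^{\gr}(\psi'\psi)=\id$ and $K_0^{\gr}(\psi\psi')=\id$, so it remains to establish the following \emph{graded homotopy rigidity}: for $E$ primitive, any unital $\Z$-graded endomorphism $\theta$ of $L(E)$ with $K_0^{\gr}(\theta)=\id$ satisfies $\theta\sim_{\gr}\id_{L(E)}$. Granting this, $\psi'\psi\sim_{\gr}\id_{L(E)}$ and $\psi\psi'\sim_{\gr}\id_{L(F)}$, so $\psi$ is a graded homotopy equivalence with inverse $\psi'$, which is exactly the assertion. This is where primitivity enters: primitivity of $A_E$ forces $1$ not to be an eigenvalue (the Perron eigenvalue is $>1$ once $E$ is not the single loop), so $\det(I-A_E^t)\neq0$, $\BF(E)$ is finite and $\ker(I-A_E^t)=0$; moreover primitivity makes $L(E)$ graded simple and (outside the trivial loop) SPI, hence properly infinite. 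The vanishing of $\ker(I-A_E^t)$ is what removes the $K_1$-type obstruction terms and makes $K_0^{\gr}$ a complete invariant for graded homotopy.

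To prove the rigidity statement I would mimic, in the graded world, the strategy behind Theorem~\ref{thm:cm2}. The primary route is to develop a $\Z$-graded bivariant $K$-theory $kk^{\gr}$ together with a graded analogue of the fundamental triangle of Theorem~\ref{thm:fundtri}, show that the kernel of the resulting universal-coefficient extension computing $kk^{\gr}(L(E),L(E))$ is a square-zero ideal---so that every class lifting $\id_{\BF_{\gr}(E)}$ is invertible---and then prove the graded counterpart of the homotopy/$kk$ bijection \eqref{map:homotokk}; since $\theta$ and $\id$ share the same $kk^{\gr}$-class they would be graded $\M_2$-homotopic, and the proper infiniteness of $L(E)$ would let one upgrade $\M_2$-graded homotopy to honest graded homotopy. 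An alternative route passes through the smash product: by Example~\ref{ex:crocov} and Proposition~\ref{prop:cross}, $\theta$ corresponds to a unital $\Z$-equivariant endomorphism of the ultramatricial algebra $L(\overline{E})$ inducing the identity on $K_0$, which one shows is equivariantly locally inner and then connects to the identity by a polynomial path, using $\ker(I-A_E^t)=0$ to guarantee no $K_1^{\gr}$-obstruction to this connection. I expect the genuinely hard part throughout to be precisely this rigidity: producing the graded (polynomial) homotopy witnessing the $K_0^{\gr}$-triviality of $\theta$, and in particular the passage from $\M_2$-graded homotopy to actual graded homotopy, which is where the SPI structure of $L(E)$ for primitive $E$ must be exploited.
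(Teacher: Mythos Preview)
Your overall strategy---lift the $\Z$-monoid isomorphism to a pointed ordered $\Z[x,x^{-1}]$-module isomorphism on $K_0^{\gr}$, produce graded maps in both directions, and then use a graded bivariant $K$-theory $kk^{\gr}$ to show that the composites are graded homotopic to the identities---matches the paper's sketch, which says only that the proof ``uses graded bivariant algebraic $K$-theory techniques'' from \cites{kkg,ac}. Your primary route (graded fundamental triangle, graded UCT, square-zero kernel, graded analogue of \eqref{map:homotokk}) is the right template.

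Where you go astray is in identifying the role of primitivity. You attribute it to forcing $\ker(I-A_E^t)=0$ and thereby killing $K_1$-type obstructions, but in the graded setting the computation of Example~\ref{ex:kgrle} gives $K_n^{\gr}(L(E))\cong\BF_{\gr}(E)\otimes K_n(\K)$; the ungraded group $\ker(I-A_E^t)$ does not enter the graded UCT at all, and the obstruction term is governed by $\BF_{\gr}(E)$, not by $\BF(E)$ or its kernel companion. The paper is explicit that primitivity is needed ``so that certain idempotents of $L(E)$ arising from edges are full.'' This fullness is what makes the graded analogue of the bijection \eqref{map:homotokk} work: constructing graded homotopies requires moving between degrees via the edge partial isometries $e,e^*$, and one needs the associated projections to be full in $L(E)$ so that the Morita/corner arguments go through in the graded category. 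So the SPI/proper-infiniteness consequence of primitivity that you note is relevant, but the $\ker(I-A_E^t)=0$ claim is a red herring. Your alternative route through $L(\overline{E})$ and equivariant local innerness is not what the paper does; local innerness of $K_0$-trivial automorphisms of ultramatricial algebras is indeed available, but promoting that to a $\Z$-equivariant \emph{polynomial} homotopy is a separate problem that you have not addressed.
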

The proof of Theorem \ref{thm:guidotopy} uses graded bivariant algebraic $K$-theory techniques \cites{kkg,ac}. 
The hypothesis that $E$ and $F$ be primitive is needed so that certain idempotents of $L(E)$ arising from edges are full.

\subsection{Compatibility conditions on shift equivalence that imply graded Morita equivalence}\label{subsec:compa}

In what follows, if $X$ is a finite set we write $\K X$ for the $\K$-vector space with basis $X$. We will occasionally consider $\K X$ as a $\K$-algebra with the product
$x\cdot y=\delta_{x,y}x$. For example $\K E^0$ is isomorphic to the subalgebra of $L(E)$ generated by $E^0$. 

Let $E$ and $F$ be finite regular graphs. If $A=A_E$ and $B=A_F$ are shift equivalent with lag $\ell$, then by definition there are matrices $R\in\N_0^{E^0\times F^0}$ and $S\in\N_0^{F^0\times E^0}$ satisfying the identities \eqref{eq:equifuera} and \eqref{eq:equidentro}. Let $G=G(R)$ be a set with  
$$
|R|_1=\sum_{(v,w)\in E^0\times F^0}R_{v,w}
$$
elements, and let $s:G\to E^0$ and $r:G\to F^0$ be such that 
\[
R_{v,w}=\#\big(s^{-1}(v)\cap r^{-1}(w)\big).
\]
Next consider the $\K$-vector space $M_R=\K G(R)$. We give $M_R$ a $(\K E^0,\K F^0)$-bimodule structure as follows: for each $g\in G=G(E)$, $v\in E^0$ and $w\in F^0$, we put
\[
vg=\delta_{v,s(g)}g,\, gw=\delta_{r(g),w}g.
\]
Similarly we define the $\K$-vector space $M_S$ and make it into a $(\K{F^0},\K{E^0})$-bimodule. Let
\begin{gather*}
s,r:G(R)\times_{F^0}G(S)=\{(g,h)\in G(R)\times G(S)\,\colon\, r(g)=s(h)\}\to E^0\sqcup F^0\\
 s(g,h)=s(g),\,\, r(g,h)=r(h).
\end{gather*}
Then for each pair $(v_1,v_2)\in E^0\times E^0$
\[
\#\{(g,h)\in G(R)\times_{F^0}G(S)\,\colon\, s(g,h)=v_1,\, r(g,h)=v_2\}=(RS)_{v_1,v_2}=A^\ell_{v_1,v_2}=\#v_1E^\ell v_2.
\]
Hence from  $RS=A^\ell$ we obtain
\begin{equation}\label{eq:shifteqbimod1}
M_R\otimes_{\ell{F^0}}M_S=\bigoplus_{(g,h)\in G(R)\times_{F^0}G(S)}\K\cdot(g,h)\cong \K E^\ell\cong (\K{E^1})^{\otimes_{\K{E^0}}^\ell}.
\end{equation}
The isomorphism \ref{eq:shifteqbimod1} translates one of the conditions in the definition of shift equivalence to bimodule terms. We may similarly translate all of them; setting $M=M_R$, $N=M_S$ we obtain bimodule isomorphisms 
\begin{gather}\label{eq:gse}
M\otimes_{\K{F^0}}N\overset{\omega_E}{\iso} (\K{E^1})^{\otimes_{\K{E^0}}^\ell},\,\, N\otimes_{\K{E^0}}M\overset{\omega_F}{\iso} (\K{F^1})^{\otimes_{\K{F^0}}^\ell},\\
\K{E^1}\otimes_{\K{E^0}}M\overset{\sigma_M}{\iso}M\otimes_{\K{F^0}}\K{F^1},\,\,
 \K{F^1}\otimes_{\K{F^0}}N\overset{\sigma_N}{\iso}N\otimes_{\K{E^0}}\K{E^1}. \nonumber
\end{gather}

\begin{prop}[\cite{recast}*{Theorem 3.9}]\label{prop:recast}
Let $E$ and $F$ be regular graphs. The following are equivalent. 

 \begin{enumerate}[\upshape(1)]

\item  $E$ and $F$ are shift equivalent;

\item  There exist bimodules ${}_{\K{E^0}}M_{\K{F^0}}$ and ${}_{\K{F^0}}N_{\K{E^0}}$ and bimodule isomorphisms \eqref{eq:gse}.

\end{enumerate} 
\end{prop}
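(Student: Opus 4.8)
The plan is to translate the entire statement through a dictionary between finite-dimensional bimodules over the ``diagonal'' algebras $\K E^0,\K F^0$ and nonnegative integer matrices, under which tensor product of bimodules becomes matrix multiplication. Once this is in place, the four isomorphisms in \eqref{eq:gse} are visibly nothing but the four defining relations of shift equivalence read off at the level of matrices.

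First I would record the elementary structural fact underlying the dictionary. For a finite set $X$ the algebra $\K X$ (with product $x\cdot y=\delta_{x,y}x$) is a finite product of copies of $\K$, so a finite-dimensional $(\K E^0,\K F^0)$-bimodule $M$ is the same datum as the family of vector spaces $\{vMw\}_{(v,w)\in E^0\times F^0}$, and a bimodule homomorphism is just a family of linear maps. Hence the category of such bimodules is a product of copies of the category of $\K$-vector spaces, and two bimodules are isomorphic if and only if their \emph{dimension matrices} $R_{v,w}=\dim_\K(vMw)\in\N_0^{E^0\times F^0}$ agree. A direct computation of $v\,(M\otimes_{\K F^0}N)\,v'=\bigoplus_{w\in F^0}(vMw)\otimes_\K(wNv')$ then shows that $\otimes_{\K F^0}$ induces matrix multiplication on dimension matrices; in particular, since $\K E^1$ has dimension matrix $A=A_E$, the power $(\K E^1)^{\otimes_{\K E^0}^\ell}$ has dimension matrix $A^\ell$ (and similarly for $F$).

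With the dictionary in hand, both implications are short. For $(1)\Rightarrow(2)$ I would simply invoke the construction carried out just before the statement: from matrices $R,S$ and lag $\ell$ witnessing the shift equivalence one forms $M=M_R$ and $N=M_S$, whose dimension matrices are $R$ and $S$. Then $M\otimes_{\K F^0}N$ and $(\K E^1)^{\otimes_{\K E^0}^\ell}$ have equal dimension matrices, namely $RS=A^\ell$ by \eqref{eq:equidentro}, hence are isomorphic, giving $\omega_E$; the relation $SR=B^\ell$ gives $\omega_F$, while $AR=RB$ and $BS=SA$ from \eqref{eq:equifuera} give $\sigma_M$ and $\sigma_N$. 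For $(2)\Rightarrow(1)$ I would run the dictionary backwards: let $R$ and $S$ be the dimension matrices of $M$ and $N$, which are nonnegative integer matrices by construction, and read off from the mere existence of $\omega_E,\omega_F,\sigma_M,\sigma_N$ the equalities $RS=A^\ell$, $SR=B^\ell$, $AR=RB$, $BS=SA$, that is, exactly \eqref{eq:equifuera} and \eqref{eq:equidentro}.

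The content requiring care is bookkeeping rather than a genuine obstacle: one must keep the left and right actions aligned so that $\otimes_{\K F^0}$ yields honest matrix multiplication with no spurious transpose, and verify that each of the four isomorphisms in \eqref{eq:gse} lands on the correct shift-equivalence relation. One should also note where the hypotheses enter: regularity of $E$ and $F$ makes the adjacency matrices square (so that ``shift equivalence'' is even meaningful), and finiteness makes all the bimodules in sight finite-dimensional, so that their dimension matrices are well-defined finite integer matrices to which the definition of shift equivalence applies.
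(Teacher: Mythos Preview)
Your proof is correct and follows the same route as the paper: the survey does not give a separate proof of the proposition but derives $(1)\Rightarrow(2)$ in the paragraph preceding it (the construction of $M_R,M_S$ and the discussion culminating in \eqref{eq:shifteqbimod1}), and your $(2)\Rightarrow(1)$ via dimension matrices is precisely the intended converse of that translation.

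One small point deserves more than the clause you give it. You write that ``finiteness makes all the bimodules in sight finite-dimensional,'' but statement~(2) does not assume $M,N$ finite-dimensional; it only postulates the isomorphisms \eqref{eq:gse}. Finite-dimensionality of $M$ (and hence that its dimension matrix lies in $\N_0^{E^0\times F^0}$ rather than $(\N_0\cup\{\infty\})^{E^0\times F^0}$) must be deduced: if some $vMw$ were infinite-dimensional, then finiteness of the right-hand side of $\omega_E$ forces $wNv'=0$ for every $v'\in E^0$, so the $w$-th row of the dimension matrix of $N\otimes_{\K E^0}M$ vanishes, and $\omega_F$ then says the $w$-th row of $B^\ell$ vanishes. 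This is impossible precisely because $F$ is \emph{regular}: every vertex emits an edge, so paths of length $\ell$ from $w$ exist. That is where regularity (not merely finiteness) enters the $(2)\Rightarrow(1)$ direction; your closing remark attributes the finite-dimensionality to the wrong hypothesis.
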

\begin{rem} 
Both the construction of $M$ and $N$ and  the isomorphisms \eqref{eq:gse} were first done in \cite{lens} in the setting of $C^*$-algebras, where the analogue of a bimodule is a $C^*$-correspondence. The analogue of Proposition \ref{prop:recast} in that context is \cite{lens}*{Proposition 3.5}.  
\end{rem}

To state the next theorem we need more notation. Let $E$, $F$ and $G$ be finite graphs and $\sigma:\K{E^1}\otimes_{\K{E^0}}M\to M\otimes_{\K{F^0}}\K{F^1}$ and 
$\psi:\K{F^1}\otimes_{\K{F^0}}N\to N\otimes_{\K{G^0}}\K{G^1}$ bimodule homormophisms. The composite
\[
\sigma\#\psi:\K{E^1}\otimes_{\K{E^0}}M\otimes_{\K{F^0}}N\overset{\sigma\otimes \id_{N}}{\lra} M\otimes_{\K{F^0}}\K{F^1}\otimes_{\K{F^0}}N\overset{\id_{M}\otimes\psi}{\lra} M\otimes_{\K{F^0}} N\otimes_{\K{G^0}}\K{G^1}
\]
is a homomorphism of $(\K{E^0},\K{G^0})$-bimodules. (Here and elsewhere, we omit the associativity isomorphisms of the tensor product.)

\begin{thm}[Abrams, Ruiz and Tomforde \cite{recast}*{Theorem 6.7}]\label{thm:recast} Let $E$ and $F$ be finite regular graphs. Assume there are bimodules and isomorphisms as in \eqref{eq:gse} such that the following diagrams commute
\begin{gather}
\xymatrix{
\K{E^1}{\otimes}_{\K{E^0}}M\otimes_{\K{F^0}}N\ar[rr]^{\sigma_M\#\sigma_N}\ar[dr]^{\id\otimes\omega_E}&&M\otimes_{\K{F^0}}N\otimes_{\K{E^0}}\K{E^1}\ar[dl]^{\omega_E\otimes\id}\\
&\K{E^1}^{\otimes_{\K{E^0}}m+1}&}\label{diag:M}\\
\xymatrix{
\K{F^1}{\otimes}_{\K{F^0}}N\otimes_{\K{E^0}}M\ar[rr]^{\sigma_N\#\sigma_M}\ar[dr]^{\id\otimes\omega_F}&&N\otimes_{\K{E^0}}M\otimes_{\K{F^0}}\K{F^1}\ar[dl]^{\omega_F\otimes\id}\\
&\K{F^1}^{\otimes_{\K{F^0}}m+1}&}\label{diag:N}
\end{gather}
Then $L(E)$ and $L(F)$ are graded Morita equivalent.
\end{thm}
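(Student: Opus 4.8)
The plan is to verify condition~(5) of Theorem~\ref{grmorim11}: I will produce a graded finitely generated projective $(L(E),L(F))$-bimodule $P$ and a graded finitely generated projective $(L(F),L(E))$-bimodule $Q$ together with graded bimodule isomorphisms $P\otimes_{L(F)}Q\cong_{\gr}L(E)$ and $Q\otimes_{L(E)}P\cong_{\gr}L(F)$. Throughout I would use that, since $E$ and $F$ are regular, the algebras $L(E)$ and $L(F)$ are strongly $\Z$-graded by Theorem~\ref{thm:lestrogra}; this is what ultimately forces the bimodules built below to be graded progenerators and lets $(\K E^1)^{\otimes_{\K E^0}^{\ell}}\subset L(E)_\ell$ generate $L(E)$ as a bimodule.

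First I would build $P$ from the shift-equivalence bimodule $M={}_{\K E^0}M_{\K F^0}$ of Proposition~\ref{prop:recast}, placing $M$ in a fixed degree $d$. The role of the intertwiner $\sigma_M\colon\K E^1\otimes_{\K E^0}M\iso M\otimes_{\K F^0}\K F^1$ is to prescribe how a degree-one generator of $L(E)$ (an edge of $E$) is pushed across $M$, where it becomes a degree-one generator of $L(F)$; its inverse prescribes the analogous move for the ghost edges. Concretely I take $P$ to be the $(L(E),L(F))$-bimodule generated by $M$ subject to the relations $e\cdot m=\sigma_M(e\otimes m)$ for $e\in E^1,\ m\in M$ (the right-hand side lying in $M\otimes_{\K F^0}\K F^1$, read as $m$ followed by right multiplication by $F$-edges), together with the dual relations governing the ghost edges through $\sigma_M^{-1}$. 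Because edges have degree $+1$, ghost edges degree $-1$, and $\sigma_M$ is visibly degree preserving, these relations are homogeneous, so $P$ is $\Z$-graded. I define $Q$ symmetrically from $N$ using $\sigma_N$, placing $N$ in degree $\ell-d$ so that $\sigma_N$ is again homogeneous.

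Second comes well-definedness, which is where the hypotheses are used in full. I must check that the left $L(E)$- and right $L(F)$-actions just described respect the defining relations of the Leavitt path algebras, and in particular the Cuntz--Krieger relation $v=\sum_{s(e)=v}ee^{*}$ at each regular vertex $v$ together with $e^{*}f=\delta_{e,f}r(e)$; equivalently, that the $\sigma$-induced actions descend from the Cohn path algebras to $L(E)$ and $L(F)$. The commuting triangles~\eqref{diag:M} and~\eqref{diag:N} are exactly the coherence statements that pushing $\ell+1$ edges across $M$ (resp.\ $N$) by iterating $\sigma_M$ (resp.\ $\sigma_N$) and then collapsing $\ell$ of them through $\omega_E$ (resp.\ $\omega_F$) agrees with first collapsing and then moving the remaining edge. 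I expect this to be precisely the compatibility needed to verify that $\sum_{s(e)=v}ee^{*}$ acts as the identity, so that $P$ and $Q$ are modules over the Leavitt path algebras rather than merely over the Cohn algebras.

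Finally I would produce the context isomorphisms. Forming $P\otimes_{L(F)}Q$ and using the relations to absorb the middle copy of $L(F)$ leaves $M\otimes_{\K F^0}N$ sandwiched between two copies of $L(E)$, and the isomorphism $\omega_E\colon M\otimes_{\K F^0}N\iso(\K E^1)^{\otimes_{\K E^0}^{\ell}}$ of~\eqref{eq:gse} identifies the middle factor with $\K E^\ell\subset L(E)_\ell$ as in~\eqref{eq:shifteqbimod1}; strong gradedness then yields $P\otimes_{L(F)}Q\cong_{\gr}L(E)$, and symmetrically $Q\otimes_{L(E)}P\cong_{\gr}L(F)$, the diagrams~\eqref{diag:M} and~\eqref{diag:N} again ensuring that these identifications are bimodule maps. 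Finite generation and projectivity of $P$ and $Q$ then follow formally from the existence of this inverse context. The main obstacle is the second step: translating the two coherence diagrams into the statement that the Cuntz--Krieger relations are preserved is the heart of the argument, while the remaining steps are essentially bookkeeping with tensor products and gradings. This mirrors, on the algebraic side, the $C^{*}$-correspondence construction of~\cite{lens} referenced in the remark following the theorem.
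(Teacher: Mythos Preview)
Your overall strategy matches the paper's sketch closely: form $P=M\otimes_{\K F^0}L(F)$ and $Q=N\otimes_{\K E^0}L(E)$, promote the left $\K E^0$- and $\K F^0$-actions to $L(E)$- and $L(F)$-actions via $\sigma_M$ and $\sigma_N$, and then use $\omega_E$, $\omega_F$ to identify $P\otimes_{L(F)}Q$ with $L(E)$ and $Q\otimes_{L(E)}P$ with $L(F)$. Your idea of placing $M$ and $N$ in complementary degrees summing to $\ell$ is a nice refinement that makes the context isomorphisms literally graded; the paper is content with ungraded bimodule isomorphisms and then invokes the implication $(2)\Rightarrow(1)$ of Theorem~\ref{grmorim11}.

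Where you misstep is in locating the role of the coherence diagrams~\eqref{diag:M} and~\eqref{diag:N}. The well-definedness of the left $L(E)$-action on $P$---that the Cuntz--Krieger relations are respected by the operators induced by $\sigma_M$---requires only that $\sigma_M$ be a bimodule isomorphism; this step involves neither $N$ nor $\omega_E$, so it cannot depend on diagram~\eqref{diag:M}. The paper attributes this to Ery\"{u}zl\"{u}~\cite{eryu}. Diagram~\eqref{diag:M} enters only at the final stage: the map
\[
P\otimes_{L(F)}Q\cong M\otimes_{\K F^0}N\otimes_{\K E^0}L(E)\overset{\omega_E\otimes\id}{\lra}\K E^\ell\otimes_{\K E^0}L(E)\overset{\cdot}{\lra}L(E)
\]
is visibly right $L(E)$-linear and left $\K E^0$-linear; what must be checked is that it is also \emph{left $L(E)$-linear}, and that is exactly the content of~\eqref{diag:M}. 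You do mention this use in your last paragraph, but your ``second step'' puts the weight in the wrong place. Your reading of the diagram is also off: $\sigma_M\#\sigma_N$ transports a \emph{single} edge of $E$ across $M\otimes N$ (via $\sigma_M$ it becomes an $F$-edge, then via $\sigma_N$ it returns as an $E$-edge on the other side), not $\ell+1$ edges across $M$.
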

\begin{proof}[Sketch of the proof] Consider the $(\K E^0,L(F))$-bimodule $P=M\otimes_{\K F^0}L(F)$ and the $(\K F^0,L(E))$-bimodule $Q=N\otimes_{K E^0}L(E)$. Both of these are $\Z$-graded, with the gradings induced by those of $L(E)$ and $L(F)$. Following an idea of  Ery\"{u}zl\"{u} \cite{eryu} in the $C^*$-algebra context, one uses $\sigma_M$ to extend the left (graded) $\K E^0$-module structure on $P$ to a left graded $L(E)$-module structure. For example multiplication by edges is defined by the composite of the maps induced by $\sigma_M$ and by multiplication in $L(F)$
\[
\K{E^1}\otimes_{\K{E^0}}M\otimes_{\K{F^0}}L(F)\overset{\sigma_M\otimes\id}{\iso}M\otimes_{\K{F^0}}\K{F^1}\otimes_{\K{F^0}}L(F)\overset{\id\otimes\cdot}{\lra}M\otimes_{\K{F^0}}L(F)
\]
Similarly, the left $\K F^0$-module structure on $Q$ is promoted to a left $L(F)$-module structure using $\sigma_N$. The composite
\[
P\otimes_{L(F)}Q=M\otimes_{\K{F^0}}N\otimes_{\K{E^0}}L(E)\overset{\omega_E\otimes\id_{L(E)}}{\lra}\K{E^{m+1}}\otimes_{\K{E^0}}L(E)\overset{\cdot}{\to}L(E).
\]
is clearly an ungraded isomorphism  of $(\K E^0,L(E))$-bimodules. Furthermore, using the commutativity of \eqref{diag:M} one checks that it is also left $L(E)$-linear and so an $(L(E),L(E))$-bimodule isomorphism. By symmetry, $\omega_F$ and \eqref{diag:N} 
give an  ungraded $(L(F),L(F))$-bimodule isomorphism $Q\otimes_{L(E)}P\cong L(F)$. It follows by standard Morita theory that $P\otimes_{L(F)}:\operatorname{Mod} L(F)\to \operatorname{Mod} L(E)$ is a graded equivalence. By Theorem \ref{grmorim11}, this implies 
that $L(E)\sim_{\gr}L(F)$. 
\end{proof}

\begin{prop}[\cite{recast}*{Proposition 6.9}]\label{prop:ssegse}
Let $E$ and $F$ be finite regular graphs. If $E$ and $F$ are strongly shift equivalent, then the hypotheses of Theorem \ref{thm:recast} are satisfied.
\end{prop}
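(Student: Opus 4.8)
The plan is to reduce to the case of a single elementary shift equivalence and then to compose along a chain. By definition $A_E \sim_S A_F$ means there is a finite chain $A_E = A_0 \sim_{ES} A_1 \sim_{ES} \cdots \sim_{ES} A_n = A_F$ of elementary shift equivalences. Using Remark \ref{rem:e(a)} I realize each $A_i$ as the unreduced adjacency matrix of the graph $E(A_i)$, so that the spaces $\K E(A_i)^0$ and $\K E(A_i)^1$ and their tensor powers are available as bookkeeping devices; note that only $E=E(A_0)$ and $F=E(A_n)$ need be regular, since the intermediate $E(A_i)$ enter only through these vector spaces and through the combinatorial data of Theorem \ref{thm:recast}, never through its conclusion. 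Thus it suffices to (i) produce data as in \eqref{eq:gse} satisfying \eqref{diag:M} and \eqref{diag:N} for a single elementary shift equivalence, and (ii) show that this structure is closed under composition of consecutive steps.

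For (i), write $A_E = RS$ and $A_F = SR$ with $R \in \N_0^{E^0 \times F^0}$ and $S \in \N_0^{F^0\times E^0}$, and take the bimodules $M = M_R$ and $N = M_S$ of the paragraph preceding Proposition \ref{prop:recast}. Since here $\ell = 1$, the computation \eqref{eq:shifteqbimod1} and its transpose furnish bijections of bases $\omega_E \colon M\otimes_{\K F^0} N \iso \K E^1$ and $\omega_F \colon N\otimes_{\K E^0} M \iso \K F^1$; I simply fix such $\omega_E$ and $\omega_F$. The key device is to \emph{define} $\sigma_M$ and $\sigma_N$ out of $\omega_E,\omega_F$: on basis elements set $\sigma_M(e\otimes g) = g'\otimes \omega_F(h'\otimes g)$, where $(g',h') = \omega_E^{-1}(e)$, and symmetrically $\sigma_N(f\otimes h) = h'\otimes \omega_E(g'\otimes h)$ with $(h',g') = \omega_F^{-1}(f)$. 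A check on the sources and ranges of the letters shows these are well-defined bimodule isomorphisms, and with these definitions the two legs of \eqref{diag:M} applied to $e\otimes g\otimes h$ both telescope to $e\otimes \omega_E(g\otimes h)$, because $\omega_F^{-1}\omega_F = \id$ and $\omega_E\omega_E^{-1} = \id$; diagram \eqref{diag:N} commutes by the symmetric argument. So the elementary case holds essentially by construction.

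For (ii), given data $(M_1,N_1,\omega^{(1)},\sigma^{(1)})$ of lag $\ell_1$ for $(E,F)$ and $(M_2,N_2,\omega^{(2)},\sigma^{(2)})$ of lag $\ell_2$ for $(F,G)$, I set $M = M_1\otimes_{\K F^0} M_2$ and $N = N_2\otimes_{\K F^0} N_1$ (lag $\ell_1+\ell_2$). The braidings compose hexagonally, $\sigma_M = (\id_{M_1}\otimes\sigma_{M_2})(\sigma_{M_1}\otimes\id_{M_2})$ and similarly for $\sigma_N$; the isomorphism $\omega_E$ is obtained by first collapsing the inner pair $M_2\otimes_{\K G^0} N_2 \iso (\K F^1)^{\otimes \ell_2}$ with $\omega^{(2)}_F$, then sliding these $\ell_2$ copies of $\K F^1$ across $N_1$ by iterating $\sigma_{N_1}$, and finally applying $\omega^{(1)}_E$ on $M_1\otimes_{\K F^0} N_1$; the map $\omega_G$ is built symmetrically. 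Induction on the chain length (composing the accumulated data with one further elementary step) then produces the data for $(E,F)$ and proves the proposition.

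The main obstacle is the verification that this composite data again satisfies \eqref{diag:M} and \eqref{diag:N}. This is a coherence/diagram chase: one must show that the two routes around the composite triangle agree, using the commutativity of \eqref{diag:M} and \eqref{diag:N} for each factor together with the interchange compatibility of the braidings $\sigma$ with the collapsing maps $\omega$ (a hexagon-type identity). Because in the elementary case every $\omega$ and $\sigma$ is a bijection of bases, i.e. a relabelling of strings of letters, these coherences ultimately reduce to the statement that two bijections between the same finite bases coincide; the labour is in organizing this bookkeeping rather than in any genuinely new idea. I expect this to be the only delicate point, the elementary case and the definitions of the composite maps being straightforward.
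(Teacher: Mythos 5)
Your elementary case is exactly the paper's proof: the same bimodules $M=M_R$, $N=M_S$, the same choice of $\omega_E,\omega_F$ from \eqref{eq:shifteqbimod1} with $\ell=1$, and the same definition of the braidings --- your basis-element formula for $\sigma_M$ is precisely the composite $(\id_M\otimes\omega_F)\circ(\omega_E^{-1}\otimes\id_M)$ --- after which $\sigma_M\#\sigma_N=\omega_E^{-1}\otimes\omega_E$ and the triangles \eqref{diag:M}, \eqref{diag:N} commute on the nose. Where you diverge is in the treatment of the passage from a single elementary shift equivalence to a general strong shift equivalence: the paper disposes of this with ``we may assume $RS=A$ and $SR=B$'', recording only the one-step case, whereas you try to show that the class of data \eqref{eq:gse} satisfying \eqref{diag:M} and \eqref{diag:N} is closed under concatenation of chains. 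Your composite $M=M_1\otimes_{\K F^0}M_2$, $N=N_2\otimes_{\K F^0}N_1$, with the braidings composed and $\omega$ built by collapsing the inner pair and sliding edge factors across $N_1$, is the right construction, and your observation that the intermediate graphs $E(A_i)$ need not be regular (they enter only through $\K E(A_i)^0$ and $\K E(A_i)^1$, never through the conclusion of Theorem \ref{thm:recast}) is a genuine point worth making explicit. However, the coherence verification you defer --- that the composite data again satisfies \eqref{diag:M} and \eqref{diag:N} --- is the entire content of that step, and ``I expect this to be the only delicate point'' is not a proof of it; as written your argument has the same gap that the paper's ``we may assume'' papers over. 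To close it you must either carry out that diagram chase (it does reduce to comparing relabellings of finite bases, since every map in sight is a bijection of distinguished bases, but that reduction has to be exhibited), or, if one only needs the conclusion of Theorem \ref{thm:recast}, apply that theorem to each elementary step and invoke transitivity of graded Morita equivalence --- noting that this weaker route does not literally prove the proposition as stated, which asserts that the hypotheses themselves hold for the pair $(E,F)$.
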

\begin{proof}
We may assume there are matrices $R\in\N_0^{E^0\times F^0}$ and $S\in \N_0^{F_0\times E^0}$ such that $RS=A$ and $SR=B$. Let $M=M_R$ and $N=M_S$ as in the discussion above \eqref{eq:gse}. Choose bimodule isomorphisms
\[
\omega_E:M\otimes_{\K{F^0}}N\iso \K{E^1},\,\, \omega_F:N\otimes_{\K{E^0}}M\iso \K{F^1}.
\]
Set 
\[
\sigma_M=(\id_M\otimes\omega_F)\circ(\omega_E^{-1}\otimes\id_M),\, \sigma_N=(\id_N\otimes\omega_E)\circ(\omega_F^{-1}\otimes\id_F).
\]
A straightforward calculation shows that 
\[
\sigma_M\#\sigma_N=\omega_E^{-1}\otimes\omega_E,\,\, \sigma_N\#\sigma_M=\omega_F^{-1}\otimes\omega_F.
\]
The proposition is now immediate. 
\end{proof}

\begin{rem}
A different compatibility condition for shift equivalence was considered by Carlsen, Dor-On and Eilers in \cite{lens}. They expressed their condition in terms of the choices of bijections between $\K$-linear bases used above to define the isomorphisms \eqref{eq:gse}. In terms of the isomorphisms \eqref{eq:gse} themselves, it boils down to requiring that
\[
\xymatrix{\K E^\ell\otimes_{\K E^0}M\ar[r]^(.4){\id\otimes\sigma_M}&\K E^{\ell-1}\otimes_{\K E^0}M\otimes_{\K F^0}\K F^1
\ar[r]^(.9){\id\otimes\sigma_M\otimes\id}&\ar@{.}[r]&\ar[r]^(.3){\sigma_M\otimes\id}&M\otimes_{\K F^0}\K F^\ell\\
&M\otimes_{\K F^0}N\otimes_{\K E^0}M\ar[lu]^{\omega_E\otimes \id}\ar[rrru]^{\id\otimes\omega_F}}
\]
(where $\ell$ is the lag of the shift equivalence) and the analogous triangle with $F$ and $E$ and $N$ and $M$ switched, both commute. The authors show in \cite{lens}*{Theorem 7.3} that the latter, as well as another related condition, are both equivalent to requiring that $E$ and $F$ be strongly shift equivalent. On the other hand, it is not known whether the
hypotheses of Theorem~\ref{thm:recast} are equivalent to shift equivalence, or strong shift equivalence, or perhaps something in between those two conditions.
\end{rem}

\section{Filtered \topdf{$K$}{K}-theory, shift equivalence, and \topdf{$C^*$}{C*}-Morita equivalence}\label{sec:filk}
\numberwithin{equation}{section}
Following the early work of R\o rdam~\cite{ror}  and Restorff~\cite{restorff},  it became clear that one way to preserve enough information in the presence of ideals in a $C^*$-algebra, is to further consider the $K$-groups of the ideals, their subquotients, and how they are related to each other via the six-term excision sequence. Over the next ten years since \cites{ror,restorff} appeared, this approach, which is now called \emph{filtered $K$-theory},  was subsequently investigated and further developed by Eilers, Restorff, Ruiz and S\o rensen~\cites{errs4,errs2}, who showed that the lattice of gauge invariant prime ideals and their subquotient $K$-groups can be used as an invariant. For a graph $C^*$-algebra $C^*(E)$, we denote this invariant  by $\FK_{0,1}(C^*(E))$. In their major work~\cite{errs3}, Eilers, Restorff, Ruiz and S\o rensen proved that filtered $K$-theory is a complete invariant for unital graph $C^*$-algebras. Thus for finite graphs $E$ and $F$,
\begin{equation}\label{miser1}
    \FK_{0,1}(C^*(E))\cong \FK_{0,1}(C^*(F)) \Longrightarrow C^*(E) \cong_{\text{Morita eq.}} C^*(F).
\end{equation}

In~\cite{errs} the four authors introduced filtered $K$-theory in the purely algebraic setting and showed that if two Leavitt path algebras over $\mathbb C$ have isomorphic filtered algebraic $K$-theory then the associated graph $C^*$-algebras have isomorphic filtered $K$-theory.
\begin{equation}\label{miser2}
    \FK_{0,1}(L_{\mathbb C}(E))\cong \FK_{0,1}(L_{\mathbb C}(F)) \Longrightarrow \FK_{0,1}(C^*(E))\cong \FK_{0,1}(C^*(F)).
\end{equation}

 In~\cite{arahazli} it was shown that graded $K$-theory determines filtered $K$-theory up to a certain precisely defined quotient. 
 \begin{equation}\label{miser3}
     K_0^{\gr}(L_{\mathbb C}(E)) \cong K_0^{\gr}(L_{\mathbb C}(F)) \Longrightarrow
 \FKbar_{0,1}(L_{\mathbb C}(E))\cong \FKbar_{0,1}(L_{\mathbb C}(F)). 
 \end{equation}

  This shows the richness of the graded Grothendieck group as an invariant. Namely, the single group $K_0^{\gr}(L(E))$ of the Leavitt path algebra $L(E)$ associated to a graph $E$, with coefficients in a field $\K$, contains all the information about the $K_0$ groups and the quotient groups $\ol{K}_1$ of $K_1$  and how they are related via the long exact sequence of $K$-theory. The important link established in~\cite{arahazli}, was that the existence of an isomorphism at the level of this quotient filtered $K$-theory of Leavitt path algebras still implies the existence of an isomorphism at the level of the corresponding filtered $K$-theory of $C^*$-algebras (compare this with~(\ref{miser2})):
  \begin{equation}\label{miser4}
   \FKbar_{0,1}(L_{\mathbb C}(E))\cong \FKbar_{0,1}(L_{\mathbb C}(F)) 
  \Longrightarrow \FK_{0,1}(C^*(E))\cong \FK_{0,1}(C^*(F)).
     \end{equation}

  Using these connections one can prove the following proposition.

\begin{prop}[\cite{arahazli}]\label{bfg1998d}
Let $E$ and $F$ be finite regular graphs. If $E$ and $F$ are shift equivalent, then the $C^*$-algebras $C^*(E)$ and $C^*(F)$ are Morita equivalent. 
\end{prop}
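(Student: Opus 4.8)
The plan is to concatenate the implications \eqref{miser3}, \eqref{miser4}, and \eqref{miser1} assembled above, entering the chain from the shift-equivalence hypothesis via Theorem \ref{h99}. Concretely, since $E$ and $F$ are shift equivalent, their adjacency matrices $A_E$ and $A_F$ are shift equivalent, so Theorem \ref{h99} furnishes an order-preserving $\mathbb Z[x,x^{-1}]$-module isomorphism $K_0^{\gr}(L_{\mathbb C}(E))\cong K_0^{\gr}(L_{\mathbb C}(F))$. Here I would fix the ground field to be $\mathbb C$ from the outset, so that passage to the $C^*$-completions is available at the end; this costs nothing, as the starting hypothesis is purely matrix-theoretic and hence field-independent.

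First I would feed this isomorphism into \eqref{miser3} to obtain an isomorphism $\FKbar_{0,1}(L_{\mathbb C}(E))\cong \FKbar_{0,1}(L_{\mathbb C}(F))$ of quotient filtered algebraic $K$-theory. Next, \eqref{miser4} upgrades this to an isomorphism $\FK_{0,1}(C^*(E))\cong \FK_{0,1}(C^*(F))$ of the filtered $K$-theory of the graph $C^*$-algebras. Finally, \eqref{miser1}---the completeness of filtered $K$-theory for unital graph $C^*$-algebras established in \cite{errs3}---yields the desired Morita equivalence $C^*(E)\cong_{\text{Morita eq.}}C^*(F)$.

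The genuinely hard content lies entirely in the cited inputs rather than in their combination: the deepest ingredients are the Eilers--Restorff--Ruiz--S\o rensen classification \eqref{miser1} and the reduction \eqref{miser4} from quotient filtered algebraic $K$-theory to the $C^*$-side, both of which I would invoke as black boxes. Thus the only nontrivial step of the assembly is recognizing that Theorem \ref{h99} supplies precisely the hypothesis that launches the chain; the remainder is formal. The main point to watch is the coefficient bookkeeping---one must work with $L_{\mathbb C}$ uniformly so that the $C^*$-completion arguments in \eqref{miser4} and \eqref{miser1} apply---but no further obstacle arises, and in particular no new $K$-theoretic computation is needed.
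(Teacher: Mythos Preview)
Your proposal is correct and follows essentially the same route as the paper: invoke Theorem~\ref{h99} to pass from shift equivalence to an order-preserving $\Z[x,x^{-1}]$-isomorphism of $K_0^{\gr}$, then chain through \eqref{miser3}, \eqref{miser4}, and \eqref{miser1}. The paper's proof cites \eqref{miser2} and \eqref{miser4} rather than \eqref{miser3} and \eqref{miser4}, but since the input is a $K_0^{\gr}$-isomorphism, your choice of \eqref{miser3} is the logically direct one; otherwise the arguments are identical, and your care with the coefficient field $\C$ is appropriate.
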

\begin{proof}
Since $E$ and $F$ are shift equivalent, we have an isomorphism of Krieger's dimension groups 
\[ (\Delta_E, \Delta_E^+, \delta_E) \cong (\Delta_F, \Delta_F^+, \delta_F).\]
But by Corollary~\ref{h99}, Krieger's dimension group for the graph $E$ coincides with the graded Grothendieck group $K_0^{\gr}(L(E))$. We thus obtain an order-preserving $\Z[x,x^{-1}]$-module isomorphism $K_0^{\gr}(L(E))\cong K_0^{\gr}(L(F))$. By (\ref{miser2}) and (\ref{miser4}), the filtered $K$-theory of the corresponding graph $C^*$-algebras are  isomorphic. Now~(\ref{miser1})
gives that the $C^*$-algebras $C^*(E)$ and $C^*(F)$ are Morita equivalent. 
\end{proof}

The following diagram summarizes what is known and what could be considered as conjectural links.

\begin{equation*}
\resizebox{\hsize}{!}{
$
\xymatrix@=3pt{
&&&&&&&&& A_E \cong_{\text{shift eq.}}  \ar@{<->}[dd]  A_F\\
\\
    \textcolor{red}{L(E) \approx_{\text{gr Morita eq}} L(F)} \ar@{<->}[rrr] 
 &&& \DD_{\singg}(A(E)) \cong \DD_{\singg}(A(F)) \ar@{->}[rr]&&
  \textcolor{red}{K_0^{\gr}(L(E)) \cong   \ar@{->}[dd] K_0^{\gr}(L(F))}  {\ar@/_2pc/@{.>}[lllll]_{\bf Conjecture~\ref{conjmogr}}} \ar@{<->}[rr] &&
  \Delta_E \approx   \Delta_F  {\ar@{<->}[rr]} && 
 T_E \approx_{\Z} T_F \\
 \\
 &&&&&  \FKbar_{0,1}(L(E)) \cong  \ar@{->}[dd] \FKbar_{0,1}(L(F))\\
 \\
&&&&& \FK_{0,1}(C^*(E)) \cong \ar@{<->}[dd] \FK_{0,1}(C^*(F))\\
\\
&&&&& C^*(E) \cong_{\text{Morita eq.}} C^*(F)
}$
}
\end{equation*}

\begin{bibdiv}
\begin{biblist}

\bib{question}{article}{
   author={Abrams, G.},
   author={\'{A}nh, P. N.},
   author={Louly, A.},
   author={Pardo, E.},
   title={The classification question for Leavitt path algebras},
   journal={J. Algebra},
   volume={320},
   date={2008},
   number={5},
   pages={1983--2026},
   issn={0021-8693},
   review={\MR{2437640}},
   doi={10.1016/j.jalgebra.2008.05.020},
}
\bib{lpabook}{book}{
   author={Abrams, Gene},
   author={Ara, Pere},
   author={Siles Molina, Mercedes},
   title={Leavitt path algebras},
   series={Lecture Notes in Mathematics},
   volume={2191},
   publisher={Springer, London},
   date={2017},
   pages={xiii+287},
   isbn={978-1-4471-7343-4},
   isbn={978-1-4471-7344-1},
   review={\MR{3729290}},
}
\bib{aap05}{article}{
   author={Abrams, Gene},
   author={Aranda Pino, Gonzalo},
   title={The Leavitt path algebra of a graph},
   journal={J. Algebra},
   volume={293},
   date={2005},
   number={2},
   pages={319--334},
   issn={0021-8693},
   review={\MR{2172342}},
   doi={10.1016/j.jalgebra.2005.07.028},
}
\bib{chain}{article}{
    AUTHOR = {Abrams, Gene},
    author={Aranda Pino, Gonzalo},
    author={Perera, Francesc},
    author={Siles Molina, Mercedes},
     TITLE = {Chain conditions for {L}eavitt path algebras},
   JOURNAL = {Forum Math.},
    VOLUME = {22},
      YEAR = {2010},
    NUMBER = {1},
     PAGES = {95--114},
      ISSN = {0933-7741},
       DOI = {10.1515/FORUM.2010.005},
       URL = {https://doi.org/10.1515/FORUM.2010.005},
}
\bib{flow}{article}{
   author={Abrams, Gene},
   author={Louly, Adel},
   author={Pardo, Enrique},
   author={Smith, Christopher},
   title={Flow invariants in the classification of Leavitt path algebras},
   journal={J. Algebra},
   volume={333},
   date={2011},
   pages={202--231},
   issn={0021-8693},
   review={\MR{2785945}},
   doi={10.1016/j.jalgebra.2011.01.022},
}

\bib{recast}{article}{
 author={Abrams, Gene},
   author={Ruiz, Efren},
   author={Tomforde, Mark},
	title={Recasting the Hazrat conjecture: Relating shift equivalence to graded Morita equivalence},
eprint={arXiv:2311.02896},
}
\bib{abramstomforde}{article}{
   author={Abrams, Gene},
   author={Tomforde, Mark},
   title={Isomorphism and Morita equivalence of graph algebras},
   journal={Trans. Amer. Math. Soc.},
   volume={363},
   date={2011},
   number={7},
   pages={3733--3767},
   issn={0002-9947},
   review={\MR{2775826}},
   doi={10.1090/S0002-9947-2011-05264-5},
}

\bib{abramsmori}{article}{
   author={Abrams, Gene},
   author={Ruiz, Efren},
   author={Tomforde, Mark},
   title={Morita equivalence for graded rings},
   journal={J. Algebra},
   volume={617},
   date={2023},
   pages={79--112},
   issn={0021-8693},
   review={\MR{4513781}},
   doi={10.1016/j.jalgebra.2022.10.036},
}
\bib{arabrucom}{article}{
   author={Ara, Pere},
   author={Brustenga, Miquel},
   title={$K_1$ of corner skew Laurent polynomial rings and applications},
   journal={Comm. Algebra},
   volume={33},
   date={2005},
   number={7},
   pages={2231--2252},
   issn={0092-7872},
   review={\MR{2153218}},
   doi={10.1081/AGB-200063582},
}

\bib{arajazz}{article}{
	author = {Ara, Pere},
 author = {Brustenga, Miquel},
	title = {Module theory over Leavitt path algebras and K-theory},
	year = {2010},
	journal = {Journal of Pure and Applied Algebra},
	volume = {214},
	number = {7},
	pages = {1131 -- 1151},
	doi = {10.1016/j.jpaa.2009.10.001},
	
}

\bib{abc}{article}{
   author={Ara, Pere},
   author={Brustenga, Miquel},
   author={Corti\~{n}as, Guillermo},
   title={$K$-theory of Leavitt path algebras},
   journal={M\"{u}nster J. Math.},
   volume={2},
   date={2009},
   pages={5--33},
   issn={1867-5778},
   review={\MR{2545605}},
}

\bib{aratenso}{article}{
   author={Ara, Pere},
   author={Corti\~{n}as, Guillermo},
   title={Tensor products of Leavitt path algebras},
   journal={Proc. Amer. Math. Soc.},
   volume={141},
   date={2013},
   number={8},
   pages={2629--2639},
   issn={0002-9939},
   review={\MR{3056553}},
   doi={10.1090/S0002-9939-2013-11561-3},
}
\bib{skew}{article}{
   author={Ara, P.},
   author={Gonz\'{a}lez-Barroso, M. A.},
   author={Goodearl, K. R.},
   author={Pardo, E.},
   title={Fractional skew monoid rings},
   journal={J. Algebra},
   volume={278},
   date={2004},
   number={1},
   pages={104--126},
   issn={0021-8693},
   review={\MR{2068068}},
   doi={10.1016/j.jalgebra.2004.03.009},
}

\bib{agop}{article}{
   author={Ara, P.},
   author={Goodearl, K. R.},
   author={Pardo, E.},
   title={$K_0$ of purely infinite simple regular rings},
   journal={$K$-Theory},
   volume={26},
   date={2002},
   number={1},
   pages={69--100},
   issn={0920-3036},
   review={\MR{1918211}},
   doi={10.1023/A:1016358107918},
}
\bib{arahazli}{article}{
   author={Ara, Pere},
   author={Hazrat, Roozbeh},
   author={Li, Huanhuan},
   title={Graded $K$-theory, filtered $K$-theory and the classification of
   graph algebras},
   journal={Ann. K-Theory},
   volume={7},
   date={2022},
   number={4},
   pages={731--795},
   issn={2379-1683},
   review={\MR{4560379}},
   doi={10.2140/akt.2022.7.731},
}
\bib{ahls}{article}{
   author={Ara, Pere},
   author={Hazrat, Roozbeh},
   author={Li, Huanhuan},
   author={Sims, Aidan},
   title={Graded Steinberg algebras and their representations},
   journal={Algebra Number Theory},
   volume={12},
   date={2018},
   number={1},
   pages={131--172},
   issn={1937-0652},
   review={\MR{3781435}},
   doi={10.2140/ant.2018.12.131},
}

\bib{amp}{article}{
   author={Ara, P.},
   author={Moreno, M. A.},
   author={Pardo, E.},
   title={Nonstable $K$-theory for graph algebras},
   journal={Algebr. Represent. Theory},
   volume={10},
   date={2007},
   number={2},
   pages={157--178},
   issn={1386-923X},
   review={\MR{2310414}},
   doi={10.1007/s10468-006-9044-z},
}

\bib{apgrad}{article}{
   author={Ara, P.},
   author={Pardo, E.},
   title={Towards a K-theoretic characterization of graded isomorphisms
   between Leavitt path algebras},
   journal={J. K-Theory},
   volume={14},
   date={2014},
   number={2},
   pages={203--245},
   issn={1865-2433},
   review={\MR{3319704}},
   doi={10.1017/is014006003jkt269},
}

\bib{guidotopy}{article}{
 author={Arnone, Guido},
 title={Graded homotopy classification of Leavitt path algebras},
 eprint={arXiv:2309.06312},
}
\bib{guido2023}{article}{
   author={Arnone, Guido},
   title={Lifting morphisms between graded Grothendieck groups of Leavitt
   path algebras},
   journal={J. Algebra},
   volume={631},
   date={2023},
   pages={804--829},
   issn={0021-8693},
   review={\MR{4594882}},
   doi={10.1016/j.jalgebra.2023.05.018},
}

\bib{ac}{article}{
   author={Arnone, Guido},
   author={Corti\~{n}as, Guillermo},
   title={Graded $K$-theory and Leavitt path algebras},
   journal={J. Algebraic Combin.},
   volume={58},
   date={2023},
   number={2},
   pages={399--434},
   issn={0925-9899},
   review={\MR{4634308}},
   doi={10.1007/s10801-022-01184-5},
}
\bib{ac1}{article}{
   author={Arnone, Guido},
   author={Corti\~{n}as, Guillermo},
   title={Non-existence of graded unital homomorphisms between Leavitt
   algebras and their Cuntz splices},
   journal={J. Algebra Appl.},
   volume={22},
   date={2023},
   number={4},
   pages={Paper No. 2350084, 10},
   issn={0219-4988},
   review={\MR{4553219}},
   doi={10.1142/S0219498823500846},
}
\bib{tokel2}{article}{
   author={Carlsen, Toke Meier},
   title={$\ast$-isomorphism of Leavitt path algebras over $\mathbb Z$},
   journal={Adv. Math.},
   volume={324},
   date={2018},
   pages={326--335},
   issn={0001-8708},
   review={\MR{3733888}},
   doi={10.1016/j.aim.2017.11.018},
}

\bib{lens}{article}{
   author={Carlsen, Toke Meier},
   author={Dor-On, Adam},
   author={Eilers, S\o ren},
   title={Shift equivalences through the lens of Cuntz-Krieger algebras},
   journal={Anal. PDE},
   volume={17},
   date={2024},
   number={1},
   pages={345--377},
   issn={2157-5045},
   review={\MR{4702320}},
   doi={10.2140/apde.2024.17.345},
}

\bib{chen}{article}{
   author={Chen, Xiao-Wu},
   author={Yang, Dong},
   title={Homotopy categories, Leavitt path algebras, and Gorenstein
   projective modules},
   journal={Int. Math. Res. Not. IMRN},
   date={2015},
   number={10},
   pages={2597--2633},
   issn={1073-7928},
   review={\MR{3352249}},
   doi={10.1093/imrn/rnu008},
}

\bib{gpdgen}{article}{
   author={Clark, Lisa Orloff},
   author={Farthing, Cynthia},
   author={Sims, Aidan},
   author={Tomforde, Mark},
   title={A groupoid generalisation of Leavitt path algebras},
   journal={Semigroup Forum},
   volume={89},
   date={2014},
   number={3},
   pages={501--517},
   issn={0037-1912},
   review={\MR{3274831}},
   doi={10.1007/s00233-014-9594-z},
}
\bib{strogra}{article}{
   author={Clark, Lisa Orloff},
   author={Hazrat, Roozbeh},
   author={Rigby, Simon W.},
   title={Strongly graded groupoids and strongly graded Steinberg algebras},
   journal={J. Algebra},
   volume={530},
   date={2019},
   pages={34--68},
   issn={0021-8693},
   review={\MR{3938862}},
   doi={10.1016/j.jalgebra.2019.03.030},
}
\bib{oberwol}{article}{
author={Cordeiro, L.G.},
author={Gillaspy, E.},
author={Gon\c{c}alves, D.},
author={Hazrat, R},
title={Williams' Conjecture holds for meteor graphs}, 
eprint={arXiv:2304.05862},
}

\bib{friendly}{article}{
   author={Corti\~{n}as, Guillermo},
   title={Algebraic v. topological $K$-theory: a friendly match},
   conference={
      title={Topics in algebraic and topological $K$-theory},
   },
   book={
      series={Lecture Notes in Math.},
      volume={2008},
      publisher={Springer, Berlin},
   },
   isbn={978-3-642-15707-3},
   date={2011},
   pages={103--165},
   review={\MR{2762555}},
   doi={10.1007/978-3-642-15708-0\_3},
}
\bib{B13}{book}{
   author={Corti\~{n}as, Guillermo},
   title={\'Algebra II+1/2. Notas de teor\'\i a de \'algebras},
   series={Cursos y Seminarios de Matem\'atica, Serie B},
   volume={13},
   publisher={Departamento de Matemática, Facultad de Ciencias Exactas y Naturales,
Universidad de Buenos Aires},
date={2021},
}
\bib{bullift}{article}{
   author={Corti\~{n}as, Guillermo},
   title={Lifting graph $C^*$-algebra maps to Leavitt path algebra maps},
   journal={Bull. Lond. Math. Soc.},
   volume={54},
   date={2022},
   number={6},
   pages={2188--2201},
   issn={0024-6093},
   review={\MR{4528618}},
}
\bib{classinvo}{article}{
   author={Corti\~{n}as, Guillermo},
   title={Classifying Leavitt path algebras up to involution preserving
   homotopy},
   journal={Math. Ann.},
   volume={386},
   date={2023},
   number={3-4},
   pages={2107--2157},
   issn={0025-5831},
   review={\MR{4612414}},
   doi={10.1007/s00208-022-02436-2},
}
\bib{cm1}{article}{
author={Corti\~nas, Guillermo},
author={Montero, Diego},
title={Algebraic bivariant $K$-theory and Leavitt path algebras},
journal={J. Noncommut. Geom.},
volume={15},
   date={2021},
   number={1},
   pages={113--146},
   DOI={10.4171/jncg/397},
}
\bib{cm2}{article}{
   author={Corti\~{n}as, Guillermo},
   author={Montero, Diego},
   title={Homotopy classification of Leavitt path algebras},
   journal={Adv. Math.},
   volume={362},
   date={2020},
   pages={106961, 26},
   issn={0001-8708},
   review={\MR{4050584}},
   doi={10.1016/j.aim.2019.106961},
}
\bib{chriswi}{article}{
   author={Corti\~{n}as, Guillermo},
    author={Phillips, N. Christopher},
    title={Algebraic K-theory and properly infinite $C^\ast$-algebras},
    eprint={arXiv:1402.3197},
}
\bib{ct}{article}{
   author={Corti\~{n}as, Guillermo},
   author={Thom, Andreas},
   title={Bivariant algebraic $K$-theory},
   journal={J. Reine Angew. Math.},
   volume={610},
   date={2007},
   pages={71--123},
   issn={0075-4102},
   review={\MR{2359851}},
   doi={10.1515/CRELLE.2007.068},
}
\bib{On}{article}{
   author={Cuntz, Joachim},
   title={Simple $C\sp*$-algebras generated by isometries},
   journal={Comm. Math. Phys.},
   volume={57},
   date={1977},
   number={2},
   pages={173--185},
   issn={0010-3616},
   review={\MR{0467330}},
}
\bib{ck2}{article}{
   author={Cuntz, J.},
   title={A class of $C\sp{\ast} $-algebras and topological Markov chains.
   II. Reducible chains and the Ext-functor for $C\sp{\ast} $-algebras},
   journal={Invent. Math.},
   volume={63},
   date={1981},
   number={1},
   pages={25--40},
   issn={0020-9910},
   review={\MR{0608527}},
   doi={10.1007/BF01389192},
}
\bib{ck}{article}{
   author={Cuntz, Joachim},
   author={Krieger, Wolfgang},
   title={A class of $C\sp{\ast} $-algebras and topological Markov chains},
   journal={Invent. Math.},
   volume={56},
   date={1980},
   number={3},
   pages={251--268},
   issn={0020-9910},
   review={\MR{0561974}},
   doi={10.1007/BF01390048},
}
\bib{cmr}{book}{
   author={Cuntz, Joachim},
   author={Meyer, Ralf},
   author={Rosenberg, Jonathan M.},
   title={Topological and bivariant $K$-theory},
   series={Oberwolfach Seminars},
   volume={36},
   publisher={Birkh\"{a}user Verlag, Basel},
   date={2007},
   pages={xii+262},
   isbn={978-3-7643-8398-5},
   review={\MR{2340673}},
}
\bib{dade}{article}{
   author={Dade, Everett C.},
   title={Group-graded rings and modules},
   journal={Math. Z.},
   volume={174},
   date={1980},
   number={3},
   pages={241--262},
   issn={0025-5874},
   review={\MR{0593823}},
   doi={10.1007/BF01161413},
}
\bib{david}{book}{
   author={Davidson, Kenneth R.},
   title={$C^*$-algebras by example},
   series={Fields Institute Monographs},
   volume={6},
   publisher={American Mathematical Society, Providence, RI},
   date={1996},
   pages={xiv+309},
   isbn={0-8218-0599-1},
   review={\MR{1402012}},
   doi={10.1090/fim/006},
}
\bib{dritom}{article}{
   author={Drinen, D.},
   author={Tomforde, M.},
   title={The $C^*$-algebras of arbitrary graphs},
   journal={Rocky Mountain J. Math.},
   volume={35},
   date={2005},
   number={1},
   pages={105--135},
   issn={0035-7596},
   review={\MR{2117597}},
   doi={10.1216/rmjm/1181069770},
}
\bib{dritom2}{article}{
   author={Drinen, D.},
   author={Tomforde, M.},
   title={Computing $K$-theory and ${\rm Ext}$ for graph $C^*$-algebras},
   journal={Illinois J. Math.},
   volume={46},
   date={2002},
   number={1},
   pages={81--91},
   issn={0019-2082},
   review={\MR{1936076}},
}
\bib{eilers2}{article}{
   author={Eilers, S\o ren},
   author={Ruiz, Efren},
   author={Sims, Aidan},
   title={Amplified graph $C^*$-algebras II: Reconstruction},
   journal={Proc. Amer. Math. Soc. Ser. B},
   volume={9},
   date={2022},
   pages={297--310},
   review={\MR{4446255}},
   doi={10.1090/bproc/112},
}

\bib{errs2}{article}{
   author={Eilers, S\o ren},
   author={Restorff, Gunnar},
   author={Ruiz, Efren},
   title={On graph $C^*$-algebras with a linear ideal lattice},
   journal={Bull. Malays. Math. Sci. Soc. (2)},
   volume={33},
   date={2010},
   number={2},
   pages={233--241},
   issn={0126-6705},
   review={\MR{2666426}},
}

\bib{errs}{article}{
   author={Eilers, S\o ren},
   author={Restorff, Gunnar},
   author={Ruiz, Efren},
   author={S\o rensen, Adam P. W.},
   title={Filtered $K$-theory for graph algebras},
   conference={
      title={2016 MATRIX annals},
   },
   book={
      series={MATRIX Book Ser.},
      volume={1},
      publisher={Springer, Cham},
   },
   isbn={978-3-319-72298-6},
   isbn={978-3-319-72299-3},
   date={2018},
   pages={229--249},
   review={\MR{3792523}},
}

\bib{errs4}{article}{
   author={Eilers, S\o ren},
   author={Restorff, Gunnar},
   author={Ruiz, Efren},
   author={S\o rensen, Adam P. W.},
   title={Geometric classification of graph $\rm C^*$-algebras over finite
   graphs},
   journal={Canad. J. Math.},
   volume={70},
   date={2018},
   number={2},
   pages={294--353},
   issn={0008-414X},
   review={\MR{3759003}},
   doi={10.4153/CJM-2017-016-7},
}

\bib{errs3}{article}{
   author={Eilers, S\o ren},
   author={Restorff, Gunnar},
   author={Ruiz, Efren},
   author={S\o rensen, Adam P. W.},
   title={The complete classification of unital graph $C^*$-algebras:
   geometric and strong},
   journal={Duke Math. J.},
   volume={170},
   date={2021},
   number={11},
   pages={2421--2517},
   issn={0012-7094},
   review={\MR{4302548}},
   doi={10.1215/00127094-2021-0060},
}

\bib{elliott}{article}{
   author={Elliott, George A.},
   title={On the classification of inductive limits of sequences of
   semisimple finite-dimensional algebras},
   journal={J. Algebra},
   volume={38},
   date={1976},
   number={1},
   pages={29--44},
   issn={0021-8693},
   review={\MR{0397420}},
   doi={10.1016/0021-8693(76)90242-8},
}

\bib{kkg}{article}{
   author={Ellis, Eugenia},
   title={Equivariant algebraic $kk$-theory and adjointness theorems},
   journal={J. Algebra},
   volume={398},
   date={2014},
   pages={200--226},
   issn={0021-8693},
   review={\MR{3123759}},
   doi={10.1016/j.jalgebra.2013.09.023},
}
\bib{eryu}{article}{
author={Ery\"{u}zl\"{u}, Menev\c{s}e},
title={Passing {$C^*$}-correspondence Relations to the Cuntz-Pimsner algebras},
journal={M\"{u}nster J. Math. },
volume={15},
number={2},
pages={441--471},
}
\bib{franks}{article}{ 
author={Franks, J.},
title={Flow equivalence of subshifts of finite type}, 
journal={Ergodic Theory Dynam. Systems},
volume={4},
date={1984},
pages={53--66},
}
\bib{goodearl}{book}{
   author={Goodearl, K. R.},
   title={von Neumann regular rings},
   edition={2},
   publisher={Robert E. Krieger Publishing Co., Inc., Malabar, FL},
   date={1991},
   pages={xviii+412},
   isbn={0-89464-632-X},
   review={\MR{1150975}},
}

\bib{haziso}{article}{
   author={Hazrat, R.},
   title={A note on the isomorphism conjectures for Leavitt path algebras},
   journal={J. Algebra},
   volume={375},
   date={2013},
   pages={33--40},
   issn={0021-8693},
   review={\MR{2998945}},
   doi={10.1016/j.jalgebra.2012.11.017},
}

\bib{haz2013}{article}{
   author={Hazrat, R.},
   title={The graded Grothendieck group and the classification of Leavitt
   path algebras},
   journal={Math. Ann.},
   volume={355},
   date={2013},
   number={1},
   pages={273--325},
   issn={0025-5831},
   review={\MR{3004584}},
   doi={10.1007/s00208-012-0791-3},
}

\bib{hazbook}{book}{
   author={Hazrat, Roozbeh},
   title={Graded rings and graded Grothendieck groups},
   series={London Mathematical Society Lecture Note Series},
   volume={435},
   publisher={Cambridge University Press, Cambridge},
   date={2016},
   pages={vii+235},
   isbn={978-1-316-61958-2},
   review={\MR{3523984}},
   doi={10.1017/CBO9781316717134},
}

\bib{hazd}{article}{
   author={Hazrat, R.},
   title={The dynamics of Leavitt path algebras},
   journal={J. Algebra},
   volume={384},
   date={2013},
   pages={242--266},
   issn={0021-8693},
   review={\MR{3045160}},
   doi={10.1016/j.jalgebra.2013.03.012},
}

\bib{hazli}{article}{
   author={Hazrat, Roozbeh},
   author={Li, Huanhuan},
   title={The talented monoid of a Leavitt path algebra},
   journal={J. Algebra},
   volume={547},
   date={2020},
   pages={430--455},
   issn={0021-8693},
   review={\MR{4040730}},
   doi={10.1016/j.jalgebra.2019.11.033},
}
\bib{hazlia}{article}{
 AUTHOR = {Hazrat, Roozbeh},
 author={Va\v{s}, Lia},
     TITLE = {{$K$}-theory classification of graded ultramatricial algebras
              with involution},
   JOURNAL = {Forum Math.},
    VOLUME = {31},
      YEAR = {2019},
    NUMBER = {2},
     PAGES = {419--463},
      ISSN = {0933-7741},
       DOI = {10.1515/forum-2017-0268},
       URL = {https://doi.org/10.1515/forum-2017-0268},
}
\bib{huang}{article}{
   author={Huang, Danrun},
   title={Automorphisms of Bowen-Franks groups of shifts of finite type},
   journal={Ergodic Theory Dynam. Systems},
   volume={21},
   date={2001},
   number={4},
   pages={1113--1137},
   issn={0143-3857},
   review={\MR{1849604}},
   doi={10.1017/S0143385701001535},
}

\bib{johsor}{article}{
author={Rune Johansen},
author={Adam P. W. S\o rensen},
title={The Cuntz splice does not preserve $*$-isomorphism of Leavitt path algebras over $\Z$},
journal={J. Algebra},
volume={220},
date={2016},
pages={3966--3983},
}

\bib{kamiput}{article}{
   author={Kaminker, Jerome},
   author={Putnam, Ian},
   title={$K$-theoretic duality of shifts of finite type},
   journal={Comm. Math. Phys.},
   volume={187},
   date={1997},
   number={3},
   pages={509--522},
   issn={0010-3616},
   review={\MR{1468312}},
   doi={10.1007/s002200050147},
}

\bib{kr1}{article}{
   author={Kim, K. H.},
   author={Roush, F. W.},
   title={Williams's conjecture is false for reducible subshifts},
   journal={J. Amer. Math. Soc.},
   volume={5},
   date={1992},
   number={1},
   pages={213--215},
   issn={0894-0347},
   review={\MR{1130528}},
   doi={10.2307/2152756},
}
\bib{kr2}{article}{
   author={Kim, K. H.},
   author={Roush, F. W.},
   title={The Williams conjecture is false for irreducible subshifts},
   journal={Ann. of Math. (2)},
   volume={149},
   date={1999},
   number={2},
   pages={545--558},
   issn={0003-486X},
   review={\MR{1689340}},
   doi={10.2307/120975},
}
\bib{KP}{article}{
   author={Kirchberg, Eberhard},
   author={Phillips, N. Christopher},
   title={Embedding of exact $C^*$-algebras in the Cuntz algebra $\scr O_2$},
   journal={J. Reine Angew. Math.},
   volume={525},
   date={2000},
   pages={17--53},
   issn={0075-4102},
   review={\MR{1780426}},
   doi={10.1515/crll.2000.065},
}

\bib{krieger}{article}{
   author={Krieger, Wolfgang},
   title={On dimension functions and topological Markov chains},
   journal={Invent. Math.},
   volume={56},
   date={1980},
   number={3},
   pages={239--250},
   issn={0020-9910},
   review={\MR{0561973}},
   doi={10.1007/BF01390047},
}
\bib{kpr}{article}{
   author={Kumjian, Alex},
   author={Pask, David},
   author={Raeburn, Iain},
   title={Cuntz-Krieger algebras of directed graphs},
   journal={Pacific J. Math.},
   volume={184},
   date={1998},
   number={1},
   pages={161--174},
   issn={0030-8730},
   review={\MR{1626528}},
   doi={10.2140/pjm.1998.184.161},
}
\bib{vitt62}{article}{
   author={Leavitt, W. G.},
   title={The module type of a ring},
   journal={Trans. Amer. Math. Soc.},
   volume={103},
   date={1962},
   pages={113--130},
   issn={0002-9947},
   review={\MR{0132764}},
   doi={10.2307/1993743},
}

\bib{lind}{book}{
   author={Lind, Douglas},
   author={Marcus, Brian},
   title={An introduction to symbolic dynamics and coding},
   series={Cambridge Mathematical Library},
   edition={2},
   publisher={Cambridge University Press, Cambridge},
   date={2021},
   pages={xix+550},
   isbn={978-1-108-82028-8},
   review={\MR{4412543}},
   doi={10.1017/9781108899727},
}

\bib{chrisp}{book}{
   author={Phillips, N. Christopher},
   title={Equivariant $K$-theory and freeness of group actions on
   $C^*$-algebras},
   series={Lecture Notes in Mathematics},
   volume={1274},
   publisher={Springer-Verlag, Berlin},
   date={1987},
   pages={viii+371},
   isbn={3-540-18277-2},
   review={\MR{0911880}},
   doi={10.1007/BFb0078657},
}

\bib{P}{article}{
   author={Phillips, N. Christopher},
   title={A classification theorem for nuclear purely infinite simple
   $C^*$-algebras},
   journal={Doc. Math.},
   volume={5},
   date={2000},
   pages={49--114},
   issn={1431-0635},
   review={\MR{1745197}},
}
\bib{raeburn}{book}{
   author={Raeburn, Iain},
   title={Graph algebras},
   series={CBMS Regional Conference Series in Mathematics},
   volume={103},
   publisher={Conference Board of the Mathematical Sciences, Washington, DC;
   by the American Mathematical Society, Providence, RI},
   date={2005},
   pages={vi+113},
   isbn={0-8218-3660-9},
   review={\MR{2135030}},
   doi={10.1090/cbms/103},
}
\bib{restorff}{article}{
   author={Restorff, Gunnar},
   title={Classification of Cuntz-Krieger algebras up to stable isomorphism},
   journal={J. Reine Angew. Math.},
   volume={598},
   date={2006},
   pages={185--210},
   issn={0075-4102},
   review={\MR{2270572}},
   doi={10.1515/CRELLE.2006.074},
}

\bib{ror}{article}{
   author={R\o rdam, Mikael},
   title={Classification of Cuntz-Krieger algebras},
   journal={$K$-Theory},
   volume={9},
   date={1995},
   number={1},
   pages={31--58},
   issn={0920-3036},
   review={\MR{1340839}},
   doi={10.1007/BF00965458},
}

\bib{rordam111}{article}{
   author={R\o rdam, M.},
   title={Classification of nuclear, simple $C^*$-algebras},
   conference={
      title={Classification of nuclear $C^*$-algebras. Entropy in operator
      algebras},
   },
   book={
      series={Encyclopaedia Math. Sci.},
      volume={126},
      publisher={Springer, Berlin},
   },
   isbn={3-540-42305-X},
   date={2002},
   pages={1--145},
   review={\MR{1878882}},
   doi={10.1007/978-3-662-04825-2\_1},
}

\bib{rordam222}{article}{
   author={R\o rdam, Mikael},
   title={Structure and classification of $C^\ast$-algebras},
   conference={
      title={International Congress of Mathematicians. Vol. II},
   },
   book={
      publisher={Eur. Math. Soc., Z\"{u}rich},
   },
   isbn={978-3-03719-022-7},
   date={2006},
   pages={1581--1598},
   review={\MR{2275660}},
}
\bib{rosen}{book}{
   author={Rosenberg, Jonathan},
   title={Algebraic $K$-theory and its applications},
   series={Graduate Texts in Mathematics},
   volume={147},
   publisher={Springer-Verlag, New York},
   date={1994},
   pages={x+392},
   isbn={0-387-94248-3},
   review={\MR{1282290}},
   doi={10.1007/978-1-4612-4314-4},
}

\bib{roscho}{article}{
   author={Rosenberg, Jonathan},
   author={Schochet, Claude},
   title={The K\"{u}nneth theorem and the universal coefficient theorem for
   Kasparov's generalized $K$-functor},
   journal={Duke Math. J.},
   volume={55},
   date={1987},
   number={2},
   pages={431--474},
   issn={0012-7094},
   review={\MR{0894590}},
   doi={10.1215/S0012-7094-87-05524-4},
}
\bib{ruto}{article}{
AUTHOR = {Ruiz, Efren},
author={Tomforde, Mark},
     TITLE = {Classification of unital simple {L}eavitt path algebras of
              infinite graphs},
   JOURNAL = {J. Algebra},
    VOLUME = {384},
      YEAR = {2013},
     PAGES = {45--83},
      ISSN = {0021-8693},
       DOI = {10.1016/j.jalgebra.2013.03.004},
       URL = {https://doi.org/10.1016/j.jalgebra.2013.03.004},
}
\bib{smith3}{article}{
   author={Paul Smith, S.},
   title={Category equivalences involving graded modules over path algebras
   of quivers},
   journal={Adv. Math.},
   volume={230},
   date={2012},
   number={4-6},
   pages={1780--1810},
   issn={0001-8708},
   review={\MR{2927354}},
   doi={10.1016/j.aim.2012.03.031},
}

\bib{steinkind}{article}{
author={Steinberg, Benjamin},
title={A note on projections in \'etale groupoid algebras and diagonal preserving homomorphisms},
eprint={arXiv:2311.05694},
}

\bib{vas}{article}{
   author={Va\v{s}, Lia},
   title={The functor $K_0^{\rm{gr}}$ is full and only weakly faithful},
   journal={Algebr. Represent. Theory},
   volume={26},
   date={2023},
   number={6},
   pages={2877--2890},
   issn={1386-923X},
   review={\MR{4681336}},
   doi={10.1007/s10468-023-10199-w},
}

\bib{kh}{article}{
   author={Weibel, Charles A.},
   title={Homotopy algebraic $K$-theory},
   conference={
      title={Algebraic $K$-theory and algebraic number theory},
      address={Honolulu, HI},
      date={1987},
   },
   book={
      series={Contemp. Math.},
      volume={83},
      publisher={Amer. Math. Soc., Providence, RI},
   },
   isbn={0-8218-5090-3},
   date={1989},
   pages={461--488},
   review={\MR{0991991}},
   doi={10.1090/conm/083/991991},
}
\bib{chuk}{book}{
   author={Weibel, Charles A.},
   title={The $K$-book},
   series={Graduate Studies in Mathematics},
   volume={145},
   note={An introduction to algebraic $K$-theory},
   publisher={American Mathematical Society, Providence, RI},
   date={2013},
   pages={xii+618},
   isbn={978-0-8218-9132-2},
   review={\MR{3076731}},
   doi={10.1090/gsm/145},
}
\bib{williams}{article}{
   author={Williams, R. F.},
   title={Classification of subshifts of finite type},
   journal={Ann. of Math. (2)},
   volume={98},
   date={1973},
   pages={120--153; errata, ibid. (2) {\bf 99 (1974), 380--381}},
   issn={0003-486X},
   review={\MR{0331436}},
   doi={10.2307/1970908},
}

\end{biblist}
\end{bibdiv}
\end{document}